\documentclass[11pt,reqno]{amsart}
\usepackage[left=1.2in, right=1.2in]{geometry}

\usepackage{amsmath}
\usepackage{amsfonts}
\usepackage{mathrsfs}
\usepackage{amssymb}
\usepackage{amsthm}
\usepackage{enumitem}
\usepackage{bbm}
\usepackage{times}
\usepackage{tabularx}
\usepackage{amsbsy}
\usepackage{mathtools}
\usepackage{tikz}
\usetikzlibrary{arrows,decorations.markings}
\usetikzlibrary{matrix}
\usetikzlibrary{graphs}
\usetikzlibrary{backgrounds}
\usepackage{setspace}\spacing{1}
\usepackage{color}

\usepackage[colorlinks=true,linkcolor=blue,citecolor=blue]{hyperref}

\theoremstyle{Theorem}
\newtheorem{theorem}{Theorem} [section]

\newtheorem{proposition}[theorem]{Proposition}
\newtheorem{claim}[theorem]{Claim}
\newtheorem{lemma}[theorem]{Lemma}
\newtheorem{corollary}[theorem]{Corollary}

\newtheorem{definition}[theorem]{Definition}

\newtheorem{remark}[theorem]{Remark}

\theoremstyle{remark}

\newlist{enumlemma}{enumerate}{3}
\setlist[enumlemma]{label*={(\alph*)}, ref= {(\alph*)} }

\usepackage{marginnote}

\newcommand{\diff}{\mathrm{Diff}}

\renewcommand{\epsilon}{\varepsilon}

\DeclareMathOperator{\supp}{supp}
\DeclareMathOperator{\Diff}{Diff}

\newcommand{\R}{\mathbb {R}}
\newcommand{\Q}{\mathbb {Q}}
\newcommand{\Z}{\mathbb {Z}}
\newcommand{\N}{\mathbb {N}}
\newcommand{\T}{\mathbb {T}}

\newcommand{\s}{\mathbb {S}}

\newcommand{\e}{\epsilon}

 \newcommand{\oldepsilon}{\mathchar"10F}
\newcommand{\eps}{\oldepsilon}

\newcommand{\Sr}{S_{\text{rot}}}

\def\bu{\mathbf{u}}

\def\rot{\mathrm{rot}}
\def\FF{\mathcal{F}}
\def\SL{\mathrm{SL}}

\title[global rigidity of ABC group actions]{ Global Rigidity of Some Abelian-by-Cyclic group actions on $\T^2$}

\author[Sebastian Hurtado]{Sebastian Hurtado}
\address{Department of Mathematics, University of Chicago, Chicago, IL 60637, USA}
\email{shurtados@uchicago.edu}

\author[Jinxin Xue]{Jinxin xue}
\address{Yau Mathematical Sciences Center \& Department of Mathematics, Tsinghua University, Beijing, China, 100084}
\email{jxue@tsinghua.edu.cn}


\long\def\symbolfootnote[#1]#2{\begingroup\def\thefootnote{\fnsymbol{footnote}}
\footnote[#1]{#2}\endgroup}

\begin{document}

\begin{abstract}
For groups of diffeomorphisms of $\T^2$ containing an Anosov diffeomorphism, we give a complete classification for polycyclic Abelian-by-Cyclic group actions on $\T^2$ up to both topological conjugacy and smooth conjugacy under mild assumptions. Along the way, we also prove a Tits alternative type theorem for some groups of diffeomorphisms of $\T^2$.
 
\end{abstract}

\maketitle

\begin{section}{Introduction}

In this article, we study the action of certain abelian-by-cyclic groups (ABC, for simplicity) on $\T^2$. Our groups are given as follows. Let $B \in \text{SL}_n(\Z)$ be an invertible matrix with integer coefficients and determinant one, and let $\Gamma_B = \Z \ltimes_B \Z^n$ be the semi-direct product, where $\Z$ acts on $\Z^n$ via  the linear standard action of $B$ on $\Z^n$. 

\subsection{The topological classification result}
 We consider the case where $n = 2$, that is $B \in \SL_2(\Z)$ and $\Gamma_B = \Z \ltimes_B \Z^2$. We define an \emph{affine action} of $\Gamma_B$ on $\T^2$ to be a group homomorphism $\Phi:\  \Gamma_B\to \mathrm{Aff}(\T^2)$, where $\mathrm{Aff}(\T^2)$ is the group of orientation preserving affine transformations on $\T^2$, and \begin{equation}\label{EqAffine} \Phi(B)=A\in \SL_2(\Z),\quad \Phi(e_i): x\mapsto x+\rho_i\ \mathrm{mod}\ \Z^2,\ i=1,2,\end{equation}
where $e_1=(1,0)$, $e_2=(0,1)\in \Z^2$ and $\rho_i\in \R^2,\ i=1,2$.

In this case, we let $\rho=(\rho_1,\rho_2)$ to be the matrix having $\rho_1$ and $\rho_2$ as columns and call it the rotation matrix of $\Phi$.  The group relation $\Phi(B)\Phi(v)=\Phi(Bv)\Phi(B),\ \forall \ v\in \Z^2,$ is equivalent to 
\begin{equation}\label{EqCommute}
A\rho=\rho B,\  \mathrm{mod\ }\Z^{2\times 2},
\end{equation}
 in other words, for the equation \label{EqAffine} to define an action, there must exists a matrix $C\in \Z^{2\times 2}$ such that $A\rho=\rho B+C$. As discussed in Section \ref{SSHomotopy} (Corollary \ref{ABCeasy}), these actions can only be faithful in the case where $|tr A| = |tr B|$. Affine actions will be classified in Section \ref{SSAffine}.

One of our main results is the following global rigidity result for actions of  $\Gamma_B$ on $\T^2$ by diffeomorphisms. We denote by $\Diff^r(\T^2)$ the group of $C^r$ diffeomorphisms on $\T^2$ and by  $\mathrm{Homeo}(\T^2)$ the group of homeomorphisms on $\T^2$. 

\begin{theorem}\label{ABC3}

Suppose that $\Phi: \Gamma_B \to  \Diff^r(\T^2),\ r>2,$ is such that:
\begin{enumerate}
\item $B \in \SL_2(\Z)$ is an Anosov linear map (i.e. $B$ has its eigenvalues with norm different than one);
\item $\Phi(B)$ is an Anosov diffeomorphism of $\T^2$ homotopic to $A\in \SL_2(\Z)$.
\end{enumerate}

Then $\Phi$ is topologically conjugate to an affine action of $\Gamma_B$ up to finite index. More concretely, there exist a finite index subgroup $\Gamma' \subset \Gamma_B$ and  $h \in \mathrm{Homeo}(\T^2)$ such that $h\Phi(\gamma)h^{-1},$ for all $\gamma \in \Gamma'$, coincides with an affine action of the form \eqref{EqAffine}.

\end{theorem}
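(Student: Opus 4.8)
The plan is to normalize the action on the two natural pieces of $\Gamma_B$ — the hyperbolic generator $b$ and the normal $\Z^2$ — and then observe that the normalized pieces assemble into an affine action; the homeomorphism $h$ in the statement will end up being the Franks–Manning conjugacy of $\Phi(b)$.

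First I would dispose of the homotopy data. Let $L_v\in\GL_2(\Z)$ be the map induced by $\Phi(v)$ on $H_1(\T^2;\Z)$. The group relations force the $L_{e_i}$ to commute and to be conjugated by $A$ exactly as $B$ acts on $\Z^2$; since a hyperbolic $B\in\SL_2(\Z)$ has neither $1$ nor $-1$ as an eigenvalue, the analysis of homotopy data in Section~\ref{SSHomotopy} (cf.\ Corollary~\ref{ABCeasy}) gives $L_{e_i}=\pm\id$. Passing to the finite-index, $B$-invariant subgroup $K=\{v\in\Z^2:L_v=\id\}$ and, if necessary, to a power of $b$ — both permitted by the ``up to finite index'' clause — we may assume every $\Phi(v)$, $v\in\Z^2$, is homotopic to the identity while $\Phi(b)$ is still Anosov and homotopic to a hyperbolic matrix, still denoted $A$. (If $K$ has infinite index, i.e.\ $\Phi|_{\Z^2}$ has finite image, the conclusion holds on a finite-index subgroup for trivial reasons, so assume $K=\Z^2$ after renaming.) Then, since $\Phi(b)$ is Anosov, Manning's theorem furnishes the unique homeomorphism $h_0$ homotopic to the identity with $h_0\Phi(b)h_0^{-1}=A$; replacing $\Phi$ by $h_0\Phi h_0^{-1}$ we may assume $\Phi(b)=A$ is linear. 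Writing $f_i:=\Phi(e_i)$ and $f_v:=f_1^{v_1}f_2^{v_2}$, we then have commuting $C^r$ diffeomorphisms, each homotopic to $\id$, with $Af_vA^{-1}=f_{Bv}$ for all $v$, and everything reduces to the claim that each $f_i$ is a translation $x\mapsto x+\rho_i$ with $A\rho=\rho B\bmod\Z^{2\times2}$, $\rho=[\rho_1|\rho_2]$.

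To prove that claim I would combine three ingredients. (i) \emph{Tameness from distortion:} since $B$ is hyperbolic, $\Z^2$ is exponentially distorted in $\Gamma_B$ — the word length of $w\in\Z^2$ is $O(\log(2+|w|))$ — and together with $A^nf_vA^{-n}=f_{B^nv}$ this yields $\|f_w\|_{C^1}=O(|w|^\kappa)$ for some $\kappa$, hence $\limsup_n\tfrac1n\log\|Df_v^{\,n}\|=0$, so each $f_v$ has zero topological entropy, and $\|\widetilde{f_v^{\,n}}-\id\|_{C^0}=O(n)$. (ii) \emph{A rotation matrix from amenability:} $\Gamma_B$ is amenable, so it preserves a probability measure $\nu$ on $\T^2$; for any $\Z^2$-invariant $\nu$ the cocycle relation $\psi_{v+w}=\psi_v\circ f_w+\psi_w$ for $\psi_v:=\widetilde{f_v}-\id$ integrates to make $v\mapsto\rho(v):=\int\psi_v\,d\nu$ a homomorphism $\Z^2\to\R^2$, i.e.\ a real matrix, and choosing $\nu$ moreover $A$-invariant (Markov–Kakutani applied to the $A$-invariant convex set of $\Z^2$-invariant measures) the relation $Af_vA^{-1}=f_{Bv}$ forces $A\rho=\rho B$. (iii) \emph{Rigidity:} using rotation-set theory on $\T^2$ (Misiurewicz–Ziemian, Franks, Llibre–MacKay), the renormalization $\rho_{\mathrm{set}}(f_{Bv})=A\cdot\rho_{\mathrm{set}}(f_v)$, and (i), one shows the rotation set of each $f_v$ is the single point $\rho(v)$, so $f_v$ is semiconjugate to the translation $x\mapsto x+\rho(v)$; this semiconjugacy is then promoted to a conjugacy by a hyperbolic-fixed-point (graph-transform) argument in the space of candidate conjugacies, driven by the contraction/expansion of $A$ together with $f_i=A^{-1}f_{Be_i}A$. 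The endgame is transparent via the rigidity fact that any continuous self-map $g$ of $\T^2$ commuting with the linear Anosov $A$ is affine — on the torus $g_0\circ A=\tilde Ag_0$ forces $g_0\equiv\text{const}$ since $\tilde A$ has no eigenvalue on the unit circle — so a conjugacy built $A$-equivariantly is affine, and the translation structure of the $f_v$ is forced.

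Granting this, $f_i$ is the translation by $\rho_i$ with $A\rho=\rho B\bmod\Z^{2\times2}$, so $h_0\Phi(\gamma)h_0^{-1}$ is of the form \eqref{EqAffine} for every $\gamma$ in the finite-index subgroup, which is the assertion. The main obstacle is ingredient (iii): a diffeomorphism of $\T^2$ with a one-point rotation set need not be conjugate to a rotation (pseudo-rotations), so the genuine hyperbolicity of $\Phi(b)$ — not merely its homotopy type — must be used in an essential way, both to collapse the rotation set of each $f_v$ to a point and to produce a single conjugacy that is simultaneously compatible with $\Phi(b)$ and with all of $\Phi(\Z^2)$.
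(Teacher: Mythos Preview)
Your reduction to $\Phi(b)=A$ linear with $f_v:=\Phi(v)$ homotopic to the identity and $Af_vA^{-1}=f_{Bv}$ is correct and matches the paper, and the distortion and amenability remarks (i), (ii) are fine. But (iii) is where essentially all the work lies, and your sketch does not carry it. Two concrete gaps. First, you never establish that the $f_v$ preserve the stable or unstable foliation of $A$. The paper needs this (Proposition~\ref{segment}) to know each $\rot(f_v)$ is a segment parallel to the expanding eigenvector of $A$, which is what makes the joint-rotation-set analysis (Proposition~\ref{properties}) go through; foliation preservation is obtained from the Tits-alternative/ping-pong result (Theorem~\ref{main2}), using that $\Gamma_B$ contains no free subgroup. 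General rotation-set theory plus the renormalization $\rot(f_{Bv})=A\cdot\rot(f_v)\ \mathrm{mod}\ \Z^2$ alone does not collapse the rotation set to a point.

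Second, even granting singleton rotation sets, your ``promotion to a conjugacy'' is not an argument. Your rigidity fact (maps commuting with $A$ are affine) does not apply: the $f_v$ do not commute with $A$, and any $A$-equivariant self-conjugacy is a translation, hence conjugates nothing. The paper's mechanism here splits into three genuinely different cases. For $|\lambda_A|>|\lambda_B|$ a direct growth comparison (Proposition~\ref{rotzero}) shows $f_v(x)-x$ is constant. For $|\lambda_A|=|\lambda_B|$ an additional measure-theoretic argument is required (Section~\ref{mistakecorrection}). For $|\lambda_A|<|\lambda_B|$ --- the case that forces the hypothesis $r>2$, which your outline never uses --- singleton rotation sets only yield that each $f_v$ preserves every unstable leaf; one must then run Herman--Yoccoz theory leafwise to embed the $\Z^2$-action into a flow, and invoke the Ledrappier--Young entropy formula to reach a contradiction (Section~\ref{SABC2}). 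A graph-transform/fixed-point scheme in the space of conjugacies does not substitute for this: there is no contraction available, since the relation $f_v=A^{-1}f_{Bv}A$ mixes the unknowns rather than iterating a single one.
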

\begin{remark}
When $|tr A|\geq |tr B|$, the differentiability $r>2$ can be relaxed to $r\geq 1$. The assumption $r>2$ is only needed in the proof of the $|trA|<|trB|$ case where we apply the Herman-Yoccoz theory for circle maps. 
\end{remark}

This result gives a complete classification of  actions of ABC groups containing an Anosov element up to topological conjugacy and up to finite covers. It strengthens Theorem 1.5 of \cite{WX} by removing the slow oscillation condition therein.

\begin{remark} We do not know how to remove the assumption of the existence of an Anosov element in Theorem \ref{ABC3} since the proof relies strongly on the conjugacy linearizing an Anosov diffeomorphism provided by Franks' theorem, but it seems plausible the result still holds without this assumption in the smooth category. In the topological category, it is possible to construct faithful actions of $\Gamma_B$ by homeomorphisms in any surface (even by homeomorphisms homotopic to the identity) as the group $\Gamma_B$ is left-orderable if $B$ has infinite order and therefore the group $\Gamma_B$ admits an action on $\text{Homeo}(\R)$.

\end{remark}
\subsection{Smooth conjugacy}
We next show how to improve the regularity of the conjugacy $h$ in Theorem \ref{ABC3}. The main strategy is to use the ergodic effect of the $\Z^2$ part of the action to promote an SRB measure to a measure of maximal entropy.

In the next theorem, we get complete classification of faithful volume-preserving actions up to smooth conjugacy. 
\begin{theorem}\label{ThmHigher2}
Let $\Phi: \Gamma_B \to  \Diff^r(\T^2),\ r\geq 2,$ be a faithful action by volume-preserving diffeomorphisms satisfying:
\begin{enumerate}
\item $B \in \SL_2(\Z)$ is an Anosov linear map;
\item $\Phi(B)$ is an Anosov diffeomorphism.
\end{enumerate}
Then $h$ and $h^{-1}$ in Theorem \ref{ABC3} are $C^{r-\epsilon}$ for all $\epsilon>0$.
\end{theorem}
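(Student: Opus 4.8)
The plan is to improve the regularity of the conjugacy $h$ furnished by Theorem~\ref{ABC3} in two moves: first use the $\Z^2$-part of the action to show that $h$ must preserve Lebesgue measure, and then deduce smoothness from the resulting coincidence of the SRB measure and the measure of maximal entropy of $\Phi(B)$. For the first step, after replacing the finite-index subgroup $\Gamma'$ of Theorem~\ref{ABC3} by a smaller one of the form $\langle B^{k_0}\rangle\ltimes N\Z^2$ (for suitable integers $k_0,N\ge 1$), the affine normal form gives $h\,\Phi(B^{k_0})\,h^{-1}=A_0$, a hyperbolic element of $\SL_2(\Z)$, and $h\,\Phi(Ne_i)\,h^{-1}=T_i$, the translation by $N\rho_i$ on $\T^2$. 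Write $m$ for Lebesgue measure and set $\nu:=h_*m$. Since $\Phi$ is volume-preserving, $\nu$ is invariant under $T_1$ and $T_2$, hence under the closed subgroup $\Lambda:=\overline{\langle N\rho_1,N\rho_2\rangle}\le\T^2$. Faithfulness of $\Phi$ (and torsion-freeness of $\Gamma_B$) makes each $\Phi(Ne_i)$, hence each $T_i$, of infinite order, so $\Lambda$ is infinite. The commutation relation \eqref{EqCommute} shows that $A_0$ and $A_0^{-1}$ each map $\langle N\rho_1,N\rho_2\rangle$ into itself, so $A_0\Lambda=\Lambda$; a hyperbolic matrix in $\SL_2(\Z)$ has no rational eigenvector, hence preserves no $1$-dimensional rational subtorus, so $\Lambda$ cannot be such a subtorus. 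An infinite closed subgroup of $\T^2$ that is not a rational circle is all of $\T^2$; therefore $\Lambda=\T^2$, and a $\T^2$-translation-invariant probability measure is Haar, so $\nu=m$. Thus $h_*m=m$.

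Next, set $f:=\Phi(B^{k_0})$, a $C^r$ volume-preserving (hence transitive) Anosov diffeomorphism of $\T^2$ with $hfh^{-1}=A_0$. Being volume-preserving, the SRB measure of $f$ is $m$. On the other hand the measure of maximal entropy is a topological-conjugacy invariant, and $m$ is the unique measure of maximal entropy of the hyperbolic toral automorphism $A_0$, so the measure of maximal entropy of $f$ is the pull-back $h^{-1}_*m$; by the previous paragraph this is again $m$. Hence the SRB measure and the measure of maximal entropy of $f$ coincide, both being $m$.

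I would then convert this coincidence into smoothness in the classical way. By Sinai--Ruelle--Bowen theory $m$ is the equilibrium state of the geometric potential $\varphi^u=-\log\|Df|_{E^u}\|$ (of topological pressure $0$), while the measure of maximal entropy is the equilibrium state of $0$ (of pressure $h_{\mathrm{top}}(f)$); since $\varphi^u$ is H\"older for $r\ge 2$ and these equilibrium states agree, $\varphi^u$ is cohomologous to the constant $-h_{\mathrm{top}}(f)$. Livšic's periodic-orbit criterion then forces $\|Df^n|_{E^u_p}\|=\lambda_0^{\,n}$ at every periodic point $p$ of period $n$, where $\lambda_0>1$ is the expanding eigenvalue of $A_0$, and volume-preservation gives $\|Df^n|_{E^s_p}\|=\lambda_0^{-n}$; thus $f$ and $A_0$ have identical periodic eigenvalue data. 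The topological conjugacy $h$ between the $C^r$ Anosov diffeomorphisms $f$ and $A_0$ of $\T^2$ now satisfies the hypotheses of the de la Llave--Marco--Moriy\'on regularity theory for the Livšic equation (in its sharp two-dimensional form), so $h$ and $h^{-1}$ are $C^{r-\epsilon}$ for every $\epsilon>0$; equivalently, one may cite directly the surface rigidity statement that a $C^r$ Anosov diffeomorphism of $\T^2$ whose measure of maximal entropy is absolutely continuous is $C^{r-\epsilon}$-conjugate to its linear model.

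I expect the substantive point to be the first step: the coincidence ``SRB $=$ MME $\Rightarrow$ smooth linearization'' on a surface is standard hyperbolic dynamics, whereas showing $h_*m=m$ is where the algebraic structure enters essentially — faithfulness of the $\Gamma_B$-action together with the hyperbolicity of $A$ (which forbids invariant rational subtori) is exactly what pins $h_*m$ down to Haar measure. A minor technical nuisance is to arrange at the outset that a single power $B^{k_0}$ lies in the finite-index subgroup of Theorem~\ref{ABC3} while the $\Z^2$-generators (after raising to a power $N$) also do, so that the translation-ergodicity argument and the periodic-data argument both apply to the one homeomorphism $h$.
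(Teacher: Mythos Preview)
Your proposal is correct and follows essentially the same route as the paper. Both proofs (i) use the $\Z^2$-part together with faithfulness to show $h_*m=m$, (ii) deduce that Lebesgue is simultaneously the SRB measure and the measure of maximal entropy of the Anosov element, and (iii) invoke de la Llave's smooth-conjugacy theorem on $\T^2$ to upgrade $h$ to $C^{r-\epsilon}$. The only cosmetic difference is in step (i): the paper quotes a characterization of faithfulness (density of $\{\rho p:p\in\Z^2\}$ in $\T^2$) from \cite{WX}, whereas you re-derive this density by the closed-subgroup argument ($\Lambda$ infinite, $A_0$-invariant, and a hyperbolic $A_0$ fixes no rational circle, hence $\Lambda^0=\T^2$); your cautious passage to $\langle B^{k_0}\rangle\ltimes N\Z^2$ is harmless since the finite-index subgroup in Theorem~\ref{ABC3} already contains $B$.
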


We remark that the volume-preservation assumption in the above Theorem \ref{ThmHigher2} might be unnecessary and we believe our methods might lead to a proof. Also, the volume-preservation assumption  can be replaced by the assumption of preserving a smooth measure.

In the proof of Theorem \ref{ThmHigher2} we can reduce to the case where $trA=trB$, since the rotation matrix $\rho$ has to be rational (hence is nonfaithful) if $tr A\neq tr B$, as we will see in the classification of affine actions given in Proposition \ref{LmNull} in Section \ref{SSAffine}. We will use the following notation.\\ 

\textbf{Notation:} For a diagonalizable matrix $D \in SL_2(\Z)$, we let $u_D$ to be a choice of an eigenvector associated to the larger in norm eigenvalue $\lambda_D$. We define $D^{-t}$ be the transpose of $D^{-1}$. For vectors $v,w \in \R^2$, we let $v \otimes w$ denote  the matrix product of $v$ treated as a column vector and $w$ treated as a row vector. More formally,  this matrix represents the endomorphism of $\R^2$ defined by the tensor product of the element $v$ of $\R^2$ and the element $w^{t}$ of the dual $(\R^2)^{*}$ (defined by the standard inner product).\\

We will see in Proposition \ref{LmNull} that a rotation matrix $\rho$ satisfying the equation $A\rho=\rho B+C$ for some $C\in \Z^{2\times 2}$ and $trA=trB$, $|trA|>2$, has the form
\begin{equation}\label{EqRotMat}\rho=\rho_C+c_1 u_A\otimes u_{B^t}+c_2 u_{A^{-1}}\otimes u_{B^{-t}},\quad (c_1,c_2)\in \R^2,\end{equation}
where $\rho_C\in \Q^{2\times 2}$ is a particular solution to $A\rho=\rho B+C$. 

An affine action of the form \eqref{EqAffine} is faithful if and only if $A$ is of infinite order and $\{\rho p,\ p\in \Z^2\}$ is dense on $\T^2$ (c.f. Proposition 1.2 of \cite{WX}). Therefore in terms of the representation \eqref{EqRotMat}, the rotation matrices of faithful affine actions correspond to a full measure set of $(c_1,c_2)\in \R^2$. In general the full measure set cannot be $\R^2$. For example, we consider $A=B$ then $\rho=\rho_C+ a_1 \mathrm{id}+a_2 A,\ (a_1,a_2)\in \R^2$ and the affine action is not faithful if we take $a_1,a_2\in \mathbb Q$. Given $\rho$, it can be checked directly if the affine action is faithful or not.

Next, replacing the above full measure set by another {\it implicit} full measure set, we obtain the following theorem without assuming the volume preservation assumption.




\begin{theorem}\label{ThmHigher3}
There exists a full measure subset $\mathcal C$ of $\R^2$ such that if  $\Phi: \Gamma_B \to  \Diff^r(\T^2),\ r\geq 2,$ is an action satisfying:
\begin{enumerate}
\item $B \in \SL_2(\Z)$ is an Anosov linear map;
\item $\Phi(B)$ is an Anosov diffeomorphism;
\item the rotation matrix \eqref{EqRotMat} has $(c_1,c_2)\in \mathcal C$.
\end{enumerate}
Then $h$ and $h^{-1}$ in Theorem \ref{ABC3} are $C^{r-\epsilon}$ for all $\epsilon>0$.
\end{theorem}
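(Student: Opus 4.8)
The plan is to reduce Theorem~\ref{ThmHigher3} to the already-established Theorem~\ref{ABC3} plus a bootstrapping argument, with the choice of $\mathcal C$ dictated precisely by the Diophantine conditions needed to run a KAM-type/cohomological-equation scheme on the $\Z^2$ part of the action. By Theorem~\ref{ABC3} we already have a topological conjugacy $h$ (after passing to a finite-index subgroup $\Gamma'$) taking $\Phi$ to an affine action with rotation matrix $\rho$ of the form \eqref{EqRotMat}; the whole content of Theorem~\ref{ThmHigher3} is to upgrade $h$ to $C^{r-\epsilon}$ for $(c_1,c_2)$ in a full-measure set. The affine model for the $\Z^2$ part consists of the two translations $x\mapsto x+\rho_1$, $x\mapsto x+\rho_2$, and $h$ conjugates $\Phi(e_i)$ to these translations. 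The first step is therefore to isolate a single circle-rotation/torus-translation direction in which $\rho$ is Diophantine: define $\mathcal C$ to be the set of $(c_1,c_2)$ for which the resulting columns of $\rho$ (equivalently, the translation vector generating a dense one-parameter-like orbit on $\T^2$) satisfy a Diophantine condition of the form $\|q\cdot \rho p\|\geq C|p|^{-\tau}$; since fixing a Diophantine target is a full-measure condition on the parameters and $(c_1,c_2)\mapsto \rho$ is affine with the $u_A\otimes u_{B^t}$, $u_{A^{-1}}\otimes u_{B^{-t}}$ directions spanning a complement to the rationals, this $\mathcal C$ has full Lebesgue measure in $\R^2$.

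With a Diophantine translation in hand, the second step is the regularity bootstrap. We already know $h$ is a homeomorphism conjugating the Anosov diffeomorphism $\Phi(B)$ to its linearization $A$; by Franks/Manning and standard Livšic-type regularity theory for Anosov conjugacies, $h$ is as smooth as the matching of Lyapunov/periodic data allows, but a priori only Hölder. The new input is that $h$ also conjugates the commuting translations $\Phi(e_i)$ to honest rigid translations. One runs the following: the displacement $H(x) = h(x) - x$ satisfies, for the translation $T_i=\Phi(e_i)$ with affine model $R_i: x\mapsto x+\rho_i$, the twisted cohomological equation $H\circ T_i - H = \rho_i - (\text{displacement of }T_i)$; combined with the Anosov dynamics of $\Phi(B)$ and the contraction/expansion on stable/unstable foliations, the Diophantine condition lets us solve for $H$ with a gain of derivatives, exactly as in the Herman–Yoccoz theory invoked in the remark after Theorem~\ref{ABC3}. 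Iterating this between the hyperbolic direction (which gives exponential estimates along leaves) and the translation directions (which give the Diophantine small-divisor control transverse to leaves) promotes $h$ from $C^0$ to $C^{r-\epsilon}$, and the same argument applied to $h^{-1}$ (conjugating the affine action back to $\Phi$, which is also Anosov-plus-translations) gives $h^{-1}\in C^{r-\epsilon}$ as well. This is the same mechanism by which Theorem~\ref{ThmHigher2} upgrades regularity in the volume-preserving case, except that volume-preservation (used there to promote the SRB measure to the measure of maximal entropy, forcing the periodic data to match) is here replaced by the explicit Diophantine genericity of $(c_1,c_2)$, which directly furnishes the small-divisor estimates without needing the measure-rigidity detour.

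The third step is bookkeeping: the finite-index subgroup $\Gamma'$ and the normalization $trA=trB$ (forced, since otherwise $\rho$ is rational and the action is non-faithful, cf.\ Proposition~\ref{LmNull}) must be tracked so that the smoothness of $h$ on $\Gamma'$-conjugates extends to the statement as phrased; since $h$ is a single homeomorphism of $\T^2$ independent of $\gamma$, once it is $C^{r-\epsilon}$ the conclusion holds verbatim.

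The main obstacle I expect is the second step — precisely, arranging the interplay between the hyperbolic estimates coming from $\Phi(B)$ and the Diophantine small-divisor estimates coming from the translation part so that they can be iterated without loss, and in particular extracting the right Diophantine condition on $\rho$ (hence the correct definition of $\mathcal C$) so that the cohomological equations along the one-dimensional foliations can actually be solved with the stated $C^{r-\epsilon}$ gain. Getting the bootstrap to terminate at regularity $r-\epsilon$ rather than stalling at some finite intermediate regularity is the delicate point, and it is where the hypothesis $r\geq 2$ and the Herman–Yoccoz circle-diffeomorphism machinery enter; the transverse (translation) direction must be handled essentially as a Diophantine rotation on the circle of leaves, and matching the loss of derivatives in that step against the hyperbolic gain is the crux of the argument.
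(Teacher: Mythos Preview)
Your approach diverges fundamentally from the paper's, and the second step has a genuine gap. The paper does \emph{not} define $\mathcal C$ via a Diophantine condition on $\rho$ and does \emph{not} run a KAM/cohomological bootstrap. Instead, $\mathcal C$ is the set of $(c_1,c_2)$ for which the point $\rho_c = c_1(u_{B^t}\cdot e_1)\,u_A + c_2(u_{B^{-t}}\cdot e_1)\,u_{A^{-1}}$ is Birkhoff-generic on $\T^2$ for the linear Anosov map $A$. The argument is measure-theoretic: starting from an ergodic SRB measure $\mu$ for $\Phi(B)$, one forms $\nu = \lim_N \frac{1}{N}\sum_{n=1}^N \Phi(B^n e_1)_*\mu$. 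Pushing forward by $h$ turns this into an average of translates of $h_*\mu$ along the $A$-orbit of $\rho_c$, and Birkhoff genericity of $\rho_c$ forces $h_*\nu = \mathrm{vol}$. Thus $\nu$ is simultaneously the measure of maximal entropy and --- via a uniform distortion bound on $D^u\Phi(B^n e_1)$ (Lemma~\ref{LmDerivativeBound}, proved using only the bi-H\"older regularity of $h$) --- still an SRB measure. The Ledrappier--Young formula then pins the positive Lyapunov exponent at $\log\lambda_A$, and the smooth-conjugacy theorems of de la Llave and Saghin--Yang give $C^{r-\epsilon}$ regularity of $h$ along unstable leaves; repeating for $\Phi(B)^{-1}$ handles stable leaves, and Journ\'e's theorem finishes.

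The specific failure in your scheme is that the cohomological equation $H\circ T_i - H = \rho_i - (T_i(x)-x)$ is posed over the \emph{nonlinear} diffeomorphism $T_i = \Phi(e_i)$, not over a rotation; solving it with a gain of regularity presupposes that $\Phi(e_i)$ is already smoothly conjugate to a translation, which is precisely the conclusion. Herman--Yoccoz applies to circle diffeomorphisms with Diophantine rotation number, not to diffeomorphisms of $\T^2$ merely topologically conjugate to a Diophantine translation, and no such two-dimensional analogue is available. More broadly, your proposed iteration is a \emph{local}-rigidity mechanism (it would need $\Phi$ close to the affine model to get started), whereas the theorem is global: $\Phi(B)$ is an arbitrary Anosov diffeomorphism in its homotopy class. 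The paper sidesteps all small-divisor analysis by reducing to the coincidence of SRB and maximal-entropy measures and then invoking Anosov entropy rigidity as a black box.
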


For special rotation vectors translating along the stable or unstable leaves of $A$, we can combine the Herman-Yoccoz theory for circle maps and the Ledrappier-Young entropy formula to obtain the following theorem.
\begin{theorem}\label{ThmHigher}
Suppose that $\Phi: \Gamma_B \to  \Diff^r(\T^2), \ r>2,$ is such that:
\begin{enumerate}
\item $B \in \SL_2(\Z)$ is an Anosov linear map;
\item $\Phi(B)$ is a volume-preserving Anosov diffeomorphism of $\T^2$ homotopic to $A\in \SL_2(\Z)$ with $trA=trB$;
\item The matrix $\rho=(\rho_1,\rho_2)$ in \eqref{EqAffine} has the form $\rho= c u_A\otimes u_{B^t}$ or $\rho= cu_{A^{-1}}\otimes u_{B^{-t}},\ c\in \R\setminus\{0\}.$\end{enumerate}
Then $h$ and $h^{-1}$ in Theorem \ref{ABC3} are $C^{r-\epsilon}$ for all $\epsilon>0$.
\end{theorem}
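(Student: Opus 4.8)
The plan is to upgrade the topological conjugacy $h$ from Theorem \ref{ABC3} to a $C^{r-\epsilon}$ conjugacy by exploiting the extra structure coming from the commuting translation subgroup, exactly as in the volume-preserving case (Theorem \ref{ThmHigher2}), but now paying attention to the fact that $\Phi(B)$ is only assumed volume-preserving and the rotation vectors are of the special one-dimensional form $\rho = c\, u_A\otimes u_{B^t}$ (or $\rho = c\, u_{A^{-1}}\otimes u_{B^{-t}}$). First I would normalize: after conjugating by $h$, $\Phi(B)$ becomes (up to finite index) an affine action with linear part $A$ and translations $\Phi(e_i)\colon x\mapsto x+\rho_i$, and since $\rho = c\,u_A\otimes u_{B^t}$, both columns $\rho_1,\rho_2$ are (real) multiples of the unstable eigendirection $u_A$ of $A$. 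Thus the $\Z^2$-part of the conjugated action consists of translations along a single line, the unstable foliation of $A$; equivalently, after $h$, the original $\Z^2$-action $\Phi(\Z^2)$ is conjugate (by $h$) to translations along the unstable leaves of the linear Anosov $A$, hence acts on the unstable foliation of $\Phi(B)$ by permuting leaves via genuine (linear) rotations.

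The key mechanism is then one-dimensional. Restricting to a single unstable leaf $W^u$ of $\Phi(B)$ (which is a $C^r$ immersed line, with $\Phi(B)$ acting on its affine parametrization by $C^r$ expanding maps), the commuting $\Z^2$ action gives $C^r$ circle maps — more precisely, the holonomy/return picture on the leaf space together with the transverse translation structure produces a circle (the leaf closure, or a transversal to the stable foliation) on which $\Phi(\Z^2)$ acts by $C^r$ diffeomorphisms with rotation numbers that are exactly the coordinates of $c\,u_A$ read mod $\Z$, hence (for $c$ irrational-generic, which is guaranteed since $\rho$ being a nonzero multiple of $u_A\otimes u_{B^t}$ forces the orbit of $\rho\Z^2$ to be dense — otherwise the action is not faithful and there is nothing finer to prove, but in fact we get the density needed). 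Now I invoke Herman–Yoccoz: a $C^r$, $r>2$, circle diffeomorphism with Diophantine rotation number is $C^{r-1-\epsilon}$-linearizable, and — crucially — one does not need a single Diophantine element but a $\Z^2$-action where the rotation set is "simultaneously Diophantine," which holds for a.e. direction and in particular can be arranged here because $u_A$ has algebraic (quadratic irrational) slope, hence badly approximable, hence Diophantine. This produces a $C^{r-1-\epsilon}$ conjugacy $\phi$ intertwining the $\Z^2$-action along each leaf with the linear rotations; combined with the transverse smoothness of the Anosov $\Phi(B)$ (its stable and unstable foliations are $C^{r-\epsilon}$ in the volume-preserving, two-dimensional setting — this uses $\dim = 2$ and the Anosov regularity results, e.g. the fact that $h$ maps stable leaves of $\Phi(B)$ to stable lines of $A$ smoothly along leaves), one assembles $\phi$ into a global map that is $C^{r-\epsilon}$ along both the stable and unstable foliations. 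Finally one applies a Journé-type lemma: a map that is uniformly $C^{k,\alpha}$ along two transverse Hölder foliations with $C^{k,\alpha}$ leaves is itself $C^{k,\alpha}$; this yields that $h$ (after the leafwise correction, or $h$ itself) is $C^{r-\epsilon}$, and symmetrically for $h^{-1}$.

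The steps in order: (1) reduce to $trA = trB$, $|trA|>2$, normalize via Theorem \ref{ABC3} so that $h\Phi(B)h^{-1}$ is affine with linear part $A$ and $h\Phi(e_i)h^{-1}$ are translations by $c\,(u_A)_i$ along the unstable direction; (2) identify the leaf-space dynamics: $\Phi(\Z^2)$ acts on the unstable foliation of $\Phi(B)$ by $C^r$ circle diffeomorphisms, with rotation data the reductions of $c\,u_A$ mod $\Z$; (3) establish the Diophantine property of this rotation data from the quadratic-irrationality of the slope of $u_A$ (so that, say, the Herman–Yoccoz linearization theorem applies, possibly in the "$\Z^2$-action" refinement that only needs one badly-approximable combination); (4) linearize along unstable leaves to get a $C^{r-1-\epsilon}$ leafwise conjugacy $\phi^u$, and symmetrically along stable leaves $\phi^s$ (using $\Phi(B)^{-1}$ and $u_{A^{-1}}$ — here the hypothesis $\rho = c\,u_A\otimes u_{B^t}$ makes the stable translations trivial, so along stable leaves the conjugacy is already the one coming from the Anosov structure, which is $C^{r-\epsilon}$ leafwise for two-dimensional volume-preserving Anosov by the standard smooth-foliation results); (5) apply the Journé lemma to glue leafwise regularity into global $C^{r-\epsilon}$ regularity of $h$ and $h^{-1}$.

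I expect the main obstacle to be Step (3)–(4): controlling the regularity of the Herman–Yoccoz linearization uniformly over the family of leaves (the circle maps vary with the leaf, and one needs uniform Diophantine control and uniform $C^r$ bounds to get a measurable/continuous-then-smooth leafwise conjugacy), and making sure the loss in differentiability is only $\epsilon$ rather than $1+\epsilon$ — this is where the $\Z^2$-action structure must be used, not a single circle map, since a rank-one Herman–Yoccoz argument loses a full derivative while the abundance of commuting rotations (a rank-$2$ action) can be leveraged, via a KAM/averaging or a bootstrapping argument à la the higher-rank rigidity literature, to recover the optimal $C^{r-\epsilon}$. Packaging this together with the Ledrappier–Young entropy formula (as flagged in the theorem's preamble) to rule out the non-generic degenerate cases — i.e. to show the leafwise conjugacy is nontrivial / the rotation number is irrational exactly when the action is faithful — is the remaining technical point, but it is essentially the same computation that identifies the measure of maximal entropy with the SRB measure in the proof of Theorem \ref{ThmHigher2}.
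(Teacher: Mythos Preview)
Your proposal contains a genuine gap in Step~(4), and as a consequence the overall strategy diverges from the paper's.

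The error is your treatment of the stable direction. When $\rho = c\, u_A\otimes u_{B^t}$, the $\Z^2$-action translates \emph{only} along unstable leaves, so there is no leafwise $\Z^2$-dynamics on stable leaves to linearize via Herman--Yoccoz. You try to fill this by asserting that ``along stable leaves the conjugacy is already the one coming from the Anosov structure, which is $C^{r-\epsilon}$ leafwise for two-dimensional volume-preserving Anosov by the standard smooth-foliation results,'' but this conflates two different things: the stable foliation of $\Phi(B)$ is indeed $C^{r-\epsilon}$ as a foliation, yet the Franks conjugacy $h$ is \emph{a priori} only H\"older along stable leaves. Smoothness of $h$ along stable leaves is precisely part of what must be proved, not a free input. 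Without it, Journ\'e cannot be applied, and your scheme stalls. (There is also the derivative-loss issue you flag: a single-map Herman--Yoccoz linearization yields only $C^{r-1-\epsilon}$, not $C^{r-\epsilon}$.)

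The paper sidesteps this asymmetry altogether. It uses Herman--Yoccoz only as a tool to embed the leafwise $\Z^2$-action into a flow $g_t$ (this is Proposition~\ref{PropFlow}, reproved in the present setting), yielding a measurable vector field $X$ tangent to $W^u$ with $D\Phi(B)\,X = \lambda_B X$. This identity immediately computes the unstable Lyapunov exponent of $\Phi(B)$ with respect to $\mathrm{vol}$ as $\log|\lambda_B| = \log|\lambda_A|$. The Ledrappier--Young entropy formula then forces $\mathrm{vol}$ to be the measure of maximal entropy for $\Phi(B)$, hence $h_*\mathrm{vol}=\mathrm{vol}$, and de~la~Llave's smooth-conjugacy theorem (Theorem~1.3 of \cite{dL}, or Theorem~F of \cite{SY}) delivers $C^{r-\epsilon}$ regularity of $h$ and $h^{-1}$ in \emph{both} directions at once---no Journ\'e step and no derivative lost. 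You mention the SRB${}={}$MME identification at the very end as a side computation to ``rule out degenerate cases,'' but in the paper this is the \emph{central} mechanism that produces the regularity, not a technical afterthought.
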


The proofs of Theorem \ref{ThmHigher2} and \ref{ThmHigher3} rely crucially on the two dimensionality. However, the proof of Theorem \ref{ThmHigher} admits a partial higher dimensional generalization as follows. 

Let $B \in \mathrm{SL}_n(\Z)$ $n\geq 2$ be an Anosov linear map with simple real spectrum. We extend the group $\Gamma_B$ to a group  $\Gamma_{B,n}= \Z \ltimes_B (\Z^n)^n$, where each $\Z^n$ factor form a copy of $\Gamma_B= \Z \ltimes_B \Z^n$ with $\Z$ and the $\Z^n$ factors commute. An affine action $\Phi $ of $\Gamma_{B,n}$ on torus $\T^n$ is given by $\Phi(B)=A\in \mathrm{SL}_n(\Z)$ and the $i$-th $\Z^n$ factor acts by translation with rotation matrix $\rho_i\in \R^{n\times n}$ such that $A\rho_i=\rho_i B$ mod $\Z^{n\times n}$ holds for all $i=1,\ldots,n$.

\begin{theorem}\label{ThmHigher4}
Let $\Phi: \Gamma_{B,n} \to  \Diff^r(\T^n),\ r> n,$ be an action satisfying:
\begin{enumerate}
\item $B \in \SL_n(\Z)$ is an Anosov linear map with simple real spectrum $\lambda_1,\ldots,\lambda_n$, and $A\in \SL_n(\Z)$ has the same spectrum;
\item $\Phi(B)$ is volume-preserving Anosov $C^1$ close to $A$;
\item There is a homeomorphism $h$ conjugating the action to an affine action satisfying $\rho_i=c_i u_{A,i}\otimes u_{B^t,i},$ where $c_i\neq 0$ and  $u_{A,i}$ (resp. $u_{B^t,i}$) is the normalized eigenvector of $A$ (resp. $B^t$) corresponding to eigenvalue $\lambda_i,  i=1,\ldots,n.$  Moreover, the Anosov part of the affine action is $A$. 
\end{enumerate}
Then $h$ and $h^{-1}$   are $C^{1,\nu}$ for some $\nu>0$.
\end{theorem}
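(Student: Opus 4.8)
The plan is to mimic, in the higher-dimensional setting, the proof of Theorem \ref{ThmHigher}: extract leafwise smoothness of the conjugacy from the Herman--Yoccoz theory applied to the commuting translations, then globalize via Journé's lemma, using the volume-preservation through the Ledrappier--Young formula to control which foliations the conjugacy respects.

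\emph{Set-up and the smooth structures carried by $f$.} Write $f=\Phi(B)$; hypothesis (3) gives $hfh^{-1}=A$, and, denoting by $e^{(k)}_j$ ($j=1,\dots,n$) the standard generators of the $k$-th copy of $\Z^n$ in $\Gamma_{B,n}$, the map $h\,\Phi(e^{(k)}_j)\,h^{-1}$ is the translation of $\T^n$ by $v^{(k)}_j:=c_k(u_{B^t,k})_j\,u_{A,k}$. Since $A$ is diagonalizable with simple real spectrum, $\R^n=\bigoplus_k\R u_{A,k}$ is a dominated splitting, and $C^1$-closeness makes $f$ carry an $f$-invariant dominated splitting $T\T^n=\bigoplus_{k=1}^n E_k$ into line fields; the fast and slow partial flags $E_1\oplus\cdots\oplus E_k$ and $E_k\oplus\cdots\oplus E_n$ integrate to $f$-invariant foliations $\mathcal W^{\le k},\mathcal W^{\ge k}$, which the bunching furnished by $C^1$-closeness and $r>n$ makes $C^{1,\nu}$ with $C^r$ leaves; set $\mathcal W_k:=\mathcal W^{\le k}\cap\mathcal W^{\ge k}$, the $C^{1,\nu}$ foliation tangent to $E_k$ with $C^r$ leaves. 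Two elementary consequences of $\Phi(e^{(k)}_j)f=f\Phi(e^{(k)}_j)$: first, $D\Phi(e^{(k)}_j)$ intertwines the dominated splitting of $Df$ with itself (this needs only the uniform bounds on $D\Phi(e^{(k)}_j)^{\pm1}$), so $\Phi(e^{(k)}_j)$ preserves each foliation $\mathcal W_i$; second, since $f^{-m}\Phi(e^{(k)}_j)f^{m}$ corresponds to translation by $\lambda_k^{-m}v^{(k)}_j$, which tends to $\mathrm{Id}$ uniformly as $m\to\infty$ when $|\lambda_k|>1$ (and $f^{m}\Phi(e^{(k)}_j)f^{-m}\to\mathrm{Id}$ when $|\lambda_k|<1$), the map $\Phi(e^{(k)}_j)$ moves every point inside its own unstable (resp. stable) leaf of $f$.

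\emph{Pinning down the foliations --- the crux.} What is really needed is that $\Phi(e^{(k)}_j)$ maps each leaf of $\mathcal W_k$ to \emph{itself}, equivalently that the a priori merely continuous foliation $h^{-1}(\R u_{A,k})$ coincides with the smooth $\mathcal W_k$; this fails for a general bi-Hölder conjugacy (which distorts contraction rates), and it is where I would bring in volume-preservation through the Ledrappier--Young entropy formula, generalizing the ``SRB equals measure of maximal entropy'' mechanism behind Theorems \ref{ThmHigher} and \ref{ThmHigher2}. The push-forward $\mu_*:=h^{-1}_*\mathrm{Leb}$ is $f$-invariant and $\Phi(\Gamma_{B,n})$-invariant, and, $\mathrm{Leb}$ being the measure of maximal entropy of $A$, satisfies $h_{\mu_*}(f)=h_{\mathrm{top}}(A)=h_{\mathrm{top}}(f)$, hence is the unique measure of maximal entropy of $f$; moreover its conditionals along $h^{-1}(\R u_{A,k})$ are the $h^{-1}$-images of line Lebesgue, invariant under the dense group of translations carried by the $k$-th $\Z^n$. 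Feeding this into the Ledrappier--Young formula, together with the fact that $\mathrm{Leb}$ is itself $f$-invariant (as $f$ is volume-preserving), one should conclude that $\mu_*$ has absolutely continuous conditionals along the unstable foliation (and, via $f^{-1}$, the stable one), i.e. $\mu_*$ is the SRB measure, so $\mu_*=\mathrm{Leb}$, and, tracking partial dimensions direction by direction, that $h$ carries each $\mathcal W_k$ onto $\R u_{A,k}$. Granting this, $\Phi(e^{(k)}_j)$ restricts to a $C^r$ diffeomorphism of each leaf of $\mathcal W_k$.

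\emph{Leafwise Herman--Yoccoz, then Journé.} Fix $k$. As $\lambda_k\notin\{\pm1\}$, the eigenvector $u_{B^t,k}$ is not proportional to an integer vector, so I may choose $i_0,i_1$ with $c_k(u_{B^t,k})_{i_0}\ne0$ and $\beta_k:=(u_{B^t,k})_{i_1}/(u_{B^t,k})_{i_0}\notin\Q$; since $\beta_k\in\Q(\lambda_k)$ it is algebraic of degree at most $n$, hence Diophantine. On a leaf $W$ of $\mathcal W_k$ the fixed-point-free $C^r$ map $\Phi(e^{(k)}_{i_0})|_W$ has $C^r$ quotient circle $\overline W:=W/\langle\Phi(e^{(k)}_{i_0})|_W\rangle$; the commuting map $\Phi(e^{(k)}_{i_1})|_W$ descends to a $C^r$ circle diffeomorphism of rotation number $\beta_k$, and $h|_W$ descends to a topological conjugacy of it with the rotation $R_{\beta_k}$. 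The Herman--Yoccoz theory then gives a $C^{1,\nu_k}$ conjugacy --- for $n=2$, $\beta_k$ is a quadratic irrational of bounded type and essentially nothing is lost, while for $n\ge3$ only Roth's Diophantine condition is available and $r>n$ is what makes the finite-smoothness statement apply --- and lifting through the $C^r$ covering $W\to\overline W$ yields $h|_W\in C^{1,\nu_k}$, uniformly in $W$ by compactness of $\T^n$. A $C^1$ conjugacy between a circle diffeomorphism and an irrational rotation has nowhere-vanishing derivative (otherwise it vanishes along a dense orbit, hence identically), so $h$ is a $C^{1,\nu}$-diffeomorphism along every leaf of every $\mathcal W_k$. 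Iterating Journé's lemma --- on the transverse pair $\mathcal W_1$, $\mathcal W^{\ge2}$, recursing inside the leaves of $\mathcal W^{\ge2}$ through $\mathcal W_2$, $\mathcal W^{\ge3}$, down to $\mathcal W_n$ --- gives $h\in C^{1,\nu'}(\T^n)$; since $Dh$ carries each $E_k$ isomorphically onto $\R u_{A,k}$ it is block-diagonal with invertible blocks, hence everywhere invertible, so $h^{-1}\in C^{1,\nu'}$ as well.

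\emph{Where the difficulty lies.} The hard step is the third paragraph: identifying the continuous foliations $h^{-1}(\R u_{A,k})$ with the smooth $\mathcal W_k$, without which the Herman--Yoccoz machine cannot even be started; this is the higher-dimensional analogue of promoting an SRB measure to the measure of maximal entropy and is exactly why volume-preservation and Ledrappier--Young enter. It is also what caps the conclusion at $C^{1,\nu}$: once $n\ge3$ the intermediate eigendirections carry only Diophantine (not bounded-type) data, so Herman--Yoccoz returns only $C^{1,\nu}$, and the iterated Journé steps across the non-extremal foliations --- absent when $n=2$ --- compound the loss, while the bunching needed to make the $\mathcal W_k$ themselves $C^{1,\nu}$ forces the hypotheses ``$C^1$-close to $A$'' and ``$r>n$''.
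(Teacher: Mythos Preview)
The genuine gap is in your third paragraph, and you correctly flag it as the crux but the mechanism you propose does not deliver. Knowing that $h^{-1}_*\mathrm{Leb}$ is the measure of maximal entropy, and that $\mathrm{Leb}$ is $f$-invariant, does not by itself show $\mu_*=\mathrm{Leb}$: you claim $\mu_*$ has absolutely continuous conditionals along the unstable foliation of $f$, but all you know is its conditionals along the \emph{topological} foliation $h^{-1}(\R u_{A,k})$, which is not yet identified with $\mathcal W_k$. And even if one grants $\mu_*=\mathrm{Leb}$, ``tracking partial dimensions direction by direction'' in Ledrappier--Young only tells you about the dimensions of $\mathrm{Leb}$ along the $f$-invariant filtration $\mathcal W^{\le k}$; it does not force a H\"older conjugacy to carry the intermediate $\mathcal W_k$ onto $\R u_{A,k}$. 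Intermediate Lyapunov foliations are defined by growth rates, and a merely H\"older $h$ can scramble those within the full (un)stable leaf. This is precisely why the paper does \emph{not} try to match all foliations at once.

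The paper's route is an inductive bootstrap. For the \emph{weakest} stable and unstable directions, preservation by $h$ is not an entropy fact at all: it is a topological result of Gogolev (\cite{Go1}, Lemmas 6.1--6.3) valid for any conjugacy between such Anosov maps. Once $h$ is known to respect $\mathcal W_k$ for the extremal $k$, one runs the flow-embedding argument (Proposition~\ref{PropFlow}, equation~\eqref{eigenvector}) along those one-dimensional leaves, using the Katznelson Diophantine estimate on $u_{B^t,k}$ to feed Herman--Yoccoz; this pins the Lyapunov exponent of $f$ (with respect to volume) in that direction to $\log|\lambda_k|$, and then Theorem~F of \cite{SY} yields $C^{1+}$ regularity of $h$ along those leaves. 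Only now, with this regularity in hand, does Proposition~2.4 of \cite{GKS} apply to show $h$ preserves the \emph{next} weakest foliations, and one repeats. So the volume-preservation enters not through a single SRB$=$MME stroke but at each stage, via the Lyapunov-exponent computation from the eigenvector equation; and the matching of foliations is obtained one layer at a time by alternating \cite{SY} and \cite{GKS}. Your leafwise Herman--Yoccoz and Journ\'e endgame are in the right spirit once the foliation-preservation is in place, but the induction is the missing structural idea.
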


We have the following remarks improving the result slightly.
\begin{remark}
\begin{enumerate}
\item The $c_i$ corresponding to one of the largest in norm or smallest in norm eigenvalue is allowed to be zero by the volume preserving condition.
\item In the case of $n=3$, when $\Phi(B)$ is sufficiently smooth ($C^r$ for $r$ large enough), the conjugacy $h$ can be $C^{r-\eps}$ for all $\eps>0$ applying \cite{Go2}.
\end{enumerate}
\end{remark}

In recent years, there has been an emerging interest in understanding actions of non-abelian groups on manifolds by homeomorphisms or diffeomorphisms. Among the simplest interesting examples are solvable groups. For example, for the Bamslag-Solitar group $$\mathrm{BS}(p,q)=\langle a,b\ |\ ab^pa^{-1}=b^q\rangle,$$ there are results by  Farb and Franks \cite{FF}, by Burslem and Wilkinson \cite{BW},  who classified analytic $\mathrm{BS}(1,n)$ action on $\mathbb S^1$, which was later improved in \cite{GL1,BMNR}. And there are results about  Baumslag solitar group actions on surfaces by \cite{ARX,AGRX} and \cite{GL2,GL3}. 

Actions of ABC groups have been studied by many authors.  In \cite{A1,A2}, Asaoka studied the local rigidity of some non polycyclic $(|\det B|>1)$ ABC group actions on $\T^n$ and $\mathbb S^n$ generalizing \cite{BW}.  In \cite{WX}, Wilkinson and the second author studied the local rigidity for polycyclic $(|\det B|=1)$ ABC group actions on $\T^n$ using a KAM method and also obtained a number of semi-global rigidity results by combining the Herman-Yoccoz theory for circle maps and the geometry of the invariant foliations. In \cite{MC}, McCarthy proved certain $C^1$ local rigidity of the trivial action of ABC groups on compact manifolds, which is generalized to actions by semi products in \cite{BKKT} by Bonatti-Kim-Koberda-Triestino. In \cite{BMNR}, Bonatti-Monteverde-Navas-Rivas obtained a number of rigidity results for the $C^1$ action of ABC groups on interval with some hyperbolicity. We also note that the local rigidity result of \cite{W} was proved  in the Lie group setting which can be considered as a continuous analogue of our discrete ABC group action. In \cite{L}, it is proved a local rigidity result for a solvable group ($\Z\ltimes_\lambda\R$) acting on $\T^n$. 


\begin{subsection}{Free subgroups of diffeomorphism groups}
One novel discovery in the proof of Theorem \ref{ABC3} is a version of Tits alternative for subgroups of the diffeomorphism group $\Diff^1(\T^2)$ containing an Anosov element. 

The Tits alternative is a well-known theorem stating that a linear group (i.e. a subgroup of the group of invertible matrices $\mathrm{GL}_n(\R)$) either contains a free subgroup on two generators  or it is virtually solvable. This result has far-reaching applications in various fields in mathematics such as group theory, geometry and dynamics, etc.  Non-linear transformation groups such as the groups of homeomorphisms or diffeomorphisms of a closed connected manifold $M$ (denoted by $\text{Homeo}(M)$ and $\Diff(M)$ respectively) are conjectured to behave in many ways similar to linear groups (See \cite{Fis}) and therefore one can ask whether the Tits Alternative still holds for such groups.

Unfortunately, the Tits alternative does not hold for these groups, since $\text{Homeo}(\s^1)$ and $\Diff(\s^1)$ contain subgroups isomorphic to Thompson's group $F$, which is known to not contain free subgroups and neither be solvable (c.f. \cite{N,GS}). Nonetheless, up to what extent such an alternative fails for $\text{Homeo}(M)$ and $\Diff(M)$ is not clear.  As an example, a well known theorem of Margulis (Conjectured by Ghys) for the group of homeomorphisms of the circle $\s^1$ states the following:

\begin{theorem}\label{margulistits}(Margulis)
A group $\Gamma \subseteq \mathrm{Homeo}(\s^1)$ either contains a free subgroup on two generators or preserves a probability measure on $\s^1$.
\end{theorem}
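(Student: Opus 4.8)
I would reduce, by two soft steps, to the case of a minimal action and there run a ping--pong argument. First I would replace $\Gamma$ by its orientation-preserving subgroup $\Gamma^{+}=\Gamma\cap\mathrm{Homeo}^{+}(\s^{1})$, which has index at most $2$: a free subgroup of $\Gamma^{+}$ is a free subgroup of $\Gamma$, and a $\Gamma^{+}$-invariant probability measure averaged over the (at most two) cosets of $\Gamma^{+}$ in $\Gamma$ becomes $\Gamma$-invariant, so we may assume $\Gamma\subseteq\mathrm{Homeo}^{+}(\s^{1})$. Next, $\Gamma$ has a minimal set on the compact space $\s^{1}$; if some orbit is finite then its normalized counting measure is $\Gamma$-invariant and we are done, so assume $\Gamma$ has no finite orbit, hence a unique minimal set $\Lambda$, which is either $\s^{1}$ or a Cantor set. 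In the Cantor case, collapsing each connected component of $\s^{1}\setminus\Lambda$ to a point gives a monotone degree-one semiconjugacy $\pi\colon\s^{1}\to\s^{1}$ onto a minimal action of (a quotient of) $\Gamma$; a quotient-invariant measure is automatically non-atomic (atoms would form an infinite orbit of equal positive mass) and pulls back through $\pi|_{\Lambda}$ to a $\Gamma$-invariant measure on $\Lambda$, while a rank-two free subgroup of the quotient lifts, by choosing arbitrary lifts of two free generators, to a subgroup of $\Gamma$ surjecting onto $F_{2}$ and therefore containing $F_{2}$. Thus we may assume the $\Gamma$-action on $\s^{1}$ is minimal.

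Next I would invoke the equicontinuity dichotomy. If the minimal action is equicontinuous, then by the Arzel\`a--Ascoli theorem the closure of $\Gamma$ in $\mathrm{Homeo}^{+}(\s^{1})$ is a compact group, and averaging a Riemannian metric over its Haar measure produces a $\Gamma$-invariant metric whose normalized length measure is $\Gamma$-invariant; done. Otherwise the action is not equicontinuous, and this failure yields an interval $J$ of definite length together with elements $\delta_{n}\in\Gamma$ satisfying $\operatorname{diam}(\delta_{n}J)\to0$. Feeding this into minimality, to transport the shrinking images and their complements around the circle, one constructs \emph{contracting elements} of $\Gamma$: elements with approximate north--south dynamics that send the complement of a short ``repelling'' arc into a short ``attracting'' arc.

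Finally I would use minimality once more to conjugate a contracting element $g$ by a suitable $\eta\in\Gamma$ so that $h=\eta g\eta^{-1}$ has attracting and repelling arcs that are cyclically interleaved with, and disjoint from, those of $g$. After slightly shrinking the four arcs, $g^{\pm1}$ and $h^{\pm1}$ each carry the complement of their repelling arc into their attracting arc, so the four arcs constitute valid ping--pong domains and Tits's ping--pong lemma yields $\langle g,h\rangle\cong F_{2}$.

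The two reductions are routine and the ping--pong lemma is a black box, so the crux is the middle paragraph: converting the mere failure of equicontinuity into genuinely contracting (north--south-like) elements, together with the geometric bookkeeping in the last step needed to place two of them in a valid ping--pong configuration. One must also be slightly careful here, since a minimal non-equicontinuous action can be a finite cyclic cover of a ``proximal'' action; but non-equicontinuity is inherited by such covers, so the construction of contracting elements is unaffected, and alternatively one may pass to the proximal quotient and lift the free subgroup exactly as in the first paragraph.
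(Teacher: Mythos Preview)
Your sketch is a correct outline of the standard proof of Margulis's theorem, essentially the argument in Margulis's original paper and in Ghys's survey. However, there is nothing to compare against: the paper does not prove this theorem. Theorem~\ref{margulistits} is quoted as background motivation (``a well known theorem of Margulis'') for the paper's own Tits-alternative-type results (Theorems~\ref{main2}, \ref{main3}, \ref{MainSemi}), and no proof is given or attempted. The paper's own ping-pong argument in Section~\ref{SPingPong} is for a different statement, in the presence of an Anosov element on a higher-dimensional manifold, and uses the inclination lemma rather than the one-dimensional contraction/equicontinuity dichotomy you describe.
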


In the case when the dimension of $M$ is greater than one, very few similar results are known due to the complexity of elements in $\mathrm{Homeo}(M)$, but there have been some recent results in this direction. For example, Franks-Handel \cite{FH} have shown that groups of real-analytic area-preserving diffeomorphisms of $\s^2$ which contain certain entropy zero diffeomorphisms satisfy the Tits alternative.

Our discovery is a Tits alternative for $\Diff^1(M)$ in the presence of an Anosov element.  To fix the notations,  for an Anosov diffeomorphism $f$ on $M$, we denote  by $W^s_f$ and $W^u_f$ its stable and unstable foliations  respectively. 

\begin{theorem}\label{main2}
Let  $\Gamma$  be a subgroup of $\Diff^1(\T^2)$ containing an Anosov element $f \in \Diff^1(\T^2)$, then there is a dichotomy:

\begin{enumerate}
\item either there is an index 2 subgroup of $\Gamma$ which preserves either $W_f^s$ or $W_f^u$;
\item or $\Gamma$ contains a free subgroup $\mathbb F_2$.
\end{enumerate}

\end{theorem}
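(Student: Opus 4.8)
The plan is to combine a study of the $\Gamma$-action on first homology with a ping-pong argument fed by the Anosov structure of $f$. Let $\rho\colon\Gamma\to\GL_2(\Z)=\Aut\big(H_1(\T^2;\Z)\big)$ be the homology representation. Since $f$ is Anosov, $A:=\rho(f)$ is hyperbolic, of infinite order, with irrational eigendirections $[u_A^s],[u_A^u]\in\RP^1$ that are Galois conjugate over $\Q$; in particular any $M\in\GL_2(\Z)$ fixing one of these two lines fixes the other and therefore commutes with $A$.

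\emph{Step 1 (homology reduction).} As $\GL_2(\Z)$ is virtually free, every subgroup is either virtually cyclic or contains $\F_2$. If $\rho(\Gamma)$ is not virtually cyclic, choose $a,b\in\rho(\Gamma)$ generating a free group and arbitrary preimages $\tilde a,\tilde b\in\Gamma$; by freeness there is a homomorphism $\F_2\to\Gamma$ with $a\mapsto\tilde a$, $b\mapsto\tilde b$, and it is a section of $\rho$ over $\langle a,b\rangle$, hence injective, so $\Gamma\supseteq\F_2$ and we are in alternative~(2). Otherwise $\rho(\Gamma)$ is virtually cyclic; being infinite (it contains $A$) it is virtually $\Z$, so it possesses a finite-index normal subgroup $Z_0\cong\Z$, and $A^k\in Z_0$ for some $k\ge1$. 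For each $\gamma\in\rho(\Gamma)$ conjugation by $\gamma$ is an automorphism of $Z_0\cong\Z$, whence $\gamma A^k\gamma\inv=A^{\pm k}$; since both sides have the same fixed-point set $\{[u_A^s],[u_A^u]\}$ in $\RP^1$, the group $\rho(\Gamma)$ preserves this pair. Passing to the subgroup $\Gamma_1\le\Gamma$ of index at most $2$ whose homology image fixes $[u_A^s]$, we may assume that $\rho(g)$ commutes with $A$ for every $g\in\Gamma_1$; then for $g\in\Gamma_1$ the conjugate $\phi:=gfg\inv$ is Anosov, homotopic to $A$, with $W^s_\phi=g(W^s_f)$ and $W^u_\phi=g(W^u_f)$, and the leaves of $g(W^s_f)$ (resp. $g(W^u_f)$), lifted to $\R^2$, stay a bounded distance from the lines of direction $u_A^s$ (resp. $u_A^u$).

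\emph{Step 2 (reduction to one conjugate, in the projectivized tangent bundle).} Work in $M:=\mathbb P(T\T^2)$, an $\RP^1$-bundle over $\T^2$ on which $\Gamma\le\Diff^1(\T^2)$ acts through its derivative. For an Anosov $\psi$ let $\Sigma^s_\psi,\Sigma^u_\psi\subset M$ be the graphs of the line fields $E^s_\psi,E^u_\psi$: these are disjoint embedded $2$-tori, and because the stable and unstable line fields of $f$ are uniquely integrable one has $g(W^s_f)=W^s_f\iff g(\Sigma^s_f)=\Sigma^s_f$, and likewise for $u$. Moreover $\psi$ acts on $M$ with fibrewise North–South dynamics, uniformly attracting $\Sigma^u_\psi$ and repelling $\Sigma^s_\psi$: for $N$ large, $\psi^N$ carries the complement of any prescribed neighbourhood of $\Sigma^s_\psi$ into an arbitrarily small neighbourhood of $\Sigma^u_\psi$, and $\psi^{-N}$ does the mirror image. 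Now assume $\Gamma_1$ preserves neither $W^s_f$ nor $W^u_f$; since a group is never the union of two proper subgroups, there exists $g\in\Gamma_1$ preserving neither. Put $\phi:=gfg\inv$. By Step~1 the four foliations $W^s_f,W^u_f,W^s_\phi=g(W^s_f),W^u_\phi=g(W^u_f)$ are pairwise distinct, the asymptotic directions $u_A^s\ne u_A^u$ ruling out a stable foliation being equal to an unstable one.

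\emph{Step 3 (ping-pong — the main obstacle).} It remains to prove that $f$ and $\phi$ generate a subgroup of $\Gamma$ containing $\F_2$. The natural route is a ping-pong lemma in $M$: if the compact sets $\Sigma^s_f\cup\Sigma^u_f$ and $\Sigma^s_\phi\cup\Sigma^u_\phi$ are disjoint, choose disjoint neighbourhoods $U\supset\Sigma^s_f\cup\Sigma^u_f$ and $V\supset\Sigma^s_\phi\cup\Sigma^u_\phi$; the fibrewise North–South behaviour then gives, for $N$ large, $f^{nN}(V)\subseteq U$ and $\phi^{nN}(U)\subseteq V$ for all $n\ne0$, so by the ping-pong lemma $\langle f^N,\phi^N\rangle\le\Gamma$ is free of rank $2$ and $\Gamma\supseteq\F_2$. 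The step I expect to be hardest is exactly securing that disjointness: the conclusion of Step~2 only gives that the line fields $E^s_f,E^u_f,E^s_\phi,E^u_\phi$ are pairwise distinct as line fields, i.e. differ at \emph{some} point, whereas disjointness of the tori requires them to differ at \emph{every} point, and conjugating $\phi$ by powers of $f$ is useless here since $\Sigma^s_f,\Sigma^u_f$ are $f$-invariant. I would try to settle this using the freedom left in the choice of $g\in\Gamma_1$ together with the minimality of $W^s_f,W^u_f$ (all leaves dense, $f$ being topologically conjugate to $A$ by Franks' theorem) to force a genuinely transverse configuration; failing that, one can instead run the ping-pong on $\T^2$ itself between a Markov rectangle $R$ of $f$ and its image $g(R)$ — a Markov rectangle for $\phi$ whose stable and unstable sides lie along $g(W^s_f)$ and $g(W^u_f)$, which are transverse to those of $f$ — letting high powers of $f$ and $\phi$ play ping-pong on the associated families of stable and unstable strips. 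In either implementation, checking the ping-pong inclusions against the uniform hyperbolic contraction and expansion estimates is the technical core of the argument, while everything preceding Step~3 is soft.
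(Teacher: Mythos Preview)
Your Steps~1 and~2 are sound (Step~1 is in fact unnecessary---the paper never touches homology---but it does no harm). The genuine gap is exactly where you flag it, in Step~3: the projectivized-bundle ping-pong needs the four line fields $E^s_f,E^u_f,E^s_\phi,E^u_\phi$ to differ at \emph{every} point, and you have only arranged that they differ at \emph{some} point. Neither of your proposed fixes closes this. ``Using the freedom in the choice of $g$'' does not help, because the hypothesis only hands you \emph{some} $g$ failing to preserve each foliation, with no control on where the conjugate foliations may still be tangent to the originals; and your Markov-rectangle fallback simply asserts that $g(W^s_f),g(W^u_f)$ are ``transverse to those of $f$'', which is precisely the unproven point.

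The paper's resolution is to abandon global transversality altogether and work pointwise. The dichotomy is set up directly: if for \emph{every} $h\in\Gamma$ and \emph{every} $x\in\T^2$ the line $D_xh(E^s_f(x))$ fails to be transverse to one of $E^s_f(h(x)),E^u_f(h(x))$, then in dimension two it must \emph{equal} one of them, and connectedness of $\T^2$ forces $h(W^s_f)\in\{W^s_f,W^u_f\}$ for every $h$, giving the index-$2$ subgroup. Otherwise some $h_1,x_1$ give $D_{x_1}h_1(E^s_f)$ transverse to both invariant directions, and symmetrically some $h_2,x_2$ do the same for $E^u_f$. A short but nontrivial bootstrapping argument (the paper's Proposition~\ref{hardtrans}) then produces a single $h\in\Gamma$ and a single point $x$ at which \emph{both} $D_xh(E^s_f)$ and $D_xh(E^u_f)$ are transverse to both $E^s_f(h(x))$ and $E^u_f(h(x))$: if, say, $D_{x_1}h_1(E^u_f)=E^s_f(h_1(x_1))$, one replaces $h_1$ by $h_1f^{-N}h_1$ for suitable $N$, using transitivity of $f$ and the fact that $f^{-N}$ pushes everything toward $E^s_f$. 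This single point of full transversality is all that is needed for the ping-pong, which the paper runs \emph{locally} on $\T^2$ via the inclination lemma: approximate $x$ and $h(x)$ by periodic points $p_1,p_2$ of $f$, so that the local stable/unstable disks of $f^N$ at $p_1$ and of $hf^Nh^{-1}$ at $p_2$ are mutually transverse; then for any nontrivial word $w$ in $f^N,hf^Nh^{-1}$ the inclination lemma forces $w(C)\neq C$ for a fixed test set $C$ of small disks. The missing idea in your proposal is this pointwise-to-local pipeline, together with the bootstrapping Proposition~\ref{hardtrans}.
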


The above result relies crucially on the two dimensionality. We next state two results as partial generalizations of Theorem \ref{main2} to higher dimensions. We denote further by $E^s_f$ and $E^u_f$ the stable and unstable distributions of an Anosov diffeomorphism $f$ respectively. The next result is the main technical ingredient in the proof of Theorem \ref{main2}.

\begin{theorem}\label{main3} Let $\Gamma$ be a subgroup of  $\diff^1(M)$ satisfying the following:
\begin{enumerate}
\item There exists  $f \in \Gamma$ which is a transitive Anosov diffeomorphism such that $\dim W_f^s = \dim W_f^u$.
\item There exists $h \in \Gamma$ and $x \in M$ such that each of the subspaces $D_xh(E^s_{f}(x))$ and $D_xh(E^u_{f}(x))$ intersect transversely both $E^s_{f}(h(x))$ and $E^u_{f}(h(x))$.
\end{enumerate}
Then, there exists an integer $N$ such that $\{f^N, hf^{N}h^{-1}\}$ generate a free group $\mathbb{F}_2$.
\end{theorem}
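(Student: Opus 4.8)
The plan is to build a ping-pong pair from $f^N$ and $g := h f^N h^{-1}$ by working in a single tangent space. Fix the point $x$ and the point $y = h(x)$ given in hypothesis (2), and consider the finite-dimensional projective space $\mathbb{P}(T_yM)$. The Anosov splitting $T_yM = E^s_f(y)\oplus E^u_f(y)$ with $\dim E^s_f = \dim E^u_f =: k$ gives two complementary subspaces, and the hyperbolicity of $f$ makes its differential act on $\mathbb{P}(T_yM)$ (or on the Grassmannian $\mathrm{Gr}_k(T_yM)$, which is the setting we actually want, since $D h(E^{s/u}_f(x))$ are $k$-planes) with a clean north–south type dynamics: iterating $Df^n$ contracts everything transverse to $E^u_f(y)$ toward $E^u_f(y)$ and everything transverse to $E^s_f(y)$ toward $E^s_f(y)$. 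The first step is to make this precise: choose a norm on $T_yM$ adapted to $f$ so that $\|Df^n|_{E^s_f}\| \le C\mu^{n}$ and $\|Df^{-n}|_{E^u_f}\|\le C\mu^{n}$ for some $\mu<1$, and deduce that on the Grassmannian $\mathrm{Gr}_k$, the map induced by $Df^n$ has $E^u_f(y)$ as an attracting fixed point with basin the open dense set $U^s$ of $k$-planes transverse to $E^s_f(y)$, and $E^s_f(y)$ as an attracting fixed point for $Df^{-n}$ with basin the open dense set $U^u$ of $k$-planes transverse to $E^u_f(y)$; moreover the convergence is uniform on compact subsets of the respective basins.

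The second step is to transport this picture by $g$. Since $g = h f^N h^{-1}$, the differential $D_y g^{\pm 1}$ conjugates the $Df^{\pm 1}$-dynamics on $\mathrm{Gr}_k(T_yM)$ to the analogous dynamics on $\mathrm{Gr}_k(T_xM)$, so $g$ acts near $Dh(E^u_f(x))$ and $Dh(E^s_f(x))$ with the same north–south behavior, with attracting sets $P^u := Dh(E^u_f(x))$ (for $g^{+1}$) and $P^s := Dh(E^s_f(x))$ (for $g^{-1}$) and the corresponding open dense basins. Now hypothesis (2) is exactly the statement that each of $P^s, P^u$ is transverse to both $E^s_f(y)$ and $E^u_f(y)$; in Grassmannian language this says $P^s, P^u \in U^s \cap U^u$, i.e. the attracting fixed points of $g^{\pm1}$ lie in the interior of the basins of $f^{\pm 1}$, and symmetrically (applying $Dh^{-1}$, and using that $h$ is a diffeomorphism so transversality is preserved) $E^s_f(y), E^u_f(y)$ lie in the interior of the basins of $g^{\pm1}$. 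So we have four points $E^s_f(y), E^u_f(y), P^s, P^u$, pairwise "in general position" with respect to the two north–south systems, which is the standard input for ping-pong.

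The third step is the ping-pong/Tits argument. Choose small closed neighborhoods $V_f^\pm$ of $E^{u/s}_f(y)$ and $V_g^\pm$ of $P^{u/s}$ in $\mathrm{Gr}_k(T_yM)$ that are pairwise disjoint and such that each $V$ is contained in the basin of the other element; by the uniform convergence from Step 1 there is $N$ so large that $Df^{\pm N}$ maps the complement of a small neighborhood of the repelling point into $V_f^\pm$, and likewise $Dg^{\pm N'}$ — but since $g = hf^Nh^{-1}$ the relevant iterate of $g$ is automatically the $N$-th power once $N$ is chosen large, so a single $N$ works for both after shrinking neighborhoods. Then $f^{N}$ and $g$ satisfy the table-tennis conditions: $f^{\pm N}(\Sigma) \subset V_f^{\pm}$ and $g^{\pm 1}(\Sigma)\subset V_g^{\pm}$ for $\Sigma$ a suitable "playing field" disjoint from the four attracting neighborhoods, and the Ping-Pong Lemma yields that $\langle f^N, g\rangle$ is free of rank $2$. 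The only subtlety is that the action on the Grassmannian is only the linear model at one point; but since $f$ and $g$ fix $y$ only through $Df, Dg$ acting on $T_yM$ — wait, $f$ need not fix $y$. This is the point requiring care: one must either (a) run the ping-pong not on a single tangent space but on the tautological Grassmannian bundle over $M$, using that the distributions $E^{s/u}_f$ are $Df$-invariant \emph{as sections}, so $Df$ acts on the total space of $\mathrm{Gr}_k(TM)$ with $E^u_f$ (the section) as an attractor and uniform contraction by transitivity/compactness of $M$; or (b) observe that only the germ at the single orbit matters and pull everything back. I expect \textbf{this globalization — promoting the pointwise north–south dynamics on $\mathrm{Gr}_k(T_yM)$ to genuine ping-pong domains in the Grassmann bundle, with uniform estimates coming from compactness of $M$ and uniform hyperbolicity (and transitivity, to know the unstable section is the unique attractor) — to be the main obstacle}; the transversality hypothesis (2) is precisely what guarantees the four limit sections land in each other's basins, and the rest is a routine application of the Ping-Pong Lemma with $N$ chosen large enough to beat the finitely many constants.
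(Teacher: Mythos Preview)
Your instinct that the crux is ``promoting the pointwise north--south dynamics to genuine ping-pong domains'' is correct, but the resolution you sketch does not work. On the Grassmann bundle $\mathrm{Gr}_k(TM)$, the attractor for $Df$ is the full section $p\mapsto E^u_f(p)$ and the repeller is the section $p\mapsto E^s_f(p)$; likewise for $g=hf^Nh^{-1}$ the attractor/repeller are the sections $p\mapsto E^{u/s}_g(p)=D_{h^{-1}p}h\big(E^{u/s}_f(h^{-1}p)\big)$. For any version of ping-pong on this bundle you need, at minimum, the four sections to be pairwise disjoint (so that neighborhoods can be separated) and each of $E^{u/s}_g$ to lie in the basin of the $f$-dynamics, i.e.\ $E^{u/s}_g(p)$ transverse to \emph{both} $E^s_f(p)$ and $E^u_f(p)$ for \emph{every} $p\in M$. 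Hypothesis~(2) asserts this only at the single point $y=h(x)$; nothing prevents $E^u_g$ from touching or even coinciding with $E^u_f$ over large parts of $M$, and then no disjoint ping-pong neighborhoods exist. The sentence ``the transversality hypothesis~(2) is precisely what guarantees the four limit sections land in each other's basins'' is exactly where the argument breaks: it conflates a pointwise condition with a global one.

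Nor can you retreat to a single fiber: once $x$ is taken periodic for $f$, the map $g$ fixes $y$, but $f$ has no reason to fix $y$, so there is no common tangent space on which both differentials act. The paper sidesteps both difficulties by abandoning the tangent-bundle picture and running ping-pong on actual embedded $k$-disks in $M$. Density of periodic points produces nearby hyperbolic fixed points $p_1$ for $f^N$ and $p_2=h(x)$ for $g$; the transversality at $y$ (an open condition) then forces the \emph{local} stable and unstable disks $W^{s/u}_{f^N}(p_1,\delta)$ and $W^{s/u}_g(p_2,\delta)$ to be mutually transverse. The Inclination Lemma replaces your linear north--south dynamics: any disk $C^1$-close to one of these local manifolds, pushed by a high iterate of the other map, contains a sub-disk $C^1$-close to the target local manifold. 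This is purely local near $p_1,p_2$ and needs no global separation of sections. The ping-pong is then played with a fixed test set $C\subset M$ (four small disks), showing by induction on word length that $w(C)$ always contains a disk $C^1$-close to one of the four local invariant manifolds, hence $w(C)\neq C$.
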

Relaxing assumption (2) in Theorem \ref{main3} we have the following result.
\begin{theorem}\label{MainSemi}
Let $\Gamma$ be a subgroup in $\Diff^1(M)$ satisfying
\begin{enumerate}
\item there is a transitive Anosov element $f\in \Gamma$ with stable and unstable foliations $W_f^s$ and $W_f^u$ respectively and $\mathrm{dim}W_f^s\leq \mathrm{dim}W_f^u$;
\item there exist an element $h\in \Gamma$ and a point $x\in M$, such that $D_xh E_f^u(x)$ intersects transversally $E_f^u(h(x))$.
\end{enumerate}
Then there exists an integer $N$ such that $\{f^N, hf^N h^{-1}\}$ generate a nontrivial free semi-group.
\end{theorem}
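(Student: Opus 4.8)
The plan is to mimic the classical ping-pong argument used to produce free subgroups, but adapted to the ``cone field'' structure coming from the Anosov diffeomorphism $f$, and to extract only a free \emph{semi}-group because we will only control the forward dynamics of $f^N$ and of $g:=hf^Nh^{-1}$ on a suitable family of cones. First I would fix, near the point $x$ from hypothesis (2), small open cones: let $C^u$ be a narrow cone around $E^u_f$ (an invariant unstable cone field for $f$, which exists and can be made as thin as we like by taking iterates), and observe that by continuity of $Dh$ and the transversality hypothesis $D_xh\,E^u_f(x)\pitchfork E^u_f(h(x))$, the image $Dh\,E^u_f$ over a small neighborhood $U$ of $x$ lies in a cone $C'$ that is \emph{uniformly transverse} to $E^u_f$ at $h(x)$, hence (shrinking $U$ and thinning cones) disjoint, in projectivized tangent space, from a neighborhood cone $C^u_{h(x)}$ around $E^u_f(h(x))$. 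The key geometric input is that $f^N$, for $N$ large, is strongly expanding along $C^u$ and maps vectors in $C^u$ deeper into $C^u$; dually, $g=hf^Nh^{-1}$ maps vectors in the $h$-image cone — which contains $C'\ni Dh\,E^u_f$ — into something expanded along $D h\,E^u_f$ and then back, so that after conjugation $g$ expands a cone $C^g$ that sits inside $C'$-region and is transverse to $C^u$.

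The core of the argument is then a ping-pong on an appropriate space. I would not play ping-pong on $M$ itself (we lack two-sided control), but rather on the \emph{unit tangent bundle} $T^1M$, or more precisely on the projectivized tangent bundle $PM$, where $f$ acts by its derivative cocycle. Here one uses: (i) transitivity of $f$ to find an orbit segment $f^k(x)$, $0\le k\le K$, that wanders into the region where both cone families are controlled, so that after replacing $h$ by $f^k h f^{-k}$ (which does not change the group generated, up to the same kind of element) the relevant transversality happens along a genuine piece of an $f$-orbit; and (ii) the cone criterion: if $Df^N$ maps $\overline{C^u}$ into the interior of $C^u$ and strictly expands the associated projective metric, and similarly $Dg$ maps $\overline{C^g}$ into $\mathrm{int}(C^g)$ and expands, and moreover $\overline{C^u}\cap\overline{C^g}=\emptyset$ fiberwise over the relevant base points, then every nonempty positive word $w(f^N,g)$ satisfies $Dw\,(C^u)\subset C^u$ or $Dw\,(C^g)\subset C^g$ strictly, forcing $Dw\ne \mathrm{Id}$, hence $w\ne e$ in $\mathrm{Diff}^1(M)$. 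That is exactly the statement that $\{f^N,g\}$ generates a free semigroup.

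To make this precise I would proceed in the following order. Step 1: set up an $f$-invariant unstable cone field $C^u$ on $M$ and record the standard fact that for any $\theta$ there is $N_0$ with $Df^N(\overline{C^u(y)})\subset C^u_\theta(f^Ny)$ and projective expansion by a definite factor for all $N\ge N_0$, uniformly in $y$. Step 2: use hypothesis (2) and continuity to pin down a neighborhood $U$ of $x$ and a cone $C'$ with $Dh(C^u(y))\subset C'(h(y))$ for $y\in U$ and $C'(h(x))$ uniformly transverse to $E^u_f(h(x))$ (this is where $\dim W^s\le\dim W^u$ is used: we only need to protect the unstable direction, not a full splitting, so a single transversality suffices, unlike Theorem \ref{main3}). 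Step 3: use transitivity of $f$ to connect the base points — find $k,\ell$ with $f^\ell(h(x))$ and $f^{-k}(x)$ both deep inside $U$ and inside the good cone region, and absorb these iterates into a redefinition of $h$, so that the conjugated cone $C^g:=Dg(\cdot)$ applied to the unstable cone lands, after the $f^N$ expansion in the middle, in a cone disjoint from $C^u$ over the base points that actually occur. Step 4: verify the two-letter ping-pong inequalities and invoke the cone/ping-pong lemma for semigroups to conclude.

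The main obstacle I expect is Step 3: unlike the group case, a positive word in $f^N$ and $g$ visits base points $y, f^{\pm N}y, g^{\pm 1}y,\dots$, and one must guarantee that the cone inclusions $Df^N(\overline{C^u})\subset C^u$ and $Dg(\overline{C^g})\subset C^g$ hold \emph{at every base point that can be reached}, not just near $x$. The fix is to make $C^u$ globally defined and $f$-invariant (routine for Anosov $f$), and to choose $N$ large enough that a single application of $Df^N$ from anywhere lands in the thin global cone, while $Dg$ needs only be controlled on the bounded set of base points in the $h$- and $f^{\text{bounded}}$-orbit of the good neighborhood — which is compact, so uniform estimates survive. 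Managing this ``global $C^u$ vs.\ locally-defined $C^g$'' asymmetry, and checking that the finitely many conjugating iterates can indeed be chosen by transitivity so that $\overline{C^u}$ and $\overline{C^g}$ stay fiberwise disjoint along all reachable base points, is the technical heart; everything else is the standard ping-pong bookkeeping.
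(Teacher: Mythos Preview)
Your cone-field ping-pong on the projectivized tangent bundle $PM$ has a genuine gap that the final paragraph does not close. The base points visited by positive words in $f^N$ and $g=hf^Nh^{-1}$ are not confined to any neighborhood: already the forward orbit $\{f^{kN}(y)\}_{k\ge 0}$ of a typical point is dense in $M$, and once one intersperses applications of $g$ the visited set is utterly uncontrolled. For your ping-pong inclusions $Df^N(\overline{C^u})\subset C^u$, $Dg(\overline{C^g})\subset C^g$, $\overline{C^u}\cap\overline{C^g}=\emptyset$ to force $Dw\ne\mathrm{Id}$, they must hold fiberwise at \emph{every} such base point, so in effect you are demanding that $E^u_f$ and $E^u_g=Dh(E^u_f)$ be uniformly transverse on all of $M$; hypothesis (2) gives this only at the single point $h(x)$. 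Your proposed fix, ``choose $N$ large enough that a single application of $Df^N$ from anywhere lands in the thin global cone,'' is false as stated: $Df^N$ contracts the unstable cone toward $E^u_f$, but any nonzero vector in $E^s_f$ stays in $E^s_f$ for all $N$. At a base point where $E^u_g$ happens to lie close to $E^s_f$ (nothing in the hypotheses prevents this), a direction in $C^g$ is never pulled into $C^u$ by any power of $Df$, and the ping-pong collapses.

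The paper sidesteps this by abandoning tangent cones and playing ping-pong with \emph{embedded} $k$-disks ($k=\dim W^u_f$) near hyperbolic \emph{periodic} points, via the Inclination Lemma. First, by density of periodic points and openness of the transversality condition in (2), one finds $N_1$ and fixed points $p_1$ of $f_1:=f^{N_1}$ and $p_2$ of $f_2:=hf^{N_1}h^{-1}$ with $W^u_{f_1}(p_1)\pitchfork W^u_{f_2}(p_2)$. One then takes a $k$-disk $D_i$ near $p_i$ that is transverse to $W^s_{f_i}(p_i)$ but \emph{disjoint from} $W^u_{f_i}(p_i)$. By the $\lambda$-lemma and the mutual transversality just arranged, for $N_2$ large any nontrivial positive word in $\{f_1^{N_2},f_2^{N_2}\}$ sends $D_1$ to a set containing a $k$-disk $C^1$-close to one of $W^u_{f_1}(p_1)$ or $W^u_{f_2}(p_2)$, hence never back to $D_1$; this yields the free semigroup. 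The argument is entirely local near $p_1$ and $p_2$, which is precisely why the single-point transversality in (2) suffices.
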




\end{subsection}

The paper is organized as follows:
In Section \ref{SPingPong}, we first give an outline and then proceed to prove Theorem \ref{main3} and Theorem \ref{MainSemi}. The main observation in the proof is a ping-pong argument.
 In Section \ref{S2D}, we prove Theorem \ref{main2}.
 The next four sections are devoted to the proof of Theorem \ref{ABC3}. The proof is divided into three cases depending on the comparison of $|\lambda_A|$ and $|\lambda_B|$ respectively. In Section \ref{SABC1} we prove Theorem \ref{ABC3} in the case of $|\lambda_A|> |\lambda_B|$. In Section \ref{mistakecorrection}, we prove Theorem \ref{ABC3} in the case of $|\lambda_A|= |\lambda_B|$. In Section \ref{SABC2} we prove Theorem \ref{ABC3} in the case of $|\lambda_A|< |\lambda_B|$.
 In Section \ref{SHigher}, we prove Theorem \ref{ThmHigher} and \ref{ThmHigher4}.
  In Section \ref{SHigher2}, we prove Theorem \ref{ThmHigher2}.
 In Section \ref{SHigher3}, we prove Theorem \ref{ThmHigher3}.
\end{section}

\begin{section}{Ping-Pong: Proofs of Theorem \ref{main3} and Theorem \ref{MainSemi}}\label{SPingPong}
In this section, we prove Theorem \ref{main3} and Theorem \ref{MainSemi}. We first give an outline of the proof in Section \ref{SSOutline}. Afterwards, we give the formal proof. We will start by recalling some notations and results from hyperbolic dynamics in Section \ref{SSHyp}. In Section \ref{SSPP}, we give the ping-pong argument. In the remaining two subsections, we prove Theorem \ref{main3} and Theorem \ref{MainSemi}.
\subsection{Outline of the proof of Theorem \ref{main3}}\label{SSOutline}
We now proceed to give an outline of the proof of Theorem \ref{main3}.

If we consider the two Anosov diffeomorphism $f_1 := f$ and $f_2:= hfh^{-1}$, both the stable foliations and unstable foliations of $f_1$ and $f_2$ are transverse in some open neighborhood of by assumption 2) in Theorem \ref{main3}. The existence of a free group follows from a ping-pong argument using the following elementary observation: Suppose $B$ is a small open ball in $M$, if $N>0$ is large enough, the ball $f_1^N(B)$ looks similar to a small neighborhood of a long piece of a leaf of the unstable foliation $W_{f_1}^u$, as the foliation $W_{f_1}^u$ is transverse to both foliations $W_{f_2}^s$ and $W_{f_2}^u$, by the inclination lemma \ref{inclination} the set $f_2^Nf_1^N(B)$ (or $f_2^{-N}f_1^N(B)$) must contain an open set that looks like an open neighborhood of a leaf of $W_{f_2}^u$ (or $W_{f_2}^s$) near every point in $M$ and so we conclude that the words $f_2^Nf_1^N$ and $f_2^{-N}f^N_1$ are non-trivial elements in $\Gamma$, see Figure \ref{pain2}. In general, we will show that if $N$ is large enough, for any non-trivial word $w$ in the elements $f_1^N$ and $f_2^N$ the set $w(B)$ will contain a subset that looks like  the stable or unstable foliations of $f_1$ or $f_2$ and this will guarantee that $w(B) \neq B$ and so the group generated by $\{f_1^N, f_2^N\}$ must be necessarily free.\\

\subsection{Preparations from hyperbolic dynamics}\label{SSHyp}
We recall some definitions and well known facts about hyperbolic dynamics and prepare notations for later use. Let $p$ be a hyperbolic fixed point of a diffeomorphism $f$ of $M^m$.
\begin{definition}
\begin{enumerate}
\item By Theorem 6.2.3 in \cite{KatHas}, there exists a neighborhood $O$ of $p$ and a coordinate chart $\phi: O \to \R^m$ such that $\phi(W^u_f(p) \cap O) \subset \R^k \oplus \{0\}$ and $\phi(W^s_f(p) \cap O) \subset \{0\} \oplus  \R^{m-k}$ where $m = \dim(M)$ and $k = \dim(W_f^u(p))$,  this construction is known as \emph{ ``adapted coordinates"} for the dynamics of $f$ near $p$.
\item In those coordinates there are obvious \emph{projections} from $\R^m$ into $\R^k$ or $\R^{m-k}$  that we denote by $\pi_1, \pi_2$ respectively.
\item In the decomposition $\R^k\oplus \R^{m-k}$, we say that $\R^k$ is \emph{ horizontal} and $\R^{m-k}$ is \emph{ vertical.}
\end{enumerate}
 \end{definition}
\begin{definition}
\begin{enumerate}
\item Let $\mathbb{D}_k \subset \R^k$ be the $k$-dimensional open unit disk,  a \emph{ $C^1$ embedded $k$-disk $D$} in  $M$ is defined as a $C^1$ embedding $f: \mathbb{D}_k \to M$.
\item Consider the  $k$-Grassmanian $\mathrm{Gr}_k(M)$ of $M$, which is a fiber bundle over $M$ whose fiber over point $p$ of $M$ consist of all the $k-$dimensional subspaces of the tangent space $T_p(M)$, we choose an arbitrary metric on $\mathrm{Gr}_k(M)$ that we denote by $d$. A $C^1$ map  $f: \mathbb{D}_k \to M$ induces a natural map $G_k(f): \mathbb{D}_k \to \mathrm{Gr}_k(M)$ via $G_k(f)(p)=(f(p), D_pf(T_p\mathbb{D}_k) )$.
\item We say that two disks $f_1: \mathbb{D}_k \to M$ and $f_2: \mathbb{D}_k \to M$ are \emph{ $C^1$  $\e$-close} if for every $p \in \mathbb{D}_k$ we have  $d(G_k(f_1(p)), G_k(f_2(p))) < \e$.
\end{enumerate}
\end{definition}

Our main technical tool is the following theorem known as the Inclination Lemma or $\lambda$-Lemma.

\begin{lemma}[Inclination Lemma, Lemma 6.2.23 \cite{KatHas}]\label{inclination}
Considering $C^1$ adapted coordinates on a neighborhood $O$ of a hyperbolic fixed point $p$ of a diffeomorphism $f: M \to M$. Given $\e, K, \nu > 0$, there exists $N$ such that if $D$ is a $C^1$ embedded $k$-disk containing $q \in W_f^{s} \cap O$ with all tangent spaces in horizontal $K$-cones and such that $\pi_2(D)$ contains a $\nu$-ball around $0 \in \R^{k}$, then there exists $D_1 \subset D$ embedded disk such that for any $n \geq N$, the set $\pi_1(f^n(D_1))$ contains a unit ball in $\R^k$ and $T_zf^n(D_1)$ is contained in a horizontal $\e$-cone for every $z \in f^n(D_1)$.
\end{lemma}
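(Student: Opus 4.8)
The plan is to reproduce the classical proof of the $\lambda$-lemma, built around an invariant horizontal cone field near $p$. Working in the adapted coordinates $\phi$ supplied above (on a neighbourhood $O$ of $p$, which we are free to take small), we normalise so that $p=0$, $f(0)=0$, $W^u_f\cap O\subset\R^k\times\{0\}$, $W^s_f\cap O\subset\{0\}\times\R^{m-k}$, and $D_0f$ preserves this splitting with the horizontal block expanding and the vertical block contracting. Continuity of $z\mapsto D_zf$ then yields constants $K_0>0$, $\mu\in(0,1)$, $\sigma>1$ so that for every $z$ with $z,f(z)\in O$ and every width $K\le K_0$: (i) $D_zf\bigl(C^h_K(z)\bigr)\subset C^h_{\mu K}(f(z))$, where $C^h_K(z)=\{(u,v):\|v\|\le K\|u\|\}$ is the horizontal $K$-cone (cone narrowing); (ii) $\|\pi_1 D_zf(u,v)\|\ge\sigma\|u\|$ for $(u,v)\in C^h_K(z)$ (horizontal expansion); (iii) $f$ carries $W^s_f\cap O$ into itself contracting it, so $f^n(q)\to 0$ for every $q\in W^s_f\cap O$. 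We may assume the given $K$ is $\le K_0$ after an elementary reduction (for $O$ small the linear part $D_0f$ contracts horizontal cones of every width, so passing once to a sub-disk through $q$ reduces to this case); the cone estimates themselves are routine once the coordinates diagonalise $D_0f$.

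Next I would propagate $D$ and simultaneously keep it inside $O$. Since all tangent spaces of $D$ lie in horizontal cones and $\pi_1(D)$ contains a $\nu$-ball around $\pi_1(q)=0$, near $q$ the disk is the graph of a $C^1$ map $g:B(0,\nu)\to\R^{m-k}$ with $\mathrm{Lip}(g)\le K_0$. So long as the forward iterates remain in $O$, (i) makes $f^n$ of this graph again a horizontal graph with Lipschitz constant $\le\mu^n K_0$, and (ii) makes the base expand by a factor $\ge\sigma$ at each step. To prevent the disk from outgrowing $O$, fix $\rho_0$ with $B(0,\rho_0)$ lying well inside $\phi(O)$ and (after the harmless rescaling implicit in the statement) containing the target unit ball, and set
\[
D_1=\{\,z\in D:\ f^j(z)\in O\ \text{and}\ \pi_1(f^j(z))\in B(0,\rho_0)\ \text{for }0\le j\le N\,\}.
\]
By (ii), $\pi_1\circ f^j$ restricted to the graph $D$ is an expanding embedding onto its image in $\R^k$, so $D_1$ is again an embedded $k$-disk; by (iii), $q\in D_1$ once $O$ is small enough that $f^j(q)$ stays in $O$; by (ii) again, after $N\ge\log_\sigma(\rho_0/\nu)$ steps the capped base has filled up, so $\pi_1(f^N(D_1))\supseteq B(0,\rho_0)$; and by (i), every tangent space of $f^N(D_1)$ lies in a horizontal $\mu^N K_0$-cone, which sits inside the horizontal $\epsilon$-cone once $N$ is large. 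Since a capped horizontal graph over $B(0,\rho_0)$ is sent by $f$ to a horizontal graph over a strictly larger ball, which we truncate again, the same conclusion holds for every $n\ge N$ (shrinking $D_1$ further for larger $n$ if one insists on the literal statement, or reading $f^n(D_1)$ inside the chart $O$).

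For the use made of the lemma later it is convenient to record that $f^n(D_1)$ is $C^1$-close to the local unstable manifold $W^u_f(p)\cap O$, not merely thin-coned: indeed (iii) forces the vertical coordinate of $f^n(q)$, hence of the whole thin graph $f^n(D_1)$, to be of size comparable to $\mu^n$, while $W^u_f(p)\cap O$ is exactly the graph of $0$. Equivalently the argument repackages as a graph-transform contraction on the space of horizontal-cone graphs over $B(0,\rho_0)$, whose unique fixed point is the local unstable manifold and to which iterates converge geometrically in $C^1$.

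The only point that is more than bookkeeping is the tension between \emph{staying inside the chart} and \emph{filling up to a unit ball}: horizontal expansion (ii) wants to blow $D$ past the size of $O$, so one truncates repeatedly, and the content of the lemma is that this is harmless, because the relevant part of $D$ — a neighbourhood of $q\in W^s_f$ — has orbit converging to $p$ and is expanded, not contracted, precisely in the directions one is trying to fill. Making the truncation, the constant $\rho_0$, and the dependence of $N$ on $(\epsilon,K,\nu)$ mutually consistent, and verifying that the truncated set remains an embedded disk through $q$, is where the care goes.
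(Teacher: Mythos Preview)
The paper does not prove this lemma; it is quoted from Katok--Hasselblatt (Lemma~6.2.23), with a remark that the formulation there differs slightly and a pointer to Brin--Stuck. Your outline is precisely the standard cone-field / graph-transform argument found in those references: build an invariant contracting horizontal cone family near $p$, write $D$ near $q$ as a horizontal Lipschitz graph, iterate while truncating the base to remain in the chart, and read off both base-filling and cone-narrowing from the hyperbolicity constants. So there is no independent proof in the paper to compare against, and your approach is the classical one.

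Two minor remarks. First, the statement as printed has a typo: the hypothesis should be that $\pi_1(D)$ (not $\pi_2(D)$) contains a $\nu$-ball around $0\in\R^k$, since $D$ is a $k$-disk in horizontal cones through $q\in W^s_f$; you silently use the corrected version, which is right. Second, your $D_1$ is defined in terms of $N$, so the clause ``for any $n\ge N$'' literally requires the fix you note parenthetically (either re-truncate for each $n$, or observe that once the capped graph over $B(0,\rho_0)$ is reached, one further iterate followed by truncation reproduces a capped graph, so a single $D_1$ works for all $n\ge N$). Neither point is a gap in the argument.
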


\begin{remark} See (\cite{KatHas}, Sec. 6.2) for the definition of cones. In \cite{KatHas} the theorem is stated slightly different but the equivalence is easy. See also Lemma 5.7.1. in \cite{BriStu}.
\end{remark}

We have the following  corollary of the inclination lemma.
\begin{corollary}\label{cheating} Suppose that $D_1$ is a $C^1$ embedded $k$-disk intersecting $W_f^s(p)$ transversely and $D_2 \subset W_f^u(p)$ is an embedded $k$-disk, then there exists $\e_0>0$ (just depending on $D_1$) such for any $0<\e < \e_0$ there exists $N>0$ such that if $D$ is a $k$-disk which is $C^1$ $\e_0$-close to $D_1$, then $D$ intersects transversely $W_f^s(p)$, and for all $n \geq N$, $f^n(D)$ contains a $C^1$ embedded disk $D'$ which is $C^1$ $\e$-close to $D_2$.
\end{corollary}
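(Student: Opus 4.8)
The plan is to deduce Corollary~\ref{cheating} from the Inclination Lemma~\ref{inclination} by first normalizing via the adapted coordinates near $p$, and then controlling how the $\lambda$-Lemma conclusion is inherited by $C^1$-small perturbations of the reference disk $D_1$. First I would set up adapted coordinates $\phi: O \to \R^m$ on a neighborhood of $p$, so that $W_f^u(p)\cap O$ is horizontal ($\R^k\oplus\{0\}$) and $W_f^s(p)\cap O$ is vertical ($\{0\}\oplus\R^{m-k}$). Because $D_1$ intersects $W_f^s(p)$ transversely at some point $q$, after possibly shrinking $D_1$ to a sub-disk through $q$, the tangent spaces of $D_1$ in these coordinates all lie in a horizontal $K$-cone for some finite $K$, and $\pi_2(D_1)$ contains a $\nu$-ball around $0\in\R^k$ for some $\nu>0$; these $K,\nu$ are the quantities ``depending on $D_1$'' that we fix once and for all. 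Transversality is an open condition, so there is $\e_0>0$ such that every disk $D$ that is $C^1$ $\e_0$-close to $D_1$ still meets $W_f^s(p)$ transversely, still has all tangent spaces in a (slightly larger, still finite) horizontal $2K$-cone, and still has $\pi_2(D)$ containing a $(\nu/2)$-ball around the appropriate base point on $W_f^s$; shrinking $\e_0$ further keeps that base point inside the coordinate chart $O$.

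Next, given $0<\e<\e_0$, I would choose the output cone-width so that the conclusion of the Inclination Lemma places $f^n(D)$ inside a horizontal $\e'$-cone, where $\e'$ is much smaller than $\e$ — small enough that a $C^1$ graph over $\R^k$ with slope in an $\e'$-cone and passing near $W_f^u(p)$ is automatically $C^1$ $\e/2$-close to (a sub-disk of) the flat disk $W_f^u(p)\cap O$. Apply Lemma~\ref{inclination} with the cone data $(2K,\nu/2)$ and target accuracy $\e'$: this yields $N$ such that for every disk $D$ that is $C^1$ $\e_0$-close to $D_1$ and every $n\ge N$, the image $f^n(D)$ contains a sub-disk $D_1'$ whose projection $\pi_1(D_1')$ covers the unit ball in $\R^k$ and whose tangent spaces lie in a horizontal $\e'$-cone; moreover, since the disk is being attracted toward the local unstable manifold, $D_1'$ lies in a small $C^0$-neighborhood of $W_f^u(p)\cap O$ as well. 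Thus $D_1'$ is $C^1$ $\e/2$-close to a sub-disk $\widehat D_2\subset W_f^u(p)$ of full horizontal size.

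Finally, since $D_2\subset W_f^u(p)$ is itself an embedded $k$-disk inside the local unstable manifold, and $W_f^u(p)$ is a single $C^1$ (indeed $C^r$) embedded manifold, both $\widehat D_2$ and $D_2$ are pieces of the same flat horizontal sheet in adapted coordinates; I would pass to a further sub-disk $D'\subset D_1'$ lying ``over'' $D_2$ (which is possible because $\pi_1(D_1')$ covers a unit ball, and $D_2$, being compact inside $W_f^u(p)$, projects into such a ball after possibly rescaling the adapted chart), and then $D'$ is a $C^1$ embedded disk contained in $f^n(D)$ that is $C^1$ $\e$-close to $D_2$, after transporting back through $\phi^{-1}$ and absorbing the (fixed, bi-Lipschitz on $C^1$ jets) distortion of the coordinate change into the constants. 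The main obstacle I anticipate is bookkeeping the two distinct roles of $\e$: one needs the perturbation threshold $\e_0$ (depending only on $D_1$) to be chosen before $\e$, yet the Inclination-Lemma output accuracy must be tuned to the given $\e<\e_0$, and one must check that shrinking the output cone does not force $\e_0$ to shrink — this is fine because the hypotheses of Lemma~\ref{inclination} on the input disk ($K$-cone condition, $\nu$-ball condition, transversality) are exactly the open conditions that $\e_0$ guards, and they are independent of the target $\e$. A secondary technical point is verifying that ``tangent spaces in a narrow horizontal cone $+$ $C^0$-closeness to the flat sheet $W_f^u(p)$'' really does upgrade to genuine $C^1$-closeness of disks in the sense defined above; this is a routine estimate comparing $G_k$ of a graph with small slope to $G_k$ of the zero graph.
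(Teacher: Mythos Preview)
The paper does not give a proof of this corollary at all; it is simply stated as an immediate consequence of the Inclination Lemma~\ref{inclination}. Your proposal supplies exactly the routine derivation the authors had in mind: pass to adapted coordinates, extract from the transversality of $D_1$ uniform cone and size parameters $(K,\nu)$, choose $\e_0$ so that any $C^1$ $\e_0$-close disk inherits these parameters (transversality being open), and then feed these into Lemma~\ref{inclination} with output accuracy tuned to the target $\e$. Your observation that the input constants $(2K,\nu/2)$ are fixed once $\e_0$ is chosen, and hence the output $N$ from the $\lambda$-Lemma can be made to depend on $\e$ without circularly shrinking $\e_0$, is precisely the point.

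One small remark: your treatment of $D_2$ assumes it lies inside the adapted chart $O$, which need not be the case for an arbitrary embedded disk in the global unstable manifold $W_f^u(p)$. Your fix ``rescale the chart'' works, but a cleaner bookkeeping is to first use the $\lambda$-Lemma to produce a sub-disk $D_1'\subset f^n(D)$ that is $C^1$ $\e'$-close to a fixed local unstable disk $\widehat D_2\subset W_f^u(p)\cap O$, then note that $D_2\subset f^{N_0}(\widehat D_2)$ for some fixed $N_0$ (since $W_f^u(p)=\bigcup_{j\ge 0} f^j(W_f^u(p,\delta))$), and finally use the $C^1$ continuity of the fixed map $f^{N_0}$ to conclude that $f^{N_0}(D_1')\subset f^{n+N_0}(D)$ contains a disk $C^1$ $\e$-close to $D_2$, absorbing $N_0$ into $N$. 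Either way the argument is complete.
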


For a fixed Riemannian metric on $M$, $x \in M, \delta >0$, let $W^{s}_f(p,\delta)$ and $W^{u}_f(p,\delta)$ be the local stable and unstable manifolds at distance $\delta$, which are the subsets of $M$ consists of points whose forward orbit (or backward orbit) stay at distance less than $\delta$ to the orbit of $p$.  If $p$ is a hyperbolic periodic point and $\delta$ is small enough, then $W^{s}_f(p,\delta)$ and $W^u_f(p, \delta)$ are embedded manifolds.

\subsection{The Ping-Pong argument}\label{SSPP}
The main idea in the proof of Theorem \ref{main3} is enclosed in the following proposition:

\begin{proposition}[Ping Pong argument]\label{pingpong} Suppose $f_1,f_2 \in \Diff(M)$ have hyperbolic fixed points $p_1,p_2 \in M$ and $\delta>0$ is such that the local stable and unstable disks $W^{s}_{f_1}(p_1,\delta)$, $W^{u}_{f_1}(p_1,\delta)$ are smooth and both disks intersect transversely and non-trivially both $W^{s}_{f_2}(p_2,\delta)$ and $W^{u}_{f_2}(p_2,\delta)$, then there exists $N>0$ such that for all $n \geq N$ the group generated by $\{f_1^n, f_2^n\}$ is free.
\end{proposition}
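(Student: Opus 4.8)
The plan is to set up a ping-pong scheme on the $k$-Grassmannian using four families of ``admissible disks'' — those $C^1$-close to local stable/unstable disks of $f_1$ and of $f_2$ — and to show that each generator, raised to a high power $n$, maps three of these families into the fourth. Concretely, by hypothesis $W^u_{f_1}(p_1,\delta)$ intersects both $W^s_{f_2}(p_2,\delta)$ and $W^u_{f_2}(p_2,\delta)$ transversely, and similarly for $W^s_{f_1}(p_1,\delta)$; so Corollary \ref{cheating} (applied with $D_1$ a small sub-disk of $W^u_{f_1}(p_1,\delta)$ that crosses $W^s_{f_2}(p_2,\delta)$ transversely, $D_2$ a sub-disk of $W^u_{f_2}(p_2,\delta)$) gives: any disk $C^1$ $\epsilon$-close to that piece of $W^u_{f_1}(p_1,\delta)$ is pushed by $f_2^n$ ($n\ge N$) to contain a sub-disk $\epsilon$-close to a piece of $W^u_{f_2}(p_2,\delta)$; choosing the other sub-disk of $W^u_{f_1}(p_1,\delta)$ that crosses $W^u_{f_2}(p_2,\delta)$ we likewise get a sub-disk $\epsilon$-close to $W^s_{f_2}(p_2,\delta)$ under $f_2^{-n}$. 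Running the symmetric statements for $f_1^{\pm n}$ and picking $\epsilon$ smaller than all the finitely many $\epsilon_0$'s produced, and $N$ larger than all the finitely many $N$'s, we obtain a single $\epsilon$ and $N$ that work simultaneously in all four directions.

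With that in hand, I would fix a specific small ``seed'' disk $D_\ast$ that is $\epsilon$-close to a piece of, say, $W^u_{f_1}(p_1,\delta)$, and define the four sets of words/disks in the usual ping-pong bookkeeping: reading a reduced word $w$ in $f_1^{\pm n}, f_2^{\pm n}$ from the right, each time the leftmost-so-far letter is $f_i^{\pm n}$ we record that $w(D_\ast)$ contains a sub-disk $\epsilon$-close to either $W^u_{f_i}(p_i,\delta)$ or $W^s_{f_i}(p_i,\delta)$ (the sign of the exponent and the type — $s$ or $u$ — of the current disk determine which, and the key point is that in all cases the current disk is $\epsilon$-close to one of the four model disks, each of which is transverse to both local manifolds of the other fixed point as well as, after a short argument, to the ``opposite'' local manifold of the same fixed point). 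Induction on word length then shows that for every nontrivial reduced $w$, the set $w(D_\ast)$ contains a $C^1$ embedded disk that is $\epsilon$-close to one of the four model disks; in particular $w(D_\ast)$ contains a disk whose tangent planes all lie in a thin cone around an (un)stable direction of $f_1$ or $f_2$ at the relevant fixed point.

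To conclude freeness, I need to rule out $w(D_\ast) = D_\ast$ (equivalently $w = \mathrm{id}$) for nontrivial reduced $w$; this does not immediately follow because $D_\ast$ itself is $\epsilon$-close to a model disk. The fix is a standard ping-pong normalization: instead of one seed, use that the four model disks are pairwise ``separated in the Grassmannian near their base points'' — more precisely, shrink $\delta$ and $\epsilon$ so that a disk $\epsilon$-close to $W^{u}_{f_2}(p_2,\delta)$ (resp.\ $W^s_{f_2}$, $W^s_{f_1}$) cannot be $\epsilon$-close to $W^u_{f_1}(p_1,\delta)$, using that transverse subspaces stay a definite distance apart in $\mathrm{Gr}_k(M)$. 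Then, conjugating if necessary so that a fixed reduced word class always produces a disk near a model disk \emph{different} from $D_\ast$'s model, we get $w(D_\ast)\neq D_\ast$. The honest way to do this, which I would carry out, is the classical table-tennis lemma applied to the action on the space of $C^1$ $\epsilon$-disks: take the four ``ping-pong sets'' $X_i^u, X_i^s$ to be disks containing a sub-disk $\epsilon$-close to the corresponding model, check $D_\ast \notin \bigcup X$, and verify $f_i^{\pm n}(X \setminus X_i^{\mp}) \subset X_i^{\pm}$; since the local stable/unstable manifolds of $f_1$ and $f_2$ at $p_1, p_2$ are pairwise transverse, the four sets are disjoint after shrinking, and the table-tennis lemma yields a free group on $\{f_1^n, f_2^n\}$.

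I expect the main obstacle to be exactly this last point — making the ping-pong sets genuinely disjoint and verifying the inclusions uniformly, i.e.\ extracting from Corollary \ref{cheating} a \emph{single} pair $(\epsilon, N)$ that simultaneously handles all four ``in'' moves and, crucially, arranging that the transversality hypothesis (2) of Theorem \ref{main3}, which a priori only holds at one point $x$, propagates to genuine transverse intersections of the \emph{local} stable/unstable disks of $f$ and $hfh^{-1}=f_2$ used above; concretely one must translate ``$D_xh(E^{s/u}_f(x))$ meets $E^{s/u}_f(h(x))$ transversely'' into ``a small disk inside $W^{s/u}_{f_1}$ crosses $W^{s/u}_{f_2}$ transversely near $h(x)$'', which is where transitivity of $f$ and density of $W^u_f$ leaves enter, so that the relevant local leaves can be positioned to cross. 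The inclination-lemma input (Lemma \ref{inclination}, Corollary \ref{cheating}) is exactly tailored for this, so once the disjointness and uniform-$N$ bookkeeping is set up, the rest is a routine induction.
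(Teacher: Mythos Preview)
Your approach is essentially the paper's: use the inclination lemma (via Corollary~\ref{cheating}) to show that high powers of $f_1, f_2$ send disks near the local (un)stable manifolds of one fixed point to disks near those of the other, and then run a ping-pong on such disks. The induction on word length you sketch is exactly the paper's Claim~\ref{realwork}.

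Where your write-up wobbles is the endgame. You choose the seed $D_\ast$ to be $\epsilon$-close to $W^u_{f_1}(p_1,\delta)$, and then a few lines later you want to ``check $D_\ast \notin \bigcup X$'', where the $X$'s are precisely the sets of disks containing a sub-disk $\epsilon$-close to one of the four models. These two choices are incompatible: your $D_\ast$ \emph{is} in $X_1^u$. The conjugation workaround you mention can be made to work (this is the standard $|H_1|\geq 3$ clause in the ping-pong lemma, and here $\langle f_1^n\rangle$ is infinite), but as written the argument is circular.

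The paper resolves this with a single clean device that you should adopt: instead of a seed close to a model disk, take the seed set $C$ to be four small disks in adapted coordinates that are \emph{parallel to but offset from} the local stable/unstable disks (concretely, $\{2\}\times B_1^k(0)$ and $B_1^{n-k}(0)\times\{2\}$ in each chart). Each such disk is transverse to the relevant local manifold, so the inclination lemma still kicks in after one application of $f_i^{\pm n}$; but $C$ itself is not $\epsilon$-close to any of the four models. Hence for every nontrivial reduced $w$, $w(C)$ contains a disk $\epsilon$-close to some model while $C$ does not, giving $w(C)\neq C$ immediately with no conjugation bookkeeping.

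One more point: your final paragraph drifts into the proof of Theorem~\ref{main3} (propagating the transversality at a single point $x$ to transverse intersections of local disks). That reduction is indeed needed for Theorem~\ref{main3}, but it is \emph{not} part of Proposition~\ref{pingpong}: here the transverse intersections of $W^{s/u}_{f_1}(p_1,\delta)$ with $W^{s/u}_{f_2}(p_2,\delta)$ are hypotheses, so there is nothing to propagate.
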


\begin{proof}

Let $O_1$, $O_2$ be adapted coordinates for $f_1$ and $f_2$ with corresponding system of coordinates $\phi_i: O_i \to \R^n$, we can assume that $O_1$ and $O_2$ are disjoint sets  and for simplicity assume that $\phi_i(O_i)$ contains the ball $B^n_4(0) \subset \R^n$ of radius 4 for each $i=1,2$. Let $$C_i := \phi_i^{-1} \bigg( \{2\} \times B^k_{1}(0) \cup B^{n-k}_{1}(0)\times \{2\} \bigg)$$ and let $C = C_1 \cup C_2$. Therefore $C$ is the union of four ``small'' disks, each one parallel to one of the stable or unstable manifolds of $f_1$ at $p_1$ or $f_2$ at $p_2$. See figure \ref{pain2}.\\

\begin{figure}
\begin{minipage}[t]{0.5\linewidth}
\centering
\includegraphics[width=2.2in]{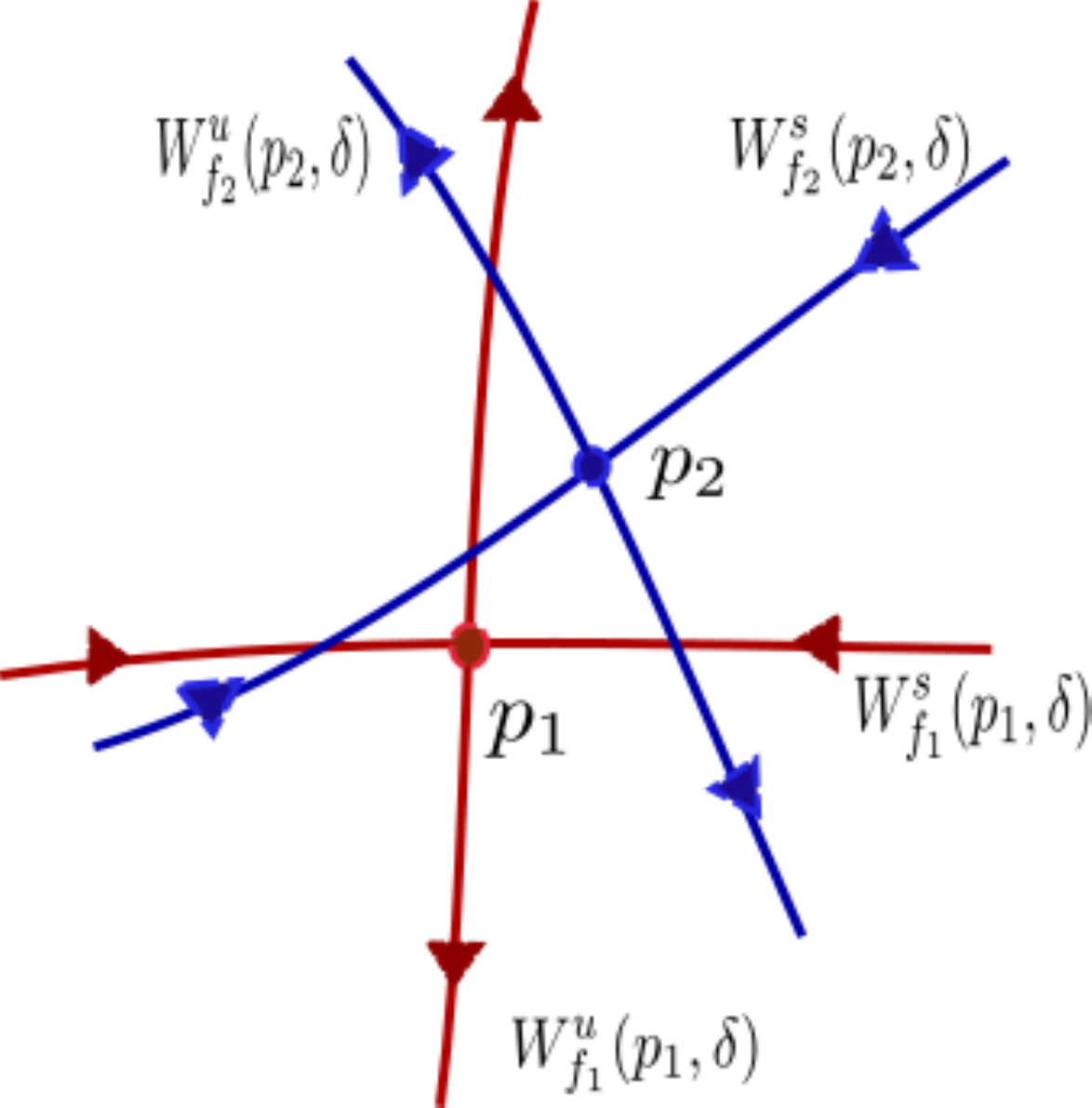}
\end{minipage}%
\begin{minipage}[t]{0.5\linewidth}
\centering
\includegraphics[width=2.5in]{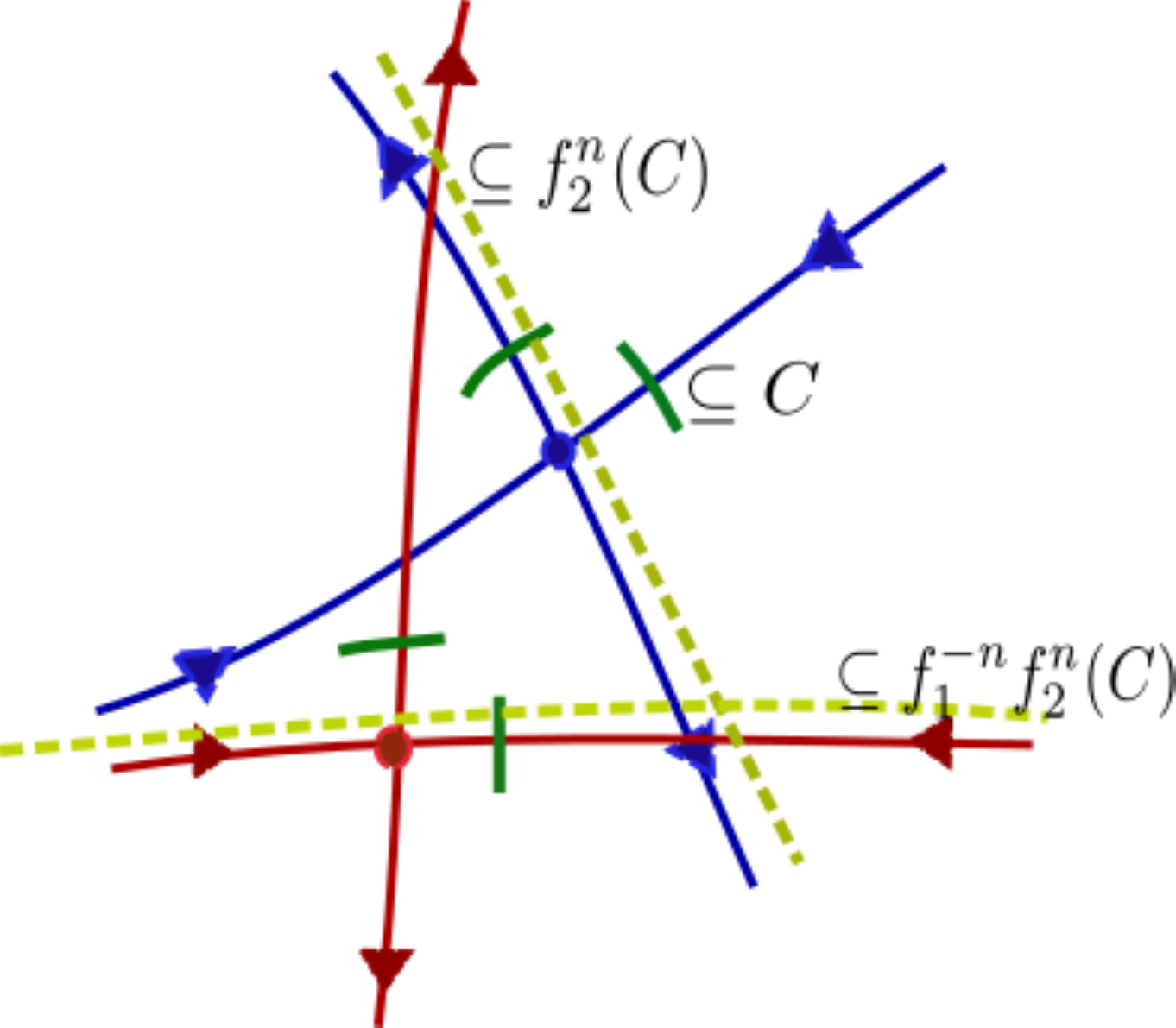}
\end{minipage}
\caption{In the right, the set $C$ consists of the four small segments colored in green, the yellow dashed lines correspond to disks contained in $f_2^n(C)$ and $f_1^{-n}f_2^n(C)$.}
\label{pain2}
\end{figure}


As an application of Corollary \ref{cheating}, there exists $\e_1>0$, such that for any $0 < \e < \e_1$, there is $N_1>0$ such that if a $C^1$ smooth disk $D$ is $\e_1$-close to $W_{f_1}^s(p_1, \delta)$ (or $W_{f_1}^u(p_1, \delta)$), then $D$ intersects transversely  $W_{f_2}^s(p_2, \delta)$ and $W_{f_2}^u(p_2, \delta)$ and if $n \geq N_1$, then $f_2^n(D)$ contains a disk which is $C^1$ $\e$-close to  $W_{f_2}^u(p_2, \delta)$ and $f_2^{-n}(D)$ contains a disk which is $C^1$ $\e$-close to $W_{f_2}^s(p_2, \delta)$.

The same argument implies there exists $\e_2>0$, such that for any $0 < \e < \e_2$ there is $N_2>0$ so that if a $C^1$ smooth disk $D$ is $\e_2$-close to  $W_{f_2}^s(p_2, \delta)$ (or $W_{f_2}^u(p_2, \delta)$), then $D$ intersects transversely both $W_{f_1}^s(p_1, \delta)$ and $W_{f_1}^u(p_1, \delta)$ and if $n \geq N_2$, then$f_1^n(D)$ contains a disk which is $C^1$ $\e$-close to  $W_{f_1}^u(p_1, \delta)$ and  also$f_1^{-n}(D)$contains a disk which is $C^1$ $\e$-close to $W_{f_1}^s(p_1, \delta)$.

Also, from the inclination Lemma, for any $\e>0$ there exists $N_3>0$ so that if $n\geq N_3$, then for every $i= 1,2$,  $f_i^n(C)$ contains an embedded disk $\e$-close to $W^u_{f_i}(p, \delta)$ and that  $f_{i}^{-n}(C)$ contains an embedded disk $\e$-close to $W^s_{f_i}(p, \delta)$.

Let $\e = \min \{\e_1,\e_2 \}$ and let $N = \max \{N_1,N_2, N_3\}$. By definition of $\e_1,\e_2, N_1, N_2, N_3$ one obtains by an easy induction in the word length the following:

\begin{claim}\label{realwork} If $n \geq N$, for any non trivial word $w$ in  $\{f_1^n, f_2^n, f_1^{-n}, f_2^{-n}\}$ we have:

\begin{enumerate}

\item If $w$ starts by $f_1^n$ ($f_2^n$ respectively),  the set $w(C)$ contains an embedded disk $D$ that is $C^1$ $\epsilon$-close to $W^{u}_{f_1}(p_1,\e)$  $(W^{u}_{f_2}(p_2,\e)$ respectively$)$.
\item if $w$ starts by $f_1^{-n}$ ($f_2^{-n}$ respectively),  the set $w(C)$ contains an embedded disk $D$ that is $C^1$ $\epsilon$-close to $W^{s}_{f_1}(p_1,\e)$  $(W^{s}_{f_2}(p_2,\e)$ respectively$)$.

\end{enumerate}

\end{claim}

As a consequence of Claim \ref{realwork}, for any non-trivial word $w(C)$ contains an embedded disk $C^1$ $\e$-close to one among  $W^u_{f_i}(p_i)$ or $W^s_{f_i}(p_i)$ for some $i$ and in particular choosing $\e$ small enough, we can guarantee that $w(C) \neq C$. In that case, the group generated by $\{f_1^n, f_2^n\}$ is free.

\end{proof}
\subsection{Proof of Theorem \ref{main3}}
We now finish the proof of Theorem \ref{main3}:

\begin{proof}[Proof of Theorem \ref{main3}.]As the condition of $D_xh(E^s_{f}(x))$ and $D_xh(E^u_{f}(x))$ intersecting transversely both $E^s_{f}(h(x))$ and $E^u_{f}(h(x))$ is an open condition on $x \in M$ and periodic points of $f$ are dense (consequence of the Anosov Closing lemma and $f$ being transitive) we can assume that $x$ is a periodic point of $f$.

By choosing $\delta$ small enough we can assume that $W^{s}_f(x,\delta)$ and $W^{u}_f(x,\delta)$ are embedded disjoint disks such that each of the disks $h(W^{s}_f(x,\delta))$ and $h(W^{u}_f(x,\delta))$ are both transverse to $W^s_f(h(x))$ and $W^u_f(h(x))$.

Let $p_2:= h(x)$, as the periodic points of $f$ are dense, we can assume that  there exists a  periodic point $p_1$ of $f$ near the point $p_2$ such that the local stable and unstable manifolds $W^{s}_{f}(p_1, \delta)$ and $W^{u}_{f}(p_1, \delta)$ are embedded manifolds that intersect transversely $h(W^{s}_{f}(x, \delta))$ and $h(W^{u}_{f}(x, \delta))$ (we are using that $W^s$ and $W^u$ vary continuously). By taking $p_1$ close enough to $h(x)$, we can assume that the intersection of any of the disks $h(W^{s}_{f}(x, \delta))$ and $h(W^{u}_{f}(x, \delta))$ with any of the disks $W^{s}_{f}(p_1, \delta)$ (or $W^{u}_{f}(p_1, \delta)$) is a transverse intersection at exactly one point. See figure \ref{pain2}.

Let $f_1:= f^{N}$ and $f_2:= hf^{N}h^{-1}$ for some $N>0$. Observe that both $f_1$ and $f_2$ are Anosov diffeomorphisms, by choosing $N$ multiple of both the periods of $p_1$ and $p_2$, we can assume that the points $p_1$, $p_2$  are hyperbolic fixed points of $f_1^{N}$ and $f_2^{N}$. We can therefore apply proposition \ref{pingpong} and we are done.

\end{proof}
\subsection{Proof of Theorem \ref{MainSemi}}
Similarly, we finish the proof of Theorem \ref{MainSemi}.
\begin{proof}[Proof of Theorem \ref{MainSemi}]
Similar to the proof of Theorem \ref{main3}, by the assumption, we can find $N_1$ and $p_1,p_2$ such that the following hold:

\begin{enumerate}
\item $f_1=f^{N_1},\ f_2=h f^{N_1} h^{-1}$ have $p_1$ and $p_2$ as hyperbolic fixed points respectively;
\item $W^u_{f_1}(p_1)$ intersects $W^u_{f_2}(p_2)$ transversally.
\end{enumerate}

We can now repeat the proof of Proposition \ref{pingpong} for words generated by $\{f^{N_2}_1,f^{N_2}_2\}$ for some $N_2$ large to show that $\{f^{N_2}_1,f^{N_2}_2\}$ generates a semigroup. Indeed, we pick a disk $D_i$ of dimension dim$W^u_f$ near $p_i$ transverse to $W^s_{f_i}(p_i)$ and disjoint from  $W^u_{f_i}(p_i)$, $i=1,2$.  After the application of any word starting with $f^{N_2}_1$, the image of $D_1$ would be  very close to either of $W^u_{f_i}(p_i)$, so is transverse to both $W^u_{f_{2-i}}(p_{2-i})$ and $W^u_{f_{2-i}}(p_{2-i})$, $i=1,2$. In particular, the image can never by $D_1$.
 Similarly for a word starting with $f^{N_2}_2$, we look at the image of $D_2$ instead. This completes the proof.
\end{proof}
\end{section}

\begin{section}{Proof of Theorem \ref{main2}}\label{S2D}

In this section we give a proof of Theorem \ref{main2}. Observe that by Theorem \ref{main3}  to obtain a free subgroup of $\Gamma$, we only need to show the existence of an element $h \in \Gamma$ and a point $x \in \T^2$ such that the subspaces $D_xh(E^s_f(x))$ and $D_xh(E^u_f(x))$ intersect transversely both $E^s_f(h(x))$ and $E^u_f(h(x))$. This will be shown in Proposition \ref{trans}.

\begin{proposition}\label{hardtrans} Suppose $f$ is an Anosov diffeomorphism in a subgroup $\Gamma \subset \Diff^1(\T^2)$ and suppose there exist $h_1, h_2 \in \Gamma$ and $x_1, x_2 \in \T^2$ such that:

\begin{enumerate}
\item $D_{x_1}h_1(E^s_{f}(x_1))$ intersect transversely both $E^s_{f}(h_1(x_1))$ and $E^u_{f}(h_1(x_1))$.
\item $D_{x_2}h_2(E^u_{f}(x_2))$ intersect transversely both $E^s_{f}(h_2(x_2))$ and $E^u_{f}(h_2(x_2))$.
\end{enumerate}

Then, there exists $h \in \Gamma$ and $x \in \T^2$ such $D_xh(E^s_{f}(x))$ and $D_xh(E^u_{f}(x))$ intersect transversely both $E^s_{f}(h(x))$ and $E^u_{f}(h(x))$.

\end{proposition}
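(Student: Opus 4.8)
The goal is to produce a single element $h$ and a single point $x$ at which the $h$-image of *both* invariant directions $E^s_f(x)$ and $E^u_f(x)$ is transverse to *both* $E^s_f(h(x))$ and $E^u_f(h(x))$, starting from two elements $h_1,h_2$ each of which only controls one of the two directions. The plan is to combine $h_1$ and $h_2$ with a suitable power of $f$ (which acts as a "rotation" on the Grassmannian of lines, sweeping tangent lines toward $E^u_f$ under forward iteration and toward $E^s_f$ under backward iteration), so that after going through $h_1$, then a power of $f$, then $h_2$, the bad direction gets bent into a good position while the already-good direction stays good. More concretely, first I would reduce to the case $x_1=x_2$ is a periodic point of $f$: the transversality conditions in (1) and (2) are open in the base point, and periodic points of $f$ are dense (Anosov closing lemma plus transitivity of $f$, which holds for Anosov diffeomorphisms of $\T^2$ since they are topologically conjugate to a hyperbolic toral automorphism). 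Actually the two conditions involve possibly different points $x_1,x_2$, so the reduction is slightly more delicate; the honest approach is to keep $x_1,x_2$ separate but exploit density of periodic orbits to move them into convenient position relative to each other, as is done in the proof of Theorem \ref{main3}.

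The core mechanism I would use is the following Grassmannian fact: for a line $\ell \subset T_yM$ with $\ell \neq E^s_f(y)$, the line $D_yf^n(\ell)$ converges to $E^u_f(f^n y)$ as $n \to +\infty$, and for $\ell \neq E^u_f(y)$, $D_yf^{-n}(\ell)$ converges to $E^s_f(f^{-n}y)$ as $n\to+\infty$; moreover $Df$ maps $E^s_f$ to $E^s_f$ and $E^u_f$ to $E^u_f$. So here is the skeleton. Take $x$ a fixed point of $f$ (pass to a power of $f$; the subgroup $\Gamma$ still contains this power and it is still Anosov). Consider $g_n := f^{n} h f^{m}$ for appropriate $h$ built from $h_1,h_2$ and integer exponents $n,m$ to be chosen. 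Concretely: set $h := h_2 \, f^{k} \, h_1$ for a large $k$. Then at a point $x$ suitably chosen, $D h_1$ sends $E^s_f(x)$ to a line transverse to both $E^s_f$ and $E^u_f$ at $h_1(x)$ (by hypothesis (1)); it sends $E^u_f(x)$ to *some* line at $h_1(x)$, possibly bad. Now apply $f^{k}$: the image of $E^u_f(x)$ under $D(f^k h_1)$ is $D f^k$ of a line at $h_1(x)$; if that line happens not to be $E^s_f(h_1(x))$ it gets pushed toward $E^u_f$, which is still a single bad direction — so this alone does not fix it. The fix is to instead arrange the composition so that the power of $f$ is inserted between two maps and to choose the base point so that the relevant line is never the forbidden one.

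The cleanest route, which I expect to be the intended one, is: use (1) to get a point and an element making $E^s_f$ go to a "generic" (bi-transverse) line, then separately use (2) similarly for $E^u_f$, and then glue by composing with a high power of $f$ which, by the convergence above, makes the composite map send *every* line except one exceptional line to within $\epsilon$ of $E^u_f$ (forward) — and since $E^s_f, E^u_f$ of $f$ are themselves mutually transverse, a line $\epsilon$-close to $E^u_f$ is automatically transverse to both $E^s_f$ and $E^u_f$. Thus $h_2 f^k h_1$ will, at a generic point $x$ (one avoiding the finitely many exceptional lines/points at each stage — a condition that holds on an open dense set, hence in particular at some periodic point of $f$), send both $E^s_f(x)$ and $E^u_f(x)$ to lines close to $E^u_f$, hence bi-transverse. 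The main obstacle, and the point requiring care, is precisely the bookkeeping of the exceptional ("forbidden") directions: at each application of $f^{\pm k}$ there is one line that gets sent toward the "wrong" invariant direction, and one must verify that the base point $x$ can be chosen (using the openness of all the conditions and density of periodic points, exactly as in the reduction in Theorem \ref{main3}) so that none of the finitely many lines arising along the composition $h_2 f^k h_1$ is ever the forbidden one at the stage where it matters. Once that is arranged, the transversality of $E^s_f$ to $E^u_f$ (built into $f$ being Anosov on a surface) closes the argument, and then Theorem \ref{main3} applies to $\{f^N, h f^N h^{-1}\}$ to produce the free group — but for the present Proposition we only need to have produced the single element $h$ and point $x$.
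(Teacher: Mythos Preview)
Your overall mechanism is right and matches one branch of the paper's argument: apply $h_1$, use a high power of $f$ to sweep lines toward $E^u_f$, then apply $h_2$ and invoke hypothesis (2). This is exactly what the paper does in its case (b), and your treatment of the $E^s_f(x)$ direction along this composition is fine.

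The genuine gap is your handling of the ``forbidden'' case $D_{x_1}h_1(E^u_f(x_1)) = E^s_f(h_1(x_1))$. You assert this can be avoided by choosing the base point generically, claiming the good condition ``holds on an open dense set''. That is not true: the equality $D_xh_1(E^u_f(x)) = E^s_f(h_1(x))$ can hold on an open neighborhood of $x_1$ while hypothesis (1) continues to hold there. (In a linear model with $E^u_f = \R e_1$, $E^s_f = \R e_2$, take $h_1$ with matrix $\begin{psmallmatrix}0&1\\1&1\end{psmallmatrix}$: then $h_1(e_1)=e_2$ and $h_1(e_2)=e_1+e_2$ everywhere.) In that situation $D(f^k h_1)(E^u_f(x))$ stays exactly on $E^s_f$ for every $k>0$, and hypothesis (2) tells you nothing about $D h_2(E^s_f)$, so $h_2 f^k h_1$ gives no control on the $E^u_f$ direction. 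No choice of base point repairs this.

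The paper closes this gap by a case split. If $D_{x_1}h_1(E^u_f(x_1))$ already avoids both invariant directions, take $h=h_1$ and stop. Otherwise it equals either $E^s_f(h_1(x_1))$ (case (a)) or $E^u_f(h_1(x_1))$ (case (b)). In case (b) the paper uses $h = h_2 f^N h_1$, essentially your construction. In case (a) the paper uses $h = h_1 f^{-N} h_1$: backward iteration by $f^{-N}$ pushes both $D h_1(E^s_f(x'))$ (bi-transverse, hence $\neq E^u_f$) and $D h_1(E^u_f(x')) = E^s_f$ toward $E^s_f$; choosing $N$ along a dense backward orbit so that $f^{-N}h_1(x')$ lands near $x_1$, one then reapplies hypothesis (1) to send lines near $E^s_f$ to bi-transverse lines. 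The key idea you are missing is that in case (a) one should reuse $h_1$ and hypothesis (1), not $h_2$.

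A minor side remark: the sentence ``a line $\epsilon$-close to $E^u_f$ is automatically transverse to both $E^s_f$ and $E^u_f$'' is false as stated (it could equal $E^u_f$); what you actually need, and what is true, is that after applying $h_2$ the images are close to $Dh_2(E^u_f)$, which is bi-transverse by (2), and bi-transversality is open.
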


\begin{proof}
If $D_{x_1}h_1(E^u_{f}(x_1))$ intersect transversely both $E^s_{f}(h_1(x_1))$ and $E^u_{f}(h_1(x_1))$ then we can take $h= h_1$ and $x = x_1$ and we are done. Otherwise we have either of the following two cases holds:
\begin{enumerate}
\item[(a)]  $D_{x_1}h_1(E^u_{f}(x_1))=E^s_{f}(h_1(x_1))$;
\item[(b)] $D_{x_1}h_1(E^u_{f}(x_1))=E^u_{f}(h_1(x_1))$.
  \end{enumerate}
   Suppose the former is true, i.e. we have  $D_{x_1}h_1(E^u_{f}(x_1))=E^s_{f}(h_1(x_1))$.

We may further assume that $D_{x'}h_1(E^u_{f}(x'))=E^s_{f}(h_1(x'))$ for all $x'$ sufficiently close to $x_1$. The assumption (1) still holds with $x'$ in place of $x_1$ by the continuity.  We also have $D_{x'}h_1(E^u_{f}(x'))\neq E^u_{f}(h_1(x'))$ since we have assumed $ D_{x_1}h_1(E^u_{f}(x_1))\neq E^u_{f}(h_1(x_1))$.  If $D_{x'}h_1(E^u_{f}(x'))\neq E^s_{f}(h_1(x'))$ for some point $x'$ close to $x_1$, we then complete the proof by choosing $h=h_1$ and $x=x'$.

By the transitivity of $f$, we choose $x'$ sufficiently close to $x_1$ such that
\begin{itemize}
\item $h_1(x')$ lies in a dense backward orbit of $f$;
\item (1) holds with $x'$ in place of $x_1$;
\item $D_{x'}h_1(E^u_{f}(x'))=E^s_{f}(h_1(x'))$.
\end{itemize}

For any $N$ we have $D_{x'}f^{-N}h_1(E^u_{f}(x'))=E^s_f(f^{-N}h_1(x'))$. Next,
by assumption (1) and taking $N$ sufficiently large we have that $D_{x'}f^{-N}h_1(E^s_{f}(x'))$ is as close as we want to $E^s_f(f^{-N}h_1(x'))$.

 By the first bullet point and the assumption (1), we can choose $N$ such that $f^{-N}h_1(x')$ and $x_1$ are close enough to guarantee that the image under $Dh_1$ of both $D_{x'}f^{-N}h_1(E^u_{f}(x'))$ and $D_{x'}f^{-N}h_1(E^s_{f}(x'))$ are transverse to $E^s(h_1f^{-N}h_1x')$ and $E^u(h_1f^{-N}h_1x')$. The proposition is then proved by choosing $h=h_1f^{-N}h_1$ and $x=x'$.

 The argument for the case (b) is similar. We have $D_{x'}f^{N}h_1(E^u_{f}(x'))=E^u_f(f^{N}h_1(x'))$ for all $N$, and by (1) in the assumption $D_{x'}f^{N}h_1(E^s_{f}(x'))$ is arbitrarily close to $E^u_f(f^{N}h_1(x'))$. Hence we complete the proof by taking $h=h_2 f^N h_1$ and apply (2) in the assumption.
\end{proof}

The following proposition finishes the proof of Theorem \ref{main2}.

\begin{proposition}\label{trans} Suppose $f \in \Gamma$ is an Anosov diffeomorphism of $\T^2$ and let $W^s$ and $W^u$ be the stable and unstable foliations of $f$. One of the following holds:

\begin{enumerate}
\item There exists an index 2-subgroup $\Gamma_0 \subset \Gamma$ preserving one among $W^s$ or $W^u$.
\item There exists $h \in \Gamma$ and $x \in \T^2$ such that each of $D_xh(E^s_f(x))$ and $D_xh(E^u_f(x))$ intersect transversely both $E^s_f(h(x))$ and $E^u_f(h(x))$.
\end{enumerate}

\end{proposition}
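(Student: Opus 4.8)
The plan is to derive the dichotomy from Proposition \ref{hardtrans} together with a connectedness argument: once alternative (2) is ruled out, connectedness of $\T^2$ forces every element of $\Gamma$ to behave rigidly with respect to the pair of line fields $E^s_f,E^u_f$, and this rigidity is exactly what yields the index-$2$ subgroup in alternative (1). So I would argue by contradiction and assume alternative (2) fails. Since the conclusion of Proposition \ref{hardtrans} is precisely alternative (2), its contrapositive gives that at least one of the following holds: (i) there is no pair $(h,x)\in\Gamma\times\T^2$ for which $D_xh(E^s_f(x))$ is transverse to both $E^s_f(h(x))$ and $E^u_f(h(x))$; or (ii) the same statement with $E^u_f(x)$ in place of $E^s_f(x)$. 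Since $W^u_f=W^s_{f^{-1}}$ and $f^{-1}\in\Gamma$, replacing $f$ by $f^{-1}$ interchanges (i) and (ii), so it is enough to treat case (i).

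Next I would use that on a surface a line $\ell\subset T_y\T^2$ is transverse to a line $m\subset T_y\T^2$ if and only if $\ell\ne m$, together with the fact that $E^s_f(y)\ne E^u_f(y)$ for all $y$ because the Anosov splitting is a direct sum. Hence case (i) says: for every $h\in\Gamma$ and every $x\in\T^2$, the line $D_xh(E^s_f(x))$ equals either $E^s_f(h(x))$ or $E^u_f(h(x))$. Fixing $h$, the three maps $x\mapsto D_xh(E^s_f(x))$, $x\mapsto E^s_f(h(x))$, $x\mapsto E^u_f(h(x))$ are continuous sections of the projectivized tangent bundle over $\T^2$, so the two sets $\{x: D_xh(E^s_f(x))=E^s_f(h(x))\}$ and $\{x: D_xh(E^s_f(x))=E^u_f(h(x))\}$ are closed; they are disjoint (since $E^s_f\ne E^u_f$ pointwise) and their union is $\T^2$. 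By connectedness one of them is empty, so either $D_xh(E^s_f(x))=E^s_f(h(x))$ for all $x$ — equivalently $h$ maps leaves of $W^s_f$ to leaves of $W^s_f$ — or $D_xh(E^s_f(x))=E^u_f(h(x))$ for all $x$.

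Finally I would package this into a subgroup statement. Let $\Gamma_0=\{h\in\Gamma: D_xh(E^s_f(x))=E^s_f(h(x))\text{ for all }x\in\T^2\}$; a routine chain-rule check shows $\Gamma_0$ is a subgroup, it contains $f$, and it preserves the foliation $W^s_f$. If $h_1,h_2\in\Gamma\setminus\Gamma_0$ then, by the previous paragraph, $D_xh_i(E^s_f(x))=E^u_f(h_i(x))$ for all $x$ and $i=1,2$; rewriting the $h_2$ relation via $x=h_2^{-1}(y)$ gives $D_yh_2^{-1}(E^u_f(y))=E^s_f(h_2^{-1}(y))$, and composing shows $D_x(h_2^{-1}h_1)(E^s_f(x))=E^s_f(h_2^{-1}h_1(x))$ for all $x$, i.e. $h_2^{-1}h_1\in\Gamma_0$. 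Therefore $[\Gamma:\Gamma_0]\le 2$, and $\Gamma_0$ preserves $W^s_f$, which is alternative (1); in case (ii) the symmetric argument produces an index-$\le 2$ subgroup preserving $W^u_f$. The only genuinely nontrivial step is the connectedness argument of the middle paragraph — it upgrades the pointwise non-transversality hypothesis to the global statement that each $h$ either preserves the stable line field or carries it entirely onto the unstable one; everything else is formal.
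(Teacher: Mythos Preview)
Your proof is correct and follows essentially the same route as the paper: both argue via the contrapositive of Proposition \ref{hardtrans}, use two-dimensionality to convert ``not transverse'' into ``equal'', invoke connectedness of $\T^2$ to globalize, and conclude that the stabilizer of $W^s_f$ has index at most two. Your write-up is in fact more careful than the paper's (you spell out the closed-disjoint-union argument and the coset computation explicitly), but there is no substantive difference in strategy.
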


\begin{proof}
If Condition (2) does not hold, then one of the conditions in Proposition \ref{hardtrans} does not hold, suppose the first condition does not hold, that is, suppose that for every $h \in \Gamma$ and every $x \in M$ we have that  $D_xh(E^s_f(x))$  does not intersect transversely one among $E^s_f(h(x))$ and $E^u_f(h(x))$ ,  as $\T^2$ is 2-dimensional then we must have  $D_xh(E^s_f(x))$ coincides with  $E^s_f(h(x))$ or $E^u_f(h(x))$ and as $\T^2$ is connected, this imply that $h(W^s)= W^s$ or $h(W^s) = W^u$, and this implies that the subgroup $$\Gamma_0 = \{h \in \Gamma \text{ such that } h(W^s)= W^s\}$$ is a subgroup of index 2 in $\Gamma$ preserving $W^s$.\\
\end{proof}

\end{section}

\begin{section}{Global rigidity of Anosov ABC actions on $\T^2$: Proof outline}\label{SABC0}
\subsection{Outline of the global rigidity}
In this section we give an outline of the proof of Theorem \ref{ABC3}. We will proceed to carry out this outline in the next sections. Recall that for a matrix $B \in \SL_2(\Z)$, we let $\Gamma_B := \Z \ltimes_{B} \Z^2$ be the corresponding Polycyclic Abelian-by-cyclic (ABC) group, which is a semi-direct product where $\Z$ acts in $\Z^2$ by $B$ and for convenience we denote the generator of $\Z$ in $\Gamma_{B}$ by $B$ as well.

Recall that we are concerned with understanding homomorphisms $\Phi: \Gamma_B \to \Diff(\T^2)$ such that $\Phi(B)$ is an Anosov diffeomorphism. By Franks Theorem \cite{F} there exists a homeomorphism $h \in \text{Homeo}(\T^2)$ such that the conjugate $A:= h\Phi(B)h^{-1}$ is an Anosov linear map, we denote by $\lambda_A$ and $\lambda_B$ the larger in modulus eigenvalue of $A$ and $B$ respectively.

The proof of Theorem \ref{ABC3} has several steps:

\textbf{Step 1}: By using our ping pong result (Theorem \ref{main2}) and using the fact that $\Gamma_B$ does not contain any free subgroup we can reduce to the case where $\Phi(\Gamma_B)$ preserves the unstable (or stable) foliation of $\Phi(B)$.

\textbf{Step 2}: We must show that for all $v \in \Z^2 \subset \Gamma_B$,  $h\Phi(v)h^{-1}$ of $\text{Homeo}(\T^2)$ is a translation. We first use the fact that $\text{PSL}_2(\Z)$ is isomorphic to $\Z_2*\Z_3$ to show that each $\Phi(v)$ is homotopic to identity (Corollary \ref{ABCeasy}). Next, we show that for any $v \in \Z^2$, the Misiurewicz-Ziemian rotation set of $\Phi(v)$ consists of a single point (Proposition \ref{difficult}). This is carried out as follows. Step 1 allow us to show that for $v \in \Z^2$ the rotation set of a lift of $\Phi(v)$ consists of either a single rotation vector or is a non-trivial segment parallel to the unstable eigenvector of $A$ and so we have to rule out this later possibility. While understanding the rotation set of elements of $\Phi(\Z^2)$, we were lead into understanding the joint rotation set (See Definition \ref{jointrota}) which is the set of possible  average rotation vectors for $\Phi(\Z^2)$-invariant measures, this set has a natural invariance by $A$ and $B$, and by using the hyperbolicity of $A$ and $B$ and basic facts about the joint rotation set, we will conclude that the actual rotation set of $\Phi(v)$ is a singleton for all $v \in \Z^2 \subset \Gamma_B$. The proof that the rotation set is a singleton is much easier in the case where $|\lambda_A| \neq |\lambda_B|$, the case where $|\lambda_A| = |\lambda_B|$ require another argument which is given in Section \ref{mistakecorrection}.

After showing that rotation sets consists of singletons, the proof is divided into two cases:

\textbf{Step 3}: The case where $|\lambda_A| \geq |\lambda_B|$: In the case where $|\lambda_A| > |\lambda_B|$ , using that rotation set consists of singletons and conjugating by high powers of $B$ one shows that the group $h\Phi(\Z^2)h^{-1}$ is a group of translations by a elementary geometric argument, finishing the proof in this case (Proposition \ref{rotzero}). The case where $|\lambda_A| = |\lambda_B|$ is more difficult and requires further understanding of the rotation set and a modification of Proposition \ref{rotzero}.

\textbf{Step 4}: The case where $|\lambda_A| < |\lambda_B|$: This case is much more difficult, we proceed by contradiction and assume such an action exists. By using the classification of the affine actions of $\Gamma_B$ (Section \ref{SSAffine}), we can show that the set of rotation vectors is finite and passing to a finite index subgroup we assume that for all $v \in \Z^2$, $\Phi(v)$ preserves set-wise each of the leaves of the unstable foliation of $\Phi(A)$, obtaining  actions of $\Z^2$ in copies of $\R$ (the leaves of the foliation). We then proceed to study such actions and show that away from fixed points, such actions are conjugated to an action by translations in $\R$ that satisfies  a specific algebraic (diophantine) condition on the possible translations. This allows to use one-dimensional Herman-Yoccoz theory to embed the action of $\Z^2$ into a flow $g_t$  that preserves the leaves of the unstable foliation. The vector field $X$ generating such flow must satisfy $D(\Phi(B))X = \lambda_B X$ by the group relations of $\Gamma_B$ and then if $\mu$ is the SRB measure preserved by $f$, we must have that $h_{\mu} (f) = \log(|\lambda_B|)$, but as the topological entropy $h_{top}(\Phi(B)) = \log(|\lambda_A|)$ being $\Phi(B)$ conjugate to $A$ we must have that $\log|\lambda_A| \geq  \log|\lambda_B|$, contradicting the assumption that $|\lambda_A| < |\lambda_B|$.

We finish this section by discussing the classification of affine actions of our group $\Gamma_B$.
\subsection{Classification of affine actions}\label{SSAffine}
Let us consider affine actions $\Phi: \Gamma_B \to \text{Aff}(\T^2)$ of the form $\Phi(B)=A\in \SL_2(\Z)$, and $\Phi(e_i): x\mapsto x+\rho_i\in \R^2$ where $e_1=(1,0)$ and $e_2=(0,1)$, $i=0,1.$ We denote by $\rho$ the matrix $(\rho_1,\rho_2)$ formed by $\rho_1$ and $\rho_2$ as columns. For this affine action, the group relation can be written as (equation \eqref{EqCommute})
\begin{equation*}
A\rho=\rho B \mathrm{\ mod} \ \Z^{2\times 2}.
\end{equation*}
The main result of this section is the following. 

\begin{proposition}\label{LmNull} Let $A$ and $B$ be two matrices in $\SL_2(\Z)$ satisfying \eqref{EqCommute}. Then 
\begin{enumerate}
\item If $tr(A)\neq tr(B)$, then all solutions $\rho$ of the equation \eqref{EqCommute} have rational coefficients and therefore the action $\Phi$ is not faithful.
\item If $tr(A)=tr(B)$,  and $|\mathrm{tr}(A)|>2$, then 
\begin{enumerate}
\item the null space of the map $X\mapsto AX-XB$ is spanned by $u_A\otimes u_{B^t}$ and $u_{A^{-1}}\otimes u_{B^{-t}}$. So the general solution to \eqref{EqCommute} has the form
$$\rho=\rho_C+c_1 u_A\otimes u_{B^t}+c_2 u_{A^{-1}}\otimes u_{B^{-t}},\ c_1,c_2\in \R $$
where $\rho_C\in \Q^{2\times 2}$ is a particular  solution to $A\rho-\rho B=C,\ C\in \Z^{2\times 2}.$
\item Suppose in addition that $A=B$, then the null space of $X\mapsto AX-XA$ is spanned by $\{\mathrm{id}, A\}$. So the general solution to \eqref{EqCommute} has the form
$$\rho=\rho_C+c_1 \mathrm{id}+c_2 A,\ c_1,c_2\in \R.$$

\end{enumerate}
\end{enumerate}
\end{proposition}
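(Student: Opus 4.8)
The plan is to treat the linear map $T\colon X \mapsto AX - XB$ on the space $\R^{2\times 2}$ of $2\times 2$ real matrices and compute its kernel, since the solution set of \eqref{EqCommute} is a coset of $\ker T \pmod{\Z^{2\times 2}}$ once a particular solution $\rho_C$ is found. Because $\eqref{EqCommute}$ has an obvious particular solution with rational coefficients (indeed, over $\Q$ the affine map $X \mapsto AX - XB$ restricted to $\Q^{2\times 2}$ is surjective onto the sublattice we need whenever $T$ is invertible over $\Q$, which happens exactly when $A$ and $B$ share no common eigenvalue), the whole problem reduces to understanding $\ker T$. The eigenvalues of $T$ are precisely $\lambda_i \mu_j^{-1}$ where $\lambda_1,\lambda_2$ are the eigenvalues of $A$ and $\mu_1,\mu_2$ the eigenvalues of $B$ (using that $B \in \SL_2(\Z)$, so $\mu_1\mu_2 = 1$ and $B^{-1}$ has eigenvalues $\mu_1^{-1},\mu_2^{-1}$); concretely, if $Av = \lambda v$ and $B^t w = \mu w$ then $T(v\otimes w) = Av \otimes w - v \otimes (B^t w)^t{}^t = \lambda\, v\otimes w - v\otimes w\, B$, and since $w B = (B^t w)^t \cdot$(as a row vector)$= \mu w$, one gets $T(v\otimes w) = (\lambda - \mu)\, v\otimes w$. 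So each rank-one tensor built from an eigenvector of $A$ and an eigenvector of $B^t$ is an eigenvector of $T$ with eigenvalue $\lambda - \mu$.

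Next I would use the trace hypothesis. In part (1), if $\operatorname{tr}(A) \neq \operatorname{tr}(B)$ then, since both matrices lie in $\SL_2(\Z)$, their characteristic polynomials $x^2 - \operatorname{tr}(A)x + 1$ and $x^2 - \operatorname{tr}(B)x+1$ are distinct, hence $A$ and $B$ have no common eigenvalue; therefore $0$ is not among the eigenvalues $\lambda_i - \mu_j$ of $T$, so $T$ is invertible. Being invertible and defined over $\Q$, the equation $AX - XB = C$ with $C \in \Z^{2\times 2}$ has a unique solution $\rho = T^{-1}(C) \in \Q^{2\times 2}$; rational rotation vectors give a non-dense orbit $\{\rho p : p \in \Z^2\}$, so by the faithfulness criterion (Proposition 1.2 of \cite{WX}) the action is not faithful. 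For part (2a), assume $\operatorname{tr}(A) = \operatorname{tr}(B)$ with $|\operatorname{tr}(A)| > 2$: then $A$ and $B$ are hyperbolic with the same characteristic polynomial, so they share eigenvalues $\lambda_A = \lambda_B =: \lambda$ and $\lambda_A^{-1} = \lambda_B^{-1}$, with $\lambda$ real and irrational and $|\lambda| > 1$. The four eigenvalues of $T$ are $\lambda - \lambda = 0$, $\lambda - \lambda^{-1} \neq 0$, $\lambda^{-1} - \lambda \neq 0$, $\lambda^{-1} - \lambda^{-1} = 0$. Thus $\dim \ker T = 2$ (the eigenvalue $0$ has multiplicity two, and $T$ is diagonalizable because $A,B$ are — the tensors $u_A \otimes u_{B^t}$, $u_A \otimes u_{B^{-t}}$, $u_{A^{-1}} \otimes u_{B^t}$, $u_{A^{-1}} \otimes u_{B^{-t}}$ form a basis of eigenvectors), and by the computation above $\ker T$ is exactly the span of $u_A \otimes u_{B^t}$ (eigenvalue $\lambda - \lambda = 0$) and $u_{A^{-1}} \otimes u_{B^{-t}}$ (eigenvalue $\lambda^{-1} - \lambda^{-1} = 0$). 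This yields the stated form $\rho = \rho_C + c_1\, u_A \otimes u_{B^t} + c_2\, u_{A^{-1}} \otimes u_{B^{-t}}$. For part (2b), $A = B$ makes $T$ the adjoint-type map $X \mapsto AX - XA$, whose kernel is the centralizer of $A$ in $\R^{2\times 2}$; since $A$ is hyperbolic with distinct eigenvalues, this centralizer is $\R[A] = \operatorname{span}\{\mathrm{id}, A\}$ (an element commuting with $A$ must preserve both eigenlines, hence be diagonal in the eigenbasis of $A$, hence a polynomial in $A$), giving $\rho = \rho_C + c_1\,\mathrm{id} + c_2\, A$.

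The main obstacle is purely bookkeeping rather than conceptual: one must verify carefully that the rank-one tensors $v \otimes w$ transform under $T$ with the claimed eigenvalue (being attentive to the row-versus-column convention and to the fact that right multiplication by $B$ acts on the row factor $w$ via $w B$, which equals $\mu w$ precisely when $w$ is a $\mu$-eigenvector of $B^t$), and that one may indeed choose the particular solution $\rho_C$ with rational — in fact, one can arrange $\Q$-valued — entries. The latter follows because $T$ preserves the rational subspace $\Q^{2\times 2}$ and, on the complement of the kernel, $T$ restricted to a suitable $T$-invariant rational complement is invertible over $\Q$; alternatively, since $C \in \Z^{2\times 2} \subset \operatorname{im}(T)$ (one checks $C$ lies in the image by a dimension count, as $\operatorname{im}(T)$ is the $\Q$-span of the two nonzero-eigenvalue tensors and contains a full-rank rational sublattice), we may solve $T(\rho_C) = C$ within $\Q^{2\times 2}$. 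I would also note that $|\operatorname{tr}(A)| > 2$ is used exactly to guarantee the eigenvalues are real, distinct, and irrational, so that $A$ and $B$ are genuinely diagonalizable over $\R$ and the eigenvector notation $u_A, u_{A^{-1}}$, etc., makes sense; the case $|\operatorname{tr}(A)| = 2$ (parabolic or $\pm\mathrm{id}$) and $|\operatorname{tr}(A)| < 2$ (elliptic, finite order) are excluded from this part of the statement and need not be addressed here.
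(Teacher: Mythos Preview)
Your proof is correct and follows essentially the same approach as the paper's: both analyze the kernel of the linear map $T\colon X \mapsto AX - XB$ via its eigenvalue decomposition, with the paper packaging this through the Kronecker product vectorization and citing \cite{HJ} for the dimension of the null space while you compute the tensor eigenvectors of $T$ directly. Your justification that $\rho_C$ can be taken rational is slightly roundabout; the paper dispatches this in one line by invoking Gauss elimination on the rational linear system $(\mathrm{id}\odot A - B\odot\mathrm{id})\vec\rho = \vec C$.
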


We use the following definition of Kronecker product.
\begin{definition}
Let $A'=(a_{ij})$ be a $m\times n$ matrix and $B'$ be a $p\times q$ matrix. We define the {\it Kronecker product} $A'\odot  B'$ as the $mp\times nq$ matrix written in block form whose $(i,j)$-th block is $a_{ij}B'$ where $1\leq i\leq m,1\leq j\leq n$.
\end{definition}
The Kronecker product has the following properties:
\begin{proposition}
\begin{enumerate}
\item If $A',B'$ are both $n\times n$ with eigenvalues $\{\lambda_i\}$ and $\{\mu_i\}$,\ $i=1,\ldots,n$, respectively,  then the eigenvalues of $A'\odot B'$ are $\{\lambda_i\mu_j\}$, $i,j=1,\ldots,n.$
\item$ (A'\odot  B')(C'\odot  D')=A'C'\odot  B'D'.$
\item$(A'\odot  B')^{-1}=A'^{-1}\odot  B'^{-1}$, if both $A'$ and $B'$ are invertible.
\item (\cite{HJ}, Chapter 4.4) an $n\times n$ matrix $X$ solves $A'X+XB'=C'$ for $A',B',C'$ $n\times n$ matrices, if and only if the $\R^{n^2}$ vector $\vec X$ formed by listing columns of $X$ solves the equation $(\mathrm{id}_{n}\odot  A'+B'\odot \mathrm{id}_n)\vec X=\vec C$.
\end{enumerate}
\end{proposition}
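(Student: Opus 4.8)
The plan is to establish the four listed properties of the Kronecker product (for square matrices, as that is the generality we will need), working over an algebraically closed field or equivalently passing to $\C$ since all matrices here are real. Throughout I will use the mixed-product identity (item (2)) as the engine driving the other three.

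\emph{Item (2), the mixed-product property.} First I would prove $(A'\odot B')(C'\odot D')=A'C'\odot B'D'$ directly by block computation. Writing $A'\odot B'$ in $p\times q$ blocks with $(i,k)$-block $a_{ik}B'$ and $C'\odot D'$ with $(k,j)$-block $c_{kj}D'$, the $(i,j)$-block of the product is $\sum_k (a_{ik}B')(c_{kj}D')=\bigl(\sum_k a_{ik}c_{kj}\bigr)B'D'=(A'C')_{ij}\,B'D'$, which is exactly the $(i,j)$-block of $A'C'\odot B'D'$. Sizes match because $A',C'$ (resp. $B',D'$) are composable. This is the only genuine hand computation and it is short.

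\emph{Items (3) and (1) from item (2).} For item (3): if $A',B'$ are invertible then $(A'\odot B')(A'^{-1}\odot B'^{-1})=A'A'^{-1}\odot B'B'^{-1}=\mathrm{id}_m\odot\mathrm{id}_n=\mathrm{id}_{mn}$ by item (2), and similarly on the other side, so $A'^{-1}\odot B'^{-1}$ is the inverse. For item (1): if $A'v_i=\lambda_i v_i$ and $B'w_j=\mu_j w_j$, then item (2) applied to the $n\times 1$ ``matrices'' $v_i,w_j$ (or a direct block check) gives $(A'\odot B')(v_i\otimes w_j)=(A'v_i)\otimes(B'w_j)=\lambda_i\mu_j\,(v_i\otimes w_j)$, where $v_i\otimes w_j$ is the $n^2$-vector obtained by stacking; so the $n^2$ scalars $\lambda_i\mu_j$ are eigenvalues. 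To see these are \emph{all} the eigenvalues (with multiplicity), I would reduce to the diagonalizable case by density — diagonalizable matrices are dense in $M_n(\C)$ and $\odot$ together with "list of eigenvalues with multiplicity" (i.e. the characteristic polynomial) depends continuously on entries — and when $A',B'$ are simultaneously diagonalized the $v_i\otimes w_j$ form a basis of eigenvectors, exhausting the $n^2$ eigenvalues. (Alternatively one can invoke Schur triangularization: conjugating $A'$ and $B'$ to upper-triangular form makes $A'\odot B'$ upper-triangular with the $\lambda_i\mu_j$ on the diagonal.)

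\emph{Item (4), Sylvester's equation.} For the identification of $AX+XB=C$ with a linear system: fix the convention $\vec X\in\C^{n^2}$ obtained by stacking the columns of $X$. The key observation is that for any $n\times n$ matrices $P,Q$ one has $\overrightarrow{PXQ}=(Q^{t}\odot P)\vec X$; this follows by writing out the $j$-th column of $PXQ$ as $\sum_k Q_{kj}\,P X_{\cdot k}$ and matching it against the $j$-th block $\sum_k Q_{kj} P$ of $Q^t\odot P$ acting on $\vec X$. Taking $P=A,Q=\mathrm{id}_n$ gives $\overrightarrow{AX}=(\mathrm{id}_n\odot A)\vec X$, and $P=\mathrm{id}_n,Q=B$ gives $\overrightarrow{XB}=(B^t\odot \mathrm{id}_n)\vec X$; wait—the statement in the excerpt is written with $B'\odot\mathrm{id}_n$ rather than $B'^t\odot\mathrm{id}_n$, so I would either (a) adopt the row-stacking convention for $\vec X$, under which the factors transpose, or (b) note that in all our applications $B$ is replaced by a matrix used only up to the basis-independent statement "the null space has such-and-such dimension", and record the precise convention as in (\cite{HJ}, Ch. 4.4). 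Adding the two relations yields $\overrightarrow{AX+XB}=(\mathrm{id}_n\odot A + B^{(t)}\odot\mathrm{id}_n)\vec X$, so $X$ solves Sylvester's equation iff $\vec X$ solves the displayed linear system, as claimed. Combining item (4) with item (1) then shows the eigenvalues of the Sylvester operator are $\{\lambda_i+\mu_j\}$, which is precisely what is exploited later in Proposition \ref{LmNull}.

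The only real subtlety is the completeness half of item (1) (that $\{\lambda_i\mu_j\}$ is the \emph{full} multiset of eigenvalues, not just a sublist); everything else is bookkeeping with block matrices. I would handle it by the density/continuity argument or by Schur triangularization, whichever reads more cleanly, and keep the proof brief since these are classical facts cited from \cite{HJ}.
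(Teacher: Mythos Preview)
Your proof is correct and complete; the paper itself does not supply a proof of this proposition at all---it is stated as a list of standard facts about the Kronecker product, with item (4) attributed to \cite{HJ}, Chapter 4.4, and then immediately used in the proof of Proposition~\ref{LmNull}. So there is nothing to compare against.

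One remark worth recording: you correctly flagged the transpose discrepancy in item (4). Under the column-stacking convention the paper adopts (``listing columns of $X$''), the standard identity is $\overrightarrow{PXQ}=(Q^{t}\odot P)\vec X$, so the operator should read $\mathrm{id}_n\odot A' + (B')^{t}\odot\mathrm{id}_n$ rather than $\mathrm{id}_n\odot A' + B'\odot\mathrm{id}_n$. This does not affect any downstream use in the paper: the applications in Proposition~\ref{LmNull} only invoke the \emph{eigenvalues} of the Sylvester operator (which are the same whether one uses $B$ or $B^t$) and then verify the null-space generators $u_A\otimes u_{B^t}$, $u_{A^{-1}}\otimes u_{B^{-t}}$ directly. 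Your handling of this---noting the convention ambiguity and that it is immaterial for what follows---is exactly right.
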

\begin{proof}[Proof of Proposition \ref{LmNull}]
Now for $A,B\in \SL_2(\Z)$,  there are two cases depending on whether $tr(A)$ and $tr(B)$ are equal or not.

Suppose $tr(A)\neq tr(B)$. In this case, the set of eigenvalues of $A$ has no intersection with that of $B$, so that $\mathrm{id}_{2}\odot A-B\odot \mathrm{id}_2$ is nondegenerate. The equation $A\rho=\rho B$ has only trivial solution and the equation $A\rho=\rho B+C$ where $C\in \Z^{2\times 2}$ has only rational solution. In this case, the affine action generated by \eqref{EqAffine} cannot be faithful.

Suppose $tr(A)=tr(B)$. Nontrivial affine actions exist already in the case of $C=0$. Indeed, in this case, we have
$$(\mathrm{id}_{2}\odot A-B\odot \mathrm{id}_2)=(\mathrm{id}_{2}\odot A)(\mathrm{id}_{4}-B\odot A^{-1}).$$
The matrix $(\mathrm{id}_{2}\odot A)$ is nondegenerate and the matrix $(\mathrm{id}_{4}-B\odot A^{-1})$ is degenerate since $B\odot A^{-1}$ has two eigenvalues 1. So the equation $A\rho=\rho B$ has nontrivial solution.

Note that in general we may have $B$ and $A$ with $tr(A)=tr(B)$ without being conjugate in $SL_2(\Z)$. For example one can take $B=\left(\begin{array}{cc}
1& 2\\
1&3
\end{array}\right)$ and $A=\left(\begin{array}{cc}
2& 1\\
3&2
\end{array}\right)$, but we must point out that for a matrix $A$ in $SL_2(\Z)$, the number of conjugacy classes of matrices $B$ with $tr(A)= tr(B)$ is finite and it is related to the class number of the number field defined by the eigenvalues of $A$.

We next prove item (2) in the statement. By assumption, $A$ and $B$ have same eigenvalues $\{\lambda,1/\lambda\}$ with $\lambda\neq 1/\lambda$. In this case, the null space of $X\mapsto AX-XB$ has dimension 2 by Theorem 4.4.14 of \cite{HJ}. It is clear that both $u_A\otimes u_{B^t}$ and $u_{A^{-1}}\otimes u_{B^{-t}}$ are in the null space and are linearly independent, so we obtain item (2.a).

For  item (2.b),  again the null space has dimension 2 and it is clear that $\mathrm{id}$ and $A$ are in the null space and are linearly independent (see also Corollary 4.4.15 of \cite{HJ}). This gives item (2.b). 

To show that $\rho_C$ can be chosen to be rational, we apply Gauss elimination to the equation 
$(\mathrm{id}\odot  A-B\odot \mathrm{id})\vec \rho=\vec C$. This completes the proof. 
\end{proof}



\end{section}

\begin{section}{Basic reductions and Rotation set properties}\label{secrota}

In this section we begin the proof of Theorem \ref{ABC3} by first making some reductions and then proceed to study the rotation set of the translation subgroup of $\Gamma_B$. This carries out Step 1 and Step 2 of the outline discussed in Section \ref{SABC0}.

\subsection{The $\Z^2$ action has trivial homotopy type}\label{SSHomotopy}
To start with the proof of Theorem \ref{ABC3}, we have from Theorem \ref{main2}, the following corollary:

\begin{proposition}\label{ABCeasy}

Suppose that $B \in \SL_2(\Z)$ is Anosov and $\Phi: \Gamma_B \to \Diff(\T^2)$ is such that the diffeomorphism $f:= \Phi(B)$ is an Anosov diffeomorphism of $\T^2$. Then there exist a finite index subgroup $\Gamma' \subset \Gamma_B$ isomorphic to $\Gamma_B$ and  $g \in \mathrm{Homeo}(\T^2)$ such that the conjugate action $\Phi': \Gamma' \to \mathrm{Homeo}(\T^2)$ given by $\Phi'(\gamma) := g\Phi(\gamma)g^{-1}$ satisfy:

\begin{enumerate}
\item  $A =  g\Phi(B)g^{-1}\in \SL_2(\Z)$.
\item Each $\Phi'(\gamma),\ \gamma \in \Gamma',$ preserves the unstable foliation $\FF$ of $A$ $($up to replacing $A$ with $A^{-1})$.
\item For each $\gamma \in \Z^2 \cap \Gamma'$, $\Phi'(\gamma)$ is homotopic to the identity.
\end{enumerate}

\end{proposition}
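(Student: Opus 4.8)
The proof breaks into three pieces, one for each item, and the main engine is Theorem \ref{main2} (the Tits alternative for subgroups of $\Diff^1(\T^2)$ containing an Anosov element) together with the fact that $\Gamma_B$ contains no free subgroup on two generators (it is polycyclic, being $\Z \ltimes_B \Z^2$). So the first step: apply Franks' theorem to get $g_0 \in \mathrm{Homeo}(\T^2)$ with $g_0 \Phi(B) g_0^{-1}$ homotopic to $A \in \SL_2(\Z)$, where $A$ is the hyperbolic matrix corresponding to the action of $f = \Phi(B)$ on homology; by replacing $\Phi$ with $g_0 \Phi g_0^{-1}$ we may assume $\Phi(B)$ is homotopic to $A$. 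Since $f = \Phi(B)$ is Anosov and $\Gamma := \Phi(\Gamma_B)$ contains no $\F_2$, Theorem \ref{main2} forces the first alternative: there is an index-$2$ subgroup of $\Gamma$ preserving $W^s_f$ or $W^u_f$. Pulling this back, there is an index-$\leq 2$ subgroup of $\Gamma_B$ whose image preserves (say) the unstable foliation $W^u_f$. Then I would apply the linearizing conjugacy $h_0$ provided by Franks' theorem (which carries $W^u_f$ to the linear unstable foliation $\FF$ of $A$): setting $g = h_0 g_0$ and conjugating, the image of this subgroup preserves $\FF$, giving (1) and (2) — after possibly replacing $A$ by $A^{-1}$ to arrange that it is the unstable rather than stable foliation. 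One has to check that the index-$2$ subgroup, intersected with $\Gamma_B$, still contains a finite-index copy of $\Gamma_B$ (abstractly isomorphic to $\Gamma_B$); this is a routine fact about finite-index subgroups of $\Z \ltimes_B \Z^2$ — a finite-index subgroup of $\Gamma_B$ contains $\Z \ltimes_{B^k} (m\Z)^2$ for suitable $k, m$, which is isomorphic to $\Gamma_{B^k} \cong \Gamma_{B}$ up to an inner-type identification, or at least to a group of the same type; I would state this as a small lemma or cite it.

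**Item (3): the $\Z^2$ part is homotopically trivial.** This is the content flagged as "Corollary \ref{ABCeasy}" in the outline and it uses $\PSL_2(\Z) \cong \Z_2 * \Z_3$. The homotopy class of $\Phi'(\gamma)$ for $\gamma \in \Gamma'$ gives a homomorphism $\Gamma' \to \SL_2(\Z)$ (the action on $H_1(\T^2) = \Z^2$, with the standard symplectic orientation forcing determinant $1$), sending $B \mapsto A$. Restricting to $\Z^2 \cap \Gamma'$: this is a normal abelian subgroup, so its image $H$ in $\SL_2(\Z)$ is an abelian subgroup that is normalized by $A$. I want to show $H$ is trivial (or, a priori, finite — but then one passes to a further finite-index subgroup to kill it). In $\SL_2(\Z)$, $A$ hyperbolic generates a cyclic group whose normalizer/centralizer structure is governed by the unit group of the order $\Z[A]$; an abelian subgroup normalized by $A$ and not contained in $\{\pm I\}$ would have to share eigenvectors with $A$ (the only way to commute-up-to-finite with a hyperbolic element), but no element of $\SL_2(\Z)$ other than $\pm I$ fixes the irrational eigendirections of $A$ while also... — the clean way is via $\PSL_2(\Z) = \Z_2 * \Z_3$: in a free product $\Z_2 * \Z_3$, every abelian subgroup is cyclic of order $1$, $2$, or $3$ (abelian subgroups of free products are either conjugate into a factor or infinite cyclic; an infinite cyclic subgroup normalized by the image of $A$, which is infinite order, would generate a solvable hence virtually-cyclic subgroup, and infinite solvable subgroups of $\Z_2 * \Z_3$ are virtually cyclic — but then $A$ and $H$ lie in a common virtually-$\Z$ subgroup, impossible since $H$ commutes with $B$-conjugation forcing $H$ to be $B$-invariant lattice-like and one derives a contradiction from hyperbolicity). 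Concretely: the image of $\Z^2 \cap \Gamma'$ is a finite subgroup of $\PSL_2(\Z)$, hence of order dividing $6$; passing to the kernel of $\Gamma' \to \PSL_2(\Z)$ restricted to $\Z^2$ — a further finite-index subgroup — we may assume it is trivial in $\PSL_2(\Z)$, i.e. each $\Phi'(\gamma)$, $\gamma \in \Z^2$, acts as $\pm I$ on homology; and then a final index-$2$ pass (or the observation that $-I$ has finite order while $\Z^2$ is torsion-free, so actually the image is trivial already) gives homotopic-to-identity.

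**The main obstacle.** The analytic steps (Franks' theorem, the linearizing conjugacy respecting foliations, applying Theorem \ref{main2}) are essentially black boxes here, so the genuinely delicate bookkeeping is the algebra: ensuring that after the two or three finite-index passages (one from the index-$2$ foliation-preserving subgroup, one or two from killing the homotopy type of the $\Z^2$ part) the resulting subgroup $\Gamma'$ is still \emph{isomorphic to $\Gamma_B$} and not merely some finite-index subgroup of it. The cleanest route is to note that any finite-index subgroup of $\Gamma_B = \Z \ltimes_B \Z^2$ contains a subgroup of the form $\Z\langle B^k\rangle \ltimes_{B^k} \Lambda$ with $\Lambda \subseteq \Z^2$ a finite-index ($B^k$-invariant, after enlarging $k$) sublattice, and such a group is isomorphic to $\Gamma_{B^k}$; since $B$ is Anosov, $B^k$ is Anosov and $\Gamma_{B^k}$ is again a polycyclic ABC group of the same type, so for the purposes of Theorem \ref{ABC3} this is harmless — but to get the statement exactly as written ("$\Gamma'$ isomorphic to $\Gamma_B$") one uses that $\Lambda \cong \Z^2$ and a change of basis conjugating $B^k|_\Lambda$ into $\SL_2(\Z)$, giving $\Gamma' \cong \Z \ltimes_{B'} \Z^2$ for $B' \in \SL_2(\Z)$ conjugate to $B^k$; I would either absorb this into the statement (allowing $B$ to change to a conjugate of a power) or remark that it suffices. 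I expect this normalization lemma to be the only place requiring care; everything else is assembling the cited results.
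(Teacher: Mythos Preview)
Your approach is essentially the same as the paper's: Franks' theorem for item (1), Theorem \ref{main2} together with the absence of free subgroups in the polycyclic group $\Gamma_B$ for item (2), and the virtual freeness of $\PSL_2(\Z)\cong \Z_2*\Z_3$ to force $\Psi(\Z^2)$ finite for item (3). The structure and the key inputs match.

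One simplification you are missing, which the paper exploits and which dissolves the ``main obstacle'' you flag: the element $B$ is \emph{automatically} contained in the foliation-preserving index-$2$ subgroup, because $\Phi(B)$ preserves its own stable and unstable foliations. Concretely, the subgroup produced by Theorem \ref{main2} is $\Gamma_0=\{h\in\Gamma: h(W^u_f)=W^u_f\}$ (or the analogous one for $W^s_f$), and $\Phi(B)\in\Gamma_0$ trivially. Hence its preimage $\Gamma_1\subset\Gamma_B$ contains $B$, and likewise the subgroup $\Gamma_2=\Z\ltimes_B k\Z^2$ coming from item (3) contains $B$. Their intersection therefore contains a subgroup of the form $\langle B\rangle\ltimes_B k'\Z^2$, and since $k'\Z^2\cong\Z^2$ as a $B$-module (via scaling by $k'$), this is isomorphic to $\Gamma_B$ on the nose. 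So there is no need to pass to $B^k$, no $\Gamma_{B^k}$, and no normalization lemma --- the bookkeeping you worry about simply does not arise.
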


\begin{proof}

By Franks Theorem \cite{F},  there is a homeomorphism $g \in \T^2$ homotopic to the identity such that the conjugate $g\Phi(B)g^{-1}$ is an Anosov linear map of $\T^2$ homotopic to the action $\Phi(B)_*$ of $\Phi(B)$ on $H_1(\T^2,\Z)$. We denote by $A=\Phi(B)_*\in \SL_2(\Z)$. 

By Theorem \ref{main2}, there exists a finite index subgroup of $\Gamma_1 \subset \Gamma_B$ that preserves one of the stable or unstable foliations of $A$ denoted by $\FF$, moreover we can assume that such subgroup $\Gamma_1$ contains $B$, because $\Phi(B)$ preserves $\FF$.

Now observe that $\Phi$ gives a homomorphism: $$\Psi:  \Gamma_B \to \mathrm{MCG}(\T^2)$$ where $\mathrm{MCG}(\T^2)$ is the mapping class group of $\T^2$. It is well known that $\mathrm{MCG}(\T^2)$ is isomorphic to $\mathrm{SL}_2(\Z)$ and $\mathrm{PSL}_2(\Z)$ is isomorphic to the free product $\mathbb{Z}_2 *\mathbb{Z}_3$ which is virtually free. As $\Gamma_B$ is a solvable group and $\Psi(B)$ has infinite order (because $\Phi(B)$ is Anosov), by using that $B$ normalizes $\Z^2$ and $\mathrm{MCG}(\T^2)$ virtually free, it follows easily that $\Psi(\Z^2)$ must be finite and so we can find a subgroup of the form $\Z \ltimes_B k\Z^2$ for some $k \neq 0$ such that $\Psi(k\Z^2)$ is trivial, we call this group $\Gamma_2$.

The group  $\Gamma_3 = \Gamma_1 \cap \Gamma_2$ satisfy both (1) and (2) and contains a subgroup $\Gamma'$ of the form  $\Z \ltimes_B k'\Z^2$ for some $k' \neq 0$, which is isomorphic to $\Gamma_B$.
\end{proof}
\subsection{Rotation sets}\label{SSRotation}

We now discuss rotation sets for toral homeomorphisms and some of its properties.

\begin{definition} For a homeomorphism $f: \T^n \to \T^n$ homotopic to the identity of $\T^n = \R^n/\Z^n$ and a lift $\tilde{f}: \R^n \to \R^n$ of $f$ the \emph{Misiurewicz-Ziemian rotation set} $\rot(\tilde{f})$ is defined to be the subset of $\R^n$ consisting of limits of sequences of the form $$\bigg\{\frac{\tilde{f}^{n_i}(x_i) - x_i}{n_i} \bigg\}$$ where $x_i \in \R^n$ and $n_i \to \infty$.
\end{definition}

The rotation set is an important dynamical invariant and it is known to be compact for arbitrary $n$ and  convex in the case $n = 2$, see \cite{MZ}. Observe also that the rotation set depends on the lift $\tilde{f}$ but it is defined for $f$ up to translation by an element of $\Z^n$. Observe also that if $\rot(\tilde{f})$ consists of only one point, we have that for every $x, y \in \R^n$:
\begin{equation}\label{eq1}
\lim_{n \to \infty} \frac{({\tilde{f}}^n(x) - x) - ({\tilde{f}}^n(y) - y)}{n} = 0
\end{equation}
and moreover such limit converges uniformly with respect to $x$ and $y$ in $\R^n$.

\begin{definition}
For an $f$-invariant probability measure $\mu$ on $\T^n$ and a lift $\tilde{f}: \R^n \to \R^n$, one can also define the \emph{average rotation vector} $\rot_{\mu}(\tilde{f})$ by $$\rot_{\mu}(\tilde{f}) = \int_{\T^n}  \tilde{f}(\tilde{x}) - \tilde{x} \ d\mu.$$
\end{definition}

When the measure $\mu$ is ergodic, Birkhoff's theorem implies that $\rot_{\mu}(\tilde{f}) \in \rot(\tilde f)$ and so in the case when  $n = 2$, the convexity of $\rot(\tilde f)$, implies that for any invariant probability measure $\mu$, $\rot_{\mu}(\tilde{f}) \in \rot(\tilde f)$. We will use the following fact repeatedly:

\begin{lemma}
Let $f: \T^2 \to \T^2$ be a homeomorphism  homotopic to the identity and $\tilde{f}: \R^2 \to \R^2$ be a lift of $f$. Let $r$ be an extreme point of the convex set $\rot(\tilde{f})$. Then  there exists an invariant ergodic probability measure $\mu$ on $\T^2$ such that $\rot_{\mu}(\tilde{f}) = r$.
\end{lemma}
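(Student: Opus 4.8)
The plan is to identify $\rot(\tilde f)$ with the set of average rotation vectors of $f$-invariant probability measures, and then extract an ergodic representative of the extreme point $r$ by an ergodic decomposition argument.

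First I would record the basic mechanism. Since $f$ is homotopic to the identity, the lift $\tilde f$ commutes with integer translations, so the displacement $\tilde x \mapsto \tilde f(\tilde x)-\tilde x$ is $\Z^2$-periodic and descends to a continuous map $\phi\colon \T^2\to\R^2$ with $\rot_\mu(\tilde f)=\int_{\T^2}\phi\,d\mu$. Hence $\mu\mapsto \rot_\mu(\tilde f)$ is affine and weak-$*$ continuous on the weak-$*$ compact convex set $\mathcal M_f$ of $f$-invariant Borel probability measures, so $\mathcal R:=\{\rot_\mu(\tilde f):\mu\in\mathcal M_f\}$ is a compact convex subset of $\R^2$. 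I would then prove $\mathcal R=\rot(\tilde f)$. For $\mathcal R\subseteq\rot(\tilde f)$: if $\mu$ is ergodic, Birkhoff's theorem gives $\tfrac1n\sum_{k=0}^{n-1}\phi(f^k x)\to\rot_\mu(\tilde f)$ for $\mu$-a.e.\ $x$, and choosing the lifts $\tilde f^k(\tilde x)$ of $f^k(x)$ makes the left-hand side telescope to $(\tilde f^n(\tilde x)-\tilde x)/n$, so $\rot_\mu(\tilde f)\in\rot(\tilde f)$; for general $\mu$ one invokes the ergodic decomposition together with the convexity and compactness of $\rot(\tilde f)$ from \cite{MZ}. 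For the reverse inclusion $\rot(\tilde f)\subseteq\mathcal R$: given $v=\lim_i (\tilde f^{n_i}(x_i)-x_i)/n_i$ with $n_i\to\infty$, form the empirical measures $\mu_i=\tfrac1{n_i}\sum_{k=0}^{n_i-1}\delta_{f^k(x_i)}$ on $\T^2$ and pass to a weak-$*$ convergent subsequence $\mu_i\to\mu$; the Krylov--Bogolyubov estimate $f_*\mu_i-\mu_i=\tfrac1{n_i}(\delta_{f^{n_i}(x_i)}-\delta_{x_i})\to 0$ shows $\mu\in\mathcal M_f$, and the same telescoping gives $\int\phi\,d\mu=\lim_i\int\phi\,d\mu_i=v$, whence $v=\rot_\mu(\tilde f)\in\mathcal R$.

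Finally, since $r$ is an extreme point of $\rot(\tilde f)=\mathcal R$, there is $\mu\in\mathcal M_f$ with $\rot_\mu(\tilde f)=r$. Taking an ergodic decomposition $\mu=\int\mu_\omega\,d\omega$ and integrating $\phi$ yields $r=\int\rot_{\mu_\omega}(\tilde f)\,d\omega$ with $\rot_{\mu_\omega}(\tilde f)\in\rot(\tilde f)$ for a.e.\ $\omega$ (each $\mu_\omega$ being ergodic). I would then use the elementary fact that if $r$ is an extreme point of a compact convex $K\subset\R^2$ and $\nu$ is a Borel probability measure on $K$ with barycenter $r$, then $\nu=\delta_r$: otherwise there is a closed set $C\subseteq K\setminus\{r\}$ with $\nu(C)\in(0,1)$, and splitting $\nu$ into its restrictions to $C$ and $K\setminus C$ exhibits $r$ as a proper convex combination of the two barycenters, the first of which lies in $\mathrm{conv}(C)\not\ni r$, contradicting extremality. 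Applying this to the push-forward of $d\omega$ under $\omega\mapsto \rot_{\mu_\omega}(\tilde f)$ forces $\rot_{\mu_\omega}(\tilde f)=r$ for a.e.\ $\omega$; choosing any such $\omega$ and setting $\mu:=\mu_\omega$ completes the proof. The only mildly delicate points are the bookkeeping of lifts that makes the telescoping valid and the barycenter fact above; neither is a genuine obstacle, and indeed the statement is essentially the assertion of \cite{MZ} that extreme points of the rotation set are realized by ergodic measures.
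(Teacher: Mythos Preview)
Your proof is correct. The paper itself does not prove this lemma: it is stated there as a known fact (``We will use the following fact repeatedly'') drawn from the Misiurewicz--Ziemian theory \cite{MZ}, so there is no paper proof to compare against. Your argument --- identifying $\rot(\tilde f)$ with the set $\{\rot_\mu(\tilde f):\mu\in\mathcal M_f\}$ via the displacement cocycle and a Krylov--Bogolyubov limit, then using ergodic decomposition together with the barycenter characterization of extreme points --- is the standard route and is carried out cleanly; you also correctly flag at the end that this is essentially the content of \cite{MZ}.
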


\subsection{Trivializing the rotation sets}\label{SSRSTrivial}

The main result of this section is the following.

\begin{proposition}\label{difficult}

Suppose $B \in \SL_2(\Z)$ is Anosov and $\Phi: \Gamma_B \to \mathrm{Homeo}(\T^2)$ is such that:

\begin{enumerate}
\item $A = \Phi(B)$ is an Anosov linear map.
\item Let $e_1, e_2$ two generators of $\Z^2 \subset \Gamma_B$. Assume $\Phi(e_1)$ and $\Phi(e_2)$ are homotopic to the identity and  preserve the unstable foliation $\FF$ of $A$.
\end{enumerate}

Then if $tr(A) \neq tr(B)$, the rotation set of $\Phi(e_i),\ i=1,2,$ consists of a single point.

\end{proposition}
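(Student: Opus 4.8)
The plan is to exploit the group relation $A\rho = \rho B \ (\mathrm{mod}\ \Z^{2\times2})$ at the level of rotation sets. Write $\tilde f_1, \tilde f_2$ for fixed lifts of $\Phi(e_1), \Phi(e_2)$ and $\tilde A$ for the linear lift of $A$. Since $\Phi(e_1), \Phi(e_2)$ commute and both preserve the unstable foliation $\FF$ of $A$ (whose leaves are lines parallel to $u_A$), I first want to show that $\rot(\tilde f_i)$ is either a point or a segment parallel to $u_A$. The key observation: because $\Phi(e_i)$ preserves each leaf of $\FF$ setwise (after passing to a finite-index subgroup so that the permutation of leaf-classes is trivial — or by noting it preserves the foliation and is homotopic to the identity, hence preserves each leaf up to the $\Z^2$-translation structure), the displacement $\tilde f_i(x) - x$ has a well-defined component transverse to $u_A$ which is bounded, forcing $\rot(\tilde f_i) \subset \R u_A + (\text{fixed vector})$, a segment (possibly degenerate) in the direction $u_A$. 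So the remaining task is to rule out a nondegenerate segment.

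Next I would use the conjugacy relation. From $\Phi(B)\Phi(e_i)\Phi(B)^{-1} = \Phi(B e_i)$ and the semidirect-product structure, one gets at the level of lifts $\tilde A \tilde f_i \tilde A^{-1} = \tilde f_1^{\,b_{1i}} \tilde f_2^{\,b_{2i}} \circ (\text{translation by an element of } \Z^2)$ where $B = (b_{ij})$. Applying the functorial behavior of the rotation set under this relation — $\rot(\tilde A \tilde f_i \tilde A^{-1}) = A \cdot \rot(\tilde f_i)$, and the rotation set of a product/power behaves additively when all maps commute — I obtain that the map $B$ acting on $\R^2$ (or rather $\tilde A$ acting by $A$) sends the pair of segments $(\rot(\tilde f_1), \rot(\tilde f_2))$ to an affine combination of itself governed by $B$; concretely the "rotation matrix" $\rho$ built from the endpoints satisfies $A\rho = \rho B \ (\mathrm{mod}\ \Z^{2\times 2})$. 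But when $tr(A) \neq tr(B)$, Proposition \ref{LmNull}(1) says every solution $\rho$ of $A\rho = \rho B \ (\mathrm{mod}\ \Z^{2\times2})$ is rational. If $\rot(\tilde f_i)$ were a nondegenerate segment, its direction $u_A$ is the unstable eigendirection of $A$, which is irrational (a Perron eigenvector of an integer Anosov matrix has irrational slope); combined with the rationality of $\rho$ this forces the segment to degenerate to a point — contradiction. I would spell this out by applying the length/direction functional: the length of the $A$-image segment scales by $|\lambda_A|$ while simultaneously, via the $B$-side, the endpoints must lie in a rational lattice translate, which is incompatible with an irrational direction unless the length is zero.

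Alternatively, and this is probably the cleaner route to organize the argument: introduce the joint rotation set of $\Phi(\Z^2)$ (the set of average rotation vectors of $\Phi(\Z^2)$-invariant measures, as promised in the outline, Definition \ref{jointrota}), observe it is a compact convex set invariant under the induced linear action of both $A$ and $B$, and note that $\rot(\tilde f_i)$ is its image under the $i$-th coordinate projection. Since $A$ is hyperbolic with irrational eigendirections, any compact $A$-invariant convex subset of $\R^2$ is a single point; this immediately gives that each $\rot(\tilde f_i)$ is a point, regardless of the $tr(A)$ versus $tr(B)$ comparison — but the hypothesis $tr(A)\neq tr(B)$ is what lets the invariance be genuinely under a hyperbolic $A$ with no rational invariant line issues, and is exactly where Proposition \ref{LmNull}(1) enters to handle the bookkeeping of the mod-$\Z^{2\times2}$ ambiguity in $A\rho = \rho B$. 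The main obstacle I anticipate is precisely this last point: making rigorous that the mod-$\Z^{2\times 2}$ (i.e., lift-dependent) ambiguity in the relation $A\rho = \rho B$ does not create spurious nondegenerate invariant segments — one must track lifts carefully and use that the transverse component of the displacement is bounded to pin down the correct integer translation, after which the hyperbolicity of $A$ and the rationality conclusion of Proposition \ref{LmNull}(1) close the argument.
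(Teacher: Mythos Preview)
Your overall strategy is on the right track---the joint rotation set $\Sr$ is indeed the key object, and the paper's proof proceeds exactly through Proposition~\ref{properties}---but there is a genuine gap in how you identify the invariance and where the hypothesis $tr(A)\neq tr(B)$ enters.

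The set $\Sr$ lives in $\mathcal{M}_{2\times 2}(\R)$, not in $\R^2$, and it is \emph{not} separately invariant under $A$ and under $B$. What Proposition~\ref{conjugarot} gives is that pushing a $\Z^2$-invariant measure $\mu$ forward by $A$ produces another $\Z^2$-invariant measure $A_*\mu$ with
\[
\big(\rot_{A_*\mu}(f_{e_1}),\rot_{A_*\mu}(f_{e_2})\big) \;=\; A\,\big(\rot_{\mu}(f_{e_1}),\rot_{\mu}(f_{e_2})\big)\,B^{-1} + (w_1,w_2),
\]
so the relevant map on $\Sr$ is the single affine transformation $R\mapsto ARB^{-1}+W$, and on differences one gets the linear map $R\mapsto ARB^{-1}$ acting on $\Sr-\Sr$. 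Your sentence ``since $A$ is hyperbolic\dots any compact $A$-invariant convex subset of $\R^2$ is a single point'' does not apply: the invariance is not under $A$ alone, and the claim as written would force $\Sr$ to be a point regardless of $B$, which is false (Section~\ref{mistakecorrection} treats precisely the case where $\Sr$ is a nontrivial segment).

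Likewise, your first approach of invoking Proposition~\ref{LmNull}(1) on ``the rotation matrix built from the endpoints'' does not work as stated: an arbitrary point $R\in\Sr$ does \emph{not} satisfy $AR=RB\pmod{\Z^{2\times2}}$; only fixed points of $R\mapsto ARB^{-1}+W$ do. Knowing that a fixed point is rational says nothing about the length of the segment through it in the irrational direction $u_A$. The correct mechanism is different: since every element of $\Sr-\Sr$ has the form $(t_1\bu,t_2\bu)$, the map $R\mapsto ARB^{-1}$ restricts to the linear map $(t_1,t_2)\mapsto \lambda_A\,(t_1,t_2)B^{-1}$ on this $2$-plane, with eigenvalues $\lambda_A\lambda_B$ and $\lambda_A\lambda_B^{-1}$. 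The first always has modulus $>1$; the second has modulus $\neq 1$ exactly when $|\lambda_A|\neq|\lambda_B|$. Under that hypothesis the map is hyperbolic, so the bounded set $\Sr-\Sr$ (invariant under this map and its inverse) must be $\{0\}$, hence $\Sr$ is a point. This is the content of Proposition~\ref{weird}, and it is where the trace hypothesis actually enters---not through rationality of a single solution, but through the spectrum of the combined $A,B$ action on matrix differences.
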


We proceed to prove this proposition. We begin by showing that the rotation set of $\Phi(e_i),\ i=1,2,$ is either a segment of a line or a point. Let $\bu \in \R^2$ be the expanding eigenvector of $A$, we have the following:

\begin{proposition}\label{segment} For each $v \in \Z^2$, the rotation set of a lift $f_v$ of $\Phi(v)$ is a segment of the form
$$S = \big\{ \alpha_v  + t\bu \text{, where } t \in [l_v^{-}, l_v^{+}] \big\}$$ where $\alpha_v \in \R^2$ and $l_v^{-},l_v^{+} \in \R$.
\end{proposition}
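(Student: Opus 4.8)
The plan is to exploit the semi-conjugacy structure forced by the relation $Ae_iA^{-1} = Be_i$ (read inside $\Gamma_B$) together with the fact that $\Phi(v)$ preserves the unstable foliation $\FF$ of the linear Anosov map $A$. First I would fix $v \in \Z^2$, write $f = \Phi(v)$ and pick a lift $\tilde f \colon \R^2 \to \R^2$; since $f$ is homotopic to the identity this is well-defined up to $\Z^2$. The rotation set $\rot(\tilde f)$ is a compact convex subset of $\R^2$ by \cite{MZ}, so it suffices to show it is contained in a line parallel to the expanding eigenvector $\bu$ of $A$. The key input is the conjugation relation: from $A\Phi(v)A^{-1} = \Phi(Bv)$ in $\mathrm{Homeo}(\T^2)$, lifting gives $\tilde A \tilde f \tilde A^{-1} = \widetilde{\Phi(Bv)}$ up to a translation in $\Z^2$, and hence on rotation sets $\tilde A \cdot \rot(\tilde f) = \rot(\widetilde{\Phi(Bv)}) + (\text{integer vector})$, where $\tilde A$ here denotes the linear map $A$ acting on $\R^2$. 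Iterating, $\rot(\widetilde{\Phi(B^n v)}) = A^n \rot(\tilde f) + w_n$ for integer vectors $w_n$.

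Next I would bound the size of the rotation sets uniformly: because $\Z^2 \subset \Gamma_B$ is abelian, all the maps $\Phi(v)$, $v \in \Z^2$, commute, and one has the elementary sub-additivity estimate $\mathrm{diam}\,\rot(\widetilde{\Phi(v+w)}) \le \mathrm{diam}\,\rot(\widetilde{\Phi(v)}) + \mathrm{diam}\,\rot(\widetilde{\Phi(w)})$ (this follows from $\widetilde{\Phi(v+w)} = \widetilde{\Phi(v)} \circ \widetilde{\Phi(w)}$ and the triangle inequality applied to Birkhoff-type averages). Writing $v = a e_1 + b e_2$ this gives $\mathrm{diam}\,\rot(\widetilde{\Phi(v)}) \le |a|\, \mathrm{diam}\,\rot(\widetilde{\Phi(e_1)}) + |b|\, \mathrm{diam}\,\rot(\widetilde{\Phi(e_2)}) =: c(|a|+|b|)$. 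Now apply this with $v = B^n e_i$: since $B$ is Anosov, $\|B^n e_i\|$ grows like $|\lambda_B|^n$ along the expanding direction of $B$, so $\mathrm{diam}\,\rot(\widetilde{\Phi(B^n e_i)}) \le C|\lambda_B|^n$. On the other hand $\rot(\widetilde{\Phi(B^n e_i)}) = A^n \rot(\widetilde{\Phi(e_i)}) + w_n$, so $A^n$ expands the diameter of $\rot(\widetilde{\Phi(e_i)})$ in the $\bu$-direction by $|\lambda_A|^n$ and contracts it in the stable direction of $A$. Decompose $\rot(\widetilde{\Phi(e_i)})$ — or rather its projection onto the line through the stable eigenvector of $A$ — and observe: if the rotation set had positive extent in any direction transverse to $\bu$, then applying $A^{-n}$ (i.e. running the argument with $B^{-n}$, using that $B^{-1}$ is also Anosov with $\bu$ now contracting and the $A$-stable direction expanding) would force the diameter to blow up faster than $C|\lambda_B|^{-n}$ allows, a contradiction. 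Hence $\rot(\widetilde{\Phi(e_i)})$, and more generally $\rot(\widetilde{\Phi(v)})$, lies on a line parallel to $\bu$.

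The remaining point is to identify the base point $\alpha_v$ and the interval $[l_v^-, l_v^+]$, but these are just coordinates on the line once we know the set is an interval contained in a $\bu$-parallel line: compactness of $\rot(\tilde f)$ gives the endpoints, and convexity gives that it is exactly the segment between them. I would also need to double-check the foliation hypothesis is actually used — it enters to guarantee that the transverse (stable-$A$) direction is the one that gets contracted the "right" way, or alternatively it can be folded into the comparison of $|\lambda_A|^{\pm n}$ versus $|\lambda_B|^{\pm n}$; in either formulation the dichotomy "diameter in transverse direction is zero" is forced. The main obstacle I anticipate is the bookkeeping with the additive integer-vector corrections $w_n$: one must make sure these corrections, which come from the ambiguity of lifts, do not themselves contribute to the diameter (they don't, since they are translations, which leave $\rot$ invariant up to translation), and that the uniform sub-additive bound is genuinely uniform in $n$ — this requires care but no new idea.
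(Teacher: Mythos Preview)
Your argument has a genuine gap. The sub-additive bound you invoke gives
\[
\mathrm{diam}\,\rot(\widetilde{\Phi(B^{-n}e_i)}) \;\le\; C\,\|B^{-n}e_i\|_1,
\]
but since $B^{-1}\in\SL_2(\Z)$ is Anosov with the same spectral radius $|\lambda_B|$, the right-hand side grows like $C|\lambda_B|^{n}$, not $C|\lambda_B|^{-n}$ as you write. With the correct bound, comparing against the $|\lambda_A|^{n}$ expansion of $A^{-n}$ in the stable-$A$ direction yields only
\[
\delta \le C\,(|\lambda_B|/|\lambda_A|)^{n},
\]
which forces $\delta=0$ precisely when $|\lambda_A|>|\lambda_B|$. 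In the regime $|\lambda_A|\le|\lambda_B|$ (which the paper explicitly treats later), your growth comparison gives nothing. You acknowledge that the foliation-preservation hypothesis should enter somewhere, but you never actually use it, and no eigenvalue juggling will repair this case.

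The paper's proof is much shorter and uses that hypothesis directly. Because $A$ is linear, the unstable foliation $\FF$ consists of parallel lines in the $\bu$ direction; the lift $f_v$ sends leaves to leaves, commutes with integer translations, and the $\Z^2$-translates of any leaf are dense (irrational slope). These three facts force $f_v(\ell)=\ell+\alpha_v$ for \emph{every} leaf $\ell$ and a single $\alpha_v\in\R^2$ independent of $\ell$. Consequently $f_v^n(x)-x-n\alpha_v\in\R\bu$ for all $x$ and $n$, so every accumulation point of $\frac{f_v^n(x)-x}{n}$ lies on $\alpha_v+\R\bu$; convexity and compactness then give the segment. No comparison of $|\lambda_A|$ and $|\lambda_B|$ is needed at this stage.
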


\begin{proof}

Recall that $A$ is a linear anosov map the unstable foliation  $\FF$ is linear. Using that $f_v$ preserves the unstable foliation $\FF$ of $A$, we have that $f_v(\ell_0) = \ell_0 + \alpha_v$ for some $\alpha_v \in \R^2$ and some leaf $\ell_0 \in \FF$. Therefore, we have $f_v(\ell_0 + w) = \ell_0 + w +\alpha_v$ for every $w\in \Z^2$ because $f_v$ commutes with $\Z^2$ translations. As for the (irrational) foliation $\mathcal{F}$ the leaves $\ell_0 + w$ where $w \in \Z^2$ are dense in $\R^2$ we conclude that $f_v(\ell) = \ell + \alpha_v$ for every $\ell \in \FF$.

Therefore for every $x \in \R^2$ we have that  $f^n_v(x) - x -  n\alpha_v$ must lie in the line $\ell' = \{t\bu , t \in \R\}$. This together with the convexity of $\rot(f_v)$ imply Proposition \ref{segment} easily.
\end{proof}

Moreover we have the following:

\begin{proposition} The following dichotomy holds:

\begin{enumerate}\label{lastfix}
\item For every $v \in \Z^2\setminus\{0\}$,  the rotation set of a lift $f_v$ of $\Phi(v)$ is a non-trivial segment.
\item There is a finite index subgroup $L$ of $\Z^2$ such that for every $v \in L\setminus\{0\}$,  the rotation set of a lift $f_v$ of $\Phi(v)$ is a point.
\end{enumerate}

\end{proposition}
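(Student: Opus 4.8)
The plan is to compress the rotation‑set information of the translation subgroup $\Z^2\subset\Gamma_B$ into a single scalar function on $\Z^2$ and to show that its zero set is a $B$‑invariant subgroup. For $v\in\Z^2$ — all such $\Phi(v)$ being homotopic to the identity and preserving the unstable foliation $\FF$ of $A$ — Proposition~\ref{segment} gives that the rotation set of a lift of $\Phi(v)$ is a segment $\alpha_v+[l_v^-,l_v^+]\bu$ parallel to the expanding eigenvector $\bu$ of $A$; put $\ell(v):=l_v^+-l_v^-\ge 0$. Two lifts of $\Phi(v)$ differ by an integer translation, so $\ell(v)$ does not depend on the lift; it is finite because the displacement of any lift of $\Phi(v)$ is bounded, hence so is its rotation set; and the rotation set of $\Phi(v)$ is a single point exactly when $\ell(v)=0$. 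Thus the dichotomy reduces to showing that $N:=\{v\in\Z^2:\ell(v)=0\}$ is a $B$‑invariant subgroup of $\Z^2$: $B$ being Anosov, its characteristic polynomial $x^2-\mathrm{tr}(B)x+1$ has non‑square discriminant, hence is irreducible over $\Q$, so $B$ has no rational invariant line and the only $B$‑invariant subgroups of $\Z^2$ are $\{0\}$ and the finite‑index ones; the case $N=\{0\}$ is alternative (1), and the case $N$ of finite index is alternative (2) with $L:=N$.

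Two of the three needed properties of $\ell$ are easy. \emph{Homogeneity:} $\ell(nv)=|n|\,\ell(v)$ for $n\in\Z$, because $\Phi(nv)=\Phi(v)^n$ and the Misiurewicz–Ziemian rotation set satisfies $\rot(\tilde g^n)=n\,\rot(\tilde g)$ for any lift $\tilde g$. \emph{Equivariance under $B$:} since $\Phi(B)=A$ is linear and the relations of $\Gamma_B$ give $A\,\Phi(v)\,A^{-1}=\Phi(Bv)$, the map $A f_v A^{-1}$ — with $A$ now also denoting its linear lift and $f_v$ a lift of $\Phi(v)$ — is a lift of $\Phi(Bv)$ satisfying $(A f_v A^{-1})^n=A f_v^n A^{-1}$, whence $\rot(A f_v A^{-1})=A\,\rot(f_v)$; as $A\bu=\lambda_A\bu$, the linear map $A$ sends a segment of length $L$ in direction $\bu$ to one of length $|\lambda_A|L$, so $\ell(Bv)=|\lambda_A|\,\ell(v)$.

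The remaining property, \emph{subadditivity} $\ell(v+w)\le\ell(v)+\ell(w)$, is the crux. First I would promote the commutativity of $\Phi(v)$ and $\Phi(w)$ (automatic, since $\Z^2$ is abelian) to commutativity of lifts: if $\tilde f_v,\tilde f_w$ are lifts, then $\tilde f_v\tilde f_w$ and $\tilde f_w\tilde f_v$ both lift $\Phi(v+w)$, so $\tilde f_v\tilde f_w=\tau_c\,\tilde f_w\tilde f_v$ for an integer translation $\tau_c$ with $c\in\Z^2$; since integer translations are central among lifts of maps homotopic to the identity, iterating shows that $(\tilde f_v\tilde f_w)^n$ equals $\tilde f_v^n\tilde f_w^n$ composed with the translation by $\pm\binom{n}{2}c$, and as both $(\tilde f_v\tilde f_w)^n$ and $\tilde f_v^n\tilde f_w^n$ have displacement $O(n)$ this forces $c=0$. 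With commuting lifts $(\tilde f_v\tilde f_w)^n=\tilde f_v^n\tilde f_w^n$, so for any $x_i$ and $n_i\to\infty$, setting $y_i:=\tilde f_w^{n_i}(x_i)$ one has $(\tilde f_v\tilde f_w)^{n_i}(x_i)-x_i=(\tilde f_v^{n_i}(y_i)-y_i)+(\tilde f_w^{n_i}(x_i)-x_i)$; dividing by $n_i$ both summands are bounded, so along a subsequence they converge, and by the very definition of the Misiurewicz–Ziemian rotation set their limits lie in $\rot(\tilde f_v)$ and $\rot(\tilde f_w)$. Hence $\rot(\tilde f_v\tilde f_w)\subseteq\rot(\tilde f_v)+\rot(\tilde f_w)$, and comparing lengths gives $\ell(v+w)\le\ell(v)+\ell(w)$.

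These properties finish the argument: $N$ is a subgroup, since $0\le\ell(v\pm w)\le\ell(v)+\ell(w)=0$ whenever $\ell(v)=\ell(w)=0$, and $N$ is $B$‑invariant, since $\ell(Bv)=|\lambda_A|\ell(v)$ vanishes iff $\ell(v)$ does; one then concludes as in the first paragraph. The main obstacle is the subadditivity step: the inclusion $\rot(fg)\subseteq\rot(f)+\rot(g)$ for commuting toral homeomorphisms is false in general, and becomes available here only after passing to commuting lifts, which works precisely because a single torus homeomorphism homotopic to the identity has bounded displacement. By contrast, homogeneity, the $B$‑equivariance, and the elementary classification of $B$‑invariant subgroups of $\Z^2$ are all routine.
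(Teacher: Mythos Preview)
Your proof is correct and follows a route closely related to, but organized differently from, the paper's. The paper argues directly: assuming some $w$ has a single-point rotation set, so does $Bw$ (by conjugation with $A$), and then the finite-index subgroup $L$ generated by $\{w,Bw\}$ works because of the lemma ``point + segment = segment'' (Proposition~\ref{sumsegment}), whose proof exploits the \emph{uniform} convergence \eqref{eq1} available when one rotation set is a singleton. You instead define the length function $\ell$, show $N=\{v:\ell(v)=0\}$ is a $B$-invariant subgroup, and invoke the easy classification of $B$-invariant subgroups of $\Z^2$. Your key step is the inclusion $\rot(\tilde f_v\tilde f_w)\subseteq\rot(\tilde f_v)+\rot(\tilde f_w)$ for commuting lifts, proved by a boundedness-plus-subsequence argument that needs neither rotation set to be a point; this is strictly more general than the paper's Proposition~\ref{sumsegment}. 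Your commuting-lifts argument via the $O(n)$ versus $\binom{n}{2}$ displacement growth is also different from the paper's later treatment, which establishes commutativity of lifts using the foliation $\FF$ directly. Both proofs ultimately rest on the same three ingredients---conjugation equivariance, an additivity property of rotation sets, and the absence of rational $B$-eigenvectors---but your packaging as ``$N$ is a $B$-invariant subgroup'' is structurally cleaner, while the paper's version is more concrete.
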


\begin{proof}
Assume that for some $w \in \Z^2\setminus\{0\}$,  the rotation set of a lift $f_w$ of $\Phi(w)$ is a point. We will show in that case that there is a finite index subgroup $L<\Z^2$ such that for every $v \in L\setminus\{0\}$, the rotation set of a lift $f_v$ of $\Phi(v)$ is a point. 

Observe that for every $v \in \Z^2$ and any integer $n$,  the rotation set of a lift of $f_v$ of $\Phi(v)$ consists of a single point if and only if the rotation set of  $f_{nv}$ consists of a single point. 

As $\Phi(B(w))$ is conjugate to $\Phi(w)$ by $A$ using the group relation in $\Gamma_B$, the rotation set of a lift of $\Phi(B(w))$ must also consist of a single point. Observe that as $B$ is Anosov, the vectors  $w$ and $B(w)$ generate a group $L$ of finite index in $\Z^2$, and so it is enough to show that the rotation set of $\Phi(v)$ for any $v \in L\setminus\{0\}$ consists of a single point.

Therefore, let  $v = nw + mB(w)$, then each of the rotation sets of $f_{nw}$ and $f_{-mB(w)}$ consists of a point, so does the rotation set of  $f_{nw + mB(w)}$,  because if it was a segment, then we obtain a contradiction to the following:

\begin{proposition}\label{sumsegment}
Let $v,w \in \Z^2$, if the rotation set of a lift $f_v$ of $\Phi(v)$ consists of a single point and the rotation set of $f_w$ is a non-trivial segment, then the rotation set of a lift $f_{v+w}$ of $\Phi(v + w)$ is a non-trivial segment.
\end{proposition}

\begin{proof}

Let $\rot(f_v) = \{\alpha\}$, and observe that $f_vf_w$ is a lift of $\Phi(v+w)$. By the definition of rotation set, if $\beta_1$ and $\beta_2$ are two different rotation vectors for $f_w$, we have two sequences  $\{x_i\}, \{y_i\}$ in $\T^2$ and two divergent sequences $\{n_i\}$ and $\{m_i\}$ of positive integers such that 
$$ \lim \frac{{f_w}^{n_i}(x_i) - x_i}{n_i} = \beta_1 \ \ \text{ and } \ \ \lim \frac{{f_w}^{m_i}(y_i) - y_i}{m_i}  = \beta_2.$$

Moreover we have that for every $x \in \R^2$, $\frac{{f_v}^{n}(x) - x}{n} = \alpha$ and this convergence must be uniform for all $x \in \R^2$. It follows that 

$$ \lim \frac{{(f_vf_w)}^{n_i}(x_i) - x_i}{n_i} = \lim \frac{f_v^{n_i}f_w^{n_i}(x_i) - f_w^{n_i}(x_i)}{n_i} +  \lim \frac{{f_w}^{n_i}(x_i) - x_i}{n_i} = \alpha + \beta_1,$$ where the first limit holds because of the uniform convergence and similarly $\lim \frac{{(f_vf_w)}^{m_i}(y_i) - y_i}{m_i}  = \alpha + \beta_2$ and therefore the rotation set of $f_vf_w$ has two different vectors and by Proposition \ref{segment} must be a non-trivial segment.

\end{proof}

\end{proof}

We define the following subset of the set of $2\times2$ real matrices $\mathcal{M}_{2\times2}(\R)$ whose study will be crucial to finish the proof of proposition \ref{difficult}.

\begin{definition}\label{jointrota} For a fixed choice of lifts $f_{e_1}$ and $f_{e_2}$, the set of {\it joint rotation numbers} $\Sr \subset \mathcal{M}_{2\times2}(\R)$ is defined by:
$$\Sr := \bigg\{\big(\rot_{\mu}(f_{e_1}), \rot_{\mu}(f_{e_2}) \big) \in \mathcal{M}_{2\times2}(\R)  \ \bigg| \  \mu \text{ is } \Z^2\text{-invariant} \bigg\}.$$
\end{definition}

Observe that $\Sr$ must be contained in the bounded square $\rot(f_{e_1}) \times \rot(f_{e_2}) \subset \mathcal{M}_{2\times2}(\R)$ since a choice of lifts $f_{e_1}$ and $f_{e_2}$ is fixed.  Recall that for a subset $X$ of  $\mathcal{M}_{2\times2}(\R)$, $ X-X: = \{x-y\ |\ x,y \in X\}$.

Proposition \ref{difficult} follows easily from  the following  key proposition.

\begin{proposition}\label{properties} The set $\Sr$ satisfies the following properties:
\begin{enumerate}
\item $\Sr$ is a convex and compact set.
\item For every $n \in \Z$: if $R_1, R_2 \in \Sr$, then $A^n(R_1 - R_2)B^{-n}  \in \Sr - \Sr$.
\item One of the following holds:

\begin{enumerate}
\item The set $\rot(f_{e_1}) \times \rot(f_{e_2})$ consists of a single point.
\item $|\lambda_A|= |\lambda_B|$ and $\Sr$ is one of the diagonals of $\rot(f_{e_1})\times \rot(f_{e_2})$.
\end{enumerate}
\end{enumerate}
\end{proposition}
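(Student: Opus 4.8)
The three properties are of increasing difficulty, so I would organize the argument accordingly. For property (1), convexity is a standard consequence of the fact that the set of $\Z^2$-invariant probability measures is convex and the map $\mu \mapsto (\rot_\mu(f_{e_1}),\rot_\mu(f_{e_2}))$ is affine (each coordinate is the integral of a fixed continuous function, namely the displacement cocycle $\tilde f_{e_i}(\tilde x)-\tilde x$ pushed to $\T^2$, which is well-defined since $\Phi(e_i)$ is homotopic to the identity). Compactness follows because the space of $\Z^2$-invariant measures is weak-$*$ compact and the displacement functions are continuous and bounded; I would also note $\Sr \subset \rot(f_{e_1})\times\rot(f_{e_2})$ which is itself bounded by Misiurewicz--Ziemian.

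For property (2), the key is the group relation in $\Gamma_B$: writing $B$ for the generator of the cyclic factor, we have $B e_i B^{-1} = \sum_j B_{ji} e_j$ in $\Z^2$, hence $\Phi(B)\Phi(e_i)\Phi(B)^{-1} = \prod_j \Phi(e_j)^{B_{ji}}$. Since $\Phi(B)=A$ and the $\Phi(e_j)$ commute, taking a lift and computing the average rotation vector against a measure $\mu$ and its push-forward $A_*\mu$ (which is again $\Z^2$-invariant because $A$ normalizes $\Phi(\Z^2)$), one gets that the matrix of rotation vectors transforms by $R \mapsto A R B^{-1}$ up to an integer-matrix translation coming from the choice of lifts. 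The integer ambiguity cancels when we pass to differences $R_1 - R_2$, yielding $A(R_1-R_2)B^{-1} \in \Sr - \Sr$; iterating gives the statement for all $n \in \Z$ (using that $A,B$ are invertible over $\Z$ so $A^{-1}, B^{-1}$ also normalize appropriately). I would do the bookkeeping carefully here since the interplay between "average rotation vector" (an integral) and the lift-dependence is where sign/index errors creep in.

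For property (3) — the main obstacle — I would argue by contradiction: suppose $\rot(f_{e_1})\times\rot(f_{e_2})$ is \emph{not} a single point, so by Proposition \ref{segment} at least one factor, say $\rot(f_{e_1})$, is a nontrivial segment in the direction $\bu$ (the expanding eigenvector of $A$); similarly $\rot(f_{e_2})$ is a point or a segment in direction $\bu$. Thus every $R \in \Sr$ has both columns parallel to $\bu$ modulo a fixed translation, and differences $R_1 - R_2$ are matrices whose columns are multiples of $\bu$, i.e.\ of the form $\bu \otimes w$ for $w \in \R^2$; moreover the set $D := \Sr - \Sr$ is a bounded convex symmetric subset of this two-dimensional space $\{\bu\otimes w : w\in\R^2\}$, and it is nonzero since $\rot(f_{e_1})$ is a nontrivial segment. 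Now apply property (2): $A^n(\bu\otimes w)B^{-n} = \lambda_A^n\, \bu \otimes (B^{-n})^t w$ must stay in the bounded set $D$ for all $n\in\Z$. Decomposing $w$ in the eigenbasis of $B^t$, the component along $u_{B^t}$ gets scaled by $\lambda_A^n \lambda_B^{-n}$ and the component along $u_{B^{-t}}$ by $\lambda_A^n\lambda_B^{n}$; boundedness as $n\to\pm\infty$ forces: if $|\lambda_A|\neq|\lambda_B|$ then both components vanish for every nonzero $w$ realized, contradicting $D\neq\{0\}$ — this already finishes the case $tr(A)\neq tr(B)$ (where $|\lambda_A|\neq|\lambda_B|$); and if $|\lambda_A|=|\lambda_B|$ then only one eigendirection of $B^t$ can survive, pinning $D$ (hence $\Sr$, being convex with these extreme behaviors) to a single line, which one identifies with a diagonal of the square $\rot(f_{e_1})\times\rot(f_{e_2})$ by matching the two column-segments. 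The delicate point is verifying that "the $\Sr-\Sr$ direction is one line" upgrades to "$\Sr$ is exactly a diagonal" rather than a thin sub-segment: here one uses that the extreme points of each $\rot(f_{e_i})$ are realized by ergodic measures (the Lemma preceding Definition \ref{jointrota}), and a convexity/extreme-point chase on $\Sr$ inside the square, together with the fact that $A,B$ have the \emph{same} eigenvalues when $|\lambda_A|=|\lambda_B|$ and $tr A = tr B$, forces the correspondence between the two coordinates to be the linear one giving a genuine diagonal.
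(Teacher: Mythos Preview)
Your approach to items (1), (2), and the eigenvalue analysis in item (3) is essentially the same as the paper's, and is correct. However, there is a genuine gap in item (3) that affects both the $|\lambda_A|\neq|\lambda_B|$ and the $|\lambda_A|=|\lambda_B|$ cases.

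The gap is the claim that $D=\Sr-\Sr$ is nonzero whenever $\rot(f_{e_1})$ is a nontrivial segment, and relatedly, that $\Sr$ is a \emph{full} diagonal rather than a sub-segment. Both claims require knowing that the projection of $\Sr$ onto each factor $\rot(f_{e_i})$ is surjective, but you have not argued this. The lemma you invoke only produces a $\Phi(e_i)$-\emph{ergodic} measure realizing an extreme point of $\rot(f_{e_i})$; such a measure need not be $\Z^2$-invariant, so it does not directly yield a point of $\Sr$. The missing idea is an averaging step: given a $\Phi(e_1)$-invariant measure $\mu_1$ with $\rot_{\mu_1}(f_{e_1})=r$, form $\nu_n=\frac{1}{n}\sum_{i=1}^n \Phi(e_2)^i_*\mu_1$ and pass to a weak-$*$ limit $\nu$. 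Since $\Phi(e_2)$ commutes with $\Phi(e_1)$, each $\Phi(e_2)^i_*\mu_1$ is still $\Phi(e_1)$-invariant with the same rotation vector $r$, so $\nu$ is $\Z^2$-invariant with $\rot_\nu(f_{e_1})=r$. This shows $\Sr$ meets every side of the square $\rot(f_{e_1})\times\rot(f_{e_2})$; combined with convexity, $\Sr$ is either a diagonal or has nonempty interior, and your eigenvalue argument (which is exactly the content of the paper's Proposition~\ref{weird}) then finishes the proof. Without this averaging step, your contradiction in the $|\lambda_A|\neq|\lambda_B|$ case never gets off the ground, and in the equal-eigenvalue case you cannot upgrade ``$\Sr$ lies on a line'' to ``$\Sr$ is a diagonal''.
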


We postpone the proof of Proposition \ref{properties} to the next subsection.

\subsection{Description of the rotation sets}
In this section, we give the proof of Proposition \ref{properties}.
\begin{lemma}The action of $\Z^2$ on $\T^2$ lifts to an action on the universal covering $\R^2$ of $\T^2$. In other words, for every $v \in \Z^2$, we can choose a lift $f_v: \R^2 \to \R^2$ of $\Phi(v)$, so that we have \begin{equation}\label{linearity}
f_{v+w} = f_vf_w,\text{ for all  }v,w \in \Z^2.
\end{equation}

\end{lemma}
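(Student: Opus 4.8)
The plan is to construct the lifts one generator at a time and then patch them together using the commutativity of $\Z^2$ together with the fact that each $\Phi(v)$ is homotopic to the identity, so that every element of $\Z^2$ has a canonical lift obtained as a word in $f_{e_1}, f_{e_2}$. First I would fix once and for all a lift $f_{e_1}$ of $\Phi(e_1)$ and a lift $f_{e_2}$ of $\Phi(e_2)$; this is possible because $\Phi(e_1), \Phi(e_2)$ are homotopic to the identity, so they admit lifts that commute with the deck group $\Z^2$. Then for $v = a e_1 + b e_2 \in \Z^2$ I would \emph{define}
\begin{equation*}
f_v := f_{e_1}^{a} f_{e_2}^{b}.
\end{equation*}
Each such $f_v$ is a lift of $\Phi(v) = \Phi(e_1)^a \Phi(e_2)^b$ since a composition of lifts is a lift of the composition, so the only thing to check is the cocycle identity $f_{v+w} = f_v f_w$.

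The key step is to show that $f_{e_1}$ and $f_{e_2}$ commute \emph{as maps of $\R^2$}, not merely up to a deck transformation. Since $\Phi(e_1)$ and $\Phi(e_2)$ commute on $\T^2$, the map $f_{e_1} f_{e_2} f_{e_1}^{-1} f_{e_2}^{-1}$ is a lift of the identity, hence equals translation by some $w_0 \in \Z^2$. I would rule out $w_0 \neq 0$ as follows: conjugating the commutator relation by $f_{e_1}$ (resp.\ $f_{e_2}$), and using that $f_{e_1}, f_{e_2}$ commute with the deck group, one sees that $w_0$ is fixed by the commutator subgroup structure in a way that forces $n w_0$ to be the ``commutator defect'' of $f_{e_1}^n, f_{e_2}$; but the displacement $f_{e_i}^n(x) - x$ grows sublinearly along the stable direction (by Proposition \ref{segment}, the rotation set of $f_{e_i}$ is a segment parallel to $\bu$), and a bounded commutator defect that is forced to be linearly growing must vanish. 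A cleaner route: the group generated by $f_{e_1}, f_{e_2}$ and the deck transformations $\Z^2$ is a lift of the abelian group $\Phi(\Z^2) \times \{1\}$, acting on $\R^2$; its commutator subgroup lies in $\Z^2$ and is normalized by $f_{e_1}, f_{e_2}$, but any element of $\Z^2$ conjugation-invariant under $f_{e_i}$ (which acts trivially on $\Z^2 \cong \pi_1$ since $\Phi(e_i)$ is homotopic to the identity) — this gives no immediate contradiction, so instead I would invoke that $H^2(\Z^2;\Z^2) $ classifies such central extensions and the relevant class is torsion-free, or more concretely observe that replacing $f_{e_1}$ by $f_{e_1} - \tfrac{1}{?} w_0$ is impossible over $\Z$, forcing $w_0 = 0$ directly by evaluating the commutator on a periodic orbit.

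Once $f_{e_1} f_{e_2} = f_{e_2} f_{e_1}$ on $\R^2$ is established, the identity $f_{v+w} = f_v f_w$ is immediate from the definition $f_v = f_{e_1}^a f_{e_2}^b$ and the commutativity, since $f_{e_1}^{a+a'} f_{e_2}^{b+b'} = f_{e_1}^a f_{e_2}^b f_{e_1}^{a'} f_{e_2}^{b'}$ when the two elementary factors commute. So the whole statement reduces to the single claim that the chosen lifts of the two generators commute exactly.

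\textbf{Main obstacle.} The real content is ruling out a nonzero ``commutator translation'' $w_0$. I expect this to be the crux: a priori $f_{e_1} f_{e_2} f_{e_1}^{-1} f_{e_2}^{-1}$ could be a nontrivial translation, and one must use genuine dynamical input — the rotation set being a segment in the unstable direction (Proposition \ref{segment}) together with the boundedness of displacements transverse to that direction, or alternatively the fact that $\Phi(\Z^2)$ preserves the foliation $\FF$ and acts on each leaf by the \emph{same} translation $\alpha_v$ (established in the proof of Proposition \ref{segment}) — to see that no such affine slippage is consistent with commutativity of $\Phi(e_1), \Phi(e_2)$ downstairs. I would phrase the final contradiction by iterating: $(f_{e_1} f_{e_2} f_{e_1}^{-1} f_{e_2}^{-1})$ translating by $w_0 \neq 0$ would, after conjugating by $f_{e_1}^n$ and using commutativity with deck transformations, force the displacement cocycle of $f_{e_2}$ to absorb $n w_0$, contradicting the uniform boundedness of $f_{e_2}^k(x) - x - k\alpha_{e_2}$ along leaves guaranteed by Proposition \ref{segment}.
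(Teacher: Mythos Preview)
Your reduction is correct and matches the paper: fix lifts $f_{e_1}, f_{e_2}$, set $f_v = f_{e_1}^a f_{e_2}^b$, and the whole statement collapses to showing that the commutator $f_{e_1} f_{e_2} f_{e_1}^{-1} f_{e_2}^{-1}$, which a priori is a translation $T_{w_0}$ with $w_0 \in \Z^2$, is actually the identity. You also correctly isolate the relevant dynamical input from the proof of Proposition~\ref{segment}: each $f_v$ sends every leaf $\ell$ of the lifted foliation to $\ell + \alpha_v$.

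The gap is in the execution of $w_0 = 0$. Your iteration observation $[f_{e_1}^n, f_{e_2}] = T_{nw_0}$ is correct but yields no contradiction: $f_{e_2} + nw_0$ is a perfectly legitimate lift of $\Phi(e_2)$, and nothing bounds which lift a conjugate must be. The cohomological route is rightly abandoned, since $H^2(\Z^2;\Z^2) \cong \Z^2 \neq 0$ and nontrivial central extensions genuinely exist in the abstract. And the ``uniform boundedness of $f_{e_2}^k(x) - x - k\alpha_{e_2}$'' you invoke at the end is not what Proposition~\ref{segment} provides: that quantity always lies in $\R\bu$, but it is \emph{not} bounded when the rotation set is a nontrivial segment --- which is precisely the case not yet ruled out at this stage of the paper.

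The paper's argument is a two-line use of the ingredient you already named. Since $f_{e_i}(\ell) = \ell + \alpha_{e_i}$ for every leaf $\ell$, and hence $f_{e_i}^{-1}(\ell) = \ell - \alpha_{e_i}$, the commutator $f_{e_1} f_{e_2} f_{e_1}^{-1} f_{e_2}^{-1}$ sends every leaf to itself. In particular it preserves the leaf $\ell(0)$ through the origin. But the commutator is translation by $w_0 \in \Z^2$, and the irrational line $\ell(0)$ meets $\Z^2$ only at $0$; hence $w_0 = 0$. You had the right fact in hand; the point is that it pins down $w_0$ \emph{geometrically} --- it must lie on $\ell(0) \cap \Z^2$ --- with no growth or boundedness estimate needed.
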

\begin{proof}If $v, w \in \Z^2$ and $f_v, f_w$ are lifts of $\Phi(v), \Phi(w)$ respectively, we have that  $f_vf_wf_v^{-1}f_w^{-1}$ is a lift of the identity so it must be a translation by some $k \in \Z^2$, the action of $\Z^2$ lifts if and only if $k = 0$ for all $v,w$.

From the proof of Proposition \ref{segment}, we have $f_v(\ell)=\ell+\alpha_v$ for all $\ell\in \FF$ and $\alpha_v\in \R^2$ depending only on $v\in \Z^2$, so we have $f^{-1}_v(\ell)=\ell-\alpha_v$. This implies  that $f_vf_wf_v^{-1}f_w^{-1}(\ell)=\ell$ for all $\ell \in \FF$. In particular, $f_vf_wf_v^{-1}f_w^{-1}$ preserves the leaf $\ell(0)\in \FF$ passing through $0$. However, as the foliation is irrational $\ell(0) \cap \Z^2 = \{0\}$, which implies that $k = 0$.
\end{proof}

In the following discussion we fix such choice of lifts.


For a $\Z^2$-invariant probability measure $\mu$ on $\T^2$ and each $i = 1,2$, we consider the vector $\rot_{\mu}(f_{e_i})$ as a column vector and so we construct a $2\times2$ matrix in $\mathcal{M}_{2\times2}(\R)$ given by: $$\big(\rot_{\mu}(f_{e_1}), \rot_{\mu}(f_{e_2}) \big).$$

Observe also that as $ A^{-1}\Phi(e_i)A =  \Phi(B^{-1}(e_i))$, then $A^{-1}f_{e_i}A$ is also a lift of $\Phi(B^{-1}(e_i))$ and so there exists $w_i \in \Z^2$ such that  
\begin{equation}\label{ws}
f_{e_i} \circ A = T_{w_i} \circ A \circ f_{B^{-1}(e_i)}.
\end{equation}

One can easily check that for any $C \in \SL_2(\Z)$, the linearity equation \eqref{linearity} implies that:
\begin{equation}\label{rotmatrix}
(\rot_{\mu} f_{Ce_1}, \rot_{\mu} f_{Ce_2}) = (\rot_{\mu} f_{e_1}, \rot_{\mu} f_{e_2})C.
\end{equation}

We have the following:

\begin{proposition}\label{conjugarot}

If $\mu$ is a $\Z^2$-invariant probability measure on $\T^2$, then $A_{*}\mu$ is $\Z^2$-invariant and we have the following:
$$ \big( \rot_{A_{*}\mu}(f_{e_1}), \rot_{A_{*}\mu}(f_{e_2}) \big)= A\big(\rot_{\mu}(f_{e_1}), \rot_{\mu}(f_{e_2}) \big)B^{-1} + \big( w_1, w_2 \big).$$

where $w_1,w_2 \in \Z^2$ are as in equation \eqref{ws}.
\end{proposition}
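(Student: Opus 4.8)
The plan is to compute directly how the average rotation vectors transform under push-forward by $A$, using only the definition of $\rot_\mu$, the linearity of the lifts \eqref{linearity}, and the conjugation relation \eqref{ws}. First I would observe that $A_*\mu$ is $\Z^2$-invariant: since $A^{-1}\Phi(v)A = \Phi(B^{-1}v)$ for all $v \in \Z^2$, and $B^{-1}$ is a bijection of $\Z^2$, the measure $A_*\mu$ is invariant under $\Phi(v)$ for every $v \in \Z^2$ exactly because $\mu$ is invariant under $\Phi(B^{-1}v)$; this is just unwinding $\int \mathbbm 1_{E\circ\Phi(v)}\, d(A_*\mu) = \int \mathbbm 1_{E\circ\Phi(v)\circ A}\, d\mu = \int \mathbbm 1_{E\circ A\circ \Phi(B^{-1}v)}\, d\mu = \int \mathbbm 1_{E\circ A}\, d\mu$.

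Next I would compute $\rot_{A_*\mu}(f_{e_i})$ for $i=1,2$. By the change-of-variables formula for the lift $A$ on $\R^2$ (which descends to $A$ on $\T^2$),
\[
\rot_{A_*\mu}(f_{e_i}) = \int_{\T^2}\big(f_{e_i}(\tilde x) - \tilde x\big)\, d(A_*\mu) = \int_{\T^2}\big(f_{e_i}(A\tilde x) - A\tilde x\big)\, d\mu.
\]
Now I substitute the relation \eqref{ws}, namely $f_{e_i}\circ A = T_{w_i}\circ A\circ f_{B^{-1}(e_i)}$, which gives $f_{e_i}(A\tilde x) = A\,f_{B^{-1}(e_i)}(\tilde x) + w_i$. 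Plugging in, the integrand becomes
\[
A\,f_{B^{-1}(e_i)}(\tilde x) + w_i - A\tilde x = A\big(f_{B^{-1}(e_i)}(\tilde x) - \tilde x\big) + w_i,
\]
so that $\rot_{A_*\mu}(f_{e_i}) = A\,\rot_{\mu}(f_{B^{-1}(e_i)}) + w_i$. Assembling the two columns $i=1,2$ into a matrix and pulling $A$ out on the left,
\[
\big(\rot_{A_*\mu}(f_{e_1}), \rot_{A_*\mu}(f_{e_2})\big) = A\big(\rot_{\mu}(f_{B^{-1}(e_1)}), \rot_{\mu}(f_{B^{-1}(e_2)})\big) + (w_1, w_2).
\]

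Finally I would rewrite the middle factor using \eqref{rotmatrix}. Since $B^{-1} \in \SL_2(\Z)$, equation \eqref{rotmatrix} with $C = B^{-1}$ gives $\big(\rot_{\mu}(f_{B^{-1}(e_1)}), \rot_{\mu}(f_{B^{-1}(e_2)})\big) = \big(\rot_{\mu}(f_{e_1}), \rot_{\mu}(f_{e_2})\big)B^{-1}$, and substituting this in yields exactly
\[
\big(\rot_{A_*\mu}(f_{e_1}), \rot_{A_*\mu}(f_{e_2})\big) = A\big(\rot_{\mu}(f_{e_1}), \rot_{\mu}(f_{e_2})\big)B^{-1} + (w_1, w_2),
\]
as claimed. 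The only point requiring a little care is the change-of-variables step on the universal cover: one must check that $\tilde x \mapsto f_{e_i}(\tilde x) - \tilde x$ is $\Z^2$-periodic (it is, because $f_{e_i}$ is a lift of a homeomorphism homotopic to the identity, hence commutes with integer translations up to nothing), so the integral over $\T^2$ against $A_*\mu$ is well defined and transforms correctly; everything else is bookkeeping with \eqref{linearity}, \eqref{ws}, and \eqref{rotmatrix}. I do not expect any genuine obstacle here — this proposition is a formal consequence of the setup, and the substance of the argument lies in Proposition~\ref{properties}, whose proof it feeds into.
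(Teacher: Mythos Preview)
Your proposal is correct and follows essentially the same approach as the paper: first verify $A_*\mu$ is $\Z^2$-invariant via the conjugation relation, then compute $\rot_{A_*\mu}(f_{e_i})$ by change of variables and substitution of \eqref{ws} to obtain $A\,\rot_\mu(f_{B^{-1}(e_i)}) + w_i$, and finally apply \eqref{rotmatrix} with $C=B^{-1}$. The paper's proof is line-for-line the same computation, and your additional remark on the $\Z^2$-periodicity of $\tilde x\mapsto f_{e_i}(\tilde x)-\tilde x$ justifying the change of variables is a welcome clarification.
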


\begin{proof}
We first show that $A_*\mu$ is $\Z^2$-invariant. Since $\mu$ is $\Z^2$ invariant, we have $\Phi(v)_*\mu=\mu$ for all $v\in \Z^2$. So we get $$\Phi(v)_*(A_*\mu)(E)=A_*\mu(\Phi(v)^{-1}(E))=\mu(A^{-1}\Phi(v)^{-1}(E))= \mu(\Phi(B^{-1}v)A^{-1}(E)) = \mu(A^{-1}(E))$$ for all Borel set $E$ and all $v\in \Z^2$. This gives the $\Z^2$-invariance of $A_*\mu$.

We next prove the identity. By definition we have for every $i=1,2$:
\begin{align*}
\rot_{A_{*}\mu}(f_{e_1}) &= \int_{\T^2}  f_{e_i}(\tilde{Ax}) - \tilde{Ax} \ d\mu \\
&= \int_{\T^2}  f_{e_i}(A \tilde{x}) - A\tilde{x} \ d\mu \\
&= \int_{\T^2}   T_{w_i} \circ Af_{B^{-1}(e_i)} (\tilde{x}) - A(\tilde{x}) \ d\mu \\
&= \int_{\T^2}   A \big( f_{B^{-1}(e_i)} (\tilde{x}) - \tilde{x}) \big) + w_i \ d\mu \\
&= A \int_{\T^2}  (f_{B^{-1}(e_i)}(\tilde{x}) - \tilde{x}) \ d\mu + w_i\\
&= A(\rot_{\mu} f_{B^{-1}e_i}) + w_i.
\end{align*}

And so to finish the proof, we must only show that:
$$(\rot_{\mu} f_{B^{-1}e_1}, \rot_{\mu} f_{B^{-1}e_2}) = (\rot_{\mu} f_{e_1}, \rot_{\mu} f_{e_2})B^{-1},$$
which  follows immediately from equation \eqref{rotmatrix}.

\end{proof}




\begin{proof}[Proof of Proposition \ref{properties}]

Item (1) follows from the convexity and compactness of $\Z^2$ invariant probability measures and the fact that the map $\rot_{\mu}(.)$ depends linearly in $\mu$. Item (2) follows from Proposition \ref{conjugarot}.  We remark that the integer vectors $(w_1,w_2)$ in Proposition \ref{conjugarot} might depend on $n$ if we iterate $A^n$ and $B^n$. However, they disappear after taking difference $\Sr - \Sr$.

We consider item (3). Suppose that $r \in \R^2$ is an extreme point of $\rot(f_{e_1})$, we will show that there exist $(r,s) \in \Sr$ for some $s \in \R^2$. Let $\mu_1$ a probability measure   which is $\Phi(e_1)$-invariant on $\T^2$ such that $\rot_{\mu}(f_{e_1}) = r$, if we consider the sequence of measures $$\nu_n := \frac{1}{n}\sum_{i=1}^n \Phi(e_2)_{*}^i(\mu_1)$$ and $\nu$ a limit measure of a subsequence of $\nu_n$, then $\nu$ is $\Z^2$-invariant and we have $\rot_\nu(f_{e_1})=r$. Moreover, if we denote $s = \rot_{\nu}(f_{e_2})$, then we have that $(r,s) \in \Sr$ by definition.

This implies that $\Sr$ has at least a point in two opposite sides of the square $\rot(f_{e_1}) \times \rot(f_{e_2})$, arguing similarly for $\rot(f_{e_2})$ one can show there are points in the other pair of opposite sides of the square $\rot(f_{e_1}) \times \rot(f_{e_2})$. In conclusion the set $\Sr$ contains points in each side of the square $\rot(f_{e_1}) \times \rot(f_{e_2})$. From the convexity of $\Sr$ it follows that either $\Sr$ is a one of the diagonals of  $\rot(f_{e_1}) \times \rot(f_{e_2})$ or it has non-empty interior and then item (3) follows from the following Proposition \ref{weird}.

\end{proof}

\begin{proposition}\label{weird} {The set $\Sr$ has empty interior. Moreover if $|\lambda_A| \neq |\lambda_B|$, the set $\rot(f_{e_1}) \times \rot(f_{e_2})$ consists of a single point.}
\end{proposition}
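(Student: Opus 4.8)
The plan is to exploit the conjugation identity (2) of Proposition \ref{properties}, namely that $A^n (R_1 - R_2) B^{-n} \in \Sr - \Sr$ for all $n \in \Z$ and all $R_1, R_2 \in \Sr$, together with the fact that $\Sr$ is bounded (being contained in the fixed square $\rot(f_{e_1}) \times \rot(f_{e_2})$). The point is that the linear map $M \mapsto A^n M B^{-n}$ on $\mathcal{M}_{2\times 2}(\R)$ has eigenvalues $\lambda_i(A)\lambda_j(B)^{-1}$, i.e. the four numbers $|\lambda_A|^{\pm 1}|\lambda_B|^{\mp 1}$ and $1$ (with multiplicity two, on the "diagonal" eigenspaces $u_A \otimes u_{B^t}$ and $u_{A^{-1}}\otimes u_{B^{-t}}$). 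So unless $M$ lies entirely in the span of those two diagonal rank-one matrices, iterating $A^n(\cdot)B^{-n}$ for $n \to +\infty$ or $n \to -\infty$ blows up the norm, contradicting boundedness of $\Sr - \Sr$.

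First I would record that $\Sr - \Sr$ is a bounded, convex, centrally symmetric subset of $\mathcal{M}_{2\times 2}(\R)$ invariant under $M \mapsto A^n M B^{-n}$ for every $n \in \Z$. Decompose any $M \in \Sr - \Sr$ in the eigenbasis of this family of commuting operators: write $M = c_1\, u_A\otimes u_{B^t} + c_2\, u_{A^{-1}}\otimes u_{B^{-t}} + (\text{components in the two off-diagonal eigenspaces with eigenvalues } |\lambda_A|/|\lambda_B| \text{ and } |\lambda_B|/|\lambda_A|)$. If $|\lambda_A| \neq |\lambda_B|$, one of those two off-diagonal eigenvalues is strictly bigger than $1$; if the corresponding component of $M$ is nonzero, then $\|A^n M B^{-n}\| \to \infty$ as $n \to +\infty$ (or $n\to -\infty$), contradicting boundedness. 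Hence both off-diagonal components vanish, so $\Sr - \Sr \subseteq \mathrm{span}\{u_A\otimes u_{B^t},\, u_{A^{-1}}\otimes u_{B^{-t}}\}$, a $2$-dimensional subspace of the $4$-dimensional $\mathcal{M}_{2\times2}(\R)$; in particular $\Sr - \Sr$ has empty interior, so $\Sr$ has empty interior. To then upgrade to the stronger conclusion when $|\lambda_A| \neq |\lambda_B|$: by Proposition \ref{segment} we already know $\rot(f_{e_1})$ is a segment in the direction $\bu = u_A$ and $\rot(f_{e_2})$ likewise, so $\rot(f_{e_1})\times\rot(f_{e_2})$, if nondegenerate, would have two "independent" directions of extreme points, and by the argument in the proof of Proposition \ref{properties} (item 3) $\Sr$ meets all four sides of that square; combined with $\Sr - \Sr$ lying in the $2$-plane $\mathrm{span}\{u_A\otimes u_{B^t}, u_{A^{-1}}\otimes u_{B^{-t}}\}$ and the fact that $u_{e_1}, u_{e_2}$-columns of such a matrix both point in the $u_A$-direction while the difference of an extreme point of $\rot(f_{e_1})$ (direction $u_A$ in the first column, zero second column) is generically not of that form — one derives a contradiction unless both segments are degenerate, i.e. $\rot(f_{e_1})\times\rot(f_{e_2})$ is a single point.

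The main obstacle I expect is the bookkeeping in this last step: translating "$\Sr$ meets all four sides of the square and $\Sr-\Sr$ lies in the diagonal $2$-plane" into a genuine contradiction when the segments are nontrivial. Concretely, a point of $\Sr$ on the side $\{r\}\times \rot(f_{e_2})$ with $r$ an extreme point of $\rot(f_{e_1})$ differs, from a point on the opposite side $\{r'\}\times\rot(f_{e_2})$, by a matrix whose first column is $r - r' \in \R u_A \setminus\{0\}$ and whose second column is some vector in $\R u_A$; one must check such a matrix does not lie in $\mathrm{span}\{u_A\otimes u_{B^t}, u_{A^{-1}}\otimes u_{B^{-t}}\}$ unless $r = r'$. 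Since $u_A \otimes u_{B^t}$ has both columns proportional to $u_A$ but $u_{A^{-1}}\otimes u_{B^{-t}}$ has both columns proportional to $u_{A^{-1}}$ (independent of $u_A$), a matrix in that span with first column in $\R u_A$ must have zero $u_{A^{-1}}$-coefficient, forcing the whole matrix to be a multiple of $u_A \otimes u_{B^t}$; but then its second column is a fixed scalar multiple of its first, which pins the geometry of the square and, chasing through, forces degeneracy. This is elementary linear algebra but needs to be laid out carefully; everything else follows cleanly from the eigenvalue/boundedness dichotomy.
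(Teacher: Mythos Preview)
Your overall strategy---diagonalize the linear map $M \mapsto A^nMB^{-n}$ on $\mathcal{M}_{2\times 2}(\R)$ and use boundedness of $\Sr-\Sr$ to kill the components with eigenvalue of modulus $\neq 1$---is exactly the paper's approach. But your eigenvalue computation is wrong, and the error propagates. The four tensor eigenvectors $u_A\otimes u_{B^t}$, $u_{A^{-1}}\otimes u_{B^{-t}}$, $u_A\otimes u_{B^{-t}}$, $u_{A^{-1}}\otimes u_{B^t}$ have eigenvalues $\lambda_A/\lambda_B$, $\lambda_B/\lambda_A$, $\lambda_A\lambda_B$, $(\lambda_A\lambda_B)^{-1}$ respectively, not $1,1,|\lambda_A/\lambda_B|^{\pm 1}$ as you claim. (Check: $Au_A=\lambda_A u_A$ and $u_{B^t}^{\,t}B^{-1}=\lambda_B^{-1}u_{B^t}^{\,t}$, so $A(u_Au_{B^t}^{\,t})B^{-1}=(\lambda_A/\lambda_B)\,u_Au_{B^t}^{\,t}$.) With your values, when $|\lambda_A|=|\lambda_B|$ all four eigenvalues become $1$ and your argument yields nothing---yet the empty-interior assertion must hold in that case too.

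Once the eigenvalues are corrected, two things change. First, when $|\lambda_A|\neq|\lambda_B|$ \emph{all four} eigenvalues have modulus $\neq 1$, so $\Sr-\Sr=\{0\}$ immediately; since $\Sr$ meets every side of the square (Proposition~\ref{properties}(3)), the square collapses to a point and your entire ``bookkeeping'' paragraph is unnecessary. Second, for the general empty-interior claim you must pair the eigenvalue argument with Proposition~\ref{segment}: the latter already forces $\Sr-\Sr\subset\{(t_1\bu,t_2\bu)\}=\mathrm{span}\{u_A\otimes u_{B^t},\,u_A\otimes u_{B^{-t}}\}$, and since $|\lambda_A\lambda_B|>1$ always, the $u_A\otimes u_{B^{-t}}$ component vanishes, leaving $\Sr-\Sr$ in a line. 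Your claim ``$\Sr-\Sr$ lies in a $2$-plane of the $4$-dimensional $\mathcal{M}_{2\times 2}$, hence $\Sr$ has empty interior'' is not enough on its own: the relevant interior is in the $2$-dimensional square, so you need to intersect with the $\bu$-column plane. The paper does precisely this, simply by working inside $\{(t_1\bu,t_2\bu)\}$ from the start rather than in the full $4$-dimensional matrix space.
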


\begin{proof}

Let's show that $\Sr$ has empty interior. Assume by contradiction that $\Sr$ has non empty interior. As we know that for every $i=1,2$, points in the set $\rot(f_{e_i})$ are of the form  $\alpha_{e_i}  + t\bu$, where $t \in [l_{e_i}^{-}, l_{e_i}^{+}] $. If $\Sr$ has non-empty interior we conclude there exists $\e>0$ such that for any $t_1, t_2 \in [-\e,\e]$ we have that $(t_1\bu,t_2\bu) \in \Sr - \Sr$ and so by item (2) in Proposition \ref{properties} we have $A^n(t_1\bu, t_2\bu)B^{-n} \in \Sr - \Sr$. We will show that this cannot be true for $n$ large enough.

Diagonalizing the matrix $B$, we have that $B= Q\text{diag}(\lambda_B, \lambda_B^{-1})Q^{-1}$ for $Q = (v_1, v_2)$ where $v_1$ and $v_2$ are the eigenvectors of $B$. We then have:
\begin{align*}
A^n(t_1\bu, t_2\bu)B^{-n} &= (t_1A^n(\bu), t_2A^n(\bu))B^{-n}\\
&= \lambda_A^n(t_1\bu, t_2\bu)B^{-n}\\
&= \lambda_A^n(t_1\bu, t_2\bu)Q\text{diag}(\lambda_B^{-n},  \lambda_B^{n})Q^{-1}\\
&= (t_1\bu, t_2\bu)(v_1, v_2)\text{diag}((\lambda_A\lambda_B^{-1})^n, (\lambda_A\lambda_B)^{n})Q^{-1}\\
&= (t_1\bu, t_2\bu)((\lambda_A\lambda_B^{-1})^nv_1, (\lambda_A\lambda_B)^nv_2)Q^{-1}.
\end{align*}

One can show easily that the vector $(t_1\bu, t_2\bu)( (\lambda_A\lambda_B)^nv_2)$ diverges for an appropriate choice of $t_1, t_2 \in [-\e,\e]$ and so $A^n(t_1\bu, t_2\bu)B^{-n}$ cannot lie in the compact set $\Sr - \Sr$ for every $n >0$. Therefore $\Sr$ must have empty interior.

In the case where $|\lambda_A| \neq |\lambda_B|$. As we showed that $\Sr$ must be one of the diagonals of  $\rot(f_{e_1}) \times \rot(f_{e_2})$ we must only show that $\Sr$ does not consists of a single point. Arguing by contradiction as before if $\Sr$ was not a single point, then $(t_1\bu,t_2\bu) \in \Sr - \Sr$ for some $t_1, t_2 \in \R$ where at least one of them is different than zero. From the calculations above we have:
$$A^n(t_1\bu, t_2\bu)B^{-n} = (t_1\bu, t_2\bu)((\lambda_A\lambda_B^{-1})^nv_1, (\lambda_A\lambda_B)^nv_2)Q^{-1},$$
 which implies in particular that $(t_1\bu, t_2\bu)( (\lambda_A\lambda_B)^nv_2)$ is bounded and so by taking $n$ large we have $(t_1, t_2)$ is orthogonal to $v_2$. We also have that  $(t_1\bu, t_2\bu)( (\lambda_A\lambda_B^{-1})^nv_1)$ is bounded and so taking $n$ or $-n$ large depending whether $|\lambda_1| > |\lambda_2|$ or not, we have that $(t_1, t_2)$ is also orthogonal to $v_1$ and so $t_1 = t_2 = 0$, a contradiction.

\end{proof}

\end{section}

\begin{section}{Case $|\lambda_A| > |\lambda_B|$.}\label{SABC1}

In this section, we  prove Theorem \ref{ABC3} in the case where $|\lambda_A|  > |\lambda_B|$, more concretely, we show the following proposition under the assumption that $|\lambda_A|  > |\lambda_B|$.

\begin{theorem}\label{ABC4}

Suppose that  $B \in \SL_2(\Z)$ is Anosov and $\Phi: \Gamma_B \to \Diff(\T^2)$ is such that the diffeomorphism $\Phi(B)$ is an Anosov diffeomorphism of $\T^2$ homotopic to $A\in \SL_2(\Z)$  with $|\lambda_A|\geq |\lambda_B|$.

Then $\Phi$ is topologically conjugate to an affine action of $\Gamma_B$ of the form \eqref{EqAffine} up to finite index. More concretely, there exist a finite index subgroup $\Gamma' \subset \Gamma_B$ and  $g \in \mathrm{Homeo}(\T^2)$ such that $g\Phi(\Gamma')g^{-1}$ coincides with an action of the form  \eqref{EqAffine}.

\end{theorem}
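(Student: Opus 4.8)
\textbf{Proof plan for Theorem \ref{ABC4}.}

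The plan is to combine the reductions from Section \ref{secrota} with a geometric argument that forces the translation subgroup to act by genuine translations. First I would invoke Proposition \ref{ABCeasy} to pass to a finite index subgroup $\Gamma' \cong \Gamma_B$ and conjugate by $g$ so that $A = g\Phi(B)g^{-1} \in \SL_2(\Z)$ is linear Anosov, every $\Phi'(\gamma)$ preserves (say) the unstable foliation $\FF$ of $A$, and every $\Phi'(v)$ for $v \in \Z^2 \cap \Gamma'$ is homotopic to the identity. Relabel $\Phi'$ as $\Phi$. Next, by Proposition \ref{difficult} together with Proposition \ref{lastfix}, either every nontrivial $v \in \Z^2$ has a lift $f_v$ with a nontrivial rotation segment, or there is a finite index subgroup $L < \Z^2$ on which all rotation sets are singletons. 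The dichotomy (3) of Proposition \ref{properties} is the key input: since we are in the regime $|\lambda_A| \neq |\lambda_B|$ (the boundary case $|\lambda_A| = |\lambda_B|$ being excluded here and handled in Section \ref{mistakecorrection}), Proposition \ref{weird} forces $\rot(f_{e_1}) \times \rot(f_{e_2})$ to be a single point, so \emph{all} rotation sets of elements of $\Phi(\Z^2)$ are singletons. After passing to $L$ (still isomorphic to $\Gamma_B$ as it contains an appropriate $\Z \ltimes_B k\Z^2$), I will assume from now on that for every $v \in \Z^2$, $\rot(f_v) = \{\alpha_v\}$ is a single point, and in fact $v \mapsto \alpha_v$ is linear by equation \eqref{linearity}.

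The heart of the proof is then a proposition (which I'd call Proposition \ref{rotzero}) showing: if $\Phi(v)$ is homotopic to the identity, preserves the unstable foliation $\FF$ of the linear Anosov $A$, commutes with $\Phi(B) = A$ up to the $\Gamma_B$-relation, and has singleton rotation set, and if $|\lambda_A| > |\lambda_B|$, then $\Phi(v)$ is (after a further conjugacy independent of $v$) an actual translation by $\alpha_v$. The mechanism is the following. By \eqref{eq1}, singleton rotation set means $f_v^n(x) - x - n\alpha_v$ is bounded uniformly in $x$; write $\psi_v(x) := f_v(x) - x - \alpha_v$, a $\Z^2$-periodic continuous map $\R^2 \to \R^2$ with bounded Birkhoff sums along $f_v$. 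Using the group relation $A^{-1} f_v A = T_{w} f_{B^{-1}v}$ from \eqref{ws} and iterating, conjugation by $A^n$ replaces the "displacement field" of $f_v$ by roughly $A^n \psi_{B^{-n}v}(A^{-n}\,\cdot\,)$; since $\|A^n\psi\|$ along the unstable direction grows like $\lambda_A^n$ while the relevant displacement vectors (living along $\bu$, the unstable eigenvector, by Proposition \ref{segment}) come from $f_{B^{-n}v}$ whose $\Z^2$-coefficients grow only like $\lambda_B^n$, the ratio $(\lambda_B/\lambda_A)^n \to 0$ forces the displacement field along $\bu$ to vanish. Combined with the fact that $f_v$ already acts by a pure translation transverse to $\bu$ (it sends each leaf $\ell$ of $\FF$ to $\ell + \alpha_v$, from the proof of Proposition \ref{segment}), this pins $f_v$ to the translation $T_{\alpha_v}$. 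The precise bookkeeping of "which direction grows at which rate" under the non-commuting conjugation $A^n(\cdot)B^{-n}$ — already the crux of Proposition \ref{weird} — is exactly the elementary geometric argument referred to in Step 3 of the outline.

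Finally, once $\Phi(\Z^2)$ has been shown (after conjugation by some $g' \in \mathrm{Homeo}(\T^2)$ commuting with the reductions) to consist of translations $x \mapsto x + \rho(v)$ with $\rho \colon \Z^2 \to \R^2$ a homomorphism, I would check that $\Phi(B) = A$ and these translations together satisfy the affine relation \eqref{EqCommute}: indeed $A \rho(v) = \rho(Bv) \bmod \Z^2$ is forced by $A \Phi(v) A^{-1} = \Phi(Bv)$ and the fact that a translation conjugated by $A$ is translation by $A$ applied to the vector. Setting $\rho_i := \rho(e_i)$ gives the rotation matrix, and $h := g' g$ is the desired conjugacy onto an affine action of the form \eqref{EqAffine} on the finite index subgroup. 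I expect the main obstacle to be the rigorous version of the "displacement field contracts under $A^n(\cdot)B^{-n}$" step — one must be careful that $\psi_v$ is only continuous (not smooth), that the boundedness of Birkhoff sums is used correctly, and that the two eigendirections of $A$ versus those of $B$ interact as claimed; this is where the strict inequality $|\lambda_A| > |\lambda_B|$ (rather than $\geq$) is genuinely needed, the equality case being deferred.
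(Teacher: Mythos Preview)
Your overall architecture is exactly the paper's: reduce via Proposition \ref{ABCeasy}, kill the rotation sets via Proposition \ref{difficult}/\ref{properties}/\ref{weird}, and then run a conjugation argument (Proposition \ref{rotzero}) exploiting $|\lambda_A|>|\lambda_B|$ to force the displacements to vanish. Two details in your sketch are off, and the first one, if used, would break the argument.

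First, ``singleton rotation set $\Rightarrow$ $f_v^n(x)-x-n\alpha_v$ is bounded'' is false; equation \eqref{eq1} only gives uniform $o(n)$, not $O(1)$. You never actually need bounded Birkhoff sums, so simply drop this claim.

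Second, and more substantively, your conjugation runs in the wrong direction. From $\psi_v = A^n\psi_{B^{-n}v}(A^{-n}\cdot)+\text{const}$ and the trivial bound $\|\psi_{B^{-n}v}\|_\infty \lesssim |B^{-n}v|\sim \lambda_B^n$, the $\bu$-component of the right-hand side is bounded only by $\lambda_A^n\cdot\lambda_B^n$, which grows; nothing is concluded. The paper (and the only way this works) goes the other way. Taking differences at two points to kill the constant, the relation reads
\[
A^n\bigl(\psi_v(x)-\psi_v(y)\bigr)=\psi_{B^n v}(A^n x)-\psi_{B^n v}(A^n y).
\]
Since $\psi_v$ lies along $\bu$ (from the leaf-translation argument in Proposition \ref{segment}), the left side has norm $|\lambda_A|^n\,|\psi_v(x)-\psi_v(y)|$. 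For the right side the paper uses the singleton rotation set directly: writing $f_{B^n v}=f_{e_1}^{a_n}f_{e_2}^{c_n}$ with $|a_n|,|c_n|\sim|\lambda_B|^n$ and invoking the uniform convergence in \eqref{eq1} gives $\psi_{B^n v}(z)-\psi_{B^n v}(z')=o(|\lambda_B|^n)$. Hence $|\psi_v(x)-\psi_v(y)|=o\bigl((|\lambda_B|/|\lambda_A|)^n\bigr)\to 0$, so $\psi_v$ is constant, hence zero by integration. (Under the strict inequality you are treating, the trivial $O(|\lambda_B|^n)$ bound on the right side already suffices, so your intended shortcut does work once the direction is fixed; the paper's $o$-bound is what lets Proposition \ref{rotzero} be stated for $|\lambda_A|\ge|\lambda_B|$.) No extra conjugacy $g'$ is needed: once $\psi_{e_1}=\psi_{e_2}=0$ the lifts $f_{e_i}$ are literally translations, and the affine relation \eqref{EqCommute} follows exactly as you say.
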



Theorem \ref{ABC4} in the case $|\lambda_A| > |\lambda_B|$ follows from Proposition \ref{difficult} and the following proposition. The case in Theorem \ref{ABC4} when $|\lambda_A| = |\lambda_{B}|$ will be consider in the next section. 

\begin{proposition}\label{rotzero} Suppose $\Gamma_B = \Z \ltimes \Z^2$ is an ABC group as in Theorem \ref{ABC4} and $\Phi: \Gamma_B \to \mathrm{Homeo}(\T^2)$ is such that:

\begin{enumerate}
\item $A= \Phi(B)$ is an Anosov linear map with $|\lambda_A|\geq |\lambda_B|$.
\item $\Phi(e_1)$ and $\Phi(e_2)$ are homotopic to the identity.
\item For $i=1,2$, if $f_{e_i}: \R^2 \to \R^2$ is any lift of $\Phi(e_i)$,  the rotation set $\rot(f_{e_i})$ consists of a single point.
\end{enumerate}
Then $\Phi$ is an affine action.
\end{proposition}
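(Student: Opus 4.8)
The plan is to show that each $\Phi(e_i)$, $i=1,2$, is a translation of $\T^2$; combined with $\Phi(B)=A$ linear this says exactly that $\Phi$ has the form \eqref{EqAffine} (the relation $A\rho=\rho B\bmod\Z^{2\times2}$ then holds automatically because $\Phi$ is a homomorphism). The first step is to fix a coherent family of lifts. Write $\rot(f_{e_i})=\{\alpha_i\}$. The maps $f_{e_1}f_{e_2}$ and $f_{e_2}f_{e_1}$ are both lifts of $\Phi(e_1+e_2)$, and since both $f_{e_1}$ and $f_{e_2}$ have a singleton rotation set, the computation in the proof of Proposition \ref{sumsegment} shows both have rotation vector $\alpha_1+\alpha_2$; two lifts of the same homeomorphism with the same rotation vector coincide, so $f_{e_1}f_{e_2}=f_{e_2}f_{e_1}$ and we may set $f_v:=f_{e_1}^{p}f_{e_2}^{q}$ for $v=(p,q)\in\Z^2$ (note no use of an invariant foliation here). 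Put $\psi_i(x):=f_{e_i}(x)-x-\alpha_i$, a $\Z^2$-periodic continuous vector field, hence bounded with $M_i:=\|\psi_i\|_\infty<\infty$; let $M:=\max(M_1,M_2)$. If $\bu^{+},\bu^{-}$ are unit eigenvectors of $A$ for the eigenvalues $\lambda_A^{\pm1}$, decompose $\psi_i=a_i\,\bu^{+}+b_i\,\bu^{-}$.

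The key identity comes from the group relation. In $\Gamma_B$ one has $B^{n}e_iB^{-n}=B^{n}(e_i)$, so $A^{n}f_{e_i}A^{-n}$ and $f_{B^{n}e_i}$ both lift $\Phi(B^{n}(e_i))$ and therefore differ by an integer translation: $A^{n}f_{e_i}A^{-n}=T_{c_n}\circ f_{B^{n}e_i}$ for some $c_n\in\Z^2$. On the other hand, writing $(p,q)=B^{n}e_i$, the map $f_{B^{n}e_i}=f_{e_1}^{p}f_{e_2}^{q}$ is a composition of $\|B^{n}e_i\|_1=|p|+|q|$ maps, each of the form $x\mapsto x\pm\alpha_j+(\text{error of norm}\le M)$; telescoping gives $\bigl\|f_{B^{n}e_i}(x)-x-\rho B^{n}e_i\bigr\|\le \|B^{n}e_i\|_1\,M$ for every $x$, where $\rho:=(\alpha_1\mid\alpha_2)$ and $p\alpha_1+q\alpha_2=\rho B^{n}e_i$. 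Comparing the two expressions for $A^{n}f_{e_i}A^{-n}$, the $\Z^2$-periodic functions $x\mapsto A^{n}\psi_i(A^{-n}x)$ and $x\mapsto f_{B^{n}e_i}(x)-x-\rho B^{n}e_i$ differ by a constant, so they have the same oscillation, which is thus $\le 2\|B^{n}e_i\|_1\,M\le C|\lambda_B|^{n}M$ (using that $B$ is diagonalizable with eigenvalues of modulus $|\lambda_B|^{\pm1}$, so $\|B^{n}\|\le C'|\lambda_B|^{n}$). But in eigencoordinates $A^{n}\psi_i(A^{-n}x)=\lambda_A^{n}a_i(A^{-n}x)\,\bu^{+}+\lambda_A^{-n}b_i(A^{-n}x)\,\bu^{-}$, so its oscillation is at least $c_0|\lambda_A|^{n}\,\mathrm{osc}(a_i)$ (here $A^{-n}$ is a bijection of $\R^2$). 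Since $|\lambda_A|>|\lambda_B|$, letting $n\to\infty$ forces $\mathrm{osc}(a_i)=0$, i.e. $a_i$ is constant. Running the same argument with $A^{-n}$ in place of $A^{n}$ (here one uses that $B^{-1}$ also has an eigenvalue of modulus $|\lambda_B|$, so $\|B^{-n}e_i\|_1\le C|\lambda_B|^{n}$) interchanges the two eigendirections and forces $b_i$ to be constant. Hence $\psi_i$ is constant, so $f_{e_i}$ is a translation, and comparing rotation vectors the constant must be $0$, so $\Phi(e_i)=T_{\alpha_i}\bmod\Z^2$. Therefore $\Phi$ is an affine action of the form \eqref{EqAffine}.

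The main obstacle is concentrated in the last estimate: the whole argument rests on the \emph{strict} inequality $|\lambda_A|>|\lambda_B|$, which is exactly what lets the $|\lambda_A|^{n}$–expansion of the wobble of $f_{e_i}$ under conjugation by $A^{\pm n}$ outrun the $|\lambda_B|^{n}$–growth of the word length $\|B^{n}e_i\|_1$; at $|\lambda_A|=|\lambda_B|$ the two growth rates balance and this geometric argument simply fails — that is the case postponed to Section \ref{mistakecorrection}. A secondary point to watch is that although Proposition \ref{rotzero} does not assume foliation invariance, one still needs the coherent lifts $f_v$ and the exact value $\rho B^{n}e_i$ for the mean displacement of the word $f_{e_1}^{p}f_{e_2}^{q}$; as explained above, the hypothesis that $\rot(f_{e_i})$ is a singleton supplies both (via $f_{e_1}f_{e_2}=f_{e_2}f_{e_1}$ and via $p\alpha_1+q\alpha_2=\rho B^{n}e_i$), so no invariant foliation is required. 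Everything else — the telescoping bound, the norm estimates for $\|B^{\pm n}\|$, and the oscillation comparison in eigencoordinates — is routine.
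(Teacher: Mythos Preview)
Your argument follows the same skeleton as the paper's: conjugate $f_{e_i}$ by $A^{\pm n}$, use the group relation to rewrite this as $f_{B^{\pm n}e_i}$ up to an integer translation, and compare the growth in the $A$-eigendirections with the growth of the word length $\|B^{\pm n}e_i\|$. For $|\lambda_A|>|\lambda_B|$ your proof is correct.

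The gap is at $|\lambda_A|=|\lambda_B|$, and it is not the gap you think it is. Your telescoping bound $\|f_{B^{n}e_i}(x)-x-\rho B^{n}e_i\|\le \|B^{n}e_i\|_1\,M$ only uses that $\psi_i$ is bounded, and this gives an $O(|\lambda_B|^{n})$ upper bound on the oscillation. The paper does better: instead of bounding the whole displacement, it compares two points $x,y$ and uses the \emph{uniform} convergence
\[
\frac{(f_{e_j}^{m}(z)-z)-(f_{e_j}^{m}(w)-w)}{m}\longrightarrow 0
\]
coming from hypothesis (3) (this is equation \eqref{eq1}). Splitting $f_{B^{n}e_1}=f_{e_1}^{a_n}f_{e_2}^{c_n}$ and applying this uniform $o(m)$ estimate to each factor yields
\[
\frac{\big(f_{B^{n}e_1}(x_n)-x_n\big)-\big(f_{B^{n}e_1}(y_n)-y_n\big)}{|B^{n}e_1|}\longrightarrow 0,
\]
i.e.\ an $o(|\lambda_B|^{n})$ bound. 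Equating with $A^{n}(p_x-p_y)/|B^{n}e_1|$ then forces the unstable component of $p_x-p_y$ to vanish already when $|\lambda_A|=|\lambda_B|$, and the same with $A^{-n}$ kills the stable component. So the proposition, as stated with $|\lambda_A|\ge|\lambda_B|$, is proved in full by the paper; your version leaves the equality case open.

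Your last paragraph misreads the role of Section~\ref{mistakecorrection}. That section does \emph{not} supply a separate proof of Proposition~\ref{rotzero} at $|\lambda_A|=|\lambda_B|$; rather, it is devoted to establishing hypothesis~(3) (singleton rotation sets) in that case, which Proposition~\ref{difficult} does not cover. Once (3) is known, Proposition~\ref{rotzero} itself applies --- and for that one needs the sharper $o(|\lambda_B|^{n})$ estimate, not your $O(|\lambda_B|^{n})$. The fix is easy: replace your crude telescoping by the two-point comparison using \eqref{eq1}.
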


\begin{proof}
It is enough to show that $f_{e_i},\ i=1,2,$ is a translation of $\R^2$. Indeed, for any $v=(v_1,v_2)\in \Z^2$, we have $f_v$, the lift of $\Phi(v)$, equals $f^{v_1}_{e_1}f_{e_2}^{v_2}+k,$ for some $k\in \Z^2$. Since $f_{e_i}$ is a translation, we get that $f_v$ hence $\Phi(v)$ are also translations.

Suppose that $x,y \in \R^2$ are such that $f_{e_1}(x) = x + p$ and $f_{e_1}(y)  = y + q$ for some $p,q \in \R^2$, we must show that $p =q$. We have the following:
\begin{align*}
A^nf_{e_1}(x) = A^n(x) + A^n(p),\\
A^nf_{e_1}(y) = A^n(y) + A^n(q).
\end{align*}
We have that both $A^nf_{e_1}A^{-n}$ and $f_{B^n(e_1)}$ are lifts of $\Phi(B^n(e_1))$, therefore they differ by a translation and so $A^nf_{e_1} = T_wf_{B^n(e_1)}A^n$ for some translation $T_w$ for some $w \in \Z^2$, and so:
\begin{align*}
A^nf_{e_1}(x) =  f_{B^n(e_1)}A^n(x) + w,\\
A^nf_{e_1}(y) =  f_{B^n(e_1)}A^n(y) + w.
\end{align*}
Therefore from the previous equations: $$f_{B^n(e_1)}A^n(x) - f_{B^n(e_1)}A^n(y) = A^n(x) + A^n(p) - (A^n(y) + A^n(q))$$ and so we obtain:

\begin{equation}\label{eq2}
\frac{ \big( f_{B^n(e_1)}A^n(x) - A^n(x) \big) - \big( f_{B^n(e_1)}A^n(y) - A^n(y) \big)}{|B^n(e_1)|}=  \frac{A^n(p - q)}{|B^n(e_1)|}.
\end{equation}
Since we have $f_v(x+k)=f_v(x)+k,\ k\in \Z^2$ by the definition of a lift, in the expression $\big( f_{B^n(e_1)}A^n(x) - A^n(x) \big)$, we may assume that $A^n:\ \T^2\to \T^2$. Suppose
$B^n=\begin{pmatrix} a_n &  b_n \\ c_n & d_n \end{pmatrix}$, then $B^n(e_1)=(a_n,c_n)$ and $f_{B^n(e_1)}=f_{e_1}^{a_n}f_{e_2}^{c_n}$. Denoting $z_n=A^n(x) $ or $A^n(y)\in \T^2$, we have
\begin{equation*}
\begin{aligned}
f_{B^n(e_1)}(z_n)-z_n&=f_{e_1}^{a_n}f_{e_2}^{c_n}(z_n)-z_n\\
&=f_{e_1}^{a_n}f_{e_2}^{c_n}(z_n)-f_{e_2}^{c_n}(z_n)+f_{e_2}^{c_n}(z_n)-z_n\\
&=(f_{e_1}^{a_n}\Phi(c_ne_2)(z_n)-\Phi(c_ne_2)(z_n))+(f_{e_2}^{c_n}(z_n)-z_n).
\end{aligned}
\end{equation*}
The sequences $(a_n), (c_n), (|B^n(e_1)|)$ all go to infinity with a rate proportional to $|\lambda_B|^n$.
Using the fact that $f_{e_i},\ i=1,2,$ has one point in its rotation set, by equation \eqref{eq1} and the uniformity of the convergence we get
$$\frac{1}{|B^n(e_1)|}[(f_{e_1}^{a_n}\Phi(c_ne_2)(x_n)-\Phi(c_ne_2)(x_n))-(f_{e_1}^{a_n}\Phi(c_ne_2)(y_n)-\Phi(c_ne_2)(y_n))]\to 0,$$
$$\frac{1}{|B^n(e_1)|}[(f_{e_2}^{c_n}(x_n)-x_n)-(f_{e_2}^{c_n}(y_n)-y_n)] \to 0,$$
where $x_n=A^n(x),\ y_n=A^n(y)\in \T^2$.
This shows that the LHS of \eqref{eq2} vanishes. So we have $$\lim_{n \to \infty} \frac{A^n(p - q)}{\lambda_B^n} = 0.$$ But this can happen only if $p -q$ lies in the one dimensional eigenspace of $\lambda_A^{-1}$ since we have $|\lambda_A|\geq |\lambda_B|$. Applying the same argument with $B^{-1}$ we have that $p - q$ lies in the one dimensional eigenspace of $\lambda_A$, therefore $p -q = 0$ as we wanted. This proves that $f_{e_1}$ is a translation. Similarly, we get that $f_{e_2}$ is a translation.

\end{proof}

\end{section}

\begin{section}{Case $|\lambda_A| = |\lambda_B|$}\label{mistakecorrection}

In this section we finish the proof of Theorem \ref{ABC4} dealing with the remaining case where $|\lambda_A| = |\lambda_B|$. Recall we assume that $\Phi: \Gamma_B \to \text{Homeo}(\T^2)$ is an action satisfying the following: 

\begin{enumerate}
\item $A= \Phi(B)$ is an Anosov linear map with $|\lambda_A| = |\lambda_B|$.
\item For every $v \in \Z^2$, $\Phi(v)$ is isotopic to the identity and $f_v: \R^2 \to \R^2$ denotes some lift of $\Phi(v)$.
\end{enumerate}

Observe that we cannot use Proposition \ref{rotzero} because we have not shown the rotation set of elements of $\Z^2$ consists of a single point in the case $|\lambda_A| = |\lambda_B|$. We will argue by contradiction to show that for all $v \in \Z^2$ the rotation set of $\Phi(v)$ consists of a single point. Therefore we will suppose that some $\rot(f_v)$ is not a single point and in that case, recall we have shown in Proposition \ref{segment} and \ref{lastfix} that $\rot(f_v)$ must be a non-trivial segment for every $v \in \Z^2$. Let $e_1, e_2$ be the standard basis of $\Z^2$ and recall the joint rotation set $\Sr$ defined in Section \ref{SSRSTrivial}. By Proposition \ref{properties}, we can assume that $\Sr$ is one of the diagonals of the square $\rot (f_{e_1}) \times \rot (f_{e_2})$. Moreover, a point in the segment $\Sr$ has the form $\boldsymbol\alpha+(a\mathbf u, b\mathbf u),\ \boldsymbol\alpha\in \R^{2\times 2},\ a/b\notin \mathbb Q$ arguing as Proposition \ref{weird}.  Indeed, if $a/b\in \mathbb Q$, then following the proof of Proposition \ref{weird}, we will see that $((a\mathbf u, b\mathbf u))(\lambda_A\lambda_B)^n v_2$ diverges as $n\to\infty$ since we have $(a,b)\cdot v_2\neq 0$ due to the irrationality of $v_2$. Then the argument of Proposition \ref{weird} goes through and we get $\rot(f_{e_1})\times \rot(f_{e_1})$ is a single point.  The irrationality of $a/b$ implies that the segment $\Sr$ cannot form a loop on the torus $\T^2\times \T^2(\supset \rot (f_{e_1}) \times \rot (f_{e_2}))$.

We start by showing the existence of a nice $\Gamma_B$-invariant  measure $\nu$ where the asymptotic behaviour of the action of $\Z^2$ is given by the rotation number with respect to $\nu$. We will then use this measure together with a modification of Proposition \ref{rotzero} to obtain a contradiction.

\begin{proposition}\label{nicemeasure} There exists a $\Gamma_B$-invariant probability measure $\nu$ on $\T^2$ such for any $i=1,2$, any lift $f_{e_i}: \R^2 \to \R^2$ of $\Phi(e_i)$ and $\nu$ a.e. $x \in \T^2$  we have :
$$\lim_{n \to \infty} \frac{f_{e_i}^{n}(x) - x}{n} = \rot_{\nu}(f_{e_i})  \  \mod \Z^2.$$
\end{proposition}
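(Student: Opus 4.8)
The plan is to produce $\nu$ by a fixed-point argument and then to read off the a.e.\ behaviour from Birkhoff's theorem, the key point being that the resulting limit function is automatically $\Z^2$-invariant.

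\emph{Construction of $\nu$.} Since $\Z^2$ is amenable and $\T^2$ is compact, the set $M_{\Z^2}$ of $\Z^2$-invariant Borel probability measures on $\T^2$ is non-empty, and it is convex and weak-$*$ compact. By Proposition \ref{conjugarot} the map $\mu\mapsto A_*\mu$ sends $M_{\Z^2}$ into itself, and it is affine and weak-$*$ continuous, so the Markov--Kakutani fixed point theorem produces a fixed point, i.e.\ a $\Gamma_B$-invariant probability measure; the set of such measures is non-empty, convex and compact, and I would take $\nu$ to be one of its extreme points, so that $\nu$ is $\Gamma_B$-ergodic.

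\emph{Reduction to a statement about ergodic components.} For a $\Z^2$-ergodic measure $\mu$ and $v\in\Z^2$, Birkhoff's theorem applied to the $\mu$-preserving map $\Phi(v)$ and the continuous displacement cocycle $\phi_v(x)=f_v(\tilde x)-\tilde x$ gives, via the telescoping identity $\sum_{k=0}^{n-1}\phi_v(\Phi(v)^k x)=f_v^n(\tilde x)-\tilde x$, that $g_v(x):=\lim_n\tfrac1n\big(f_v^n(\tilde x)-\tilde x\big)$ exists $\mu$-a.e. Because the lifts commute, $f_vf_w=f_wf_v$ for $v,w\in\Z^2$ (equation \eqref{linearity}), and $f_w-\mathrm{id}$ is bounded ($\Phi(w)$ being isotopic to the identity), one gets $g_v\circ\Phi(w)=g_v$ $\mu$-a.e.; hence $g_v$ is $\Z^2$-invariant, so by $\Z^2$-ergodicity $g_v\equiv\int\phi_v\,d\mu=\rot_\mu(f_v)$ $\mu$-a.e. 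Consequently, disintegrating $\nu=\int\nu_\omega\,d\mathbb P(\omega)$ into $\Z^2$-ergodic components, the Proposition follows as soon as $\omega\mapsto\big(\rot_{\nu_\omega}(f_{e_1}),\rot_{\nu_\omega}(f_{e_2})\big)$ is $\mathbb P$-a.e.\ constant, its common value being then $\big(\rot_\nu(f_{e_1}),\rot_\nu(f_{e_2})\big)$.

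\emph{Constancy via the dynamics on $\Sr$.} Since $A$ permutes the $\Z^2$-ergodic measures (Proposition \ref{conjugarot}), $\mathbb P$ is $A$-invariant, and the push-forward $\sigma:=\Psi_*\mathbb P$ of the weak-$*$ continuous map $\Psi(\mu)=\big(\rot_\mu(f_{e_1}),\rot_\mu(f_{e_2})\big)\in\mathcal M_{2\times 2}(\R)$ is a probability measure supported on the segment $\Sr$, invariant under the affine map $\bar A\colon X\mapsto AXB^{-1}+(w_1,w_2)$ coming from equation \eqref{ws}; moreover $\sigma$ is $\bar A$-ergodic because $\nu$ is $\Gamma_B$-ergodic. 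Now $\bar A(\Sr)=\Sr$, since both $A_*$ and $A_*^{-1}$ preserve $M_{\Z^2}$ and hence $\Sr$; comparing the two endpoints of this compact segment forces its direction vector $d$ to satisfy $AdB^{-1}=\pm d$, equivalently $d$ is a multiple of $u_A\otimes u_{B^t}$ and $\bar A$ scales it by $\lambda_A\lambda_B^{-1}=\pm1$ (cf.\ \eqref{EqRotMat}). Thus in the affine parametrization of $\Sr$ the map $\bar A|_{\Sr}$ is the identity when $\lambda_A=\lambda_B$, i.e.\ $\mathrm{tr}\,A=\mathrm{tr}\,B$, and the reflection about the midpoint when $\lambda_A=-\lambda_B$, i.e.\ $\mathrm{tr}\,A=-\mathrm{tr}\,B$. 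In the first case $\bar A$ fixes every point of $\Sr\supseteq\supp\sigma$, so $\bar A$-ergodicity forces $\sigma$ to be a point mass and we are done. The remaining case $\mathrm{tr}\,A=-\mathrm{tr}\,B$ is the main obstacle: there $\bar A|_{\Sr}$ is a genuine involution, so an $\bar A$-ergodic $\sigma$ could a priori be a symmetric pair of points rather than a single point; to deal with it I would pass to the finite-index subgroup $\langle B^2\rangle\ltimes\Z^2\subset\Gamma_B$, for which the induced map on $\Sr$ is the square of this involution, hence the identity, so the first case applies and yields a measure invariant under this subgroup with the required property — which is all that the subsequent contradiction argument needs.
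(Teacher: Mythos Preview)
Your proof is correct and takes a genuinely different route from the paper's.

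The paper constructs $\nu$ explicitly: after passing to $\Gamma_{B^2}$ so that the affine map $\bar A:X\mapsto AXB^{-1}+(w_1,w_2)$ fixes the two endpoints of the segment $\Sr$, it picks a $\Z^2$-ergodic measure $\mu$ whose joint rotation matrix sits at one of those endpoints (so that in particular each $\rot_\mu(f_{e_i})$ is an \emph{extreme} point of $\rot(f_{e_i})$), and then Ces\`aro-averages $\mu$ under $A$ to obtain a $\Gamma_{B^2}$-invariant limit $\nu$ with the same rotation matrix. The key step is then that extremality of $\rot_\nu(f_{e_i})$ within $\rot(f_{e_i})$ forces every $\Phi(e_i)$-ergodic component of $\nu$ to have that same rotation number, after which Birkhoff finishes.

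Your argument replaces this extremality mechanism by $\Gamma_B$-ergodicity: you take an extreme point $\nu$ of the compact convex set of $\Gamma_B$-invariant measures (Markov--Kakutani plus Krein--Milman), disintegrate into $\Z^2$-ergodic components, and observe that the induced action of $\bar A$ on $\Sr$ is an affine bijection of a compact segment with linear part of modulus one, hence either the identity or a reflection. Ergodicity of the pushed-forward measure $\sigma$ then forces it to be a Dirac mass in the identity case, and the reflection case is handled by passing to $\Gamma_{B^2}$ --- exactly as the paper does.

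What the paper's approach buys: it pins $\rot_\nu$ to an endpoint of $\Sr$, and this is actually used immediately afterward (in completing the proof of Theorem \ref{ABC4}), where a second measure $\nu'$ is built at the other endpoint and a rationality dichotomy is run on the two endpoints. Your $\nu$, being an arbitrary $\Gamma_B$-ergodic measure, carries no such information about where $\rot_\nu$ sits in $\Sr$, so the downstream argument would need adjustment. What your approach buys: a cleaner separation of existence and constancy, and it avoids the somewhat delicate ``extremal rotation number forces agreement of ergodic components'' step.
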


\begin{proof}

We will prove the proposition for the coordinate vectors $v = e_1, e_2$. 

Recall also that if $\mu$ is a $\Z^2$-invariant measure, then $A_{*}\mu$ is also $\Z^2$-invariant and we have an action of $A$ in $\Sr$ ( $\text{mod} \  \Z^2 \times \Z^2$) which is described by Proosition \ref{conjugarot} and states that:
$$ \big( \rot_{A_{*}\mu}(f_{e_1}), \rot_{A_{*}\mu}(f_{e_2}) \big)= A\big(\rot_{\mu}(f_{e_1}), \rot_{\mu}(f_{e_2}) \big)B^{-1}  \ \ \ \mod \Z^2 \times \Z^2.$$ 

Observe that the action in $\Sr$ is linear and therefore is clearly continuous. Therefore the two endpoints of the interval $\Sr$ are either fixed or permuted and by replacing $A$ with $A^2$ (this is the same as replacing $\Gamma_B$ with the finite index subgroup $\Gamma_{B^2}$) we will assume that the endpoints of $\Sr$ are fixed. We fix an endpoint of $\Sr$ and take a $\Z^2$-ergodic invariant measure $\mu$ such that $(\rot_{\mu}(f_{e_1}), \rot_{\mu}(f_{e_2}))$ is equal to this endpoint, this implies in particular that both $\rot_{\mu}(f_{e_1})$
 and $\rot_{\mu}(f_{e_2})$ are extremal points of $\rot(f_{e_1})$ and $\rot(f_{e_2})$ respectively.
 
 Consider the sequence of measures $\mu_n : = \frac{1}{n}\sum_{j = 1}^{n} A^{j}_{*} \mu.$
From the equality $$( \rot_{A_{*}\mu}(f_{e_1}), \rot_{A_{*}\mu}(f_{e_2}) \big) =  (\rot_{\mu}(f_{e_1}), \rot_{\mu}(f_{e_2})),\ \mathrm{mod}\ \Z^2\times\Z^2,$$ we get by linearity that $(\rot_{\mu_n}(f_{e_1}), \rot_{\mu_n}(f_{e_2}) = (\rot_{\mu}(f_{e_1}), \rot_{\mu}(f_{e_2}))$. We let $\nu$ be any weak $*$-limit of $( \mu_n )$. Observe that $\nu$ is $\Z^2$- and $A$-invariant and therefore is $\Gamma_B$-invariant. By continuity we also have $$(\rot_{\nu}(f_{e_1}), \rot_{\nu}(f_{e_2})) = (\rot_{\mu}(f_{e_1}), \rot_{\mu}(f_{e_2})).$$  

It is possible that $\nu$ is not $\Phi(e_1)$ ergodic, but nonetheless for $\nu$-a.e.every $\Phi(e_1)$-ergodic component $\eta$ of $\nu$ we must have $\rot_{\eta}(f_{e_1}) = \rot_{\nu}(f_{e_1})$ because $\rot_{\nu}(f_{e_1})$ is an extremal point of $\rot(f_{e_1})$. From Birkhoff ergodic theorem we conclude that: 
$$\lim_{n \to \infty} \frac{f_{e_1}^{n}(x) - x}{n} = \rot_{\nu}(f_{e_1})  \  \mod \Z^2$$ for $\eta$ a.e. $x \in \T^2$ and because this is true for a.e. every ergodic component $\eta$, it is also true for $\nu$ a.e. $x \in \T^2$ as we wanted. We can argue similarly for $\Phi(e_2)$.

\end{proof}

We will now  show that for a measure $\nu$ as in Proposition \ref{nicemeasure},  the action of  $\Z^2$ restricted to the support of $\nu$ is by translations, the proof is very similar to the proof of Proposition \ref{rotzero}. This will lead us later to a contradiction.
\begin{proposition}\label{technical} For every $i  =1,2 $ and every $\delta, \epsilon > 0$, there exists a set $C$ with $\nu(C) > 1 - \delta$ such that for every $x,y \in C$ we have: $$|(f_{e_i}(x) - x)  - (f_{e_i}(y) - y)| < \epsilon.$$
\end{proposition}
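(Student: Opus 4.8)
The statement says: on a large-measure subset $C$, the displacement function $x \mapsto f_{e_i}(x) - x$ (well-defined mod $\Z^2$, but I will want to work with an honest $\R^2$-valued function on $C$) is nearly constant, equal to $\rot_\nu(f_{e_i})$. The strategy mirrors Proposition~\ref{rotzero}: conjugate by high powers of $A$ and use the hyperbolicity of $A$ against the rate $|\lambda_B|^n$, but now the convergence is only $\nu$-a.e.\ rather than uniform, so I replace uniform convergence by an Egorov-type argument.

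First I would fix $i$ (say $i=1$) and set $g := f_{e_1}$. By Proposition~\ref{nicemeasure}, $\frac{1}{n}(g^n(x)-x) \to \rot_\nu(g) =: \alpha$ mod $\Z^2$ for $\nu$-a.e.\ $x$; since $\nu$ is also $A$-invariant and $B^n(e_1) = (a_n,c_n)$ with $g_{B^n(e_1)} = f_{e_1}^{a_n} f_{e_2}^{c_n}$, the same a.e.\ convergence holds simultaneously for $f_{e_1}$ and $f_{e_2}$ along the sequences $a_n, c_n \sim |\lambda_B|^n$. By Egorov's theorem, given $\delta>0$ there is a set $C_0$ with $\nu(C_0) > 1-\delta/2$ on which these convergences are \emph{uniform}; moreover by the invariance of $\nu$ under $A$ (after passing to $\Gamma_{B^2}$ as in Proposition~\ref{nicemeasure}, so $A$ fixes the relevant rotation data) I may shrink to a set $C$ with $\nu(C)>1-\delta$ such that $A^n(C) \subset C_0$ on a positive-density set of $n$, or more simply just take $C := \bigcap_n A^{-n}(C_0')$ for a suitably chosen full-measure-up-to-$\delta$ base set — the point being that I can guarantee $A^n x, A^n y \in C_0$ for all large $n$ whenever $x,y \in C$.

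Now the computation of Proposition~\ref{rotzero} goes through verbatim with ``uniform on $\R^2$'' replaced by ``uniform on $C_0$'': for $x,y \in C$, writing $z_n = A^n x$ or $A^n y$ (viewed in $\T^2$), the identity
\begin{equation*}
A^n g(x) = f_{B^n(e_1)} A^n(x) + w_n, \qquad w_n \in \Z^2,
\end{equation*}
together with the decomposition $f_{B^n(e_1)}(z_n) - z_n = \big(f_{e_1}^{a_n}\Phi(c_n e_2)(z_n) - \Phi(c_n e_2)(z_n)\big) + \big(f_{e_2}^{c_n}(z_n) - z_n\big)$ and the uniform-on-$C_0$ convergence of each normalized term to the corresponding rotation vector, yields
\begin{equation*}
\frac{A^n\big((g(x)-x) - (g(y)-y)\big)}{|\lambda_B|^n} \longrightarrow 0.
\end{equation*}
Since $|\lambda_A| = |\lambda_B|$, this forces $(g(x)-x)-(g(y)-y)$ to lie in the eigenspace of $A$ for $\lambda_A^{-1}$; repeating with $B^{-1}$ in place of $B$ (which replaces $\lambda_B$ by $\lambda_B^{-1}$ and swaps the roles of the two eigendirections of $A$) forces it into the $\lambda_A$-eigenspace as well, hence $(g(x)-x)-(g(y)-y) = 0$. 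In particular the displacement is exactly constant on $C$, so the desired inequality $|(f_{e_i}(x)-x)-(f_{e_i}(y)-y)| < \epsilon$ holds — indeed with $0$ on the right. (If one only wants an $\epsilon$ and worries about the mod-$\Z^2$ ambiguity between components of $C$, one shrinks $C$ further to a single small ball of positive $\nu$-measure, lifts the displacement continuously there, and runs the same argument; the $\epsilon$ then simply absorbs the passage to a continuous lift.)

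**Main obstacle.** The delicate point is not the hyperbolic estimate — that is essentially Proposition~\ref{rotzero} — but arranging the set $C$ so that the orbit $\{A^n x\}_{n\ge N}$ stays inside the Egorov uniformity set $C_0$ for \emph{all} large $n$ and for \emph{all} $x \in C$ simultaneously, while keeping $\nu(C) > 1-\delta$. This requires using $A$-invariance of $\nu$ carefully: one cannot intersect over all $n\in\Z$ of $A^{-n}(C_0)$ and keep positive measure unless $C_0$ is already $A$-invariant, which it is not. The correct move is to note we only need $n \to +\infty$ (and separately $n\to -\infty$ for the $B^{-1}$ step), and to use a Poincaré-recurrence / Borel–Cantelli style argument, or simply to replace $C_0$ by $\bigcup_{N} \bigcap_{n \ge N} A^{-n}(C_0)$, which has full measure by $A$-invariance and the fact that $\nu(A^{-n}C_0) = \nu(C_0) > 1-\delta/2$ for every $n$ — hence by Fatou $\nu(\limsup_n A^{-n}(C_0^c)) \le \delta/2$, so $\nu$-a.e.\ point eventually enters and stays; restricting to where it stays from time $N_0$ onward for a fixed large $N_0$ gives the set $C$. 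This bookkeeping, plus the parallel handling of the $c_n$-sequence and of the $B^{-1}$ direction, is the only real work.
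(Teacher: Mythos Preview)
Your overall strategy is right, but the resolution of what you correctly identify as the main obstacle is flawed, and the paper's actual argument sidesteps that obstacle entirely by an observation you are missing.

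\textbf{The gap.} Your Fatou step is in the wrong direction: reverse Fatou gives $\nu(\limsup_n A^{-n}(C_0^c)) \ge \limsup_n \nu(A^{-n}(C_0^c)) = \nu(C_0^c)$, not $\le$. In fact, since $\nu$ is $A$-invariant and $\limsup_n A^{-n}(C_0^c)$ is $A$-invariant, on any $A$-ergodic component where $C_0^c$ has positive measure this $\limsup$ set has full measure: $\nu$-a.e.\ point visits $C_0^c$ infinitely often. So there is no set $C$ of measure $>1-\delta$ on which $A^n x$ \emph{eventually stays} in $C_0$, and your plan of taking a limit $n\to\infty$ along a good set cannot work. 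The same objection kills the $B^{-1}$ half of your argument, so you never actually pin the displacement difference into both eigenspaces.

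\textbf{The missing observation.} You do not need all large $n$; a single $n$ suffices, because the displacement difference is already known to lie along $\bu$. From Proposition~\ref{segment} (the $\Z^2$-action preserves the unstable foliation $\FF$) one has $f_{e_1}(x)-x = \alpha + t_x\bu$ and $f_{e_1}(y)-y = \alpha + t_y\bu$ with the \emph{same} $\alpha$, so $p_x - p_y = (t_x-t_y)\bu$. Then, since $|\lambda_A|=|\lambda_B|$,
\[
\left|\frac{A^n(p_x-p_y)}{\lambda_B^n}\right| \;=\; |t_x-t_y| \quad \text{for \emph{every} } n,
\]
not just in the limit. Hence it is enough to find \emph{one} $n$ for which the left-hand side of your displayed identity is $<\epsilon$. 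Fix such an $n$ with $|a_n|,|c_n|$ large, and take the \emph{finite} intersection
\[
C \;=\; C_\delta \cap A^{-n}(C_\delta) \cap A^{-n} f_{e_2}^{-c_n}(C_\delta),
\]
where $C_\delta$ is your Egorov set with $\nu(C_\delta)>1-\delta/3$ (using that $\nu$ is invariant under $A$ and under $\Phi(e_2)$). This has $\nu(C)>1-\delta$, and for $x,y\in C$ all four points $A^n x,\ A^n y,\ f_{e_2}^{c_n}(A^n x),\ f_{e_2}^{c_n}(A^n y)$ lie in $C_\delta$, which is exactly what your decomposition of $f_{B^n(e_1)}(z_n)-z_n$ needs. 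No $B^{-1}$ step is required, and the conclusion is $|t_x-t_y|<\epsilon$, not $=0$.
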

\begin{proof} The proof follows closely the argument in Proposition \ref{rotzero}. We only prove it for $v = e_1$.

Suppose that $p_x,p_y \in \R^2$ are such that $f_{e_1}(x) = x + p_x$ and $f_{e_1}(y)  = y + p_y$ for each $x,y \in \R^2$. We will find a set $C$ with $\mu(C) > 1 - \delta$ such that $|p_x-p_y| < \epsilon$ for $x,y \in C$. observe that because $\Z^2$ preserves the unstable foliation, there exists a fixed $\alpha \in \R^2$ and $t_x, t_y \in \R$ such that $p_x = \alpha + t_x\bu$ and $p_y = \alpha + t_y\bu$ (see the proof of  Proposition \ref{segment}).

We have the following:
\begin{align*}
A^nf_{e_1}(x) = A^n(x) + A^n(p_x),\\
A^nf_{e_1}(y) = A^n(y) + A^n(p_y).
\end{align*}
As in proposition \ref{rotzero}, we have that for some $w \in \Z^2$ we have:
\begin{align*}
A^nf_{e_1}(x) =  f_{B^n(e_1)}A^n(x) + w,\\
A^nf_{e_1}(y) =  f_{B^n(e_1)}A^n(y) + w.
\end{align*}

Therefore from the previous equations: $$f_{B^n(e_1)}A^n(x) - f_{B^n(e_1)}A^n(y) = A^n(x) + A^n(p_x) - (A^n(y) + A^n(p_y))$$ and if we let $x_n: = A^n(x)$ and $y_n := A^n(y)$ we obtain:

\begin{equation}\label{txty}
\bigg | \frac{ \big( f_{B^n(e_1)}(x_n) - x_n \big) - \big( f_{B^n(e_1)}y_n - y_n \big)}{\lambda_B^n} \bigg|=  \bigg|\frac{A^n(p_x - p_y)}{\lambda_B^n} \bigg| =   \bigg| \frac{A^n(t_x\bu - t_y\bu)}{\lambda_A^n}  \bigg|= |t_x - t_y|.
\end{equation}

Suppose $B^n=\begin{pmatrix} a_n &  b_n \\ c_n & d_n \end{pmatrix}$, then $B^n(e_1)=(a_n,c_n)$ and $f_{B^n(e_1)}=f_{e_1}^{a_n}f_{e_2}^{c_n}$. We can choose a large constant $M>0$ such that the quantities $\frac{|a_n|}{|\lambda_B^n|}, \frac{|b_n|}{|\lambda_B^n|}, \frac{|c_n|}{|\lambda_B^n|}, \frac{|d_n|}{|\lambda_B^n|}$ are bounded above by $M$ and below by $1/M$.

From Proposition \ref{nicemeasure}, we can choose $N_{\epsilon}$ and $C_{\delta}$ such that $\mu(C_{\delta}) > 1 - \frac{\delta}{3}$, every $z \in C_{\delta}$ and every $|n| > N_{\epsilon}$ and $i=1,2$ we have:

\begin{equation}\label{goodrot}
\bigg| \frac{f^{n}_{e_i} (z) - z - n\rot_{\nu}(f_{e_i}) }{n} \bigg| < \frac{\epsilon}{4M}.
\end{equation}
Let us choose $n \geq N_{\epsilon}$ such that $|c_{n}| > N_{\epsilon}$ and $|a_{n}| > N_{\epsilon}$ and define $$C := C_{\delta} \cap A^{-n}(C_\delta) \cap A^{-n}f_{e_{2}}^{-c_n}(C_\delta).$$
Observe that $\mu(C) > 1 - \delta$ as we wanted.  Let $x,y \in C$ and define $x'_n := f_{e_{2}}^{c_n}(x'_n) $ and  $y'_n := f_{e_{2}}^{c_n}(y'_n) $. We have by definition of $C$ that the points $x_n, x_n', y_n, y_n' \in C$, and easy computation shows that:
\begin{align*}
f_{B^n(e_1)}(x_n)-x_n = f_{e_1}^{a_n}(x_n')- x'_n + f_{e_2}^{c_n}(x_n)-x_n\\
f_{B^n(e_1)}(y_n)-y_n = f_{e_1}^{a_n}(y_n')- y'_n + f_{e_2}^{c_n}(y_n)-y_n.
\end{align*}
Therefore using  \eqref{goodrot}, the fact that $|a_n|, |c_n| > N_{\epsilon}$ and the triangle inequality we obtain:
\begin{align*}
\frac{|f_{B^n(e_1)}(x_n)-x_n - f_{B^n(e_1)}(y_n)- y_n |}{|\lambda_B^n|} &\leq  \frac{M|f_{e_1}^{a_n}(x_n')- x'_n -  f_{e_1}^{a_n}(y_n')- y'_n|}{|a_n|}\\
&\ \ \ +  \frac{M|f_{e_2}^{c_n}(x_n)-x_n - f_{e_2}^{c_n}(y_n)-y_n|}{|c_n|}\\
&\ < \e/2 + \e/2 = \e.
\end{align*}
Therefore, by equation \eqref{txty} we obtain $|(f_{e_1}(x) -x) - (f_{e_1}(y) - y)| = |t_x - t_y| < \e$ for every $x,y \in C$ as we wanted. We can argue similarly for $f_{e_2}$.

\end{proof}

\begin{proposition}\label{rotzero2} Suppose that $\nu$ is a $\Gamma_B$-invariant probability measure as in  Proposition \ref{nicemeasure}. We have that for $\nu$ a.e. $x \in \T^2$ and every $v \in \Z^2$:  $$f_{v}(x) = x + \rot_{\nu}(f_{v}).$$ That is, $\Z^2$ acts as a translation for $\nu$ a.e .$x \in \T^2$.
\end{proposition}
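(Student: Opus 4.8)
The plan is to extract everything from Proposition \ref{technical}, which already says that the displacement of $f_{e_i}$ is constant up to an arbitrarily small error off a set of arbitrarily small $\nu$-measure. First I would upgrade this to genuine $\nu$-almost-everywhere constancy of the displacement of $f_{e_1}$ and $f_{e_2}$, and identify the constant with the corresponding rotation vector. Then I would propagate the resulting translation identities for $f_{e_1}$ and $f_{e_2}$ to an arbitrary $v\in\Z^2$ using the group law \eqref{linearity} and the $\Z^2$-invariance of $\nu$.

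For the first step: for $i=1,2$ set $D_i(x):=f_{e_i}(x)-x$. Since $f_{e_i}(x+k)=f_{e_i}(x)+k$ for $k\in\Z^2$, the map $D_i$ is $\Z^2$-periodic, hence descends to a continuous map $\T^2\to\R^2$ (taking values in the line $\alpha_{e_i}+\R\bu$ by Proposition \ref{segment}, though we do not even need this). I claim $D_i$ is $\nu$-a.e.\ equal to a constant $c_i$. If not, the pushforward $(D_i)_*\nu$ is not a Dirac mass, so its support contains two points $p\neq q$; put $\rho:=|p-q|$, let $U_p,U_q$ be the open $(\rho/3)$-balls about $p,q$, and note $\nu(D_i^{-1}(U_p))>0$ and $\nu(D_i^{-1}(U_q))>0$ because $p,q$ lie in the support. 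Apply Proposition \ref{technical} with $\epsilon:=\rho/3$ and with $\delta>0$ smaller than both of these two positive numbers; the set $C$ it produces satisfies $\nu(C\cap D_i^{-1}(U_p))>0$ and $\nu(C\cap D_i^{-1}(U_q))>0$, so we may pick $x$ in the first and $y$ in the second, and then $|D_i(x)-D_i(y)|\geq\rho-\rho/3-\rho/3=\rho/3=\epsilon$, contradicting Proposition \ref{technical}. Hence $D_i\equiv c_i$ $\nu$-a.e., and integrating identifies $c_i=\rot_\nu(f_{e_i})$. Thus there is a full-measure set $X_0\subset\T^2$ on which $f_{e_i}(x)=x+\rot_\nu(f_{e_i})$ for $i=1,2$.

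For the second step: since $\Z^2$ is countable and each $\Phi(w)$, $w\in\Z^2$, preserves $\nu$, the set $Y:=\bigcap_{w\in\Z^2}\Phi(w)^{-1}(X_0)$ still has full $\nu$-measure, and for $x\in Y$ every point of the $\Z^2$-orbit of $x$ lies in $X_0$. Using that $\Phi$ is a homomorphism and $e_1,e_2$ commute (so $f_v=f_{e_1}^{v_1}f_{e_2}^{v_2}$ by \eqref{linearity}), an immediate induction along the orbit gives, for $x\in Y$ and $v=(v_1,v_2)\in\Z^2$, the identity $f_v(x)=x+v_1\rot_\nu(f_{e_1})+v_2\rot_\nu(f_{e_2})$; telescoping $f_v(x)-x$ into successive displacements of $f_{e_1}$ and $f_{e_2}$ and integrating against the $\Z^2$-invariant $\nu$ shows the right-hand side equals $x+\rot_\nu(f_v)$, which is exactly the assertion. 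I expect no serious obstacle beyond Proposition \ref{technical} itself (whose proof is already given); the only point that genuinely needs care is combining the two separate almost-everywhere statements for $e_1$ and $e_2$ into one valid along every $\Z^2$-orbit simultaneously, which is precisely the role of the countable intersection defining $Y$.
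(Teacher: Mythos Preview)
Your proposal is correct and follows essentially the same approach as the paper: the paper's proof simply asserts that Proposition \ref{technical} forces $f_{e_i}(x)-x$ to be $\nu$-a.e.\ constant and that integration identifies the constant as $\rot_\nu(f_{e_i})$, restricting attention to $v=e_1,e_2$. Your argument fills in the details of that deduction (via the support of $(D_i)_*\nu$) and additionally carries out the extension to general $v\in\Z^2$ that the paper leaves implicit.
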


\begin{proof} We will only prove it in the case $v = e_1, e_2$. 
It follows from Proposition \ref{technical} that the function  $f_{e_{i}}(x) - x$ must be  constant for $\nu$ a.e. $x \in \T^2$ and it follows easily by integration that such constant must be equal to $\rot_{\nu}(f_{e_i})$ finishing the proof of Proposition \ref{rotzero2}. \end{proof}

We are now ready to complete the proof of Theorem \ref{ABC4}.
\begin{proof}[Completing the proof of Theorem \ref{ABC4}]
We will now obtain a contradiction to the non-triviality of the rotation sets and show that $\Z^2$ acts by translations on $\T^2$. Let $\nu$ be a measure as in Proposition \ref{nicemeasure}. Observe that by continuity, Proposition \ref{rotzero2} implies the action of $\Z^2$ restricted to the support $\supp(\nu)$ is by  translations. Moreover we have that $\supp(\nu)$ is also invariant by the Anosov linear transformation $A$ because $\nu$ is $\Gamma_B$-invariant. 

If the matrix $(\rot_{\nu}(f_{e_1}), \rot_{\nu}(f_{e_2}))$ contains an irrational entry we have that $\supp(\nu)$ is invariant by an irrational translation, so $\supp(\nu)$ is either $\T^2$ or contains a circle. In the latter case, the circle intersects the stable manifold of 0 transversally since the eigenvectors of $A$ have irrational slopes. We apply the $\lambda$-lemma to obtain that under iterates the image of the circle will approach the unstable manifold of $0$ and get stretched exponentially so will become dense on $\T^2$. Using the invariance by the Anosov $A$ it follows that $\supp(\nu) = \T^2$. Therefore we conclude that $f_{e_1}, f_{e_2}$ must be translations everywhere.

In the case where $(\rot_{\nu}(f_{e_1}), \rot_{\nu}(f_{e_2}))$ has all entries rational, recall that in the proof of Proposition \ref{nicemeasure} the probability measure $\nu$ was constructed in such a way  that the matrix $(\rot_{\nu}(f_{e_1}), \rot_{\nu}(f_{e_2}))$ is one of the endpoints of the interval $\Sr$. We can also construct in the same way another measure $\nu' \in \T^2$ such that $(\rot_{\nu'}(f_{e_1}), \rot_{\nu'}(f_{e_2}))$ is the other endpoint of $\Sr$, but this matrix cannot have all  entries being rational because otherwise $\Sr$ is a segment with rational slope and this contradicts the fact that $\rot(f_{e_1}), \rot(f_{e_2})$ are segments with an irrational direction given by the unstable eigenvector $\bu$ of $A$. Therefore the matrix $(\rot_{\nu'}(f_{e_1}), \rot_{\nu'}(f_{e_2}))$ contains an irrational entry and in that case $\nu'$ is invariant by an irrational translation and we can argue as before. This give us a contradiction and finishes the proof of Theorem \ref{ABC4}.

\end{proof}

\end{section}

\begin{section}{Case $|\lambda_A|< |\lambda_B|$.}\label{SABC2}

Proposition \ref{ABC4} proves the case of $|\lambda_A|\geq |\lambda_B|$ of Theorem \ref{ABC3}. In this section, we prove Theorem \ref{ABC3} in the case where $|\lambda_B| > |\lambda_A|$ finishing the proof of Theorem \ref{ABC3}, we will use the same notation as in Section \ref{secrota}. We recall that in this case we want to obtain a contradiction from the fact that the action is faithful, we will indeed show that for all $v \in \Z^2$ we have $\Phi(v) = \text{Id}$. From Proposition \ref{difficult}, we can assume that the rotation set for $v \in \Z^2 \subset \Gamma_B$ every diffeomorphisms $f_v$ consists of a single point in $\R^2$ and we will abuse notation and call such vector $\rot(f_v)$. We have from Proposition \ref{conjugarot} that the following equation holds:
$$ \big( \rot(f_{e_1}), \rot(f_{e_2}) \big)B= A\big(\rot(f_{e_1}), \rot(f_{e_2}) \big) + \big( w_1, w_2 \big)$$ for some $w_1, w_2 \in \Z^2$. Therefore from Proposition \ref{LmNull}, as $tr(B) \neq tr(A)$, we have that both $\rot(f_{e_1})$ and $ \rot(f_{e_2})$ are rational vectors. Therefore we can pass to a finite cover of $\Gamma_{B}$ of the form $B \ltimes m\Z^2 $ and assume that the rotation set of every $f_{v}$ is in $\Z^2$ for all $v \in \Z$. Moreover, we will assume that the rotation set of $f_v$  is $\{0\}$ for all $v$, which follows because we can assume that the lifts $f_{e_1}, f_{e_2}$ have rotation set $\{0\}$ and this imply that $f_v$ has rotation set $\{0\}$ for all $v$. We have the following:

\begin{lemma}\label{lifts} The action of $\Gamma_B$ on $\T^2$ lifts to an action of $\Gamma_B$ in the universal covering $\R^2$.
\end{lemma}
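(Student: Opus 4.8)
The strategy is the same one already used to lift the $\Z^2$-action alone, but now we must also include the generator $B$. Recall that after the reductions made at the start of this section, $A = \Phi(B)$ is literally a linear Anosov map of $\T^2$, hence it already lifts canonically to the linear map $A : \R^2 \to \R^2$. Moreover, for each $v \in \Z^2$ we have a lift $f_v : \R^2 \to \R^2$ of $\Phi(v)$ with $\rot(f_v) = \{0\}$, and by the same argument as in the lemma preceding the definition of the joint rotation set (using that $\Phi(v)$ preserves the irrational unstable foliation of $A$ and that $\ell(0) \cap \Z^2 = \{0\}$), the assignment $v \mapsto f_v$ is a homomorphism, i.e. $f_{v+w} = f_v f_w$ for all $v, w \in \Z^2$. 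So we already have a lift of the $\Z^2$-subaction and a lift of the cyclic generator; what remains is to check that these fit together into a homomorphism of the full group $\Gamma_B = \Z \ltimes_B \Z^2$.

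The group $\Gamma_B$ is presented by the generators $B, e_1, e_2$ with the relations $[e_1,e_2]=1$ and $B e_i B^{-1} = B(e_i)$ for $i = 1,2$ (where $B(e_i) \in \Z^2$ is the image of $e_i$ under the matrix $B$). The first relation is already satisfied by the $f_v$. For the braiding relation, consider the lift $A f_{e_i} A^{-1}$ of $\Phi(B)\Phi(e_i)\Phi(B)^{-1} = \Phi(B(e_i))$: this is a lift of $\Phi(B(e_i))$, so it differs from $f_{B(e_i)}$ by a deck transformation, i.e. $A f_{e_i} A^{-1} = T_{w_i} f_{B(e_i)}$ for some $w_i \in \Z^2$ (this is exactly equation \eqref{ws} specialized to $n=1$). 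The content of the lemma is that $w_1 = w_2 = 0$. To see this, compare rotation vectors: $\rot(A f_{e_i} A^{-1}) = A \rot(f_{B^{-1}(\cdot)} \cdots)$ — more directly, since $\rot(f_{B(e_i)}) = \{0\}$ and conjugating a homeomorphism homotopic to the identity by the linear map $A$ sends rotation set to its image under $A$ (here $\{0\} \mapsto \{0\}$), we get $\rot(A f_{e_i} A^{-1}) = \{0\}$ as well; hence $\rot(T_{w_i} f_{B(e_i)}) = \{w_i\}$ must equal $\{0\}$, forcing $w_i = 0$. Therefore $A f_{e_i} A^{-1} = f_{B(e_i)}$, which is precisely the lifted braiding relation, and so $B \mapsto A$, $e_i \mapsto f_{e_i}$ extends to a homomorphism $\Gamma_B \to \mathrm{Homeo}(\R^2)$ lifting $\Phi$.

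The main (and essentially only) obstacle is the rotation-vector bookkeeping in the previous paragraph: one must be careful that the relevant lifts really are lifts of the claimed elements and that the rotation set genuinely transforms by $A$ under conjugation by $A$. Both are routine: the first is the computation $A\Phi(v)A^{-1} = \Phi(B(v))$ read on $\R^2$, and the second follows because if $\tilde g$ is a lift of $g$ homotopic to the identity then $A\tilde g A^{-1}$ is a lift of $AgA^{-1}$ and, writing $\tilde g = \mathrm{id} + u$ with $u$ $\Z^2$-periodic, $A\tilde g A^{-1} = \mathrm{id} + A u A^{-1}$, so the displacement cocycle is conjugated by $A$ and its time-averages (hence the rotation set) are multiplied by $A$. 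Since our rotation sets are all $\{0\}$, this is immediate. Once $w_i = 0$ is established the lemma is complete.
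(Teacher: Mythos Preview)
Your proof is correct and follows essentially the same route as the paper's: both arguments lift $A$ linearly, use the already-constructed lifts $f_v$ with $\rot(f_v)=\{0\}$, and verify the braiding relation $Af_{e_i}A^{-1}=f_{B(e_i)}$ by observing that both sides are lifts of $\Phi(B(e_i))$ with rotation set $\{0\}$, hence differ by an integer vector that must be zero. Your write-up is simply more explicit than the paper's (which compresses the whole argument into three lines); the only cosmetic slip is the formula $A\tilde g A^{-1}=\mathrm{id}+AuA^{-1}$, which should read $\mathrm{id}+Au\circ A^{-1}$, but this does not affect the conclusion that $\rot(A\tilde g A^{-1})=A\,\rot(\tilde g)$.
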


\begin{proof}
As we observed, we can take lifts of the $\Z^2$ action in such a way that every $f_v$ has as rotation set $\{0\}$, then one can check that $Af_vA^{-1}$ has rotation set $\{0\}$ as well and conclude that $$Af_vA^{-1} = f_{B(v)}$$ which implies the action $\Gamma_B$ lifts.

\end{proof}

Therefore, we can assume that  $\Gamma_B$ acts on $\R^2$. Moreover, every $f_v$ leaves invariant every leaf of the foliation $\mathcal{F}$, this follows because all $f_v$ have rotation set $\{0\}$, and each $f_v$ acts in the set of leafs as a translation, this is because the lifts of any leaf of $\mathcal{F}$ are dense and as $f_v$ commutes with deck transformations if $f_v(l)=l + \alpha$ for some leaf $l \in \mathcal{F}, \alpha \in \R$, then $f_v(l) = l + \alpha$ for all $l \in \mathcal{L}$, therefore if a leaf of $\mathcal{F}$ is not preserved we will have non-zero rotation set. The following two propositions will be proved in the next subsection.

\begin{proposition}\label{PropFlow}There exists a continuous one parameter subgroup $g_t: \T^2 \to \T^2$ such that
\begin{enumerate}
\item for each $t\in \R$, the map $g_t:\ \T^2\to \T^2$ is measurable;
\item the map $t\mapsto g_t$ induces a group homomorphism $\R \to \text{Homeo}(\R)$ which preserves the unstable foliation of $\Phi(B)$;
 \item we have $g_1 = \Phi(e_1)$ and $g_c = \Phi(e_2)$ where $(1,c)$ is an unstable eigenvector of $B$.
 \end{enumerate}
  Moreover we have $$\Phi(B)g_t\Phi(B)^{-1} = g_{\lambda_Bt}$$ where $\lambda_{B}$ is the larger in norm eigenvalue of $B$.
\end{proposition}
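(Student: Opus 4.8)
The plan is to build $g_t$ one leaf of $\FF$ at a time and then glue. Since each $f_v$, $v\in\Z^2$, is isotopic to the identity and preserves every leaf of the minimal linear foliation $\FF$ (established above from the triviality of the rotation sets), we may write $f_v(x)=x+\tau_v(x)\bu$ for a unique continuous $\tau_v\colon\T^2\to\R$, where $\bu$ is the expanding eigenvector of $A=\Phi(B)$; the $\tau_v$ obey the cocycle identity $\tau_{v+w}(x)=\tau_v(x)+\tau_w(f_v(x))$ and are bounded by compactness of $\T^2$. Parametrising each leaf $\ell$ by the time $s$ of the linear flow $\psi_s(\cdot)=\cdot+s\bu$, we get on $\ell\cong\R$ a $\Z^2$-action by commuting orientation-preserving homeomorphisms $F_i^\ell=f_{e_i}|_\ell$, depending $C^r$ on the point along $\ell$ and continuously in the transverse direction; I want a measurable family $\{g_t\}_{t\in\R}$ interpolating this action leafwise, with $g_1=\Phi(e_1)$ and $g_c=\Phi(e_2)$.

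The first step is to pin down the leafwise translation numbers $\alpha_i(\ell)$ of $F_i^\ell$ (bounded, by boundedness of $\tau_{e_i}$). Combining the relation $Af_{e_i}A^{-1}=f_{Be_i}$ from Lemma \ref{lifts} with the conjugacy $A\psi_sA^{-1}=\psi_{\lambda_A s}$ gives $\vec\alpha(A\ell)=\lambda_A\,B^{-t}\,\vec\alpha(\ell)$, with $\vec\alpha=(\alpha_1,\alpha_2)$. Iterating and splitting $\vec\alpha$ into the two eigenlines of $B^{-t}$, one component gets multiplied by $(\lambda_A/\lambda_B)^n$ and the other by $(\lambda_A\lambda_B)^n$; since $|\lambda_A\lambda_B|>1$ and $\vec\alpha$ is bounded, the second component must vanish identically. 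Hence $\vec\alpha(\ell)=\kappa(\ell)\,(1,c)$ for every leaf $\ell$, with $(1,c)$ the eigenvector in the statement and $\kappa(A\ell)=(\lambda_A/\lambda_B)\,\kappa(\ell)$; the same collinearity can alternatively be read off the affine classification in Proposition \ref{LmNull}. This collinearity — that $\Phi(e_1)$ and $\Phi(e_2)$ displace every leaf in the fixed ratio $1:c$ — is exactly what an interpolating flow requires.

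The second, and main, step is the interpolation. On the complement of the set $\mathrm{Fix}$ of points fixed by the whole $\Z^2$-action I would conjugate the leafwise $\Z^2$-action to an action by translations: Hölder's theorem covers the free case, and the hypothesis $r>2$ — through Denjoy- and Kopell-type results on commuting $C^2$ one-dimensional maps, and through Herman-Yoccoz linearisation for circle diffeomorphisms, whose Diophantine input is supplied by the algebraicity of $\lambda_A,\lambda_B$ together with the scaling relation above — rules out wandering intervals and controls the fixed-point structure, giving the conjugacy with enough regularity. Because $\vec\alpha(\ell)\parallel(1,c)$, one leafwise conjugacy simultaneously sends $\Phi(e_1)$ and $\Phi(e_2)$ to translations by $\kappa(\ell)$ and $c\,\kappa(\ell)$; rescaling the parameter by $\kappa(\ell)$ turns these into $s\mapsto s+1$ and $s\mapsto s+c$. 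Declaring $g_t$ to be translation by $t$ in these coordinates on each complementary interval and the identity on $\mathrm{Fix}$, and transporting back to $\T^2$, produces a one-parameter group $\{g_t\}$ that is measurable in $x$ (the leafwise conjugacies need only glue measurably transversally), preserves $\FF$, and satisfies $g_1=\Phi(e_1)$, $g_c=\Phi(e_2)$; the flow and group laws are immediate on each interval and trivial on $\mathrm{Fix}$.

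For the scaling identity, note that for $a+bc\in\Z+c\Z$,
\[
\Phi(B)\,g_{a+bc}\,\Phi(B)^{-1}=\Phi(B)\,\Phi(ae_1+be_2)\,\Phi(B)^{-1}=\Phi\big(B(ae_1+be_2)\big)=g_{a'+b'c},
\]
where $(a',b')$ depends on $B$ and, by the eigenvalue relation defining $(1,c)$, satisfies $a'+b'c=\lambda_B(a+bc)$. Thus $\Phi(B)g_t\Phi(B)^{-1}$ and $g_{\lambda_B t}$ are measurable flows that are continuous in $t$ along every leaf and agree on the dense subgroup $\Z+c\Z\subset\R$, hence agree for all $t$; this is the asserted identity $\Phi(B)g_t\Phi(B)^{-1}=g_{\lambda_B t}$. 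The hard part of the argument is the interpolation step — the one-dimensional analysis of commuting $C^r$ ($r>2$) maps with fixed points, ensuring the leafwise conjugacies to translations exist with enough regularity to assemble into a measurable flow; the translation-number computation and the eigenvalue bookkeeping are comparatively routine.
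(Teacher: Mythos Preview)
Your Step~1 has a genuine gap. By the reductions made at the start of this section every $f_v$ has rotation set $\{0\}$ and preserves each leaf of $\FF$ setwise; hence in your linear $s$-parametrisation each $F_i^\ell$ is a homeomorphism of $\R$ with bounded displacement, so its translation number is zero. Thus $\vec\alpha(\ell)\equiv 0$, your identity $\vec\alpha(A\ell)=\lambda_A B^{-t}\vec\alpha(\ell)$ is vacuous, $\kappa(\ell)=0$, and ``rescaling the parameter by $\kappa(\ell)$'' is division by zero. The point is that the translation values produced by a H\"older conjugacy on a fixed-point-free subinterval $\mathcal I\subset\ell$ are intrinsic to the action on $\mathcal I$ (which is sent to all of $\R$) and bear no relation to the ambient translation numbers you compute; your Step~1 gives no mechanism to access them, so the ratio $c$ is left undetermined and the Diophantine input to Herman--Yoccoz is missing.

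The paper's route is driven instead by the uniform convergence $f_{B^{-n}(v)}\to\mathrm{id}$ (Lemma~\ref{ire}), which comes from $A$ expanding $\FF$. This lemma does both jobs you leave unjustified. First, it is the engine of the Kopell-type argument showing $\Z^2$ acts freely on each fixed-point-free interval: if $f_w$ fixed a point there, one writes $mB^{-n}v=a_nv+b_nw$ and contradicts Lemma~\ref{ire}; only after freeness does H\"older's theorem conjugate the action to translations by $1$ and by some $c$. Second, transporting Lemma~\ref{ire} through the H\"older conjugacy gives $(B^{-n})^t(1,c)^t\to 0$, forcing $(1,c)$ to lie in the unstable eigenspace of $B^t$; this is what makes $c$ algebraic, hence Diophantine, and is the actual input for Herman--Yoccoz --- not a scaling relation involving $\lambda_A$. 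Your final paragraph deriving $\Phi(B)g_t\Phi(B)^{-1}=g_{\lambda_B t}$ from density of $\Z+c\Z$ is correct and matches the paper.
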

\begin{proposition}\label{LmMeasurable}
The flow $g_t:\ \R\times \T^2\to\T^2$ is differentiable in $t\in \R$ and measurable in $p\in \T^2$. \end{proposition}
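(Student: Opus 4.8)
The plan is to localize the statement to the leaves of $\FF$ and feed the resulting one–dimensional problem into the Herman–Yoccoz theory for circle diffeomorphisms; this is exactly where the hypothesis $r>2$ enters. By Proposition \ref{PropFlow} the flow $g_t$ preserves $\FF$, and, being assembled leaf by leaf from the structural description of the $\Z^2$–action, it preserves each individual leaf. Fix a leaf $\ell$ of $\FF$; since $\Phi(B)$ is $C^r$ Anosov, $\ell$ is a $C^r$ injectively immersed copy of $\R$, so it suffices to prove that $t\mapsto g_t(p)\in\ell$ is $C^1$ for each $p$. Measurability of $p\mapsto g_t(p)$ for fixed $t$ is part of Proposition \ref{PropFlow}(1), and once differentiability in $t$ is known, $p\mapsto\frac{\partial}{\partial t}g_t(p)$ is the pointwise limit of the difference quotients $N\big(g_{t+1/N}(p)-g_t(p)\big)$ (computed in local charts), each measurable in $p$, hence measurable.

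Next I would work leaf by leaf. Let $F_\ell\subset\ell$ be the common fixed–point set of $\Phi(e_1)|_\ell$ and $\Phi(e_2)|_\ell$. Since $\{m+nc:m,n\in\Z\}$ is dense in $\R$ and $\{t:g_t(p)=p\}$ is a closed subgroup of $\R$ for every $p$, the set $F_\ell$ coincides with the fixed–point set of the whole flow, and $g_t(p)\equiv p$ there. Each connected component $I$ of $\ell\setminus F_\ell$ is an open interval preserved by all $g_t$, on which $\langle\Phi(e_1),\Phi(e_2)\rangle$ acts freely, and the construction of the flow in Proposition \ref{PropFlow} provides a homeomorphism $\theta_I\colon I\to\R$ conjugating $(\Phi(e_1)|_I,\Phi(e_2)|_I)$ to the pair of translations $(T_1,T_c)$. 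Then $h_t:=\theta_I\,g_t\,\theta_I^{-1}$ is a continuous one–parameter group of homeomorphisms of $\R$ with $h_1=T_1$ and $h_c=T_c$; by the classification of one–parameter subgroups of $\mathrm{Homeo}(\R)$ together with $c\notin\Q$, necessarily $h_t=T_t$, i.e. $g_t|_I=\theta_I^{-1}\circ T_t\circ\theta_I$ (this is also visible directly if, as I expect, $g_t$ was built this way in Proposition \ref{PropFlow}).

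The crux is the regularity of $\theta_I$. Two conjugacies of $(\Phi(e_1)|_I,\Phi(e_2)|_I)$ to $(T_1,T_c)$ differ by a homeomorphism commuting with $T_1$ and $T_c$, hence (as $c\notin\Q$) by a translation, so the smoothness class of $\theta_I$ is intrinsic. Passing to $\R/\langle T_1\rangle\cong\R/\Z$, the diffeomorphism $\Phi(e_2)|_I$ induces a $C^r$ circle diffeomorphism of rotation number $c$; and $c$, being the slope of the unstable eigenvector of $B\in\SL_2(\Z)$, is a quadratic irrational, hence of bounded type and a fortiori Diophantine. Since $r>2$, Herman–Yoccoz theory furnishes a $C^1$ linearizing conjugacy (indeed of regularity close to $C^r$), so $\theta_I$ and $\theta_I^{-1}$ are $C^1$ diffeomorphisms, and therefore $g_t(p)=\theta_I^{-1}(\theta_I(p)+t)$ is $C^1$ in $t$ with $\frac{\partial}{\partial t}g_t(p)=(\theta_I^{-1})'(\theta_I(p)+t)$, a nonzero vector tangent to $\ell$. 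Together with the first paragraph this yields differentiability in $t$ and measurability in $p$.

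The main obstacle is the clean invocation of Herman–Yoccoz: one must verify that the topological conjugacy to $(T_1,T_c)$ on each free interval is realized by a genuine $C^r$ circle diffeomorphism of rotation number $c$ (so that the regularity theory applies with the loss dictated by $r>2$, explaining that hypothesis), and keep the conjugacies $\theta_I$ under enough control near $F_\ell$ and across leaves for the derivative to be globally defined on $\T^2$; the Diophantine input it requires is automatic here since $c$ is a quadratic surd. The remaining ingredients — the reduction to leaves, the identification of $F_\ell$, and the uniqueness $g_t|_I=\theta_I^{-1}T_t\theta_I$ — are comparatively soft.
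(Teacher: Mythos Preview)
Your treatment of differentiability in $t$ is correct and follows exactly the Herman--Yoccoz route that the paper itself uses; in fact all of that work (the quotient $\mathcal I/\Phi(e_1)\cong\mathbb S^1$, the Diophantine rotation number $c$, and the $C^{r-1-2\delta}$ linearizing conjugacy) is carried out in the paper \emph{before} this proposition, in the course of constructing $g_t$. Accordingly the paper's own proof of the differentiability clause is a single sentence: ``the differentiability in $t$ is seen from the definition of the flow.'' So you are not doing anything different here, just recapitulating the construction.

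Where your proposal does diverge is in the measurability clause. You invoke Proposition~\ref{PropFlow}(1), but in the paper's logical structure that item is precisely what Proposition~\ref{LmMeasurable} is supposed to establish (it is asserted in \ref{PropFlow} but proved here), so your citation is circular. More importantly, you miss the paper's very short and clean argument: for $t$ in the countable dense set $\mathcal D=\{(1,c)\cdot v:v\in\Z^2\}$ one has $g_{t_v}=\Phi(v)$, which is a \emph{given} $C^r$ diffeomorphism of $\T^2$, hence certainly measurable (indeed smooth) in $p$; for an arbitrary $t_*$, pick $t_n\in\mathcal D$ with $t_n\to t_*$, and then $g_{t_*}(p)=\lim_n g_{t_n}(p)$ pointwise by the continuity of $t\mapsto g_t(p)$, so $g_{t_*}$ is a pointwise limit of measurable maps. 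This sidesteps entirely the concern you flag about ``keeping the conjugacies $\theta_I$ under enough control near $F_\ell$ and across leaves''---no such uniformity is needed, because measurability is obtained globally from the identity $g_{t_v}=\Phi(v)$ rather than by patching leafwise data.
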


We should point out that the smoothness of $g_t(p)$ as function of $t$ will be crucial for the following discussion. Assuming the previous propositions, we can define the vector field $X{(p)} = \frac{dg_t}{dt}|_{t=0} (p)$. Observe that $X(p)$ is a well defined vector field because of \ref{LmMeasurable}, but it might not vary continuously as a function of $p$. From the equality $\Phi(B)g_t\Phi(B)^{-1} = g_{\lambda_Bt}$, we have the following equation:
\begin{equation}\label{eigenvector}
D_p(\Phi(B))X(p) = \lambda_BX(\Phi(B)p), \quad \forall\ p \in \T^2.
\end{equation}
\begin{definition}
\begin{enumerate}
\item Let $$C = \{ p \in \T^2\ |\ \Phi(v)(p) = p,\ \forall\ v \in \Z^2\}.$$ Observe that $C$ is a closed set which is invariant by the action of $\Gamma_B$.

\item For every real $M > 0$, let $$X_M = \left\{p \in \T^2 \ |\ \frac{1}{M} <\|X(p)\| < M \right\}$$ and observe that $\T^2 = C \cup (\bigcup_{M > 0} X_M)$.
\end{enumerate}
\end{definition}
Let $\mu$ be the SRB-ergodic measure preserved by $\Phi(B)$, we have the following:
\begin{proposition}\label{PropLY} If $\mu(X_M) > 0$ for some $M>0$, then we have $|\lambda_A| \geq |\lambda_B|$.
\end{proposition}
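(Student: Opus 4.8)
The plan is to use the Ledrappier–Young entropy formula together with the cocycle equation \eqref{eigenvector} to compute the stable (or unstable) Lyapunov exponent of $\Phi(B)$ along $\mu$ and hence its metric entropy, then compare with the topological entropy $\log|\lambda_A|$ forced by the conjugacy with $A$.

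First I would observe that equation \eqref{eigenvector} says that $X$ is an eigenvector field for the derivative cocycle $D\Phi(B)$ with eigenvalue $\lambda_B$: along the orbit of a point $p$ with $X(p)\neq 0$, the vector $X(p)$ is expanded by exactly $\lambda_B$ at each step. On the set $X_M$ the norm $\|X\|$ is bounded between $1/M$ and $M$; by the Poincaré recurrence theorem, for $\mu$-a.e. $p\in X_M$ the orbit returns to $X_M$ infinitely often, so the Birkhoff averages of $\log\|D_p\Phi(B)^n X(p)\| - \log\|X(p)\| = n\log|\lambda_B|$ along return times show that the Lyapunov exponent in the direction $X(p)$ equals $\log|\lambda_B|$. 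Since $\Phi(B)$ is an Anosov diffeomorphism on $\T^2$, its derivative cocycle has exactly two Lyapunov exponents $\chi^u>0>\chi^s$ with $\chi^u+\chi^s=0$ (volume is not assumed preserved, so I would instead note that the sum of exponents equals $\int\log|\det D\Phi(B)|\,d\mu$, which for an Anosov diffeomorphism of $\T^2$ homotopic to $A\in\SL_2(\Z)$ has absolute value\ldots\ — more carefully, I would just use that $X(p)$ must lie in either $E^s_{\Phi(B)}(p)$ or $E^u_{\Phi(B)}(p)$ for a.e. $p$, since those are the only measurable invariant line fields and $\log|\lambda_B|\neq 0$). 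Thus $\log|\lambda_B|$ equals one of the two Lyapunov exponents of $\Phi(B)$ at $\mu$; since $|\lambda_B|>1$, it must be the unstable one: $\chi^u(\Phi(B),\mu)=\log|\lambda_B|$.

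Next I would apply the Ledrappier–Young (here in dimension two, essentially the Pesin) entropy formula to the ergodic SRB measure $\mu$ of the Anosov diffeomorphism $\Phi(B)$: since $\mu$ is SRB, $h_\mu(\Phi(B)) = \chi^u(\Phi(B),\mu) = \log|\lambda_B|$. On the other hand, $\Phi(B)$ is topologically conjugate to the linear Anosov map $A$ via Franks' theorem, so $h_{\mathrm{top}}(\Phi(B)) = h_{\mathrm{top}}(A) = \log|\lambda_A|$ (the topological entropy of a linear hyperbolic toral automorphism is $\log$ of its spectral radius). By the variational principle $h_\mu(\Phi(B)) \le h_{\mathrm{top}}(\Phi(B))$, giving $\log|\lambda_B| \le \log|\lambda_A|$, i.e. $|\lambda_A|\ge|\lambda_B|$, which is the claim.

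The main obstacle is making the Lyapunov-exponent argument rigorous despite $X$ being only measurable and possibly vanishing: I must ensure $X(p)\neq 0$ on a positive-measure subset of $X_M$ — but this is automatic from the definition of $X_M$ — and I must invoke the fact that a measurable $D\Phi(B)$-invariant one-dimensional subbundle defined $\mu$-a.e. on an invariant set coincides $\mu$-a.e. with $E^s$ or $E^u$ (Oseledets plus hyperbolicity), so that $\log|\lambda_B|\in\{\chi^s(\mu),\chi^u(\mu)\}$; since $\log|\lambda_B|>0$ it is $\chi^u(\mu)$. A secondary point is that $X_M$ need not be $\Phi(B)$-invariant, so I should either pass to the invariant set $\bigcup_M X_M = \{X\neq 0\}$ (on which the exponent in the $X$-direction is constantly $\log|\lambda_B|$ by \eqref{eigenvector}) and use $\mu(X_M)>0\Rightarrow\mu\{X\neq0\}>0$, and then restrict $\mu$ to the invariant set $\{X\neq 0\}$, noting that by ergodicity $\mu\{X\neq0\}\in\{0,1\}$ so in fact $\mu\{X\neq 0\}=1$; the rest goes through verbatim.
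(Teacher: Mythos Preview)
Your argument is correct and follows essentially the same route as the paper: use \eqref{eigenvector} and Poincar\'e recurrence on $X_M$ to identify the positive Lyapunov exponent of $\Phi(B)$ with respect to the SRB measure $\mu$ as $\log|\lambda_B|$, then combine the Ledrappier--Young/Pesin formula $h_\mu(\Phi(B))=\log|\lambda_B|$ with the variational principle and $h_{\mathrm{top}}(\Phi(B))=\log|\lambda_A|$. One small simplification: your Oseledets detour to place $X(p)$ in $E^u$ is unnecessary here, since by construction the flow $g_t$ moves points along the unstable leaves of $\Phi(B)$, so $X(p)\in E^u_{\Phi(B)}(p)$ automatically wherever it is nonzero.
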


\begin{proof} As $\Phi(B)$ is topologically conjugate to $A$, the topological entropy of $\Phi(B)$ is equal to $\log(|\lambda_A|)$. Let $\chi_{\mu}$ be the positive Lyapunov exponent of $\Phi(B)$, if $\mu(X_M) > 0$, then for a.e. $p \in X_M$ we have $\chi_{\mu} = \lim \frac{ \log \|D_p\Phi(B)^n (X(p))\| }{n}$ but from equation \eqref{eigenvector} we have  that $$D_p(\Phi(B)^n)(X(p)) = \lambda_B^nX(\Phi(B)^np)$$ and so Poincar\'e recurrence implies that $\chi_{\mu} = \log(|\lambda_B|)$.

From the Ledrappier-Young entropy formula we have \begin{equation}\label{EqLY}\log(|\lambda_A|) = h_{top}(\Phi(B)) \geq h_{\mu}(\Phi(B)) = \dim_{\mu}(W^u)\log(|\lambda_B|) = \log(|\lambda_B|).\end{equation}

The last equality holds because the dimension $\dim_{\mu}(W^u)$ of the measure $\mu$ along the unstable manifolds is equal to one as $\mu$ is an SRB-measure.

\end{proof}
With this proposition, we complete the proof of Theorem \ref{ABC3} in the case of $|\lambda_B|>|\lambda_A|$.

\begin{proof}[Proof of Theorem \ref{ABC3} in the case of $|\lambda_B|>|\lambda_A|$]
If $\mu(X_M) > 0$ for some $M > 0$ we are done. If no such $M$ exists, $\mu(\bigcup_{M > 0} X_M) = 0$ and so $\mu(C) = 1$, but as the conditional measures of $\mu$ along unstable manifolds are absolutely continuous with respect to Lebesgue by taking a partition subordinate to $\mathcal{F}$ we must have that there exists a closed non-trivial interval $J$ of a leaf of $\mathcal{F}$  such that Lebesgue $a.e.$ point of $J$ is in $C$, but $C$ is closed, and so $J$ must be contained in $C$. Also as $C$ is invariant by $\Phi(B)$, then $\Phi(B^n)(J)$ is contained in $C$, but then as $\bigcup_{n>0} \Phi(B^n)(J)$ is dense we have $C = \T^2$ and we are done.
\end{proof}

\subsection{The reduction to circle maps with Diophantine rotation number}
The remaining work is to prove Proposition \ref{PropFlow} and Proposition \ref{LmMeasurable}. In this section, we first show how the $\Z^2$ action action along each leaf of $\mathcal{W}$ is by translations and induces a circle map with Diophantine rotation number (coming from the group structure of $\Gamma_B$). We then apply the Herman-Yoccoz theory for circle maps to show that the flow $g_t$ is smooth in $t \in \R$. We should also point out that Denjoy's theorem is not enough for concluding the smoothness of $g_t$.

\begin{lemma}\label{ire} For every $v \in \Z^2$, we have $$\lim f_{B^{-n}(v)}(p) = p$$ for all $p \in \R^2$ and moreover such limit converges uniformly with respect to $p$.
\end{lemma}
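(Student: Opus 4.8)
The plan is to reduce the whole statement to the single fact that $A^{-n}$ contracts by $\lambda_A^{-n}$ along the expanding eigenline of $A$. The key structural input comes from the reductions already carried out in this section: we are in the situation where the $\Gamma_B$-action lifts to $\R^2$ (Lemma \ref{lifts}), so that $A f_w A^{-1} = f_{B(w)}$ holds \emph{exactly} for every $w \in \Z^2$; iterating this identity gives
\[
f_{B^{-n}(v)} \;=\; A^{-n}\, f_v\, A^{n}, \qquad n \ge 0,
\]
where $B^{-n}(v) \in \Z^2$ since $B \in \SL_2(\Z)$. Thus everything is controlled once we understand the conjugate $A^{-n} f_v A^n$ for large $n$.

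Next I would use the structural fact, established just before the statement, that every $f_v$ leaves invariant each leaf of the linear unstable foliation $\mathcal{F}$ of $A$. Since $\mathcal{F}$ is the foliation of $\R^2$ by affine lines in the direction of the expanding eigenvector $\bu$, leaf-invariance forces $f_v(q) - q \in \R\bu$ for every $q \in \R^2$; write $f_v(q) - q = \tau_v(q)\,\bu$ for a function $\tau_v: \R^2 \to \R$. Because $f_v$ is a lift of a torus homeomorphism isotopic to the identity, $f_v(q+k) = f_v(q) + k$ for all $k \in \Z^2$, so $\tau_v$ is $\Z^2$-periodic, hence bounded: $|\tau_v| \le D_v$ for some constant $D_v$.

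Finally I would compute directly: for any $p \in \R^2$,
\[
f_{B^{-n}(v)}(p) - p \;=\; A^{-n}\big(f_v(A^n p) - A^n p\big) \;=\; \tau_v(A^n p)\, A^{-n}\bu \;=\; \tau_v(A^n p)\,\lambda_A^{-n}\,\bu,
\]
so that $\big|f_{B^{-n}(v)}(p) - p\big| \le D_v\,|\bu|\,|\lambda_A|^{-n}$. As $A$ is Anosov we have $|\lambda_A| > 1$, so the right-hand side tends to $0$ as $n \to \infty$, and the bound is independent of $p$, which yields the claimed uniform convergence (in fact with an exponential rate). I do not expect a genuine obstacle here: the two points that require care are that the conjugacy identity $f_{B^{-n}(v)} = A^{-n} f_v A^n$ holds with no translational correction term — which is exactly why the reduction to a lifted $\Gamma_B$-action was made, using that every $f_v$ has rotation set $\{0\}$ — and that $\mathcal{F}$ has been normalized to be the \emph{expanding} foliation of $A$, so that $A^{-n}$ genuinely contracts along $\bu$.
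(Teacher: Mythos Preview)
Your proof is correct and uses essentially the same mechanism as the paper: the conjugacy identity $f_{B^{-n}(v)} = A^{-n} f_v A^{n}$, the fact that $f_v(q) - q$ lies in the expanding eigenline $\R\bu$, and the $\Z^2$-periodicity (hence boundedness) of $f_v - \mathrm{id}$. The paper argues by contradiction (if the displacement stayed bounded below, applying $A^{n}$ would make it diverge while $f_v - \mathrm{id}$ stays bounded), whereas you compute directly and obtain the sharper explicit bound $|f_{B^{-n}(v)}(p)-p| \le D_v\,|\bu|\,|\lambda_A|^{-n}$; this is a cosmetic difference, not a different approach.
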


\begin{proof}
Suppose there is $v \in \Z^2$ and a sequence of points $p_k$ in $\R^2$ such that for some constant $K>0$ and a subsequence $\{n_k\}$  we have $|f_{B^{-n_k}(v)}(p_k) - p_k| \geq K > 0$. This will lead us to a contradiction because then as both $p_k$ and $f_{B^{-n_k}(v)}(p_k)$ are in the same leaf of the foliation $\mathcal{F}$, we have that as $A$ expands the leaves of the foliation, we have that $$\lim |A^{n_k}f_{B^{-n_k}(v)}(p_k) - A^{n_k}(p_k)| = \infty$$

But this is the same as $|f_vA^{n_k}(p_k)- A^{n_k}(p_k)| \to \infty $ and this is not possible because $|f_v(p) - p|$ is bounded independent of $p \in \R^2$ because $f_v$ is a lift of a map in $\T^2$.
\end{proof}

We take a leaf $\ell \in \mathcal{F}$ and consider an open interval $\mathcal{I}_v \subset \ell $ which is invariant under $f_v$ for some $v\in \Z^2\setminus\{0\}$, and where there is no fixed point for the $f_v$. 
If this interval does not exist, that means for any open interval $\mathcal J_v$ that is invariant under $f_v$, there is a fixed point of $f_v$ in the interval, which breaks $\mathcal J_v$ into two subintervals that are invariant under $f_v$ due to the one-dimensionality. Repeating the argument, we get that there is a dense subset of fixed points in $\mathcal J_v$. Then the continuity of $f_v$ implies that each point in the entire interval $\mathcal J_v$ is fixed by $f_v$. Arguing this for any open interval  $\mathcal J_v$ and for all $v\in \Z^2$, we conclude that $\Z^2$ fixes $\ell$. As $\ell$ is dense on the torus, we have that the $\Z^2$ action is trivial. So in the following, we consider the above defined interval $\mathcal I_v$.

We have the following analogue of Kopell's lemma. 

\begin{proposition}
The group $\Z^2$ acts freely in $\mathcal{I}_v$, more precisely, if $f_w(p) = p$ for some $p \in \mathcal I_v$ and some $w(\neq v)\in\Z^2\setminus\{0\}$, then $f_w = \mathrm{id}$.
\end{proposition}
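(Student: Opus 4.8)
The approach is the classical argument of Kopell, carried out along the leaf $\ell$; the hypothesis $r>2$ of Theorem \ref{ABC3} enters exactly once, through a bounded–distortion estimate, and this is the only delicate point. Recall that the $\Z^2$–action preserves each leaf of $\mathcal F$ and that along such a leaf the maps $f_v$ are $C^r$ diffeomorphisms of $\ell\cong\R$.

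First I would reduce to the case where $\mathcal I_v$ is a maximal interval of non-fixed points of $f_v$ having (at least) one \emph{finite} endpoint $a\in\ell$, which is then a fixed point of $f_v|_\ell$; after replacing $v$ by $-v$ if necessary we may assume $f_v^n(x)\to a$ as $n\to+\infty$ for every $x\in\mathcal I_v$. (The remaining possibility, that $f_v$ has no fixed point at all on $\ell$, so that $\mathcal I_v=\ell$ and $f_v|_\ell$ is conjugate to a translation, is degenerate and has to be disposed of separately, using that the $\Z^2$–action then descends to commuting homeomorphisms of a circle.) Suppose now $f_w(p)=p$ for some $p\in\mathcal I_v$ and some $w\in\Z^2\setminus\{0\}$; since $\Z^2$ is abelian $f_w$ commutes with $f_v$, hence fixes every point of the orbit $\{f_v^n(p):n\in\Z\}$. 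Letting $n\to+\infty$ gives $f_w(a)=a$, and since the fixed points $f_v^n(p)$ of $f_w|_\ell$ accumulate at $a$ we must have $(f_w|_\ell)'(a)=1$ (otherwise $a$ would be isolated in $\mathrm{Fix}(f_w|_\ell)$).

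Next comes the core estimate. Set $I_0:=[f_v(p),p]$ and $I_n:=f_v^n(I_0)=[f_v^{n+1}(p),f_v^n(p)]$; the $I_n$, $n\ge0$, are pairwise disjoint and tile $(a,p]$, so $\sum_{n\ge0}|I_n|=|p-a|<\infty$. Because $f_v|_\ell$ is $C^2$, a standard bounded–distortion estimate (Schwartz's lemma) provides a constant $D\ge1$, depending only on $f_v|_\ell$ and on $|p-a|$, with
$$\sup_{x,y\in I_0}\frac{(f_v^n)'(x)}{(f_v^n)'(y)}\le D\qquad\text{for all }n\ge0.$$
Assume, for contradiction, that $f_w$ is not the identity on $\mathcal I_v$. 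From $f_w|_{I_n}=f_v^n\circ(f_w|_{I_0})\circ f_v^{-n}$ (valid for all $n\in\Z$ because $f_w$ commutes with $f_v$) one sees that $f_w$ fails to be the identity on every $I_n$; pulling a witness back by a suitable $f_v^{-m}$ we obtain $y_0\in I_0$ with $f_w(y_0)\ne y_0$. Let $K_0\subset I_0$ be the closed interval with endpoints $y_0$ and $f_w(y_0)$ and $\delta_0:=|K_0|/|I_0|>0$. Since $f_v^n$ commutes with $f_w$, $f_v^n(K_0)$ is the interval with endpoints $y_n:=f_v^n(y_0)$ and $f_w(y_n)$, and the distortion bound gives
$$|f_w(y_n)-y_n|=|f_v^n(K_0)|\ \ge\ \frac{\delta_0}{D}\,|I_n|\qquad\text{for all }n\ge0.$$
On the other hand $f_w$ fixes the endpoint $f_v^n(p)$ of $I_n$, so for $x\in I_n$ we have $f_w(x)-x=\int_{f_v^n(p)}^x\big((f_w|_\ell)'(t)-1\big)\,dt$, hence $|f_w(x)-x|\le|I_n|\cdot\sup_{I_n}|(f_w|_\ell)'-1|$. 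As $I_n$ shrinks to $\{a\}$ and $(f_w|_\ell)'$ is continuous with value $1$ at $a$, this supremum tends to $0$, so $|f_w(y_n)-y_n|=o(|I_n|)$, contradicting the previous displayed inequality for $n$ large. Therefore $f_w\equiv\mathrm{id}$ on $\mathcal I_v$.

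Finally I would globalize: $f_w$ commutes with every deck translation $x\mapsto x+k$, $k\in\Z^2$, hence is the identity on each translate $\mathcal I_v+k$; since $\ell$ is a leaf of an irrational linear foliation, $\bigcup_{k\in\Z^2}(\mathcal I_v+k)$ is dense in $\R^2$, and continuity gives $f_w=\mathrm{id}$ on $\R^2$, i.e.\ $\Phi(w)=\mathrm{id}$. The step I expect to be the real obstacle is the bounded–distortion estimate together with the reduction that makes it applicable: the statement is simply false in class $C^1$, so the regularity $r>2$ is essential here, and one must also guarantee $\sum_n|I_n|<\infty$, which is exactly why one needs $\mathcal I_v$ to abut a finite fixed point of $f_v$ — and hence must treat the fixed-point-free leaf by a different route.
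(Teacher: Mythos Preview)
Your argument is a correct implementation of Kopell's lemma in the non-degenerate case and is a genuinely different route from the paper's, but it has two real gaps, and the comparison is instructive.

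\textbf{The paper's approach.} The paper does \emph{not} use Kopell or any $C^2$ distortion estimate. Instead it exploits the group structure of $\Gamma_B$ through Lemma~\ref{ire}: for every $u\in\Z^2$ one has $f_{B^{-n}(u)}\to\mathrm{id}$ uniformly. Given $f_w(p)=p$ with $v,w$ linearly independent, write $mB^{-n}(v)=a_nv+b_nw$ (possible since $\langle v,w\rangle$ has finite index); then $f_{mB^{-n}(v)}(p)=f_v^{a_n}f_w^{b_n}(p)=f_v^{a_n}(p)$. The left side tends to $p$, while $|a_n|\to\infty$ forces $f_v^{a_n}(p)$ to an endpoint of $\mathcal I_v$: contradiction. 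This argument is purely $C^0$, works uniformly whether $\mathcal I_v$ has finite or infinite endpoints, and uses exactly the feature that makes $\Gamma_B$ special. Your Kopell route, by contrast, would apply to any commuting pair on an interval; it is more classical, but it \emph{needs} the $C^2$ hypothesis and does not exploit $B$ at all.

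\textbf{Gaps in your proposal.} First, the degenerate case $\mathcal I_v=\ell$ is not disposed of: your remark that ``the $\Z^2$-action descends to commuting homeomorphisms of a circle'' does not by itself yield a contradiction, since a circle diffeomorphism commuting with the identity can certainly have isolated fixed points (e.g.\ $f_v(x)=x+1$, $f_w(x)=x+\epsilon\sin 2\pi x$ on $\R$). The paper's argument handles this case with no extra work. Second, your globalization is wrong as stated: $\bigcup_{k\in\Z^2}(\mathcal I_v+k)$ projects to a single bounded arc in $\T^2$, which is \emph{not} dense, so the union is not dense in $\R^2$. In fact the globalization is unnecessary here --- the only use of the proposition downstream is freeness of the $\Z^2$-action on $\mathcal I_v$, and for that it suffices to know that no nonzero $w$ has $f_w|_{\mathcal I_v}=\mathrm{id}$; but your Kopell argument does not rule this out, whereas the paper's $\Gamma_B$-based argument does.
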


\begin{proof}

If $f_w$ has a fixed point $p$ inside $\mathcal{I}_v$, we can assume that $p$ is an endpoint of an interval $J \subset \mathcal{I}_v$ where $f_v$ is conjugate to a translation on $\R$. Since $f_v$ is known to have no fixed point in $\mathcal I_v$ we get that $v$ and $w$ are linearly independent. We decompose $m B^{-n}(v)=a_nv+b_nw$ for some $m\in \Z$ depending only on $w,v$ and $a_n,b_n\in \Z$.  So we get $$f_{m B^{-n}(v)}p=f_{a_nv}f_{b_n w}p=f_{v}^{a_n}p.$$
For $n$ large, the LHS converges to $p$ by Lemma \ref{ire}, while the RHS moves away from $p$ since $f_v$ is monotone on $\mathcal I_v$ and each point converges to one of the endpoints under iterations of $f_v$. This contradiction verifies our claim.

\end{proof}

It follows that the $\Z^2$ action on any interval like $\mathcal{I}$ is conjugate to a group of translations (Corollary 4.1.4 and Theorem 4.1.37 of \cite{N}), therefore there is a homeomorphism $h\ :\ \mathcal{I} \to \R$ such that $T_v:= hf_vh^{-1}$ is a translation and suppose that $T_{e_1} (p) = p + 1$ and $T_{e_2}(p) = p + c$ for some $c \in \R$.

\begin{lemma}
The vector $(1,c)$ lies in the unstable subspace corresponding to $B$.
\end{lemma}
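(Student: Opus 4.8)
Throughout, recall the situation at the end of the section: on a leaf $\ell\in\mathcal{F}$ we have singled out an open interval $\mathcal{I}=\mathcal{I}_v$ on which, by the Kopell-type proposition just proved, the whole group $\Z^2$ acts freely; hence by Corollary 4.1.4 and Theorem 4.1.37 of \cite{N} there is a homeomorphism $h\colon\mathcal{I}\to\R$ conjugating this action to an action by translations of $\R$. Writing $T_w:=h f_w h^{-1}$, the map $w\mapsto T_w$ is a homomorphism from $\Z^2$ into the translation group of $\R$, so $T_w$ is translation by $\tau(w)$ for a linear functional $\tau$ with $\tau(e_1)=1,\ \tau(e_2)=c$; that is, $\tau(w)=\langle(1,c),w\rangle$ for $w\in\Z^2$, where we regard $(1,c)$ as a row covector.

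The plan is to push Lemma \ref{ire} through the conjugacy $h$. Fix $v\in\Z^2$ and a point $q\in\mathcal{I}$. Since $B\in\SL_2(\Z)$, every $B^{-n}v$ again lies in $\Z^2$, so $f_{B^{-n}v}$ preserves $\mathcal{I}$ and, in the coordinate $h$, is the translation $x\mapsto x+\tau(B^{-n}v)$. By Lemma \ref{ire}, $f_{B^{-n}v}(q)\to q$ in $\R^2$, and applying the continuous map $h$ at $h(q)$ gives $h(q)+\tau(B^{-n}v)\to h(q)$, i.e. $\tau(B^{-n}v)\to 0$. Thus $\langle(1,c),B^{-n}v\rangle\to 0$ for every $v\in\Z^2$, and since $\Z^2$ spans $\R^2$ this says exactly that the row covectors $(1,c)\,B^{-n}\to 0$, equivalently $(B^{-t})^{n}(1,c)^{t}\to 0$ in $\R^2$.

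Now this is a linear-algebra statement about the hyperbolic matrix $B$. Write $\R^2=E^{+}\oplus E^{-}$, where $E^{+}$ (resp. $E^{-}$) is the eigenline of $B^{t}$ for $\lambda_{B}$ (resp. $\lambda_{B}^{-1}$), with $|\lambda_B|>1$. The map $B^{-t}$ contracts $E^{+}$ by $\lambda_B^{-1}$ and expands $E^{-}$ by $\lambda_B$, so $(B^{-t})^{n}(1,c)^{t}\to 0$ forces the $E^{-}$-component of $(1,c)^{t}$ to vanish; hence $(1,c)^{t}\in E^{+}$, i.e. $(1,c)B=\lambda_B(1,c)$. In other words $(1,c)$ spans the unstable subspace corresponding to $B$ (equivalently, $(1,c)^{t}$ spans the unstable eigenspace of $B^{t}$, the line $\langle u_{B^t}\rangle$ appearing in Proposition \ref{LmNull}). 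As a byproduct $c$ is irrational, consistent with the earlier discussion, since the eigenlines of an Anosov element of $\SL_2(\Z)$ have irrational slope.

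The only point that needs care is the first paragraph: that it is the \emph{entire} group $\Z^2$, not just the single element used to define $\mathcal{I}_v$, that acts freely on $\mathcal{I}$ and hence by translations — this is precisely the combination of the Kopell-type proposition proved just above with Navas's normal-form theorem. Granting that, the remainder is a one-line transfer of Lemma \ref{ire} through $h$ followed by the spectral decomposition of $B$; in particular no regularity of the action is used and one never has to compare the coordinates $h$ attached to different leaves.
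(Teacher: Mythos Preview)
Your proof is correct and follows essentially the same route as the paper: both push Lemma \ref{ire} through the conjugacy $h$ to obtain $(1,c)\,B^{-n}\to 0$, then read off from the spectral decomposition of $B$ that $(1,c)$ is a left eigenvector for $\lambda_B$. The paper carries this out by writing out the entries of $B^{-n}$ explicitly, whereas you phrase it via the linear functional $\tau$ and the eigenspaces of $B^{t}$; your added remark that $(1,c)^{t}\in\langle u_{B^{t}}\rangle$ in fact clarifies a slight ambiguity in the paper's phrase ``unstable subspace corresponding to $B$''.
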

\begin{proof} Recall that  from Proposition \ref{ire} we have that $T_{B^{-n}(e_i)}(x) \to x$ for $i=1,2$.

Therefore if $$B^{-n} = \begin{pmatrix} a_n &  b_n \\ c_n & d_n \end{pmatrix},$$ we have that $T_B^{-n}(e_1)(p) = p + a_n + cc_n$ and $T_B^{-n}(e_2)(p) = p + b_n + cd_n$ and so $$\begin{pmatrix} a_n &  c_n \\ b_n & d_n \end{pmatrix}\begin{pmatrix} 1 \\ c \end{pmatrix} \to \begin{pmatrix} 0 \\ 0 \end{pmatrix}$$ which implies the fact that $(1,c)$ must be in the unstable subspace corresponding to $B$ (in particular that $c$ is an algebraic irrational number.)
\end{proof}
\begin{lemma}
Let $\mathcal{I}$ be any open interval which is a connected component of $\ell \setminus C$ for some leaf $\ell$ of the unstable foliation $\mathcal{F}$ of the Anosov diffeomorphism $\Phi(B)$. Then restricted to $\mathcal{I}$, we have that $\Phi: \Z^2 \to \Diff(\mathcal{I})$ is conjugate to an action by translations and the conjugacy is smooth.
\end{lemma}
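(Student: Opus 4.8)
The plan is to keep the topological skeleton already in hand and upgrade its regularity by one-dimensional renormalization theory. By the preceding Kopell-type proposition the $\Z^2$-action on $\mathcal I$ is free, so (Corollary 4.1.4 and Theorem 4.1.37 of \cite{N}) there is a homeomorphism $h_0:\mathcal I\to\R$ with $h_0 f_v h_0^{-1}=(x\mapsto x+\tau_v)$ for each $v\in\Z^2$; after a change of basis of $\Z^2$ we may take generators $e_1,e_2$ with $f_{e_1}$ fixed-point free on $\mathcal I$ and, normalizing $h_0$, $\tau_{e_1}=1$ and $\tau_{e_2}=c$, where by the two lemmas above $(1,c)$ spans the unstable eigenline of $B$. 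Since $\lambda_B$ is a quadratic irrational (an eigenvalue of a hyperbolic matrix in $\SL_2(\Z)$), so is $c$; in particular $c$ is of constant type, i.e. there is $\kappa>0$ with $|c-p/q|\ge\kappa q^{-2}$ for all $p/q\in\Q$. This Diophantine condition is exactly what is needed to run the Herman--Yoccoz theory.

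First I would pass to the quotient circle. The $C^r$ diffeomorphism $f:=f_{e_1}$ of $\mathcal I\cong\R$ is topologically conjugate to the unit translation, so $S:=\mathcal I/\langle f\rangle$ is a $C^r$ circle and $\mathcal I\to S$ is its universal cover with deck group $\langle f\rangle$; since $g:=f_{e_2}$ commutes with $f$, it descends to an orientation preserving $C^r$ diffeomorphism $\bar g$ of $S$. Pushing $h_0$ to the quotient identifies $(S,\bar g)$ with $(\R/\Z,\ R_c)$, $R_c:x\mapsto x+c$, so $\bar g$ has rotation number $c\bmod 1$, which is again of constant type. By Yoccoz's linearization theorem (the version used in \cite{WX}), a $C^r$ circle diffeomorphism with $r>2$ whose rotation number is of constant type is $C^{r-1-\epsilon}$-conjugate to the corresponding rigid rotation, for every $\epsilon>0$; let $\bar h:S\to\R/\Z$ be such a conjugacy, so $\bar h$ and $\bar h^{-1}$ are $C^{r-1-\epsilon}$, in particular $C^1$.

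Next I would lift and check simultaneity. Lift $\bar h$ to a $C^{r-1-\epsilon}$ diffeomorphism $h:\mathcal I\to\R$ normalized so that $h f h^{-1}$ is the unit translation. Then $h g h^{-1}$ is a lift of $R_c=\bar h\bar g\bar h^{-1}$ commuting with $x\mapsto x+1$, hence equals $x\mapsto x+c+k$ for some $k\in\Z$. To see $k=0$, note that $h h_0^{-1}$ commutes with the unit translation (both $h$ and $h_0$ conjugate $f$ to it), and the translation number is invariant under conjugation by maps commuting with the unit translation, so $\rho(h g h^{-1})=\rho(h_0 g h_0^{-1})=c$; since $\rho(x\mapsto x+c+k)=c+k$ this forces $k=0$. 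As $e_1,e_2$ generate $\Z^2$, the conjugacy $h$ sends the whole $\Z^2$-action on $\mathcal I$ to the standard action by translations, and $h,h^{-1}$ are $C^{r-1-\epsilon}$ for every $\epsilon>0$; in particular the conjugacy is smooth, as claimed.

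I expect the main obstacle to be precisely the passage from topological to smooth conjugacy, i.e. the invocation of Yoccoz's theorem: one must check that the regularity $r>2$ together with the constant-type Diophantine condition on $c$ --- which is what rules out Denjoy-type phenomena and makes mere continuity of $h_0$ too weak --- suffices to produce a $C^1$, indeed $C^{r-1-\epsilon}$, conjugacy of the descended circle diffeomorphism $\bar g$ to a rotation. The remaining ingredients --- freeness of the $\Z^2$-action, identification of the quotient circle, and the bookkeeping with translation numbers that removes the integer ambiguity in the lift so that both generators are linearized by the same map --- are routine.
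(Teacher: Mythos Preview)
Your proposal is correct and follows essentially the same route as the paper: quotient $\mathcal I$ by $\Phi(e_1)$ to obtain a $C^r$ circle on which $\Phi(e_2)$ acts with algebraic-irrational (hence Diophantine) rotation number $c$, then apply Herman--Yoccoz theory to upgrade the topological conjugacy to a smooth one. The paper phrases the lift back to $\mathcal I$ via fundamental domains $D_m=\Phi(me_1)D$ rather than your translation-number bookkeeping, but the content is the same; your version is arguably cleaner in making explicit why a single $h$ simultaneously linearizes both generators.
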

\begin{proof}
It remains to show that the conjugacy is smooth. We reduce the action to a circle map and obtain the smoothness of the conjugacy by the Herman-Yoccoz theory for circle maps.

We have that $\mathcal{I}$ is a $C^{r}$ smooth embedded interval  and observe that $\mathbb S^1 \cong \mathcal{I}/ \Phi(e_1) $ is a smooth circle where $\Phi(e_2)$ acts as a smooth diffeo with rotation number $c$, which is algebraic irrational number and therefore Diophantine ($\forall\ \delta,\ \exists\ \gamma$ such that $|qc-p|>\frac{\gamma}{q^{1+\delta}},\ \forall\ (p,q)\in \Z,\ q\neq 0,$ c.f. Chapter VI, Corollary 1E of \cite{S}) and therefore $\Phi(e_2)$ is conjugate to a rotation with angle $c$ (modulo $1$) by a conjugacy that is $C^{r-1-2\delta}$ (c.f. \cite{KO}).

The action of $\Phi(e_2)$ on $\mathcal{I}/ \Phi(e_1) $ is defined as follows more explicitly. Pick a fundamental domain $D\subset \mathcal{I}$ and $p\in D$ and denote by $\bar D$ the circle obtain from $D$ by identifying two endpoints. Denote by $\bar f: \ \bar D\to \bar D$ the action. Then $$\bar f^np= \Phi(m(n)e_1)\Phi(ne_2)p,$$ where $m(n)$ is the unique integer such that $\Phi(m(n)e_1)\Phi(ne_2)p\in D$. By Herman-Yoccoz theory, there exists a $C^1$ $h:\ \bar D\to\mathbb S^1$ such that $h \bar f h^{-1}p=p+c$ mod $1$. That is to say $$\Phi(m(n)e_1)\Phi(ne_2)p=h^{-1}(h(p)+nc).$$

Define $D_m:=\Phi(me_1)D$ and $\bar f_m:\ \bar D_m\to \bar D_m$ the action on $\bar D_m$. We get that $h_m \bar f_m h^{-1}_m(p)=p+c$ where $h_m:\ \bar D_m\to \mathbb S^1$ is given by $h_m=h \Phi(-me_1)$.

\end{proof}
\subsection{Embedding the action into a flow}
We next embed the $\Z^2$-action into a flow. For a single diffeomorphism on $(0,1)$ without fixed points, this is given by Szekeres theorem (Theorem 4.1.11 of \cite{N}). The construction in our setting is simpler.

\begin{proof}[Proof of Proposition \ref{PropFlow}]
We introduce the following map $\R\to \Phi(\Z^2)$ via $(1,c) \cdot v\mapsto \Phi(v)$. Using the circle map constructed in the previous proof, we give a more explicit description. Choosing $(-1/2,1/2)$ as the fundamental domain for $\R/\Z$ and $\bar D$ as a fundamental domain for $\mathcal I/\Phi(e_1)$. We have a conjugation $h:\ \bar D\to \mathbb S^1$ satisfying
$$\Phi(m(n)e_1)\Phi(ne_2)p=h^{-1}(h(p)+nc)=h^{-1}(h(p)+nc+m(n)).$$
 Let $v\in \Z^2$ be such that $(1,c)\cdot v\in (-1/2,1/2)$ then we have $\Phi(v)p=h^{-1}(h(p)+(1,c)\cdot v)$. This defines the flow for $t$ in the dense subset of $\R$. We then extend the domain of definition of the flow to $\R$ by the smoothness of $h$. Explicitly, we obtain a flow $g_\cdot: \R\to \mathrm{Diff}(\mathcal I)$ via $g_t(p)=h^{-1}(h(p)+t)$ for all $p\in \mathcal I$. 

This gives us a smooth one parameter flow $g_t:\ \mathcal I\to \mathcal I$ without fixed points such that for every $v\in \Z^2$,  $\Phi(v) = g_{t_v}$ for $t_v=(1,c)\cdot v \in \R$. So we have $g_1 = \Phi(e_1)$ and $g_c = \Phi(e_2)$. We next define a one parameter subgroup of maps $g_t: \T^2 \to \T^2$ by defining $g_t$ in $\mathcal{F} \setminus C$ as above and defined $g_t(p) = p$ for all $p \in C$.   By construction $g_t$ is differentiable with respect to $t$, but {\it a priori} $g_t(p)$ is not differentiable in $p$.

From the equation $\Phi(B)\Phi(v)\Phi(B^{-1}) = \Phi(B(v))$ it follows that for $t_v=(1,c)\cdot v$, we have $$\Phi(B)g_{t_v}\Phi(B)^{-1} =\Phi(B)\Phi(v)\Phi(B)^{-1} =\Phi(Bv)=g_{(1,c)\cdot Bv}= g_{\lambda_Bt_v}.$$
Extending the domain of $t_v$, we get
$$\Phi(B)g_{t}\Phi(B)^{-1} = g_{\lambda_Bt}$$ for every $t \in \R$.

\end{proof}
\begin{proof}[Proof of Lemma \ref{LmMeasurable}]
The differentiability in $t$ is seen from the definition of the flow. To see the measurability in $p$, we first notice that $\Phi(v)(p)=g_{t_v}(p)$ where $t_v=(1,c)\cdot v$. This shows that for each $v$, the flow $g_{t_v}(p)$ is differentiable in $p$ for $t_v$ fixed. Choosing $v\in \Z^2$, we get that $t_v$ is defined on a dense subset $\mathcal D$ of $\R$, so this shows that $g_t(p)$ is differentiable in $p$ for $t\in\mathcal D$. For each $t_*\notin \mathcal D$, we choose a sequence $t_n\in \mathcal D $ with $t_n\to t_*$,  we get that $g_{t_n}(p)\to g_{t_*}(p)$ pointwise.  This shows that $g_{t_*}(p)$ is measurable in $p$.
\end{proof}

\section{Higher regularity for translations along leaves}\label{SHigher}
In this section, we prove Theorem \ref{ThmHigher} and \ref{ThmHigher4} of higher regularity. 
\begin{proof}[Proof of Theorem \ref{ThmHigher} ]
We consider the case $\rho= u_A\otimes u_{B^t}$, the other case $\rho= u_{A^{-1}}\otimes u_{B^{-t}}$ is similar. It can be verified that \eqref{EqCommute} is satisfied. Suppose $u_{B}=(1,c)$, where $c$ is a Diophantine irrational number because $B$ is Anosov. This shows that $$h\Phi(e_1)h^{-1}(x)=x+  u_A,\quad h\Phi(e_2)h^{-1}(x)=x+c u_A.$$
Therefore the maps $h\Phi(e_1)h^{-1}$ and $h\Phi(e_2)h^{-1}$ preserve each leaf of the unstable foliation of $A$ and act as translations on each leaf, and the maps $\Phi(e_1)$ and $\Phi(e_2)$ preserves the unstable foliations of $\Phi(B)$. 

In this setting, we recover Proposition \ref{PropFlow}. Therefore we have \eqref{eigenvector} by differentiation with respect to $t$.

Let $\mathrm{vol}$ be the volume measure which is preserved by $\Phi(B)$, by the argument in the proof of Proposition \ref{PropLY} with $\mu$ replaced by $\mathrm{vol}$, the larger Lyapunov exponent of $\Phi(B)$ is $\log\lambda_B$ and we have the same calculation as \eqref{EqLY}. By assumption we have $\lambda_A=\lambda_B$, therefore $\mathrm{vol}$ is the measure of maximal entropy for $\Phi(B)$. On the other hand $\log\lambda_B=\log\lambda_A$ is the entropy of $A$ with respect to the measure $h_*\mathrm{vol}$. We also know that $\log\lambda_A$ is the topological entropy of $A$, so we conclude that $h_*\mathrm{vol}=\mathrm{vol}$, since $\mathrm{vol}$ is the unique measure of maximal entropy for $A$. 
Now we apply Theorem 1.3 of \cite{dL} to complete the proof (see also Theorem F of \cite{SY}). 
\end{proof}

\begin{proof}[Proof of Theorem \ref{ThmHigher4} ]
We first claim:

 {\bf Claim: }{\it  On the affine action side, along each eigen-direction $u_{A,i}$ of $A$ corresponding to the eigenvalue $\lambda_i$ as in the statement of the theorem, there is a $\Z^2$ action whose generators act by translating $cu_{A_i}$ and $c\nu u_{A,i}$, where $\nu$ is a Diophantine number and $c\neq 0$. Therefore, the line along $u_{A,i}$ quotient the $\Z^2$ action gives rise to a circle rotation $x\mapsto x+\nu$ mod 1.} 

\begin{proof}[Proof of the claim]
We cite the following lemma of Katznelson.
\begin{lemma}[\cite{K} Lemma 3] Let $A$ be an irreducible $n\times n$ integer matrix and $v$ be an eigenvector of $A$ corresponding to a real eigenvalue of multiplicity 1. Then $v$ is Diophantine with of type $(C,n-1)$, i.e. there exists $C>0$ such that for all $p\in \Z^n\setminus\{0\}$ we have $|\langle p,v\rangle|\geq \frac{C}{|p|^{n-1}}$.
\end{lemma}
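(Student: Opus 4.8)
The plan is to reproduce the standard algebraic proof of this Diophantine estimate. Since $A$ is irreducible, the eigenvalue $\lambda$ attached to $v$ generates a number field of degree exactly $n$, and one controls $|\langle p,v\rangle|$ by comparing it with its Galois conjugates and invoking that a nonzero algebraic integer has field norm of absolute value at least $1$.

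First I would set up the number field. The hypothesis that $A$ is irreducible means its characteristic polynomial $\chi_A$ is irreducible over $\Q$; as $\lambda$ is one of its (necessarily simple) roots, $K:=\Q(\lambda)$ has degree $n$, with exactly $n$ embeddings $\sigma_1=\mathrm{id},\sigma_2,\dots,\sigma_n\colon K\hookrightarrow\C$ carrying $\lambda$ to the $n$ roots $\lambda^{(1)},\dots,\lambda^{(n)}$ of $\chi_A$. The eigenline $\ker(A-\lambda I)$ is defined over $K$ (the rank of $A-\lambda I$ is the same over $K$ and over $\C$, namely $n-1$), so I may take the eigenvector $v$ with coordinates in $K$ and, after permuting coordinates, normalized so that $v_1=1$. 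Then $v^{(j)}:=\sigma_j(v)$ (applying $\sigma_j$ entrywise) satisfies $Av^{(j)}=\lambda^{(j)}v^{(j)}$ and has first coordinate $1$; these are finitely many fixed complex vectors, so $\|v^{(j)}\|\le M$ for a constant $M$ depending only on $A$.

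Now fix $p\in\Z^n\setminus\{0\}$ and put $\alpha=\langle p,v\rangle\in K$. The one point requiring a small argument — and the step I expect to be the main, though still elementary, obstacle — is the nonvanishing $\alpha\neq0$: the kernel of the rational functional $w\mapsto\langle w,v\rangle$ is $A^{t}$-invariant, since $\langle A^{t}w,v\rangle=\langle w,Av\rangle=\lambda\langle w,v\rangle$, and irreducibility of $\chi_{A^{t}}=\chi_A$ forbids any proper nonzero rational invariant subspace, so the kernel is $\{0\}$ and hence $\alpha\neq0$. Granting this, the estimate follows quickly: since $p$ is integral, $\sigma_j(\alpha)=\langle p,v^{(j)}\rangle$, so $|\sigma_j(\alpha)|\le M|p|$ for all $j$; choosing $d\in\Z_{>0}$ (depending only on $A$) with $d\,v_i\in\mathcal O_K$ for every $i$, the element $d\alpha$ is a nonzero algebraic integer, hence $|N_{K/\Q}(d\alpha)|\ge1$, i.e. $\prod_{j=1}^n|\sigma_j(\alpha)|\ge d^{-n}$. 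Isolating the factor $j=1$ then gives
\[
d^{-n}\le|\alpha|\prod_{j=2}^n|\sigma_j(\alpha)|\le|\alpha|\,(M|p|)^{n-1},
\]
whence $|\langle p,v\rangle|\ge C\,|p|^{-(n-1)}$ with $C:=d^{-n}M^{-(n-1)}>0$ independent of $p$, which is exactly the asserted Diophantine bound of type $(C,n-1)$. This is of course Katznelson's argument from \cite{K}, recorded here for the reader's convenience.
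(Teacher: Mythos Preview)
Your argument is correct and is precisely the classical algebraic-number-theoretic proof: realize the eigenvector over the field $K=\Q(\lambda)$, use irreducibility of $\chi_A$ to rule out $\langle p,v\rangle=0$ via the nonexistence of proper rational $A^t$-invariant subspaces, and then bound the norm of the nonzero algebraic integer $d\alpha$ from below by $1$ while bounding the conjugates by $M|p|$. Each step is sound; the only cosmetic point is that the normalization ``$v_1=1$ after permuting coordinates'' is really just a rescaling of $v$ (which only changes the constant $C$), so no permutation of the ambient matrix is needed.

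As for the comparison: the paper does not give a proof of this statement at all --- it is quoted verbatim as Lemma~3 of Katznelson \cite{K} and used as a black box inside the proof of Theorem~\ref{ThmHigher4}. So there is nothing in the paper to compare against; your write-up simply supplies, for the reader's convenience, the standard proof that the paper omits.
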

Since we have a $\Z^n$ affine action with rotation matrix $\rho= c u_{A,i}\otimes u_{B^t,i}$, each generator is a translation along $u_{A,i}$ with translation length $c$ times each entry of $u_{B^t,i}$ respectively. If $B$ is irreducible, then by the lemma of Katznelson, the eigenvector $u_{B^t,i}$ is Diophantine of type $(C,n-1)$. Picking any two entries of $u_{B^t,i}$, we get a vector denoted by $c(1,\nu)\in \R^2$ that is Diophantine of type $(C,n-1)$. The claim is then established in this case. Suppose $B^t$ is reducible. Without loss of generality, we consider the case $B^t=\left[\begin{array}{cc}
B_1&D\\
0&B_2\end{array}\right]\in \mathrm{SL}_n(\Z)$ where $B_i,\ i=1,2$ is irreducible, and the cases of more diagonal blocks are similar. Since there is no eigenvalue 1, we have that $B_i\in \Z^{n_i\times n_i}$ satisfies $n_i\geq 2,\ i=1,2.$ If $u_1$ is an eigenvector of $B_1$, then $(u_1,0_{1\times n_2})$ is an eigenvector $B$ with the same eigenvalue. Since $u_1$ is a Diophantine vector of type $(C,n_1-1)$ by the above lemma and has at least two entries, we can pick two entries to obtain a Diophantine vector $c(1,\nu)$. If $u_2$ is an eigenvector of $B_2$ with eigenvalue $\lambda$, then we can solve for $v$ from the equation $(B_1-\lambda Id)v=D u_2$ to get that $(v,u_2)$ is an eigenvector of $B$ with eigenvalue $\lambda$ (note that $\lambda$ is not an eigenvalue of $B_1$ by assumption). Then we pick two entries from $u_2$ to get a Diophantine vector. This completes the proof of the claim.
\end{proof}
We next work on the nonlinear action $\Phi$. We relabel the eigenvalues of $B$ as follows
$$|\lambda^s_{k}|<\ldots<|\lambda^s_{1}|<1<|\lambda^u_{1}|<\ldots<|\lambda^u_{\ell}|,\quad k+\ell=n.$$
It is known that for each $\lambda^{u/s}_i$ there is a one-dimensional foliation tangent to the invariant distribution corresponding to $\lambda^{u/s}_i$ and that the conjugacy $h$ preserves the weakest stable and unstable leaves (Lemma 6.1-6.3 of \cite{Go1}). We then apply the argument in the proof of Theorem \ref{ThmHigher} and the above claim to recover Proposition \ref{PropFlow} and \eqref{eigenvector}, i.e. there is a $\Z^2$ subaction of $\Phi(\Gamma_{B,n})$ along the one dimensional foliation which can be embedded into a flow. This shows that the Lyapunov exponent of $\Phi(B)$ with respect to the volume along the weakest stable and unstable foliations agree with the corresponding Lyapunov exponents for the linear Anosov maps. Applying Theorem F of \cite{SY}, we get that $h$ and $h^{-1}$ are $C^{1+}$ along the weakest stable and weakest unstable foliations. We remark that in the proof of Proposition \ref{PropFlow}, we need to invoke Herman-Yoccoz theory for the regularity of the conjugacy for circle maps induced by the $\Z^2$ subaction. Since the Diophantine vector $(1,\nu)$ in the claim is of type $(C,n-1)$, we get that the conjugacy is $C^{1+}$ if $\Gamma_{B,n}$ acts by $C^{r},\ r>n$ diffeomorphisms.

We next apply Proposition 2.4 of \cite{GKS} to show that $h$ preserves the next weak stable and unstable foliations corresponding to Lyapunov exponents $\lambda^s_2$ and $\lambda^u_2$ respectively. The same argument as the previous paragraph allows us to show that $h$ and $h^{-1}$ are $C^{1+}$ along these foliations.

Repeating the above arguments, we obtain that $h$ and $h^{-1}$ are $C^{1+}$ along each one-dimensional foliation corresponding to each eigenvalue $\lambda_i^{u/s}$. An application of Journ\'e theorem \cite{J} completes the proof.

Note that when we have shown that $n-1$ Lyapunov exponents of $\Phi(B)$ agrees with the corresponding ones of $A$, the remaining one agrees automatically. For this reason, in the statement of the theorem, we may allow $c_i=0$ for $c_i$ corresponding to $\lambda^u_\ell$ or $\lambda^s_k$.

In the three dimensional case, if $\Phi(B)$ is smooth enough, by the main result of \cite{Go2}, the conjugacy can be as smooth as possible.
\end{proof}

\section{Proof of Theorem \ref{ThmHigher2}}\label{SHigher2}
In this section, we give the proof of Theorem \ref{ThmHigher2}.
\begin{proof}[Proof of Theorem \ref{ThmHigher2}]

Since $\Phi(B)$ is Anosov and volume-preserving, we see that the Lebesgue measure $\mathrm{vol}$ is an SRB measure. We next show that $\mathrm{vol}$ is preserved by $h_*$ and is the measure of maximal entropy (see Proposition \ref{PropMME} below), so we get that the entropy $$h_{\mathrm{vol}}(\Phi(B))=\log\lambda_A.$$
 By ergodicity and the Ledrappier-Young entropy formula, the Lyapunov exponent of $\Phi(B)$ with respect to $\mathrm{vol}$ is $\log\lambda_A$.
We then complete the proof of Theorem \ref{ThmHigher2} by applying Theorem 1.3 of \cite{dL} (see also Theorem F of \cite{SY}).
\end{proof}

\begin{proposition} \label{PropMME} Under the assumption of Theorem \ref{ThmHigher2}, we have that $h_*\mathrm{vol}=\mathrm{vol}$, and $\mathrm{vol}$ is the measure of maximal entropy of $\Phi(B)$.
\end{proposition}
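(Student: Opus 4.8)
The plan is to derive Proposition \ref{PropMME} from the topological classification of Theorem \ref{ABC3} together with faithfulness and volume-preservation; no new hyperbolic dynamics is needed at this stage. Let $\Gamma'\subset\Gamma_B$ and $h\in\mathrm{Homeo}(\T^2)$ be as in Theorem \ref{ABC3}. We may assume $B\in\Gamma'$ (this is how $\Gamma'$ is constructed in Proposition \ref{ABCeasy}, on which the proof of Theorem \ref{ABC3} rests), so that $h\Phi(B)h^{-1}=A$, and for every $v$ in the full-rank lattice $\Lambda:=\Z^2\cap\Gamma'$ the map $h\Phi(v)h^{-1}$ is the translation $x\mapsto x+\tau(v)$ of $\T^2$. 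Here $\tau\colon\Lambda\to\T^2$ is a group homomorphism, and the relation $\Phi(B)\Phi(v)\Phi(B)^{-1}=\Phi(Bv)$ forces $\tau(Bv)=A\,\tau(v)$; consequently the closure of $\tau(\Lambda)$ is a closed $A$-invariant subgroup of $\T^2$. Since $A$ is Anosov it has no invariant rational line, so every closed $A$-invariant subgroup of $\T^2$ is either finite or all of $\T^2$; faithfulness of $\Phi|_{\Gamma'}$ makes $\tau$ injective on the infinite group $\Lambda$, which rules out the finite alternative. Hence $\tau(\Lambda)$ is dense in $\T^2$.

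Next I would prove $h_*\mathrm{vol}=\mathrm{vol}$. Since $\Phi$ is volume-preserving, $\Phi(v)_*\mathrm{vol}=\mathrm{vol}$ for every $v\in\Lambda$, hence $(h\Phi(v)h^{-1})_*(h_*\mathrm{vol})=h_*\mathrm{vol}$; that is, the probability measure $h_*\mathrm{vol}$ is invariant under translation by $\tau(v)$ for all $v\in\Lambda$. The assignment $\T^2\ni s\mapsto (R_s)_*(h_*\mathrm{vol})$ is weak-$*$ continuous, so invariance propagates from the dense subgroup $\tau(\Lambda)$ to the whole torus; therefore $h_*\mathrm{vol}$ is the Haar measure, i.e. $h_*\mathrm{vol}=\mathrm{vol}$ (normalizing $\mathrm{vol}$ to total mass one).

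Finally, the maximal-entropy statement follows. The linear Anosov automorphism $A$ of $\T^2$ has Haar measure as its unique measure of maximal entropy and satisfies $h_{\mathrm{vol}}(A)=h_{\mathrm{top}}(A)$. Since $h$ conjugates $\Phi(B)$ to $A$ and a topological conjugacy preserves topological entropy and the metric entropy of corresponding invariant measures, we obtain
\[
h_{\mathrm{vol}}(\Phi(B))=h_{h_*\mathrm{vol}}(A)=h_{\mathrm{vol}}(A)=h_{\mathrm{top}}(A)=h_{\mathrm{top}}(\Phi(B)).
\]
Thus $\mathrm{vol}$ attains the topological entropy of the transitive Anosov diffeomorphism $\Phi(B)$, and by uniqueness of the measure of maximal entropy for transitive Anosov diffeomorphisms it is \emph{the} measure of maximal entropy of $\Phi(B)$.

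I expect the main point needing care to be the bookkeeping in the first paragraph: verifying that the finite-index subgroup of Theorem \ref{ABC3} contains $B$ and that $\Lambda=\Z^2\cap\Gamma'$ is a full-rank lattice, and establishing cleanly the classification of closed $A$-invariant subgroups of $\T^2$ used to force density of $\tau(\Lambda)$. Everything downstream of Proposition \ref{PropMME} — the Ledrappier--Young/Pesin identification of $h_{\mathrm{vol}}(\Phi(B))$ with the positive Lyapunov exponent of $\Phi(B)$ relative to $\mathrm{vol}$, and the appeal to de la Llave's rigidity theorem (Theorem 1.3 of \cite{dL}; see also Theorem F of \cite{SY}) — is used only in the subsequent completion of the proof of Theorem \ref{ThmHigher2}.
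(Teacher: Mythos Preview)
Your proof is correct and follows essentially the same approach as the paper: show that $h_*\mathrm{vol}$ is invariant under a dense set of translations (hence equals Haar), then transfer the maximal-entropy property through the conjugacy. The only minor difference is that the paper invokes Proposition~1.2 of \cite{WX} to deduce density of $\{\rho v : v\in\Z^2\}$ from faithfulness, whereas you obtain density directly by classifying closed $A$-invariant subgroups of $\T^2$; your version is also slightly more careful about the finite-index bookkeeping.
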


\begin{proof}

Since each $\Phi(v)$ is volume-preserving by assumption, we get that $h_*\Phi(v)_*\mathrm{vol}=h_*\mathrm{vol}$. On the other hand, using the conjugacy $h$, we can express each $\Phi(v),\ v\in \Z^2,$ as
\begin{equation}\label{EqTrans}
\Phi(v)(x)=h^{-1}(h(x)+\rho v).\end{equation}
Therefore  $$h_*\Phi(v)_*\mathrm{vol}=(h_*\Phi(v)_*h_*^{-1})(h_*\mathrm{vol})=(h\Phi(v)h^{-1})_*(h_*\mathrm{vol})=(h_*\mathrm{vol})(\bullet+ \rho v).$$
So each $h_*\Phi(v)_*\mathrm{vol}$ is a translation of $h_*\mathrm{vol}$ by $\rho v$ and we have
$$h_*\mathrm{vol}=h_*\mathrm{vol}(\bullet+\rho v),\quad \forall \ v\in \Z^2.$$ 

Proposition 1.2 of \cite{WX} shows that the affine action of the form \eqref{EqAffine} is faithful if and only if $A$ is of infinite order and $\{\rho p\ |\ p\in \Z^2\}$ is dense on $\T^2$. So the faithfulness assumption in the statement implies that $h_*\mathrm{vol}$ is invariant under the group of translations on $\T^2$, therefore we get  $h_*\mathrm{vol}=\mathrm{vol}$. Moreover, since vol is the measure of maximal entropy for $h\Phi(B)h^{-1}=A$ and the measure of maximal entropy is preserved under conjugacy, we see that $\mathrm{vol}$ is the measure of maximal entropy for $\Phi(B)$.

\end{proof}

\section{Proof of Theorem \ref{ThmHigher3}}\label{SHigher3}
In this section, we prove Theorem \ref{ThmHigher3}.
\begin{proof}[Proof of Theorem \ref{ThmHigher3}]
Let $\mu$ be an ergodic SRB measure of $\Phi(B)$. We consider the following measure
\begin{equation}\label{Eqnu}\nu:=\lim_{N\to \infty}\frac{1}{N}\sum_{n=1}^N\Phi(B^n(e_1))_*\mu.\end{equation}
We show in the following Proposition \ref{PropSRBMME} that $\nu=h^{-1}_*\mathrm{vol}$ is well-defined and is both an SRB measure and the measure of maximal entropy of $\Phi(B)$.  By the Ledrappier-Young entropy formula, since $\nu$ is the measure of maximal entropy,  the positive Lyapunov exponent of $\Phi(B)$ with respect to $\nu$ is $\log\lambda_B$. Applying Theorem F of \cite{SY} we conclude that the conjugacy $h$ restricted to the support of $\nu$ is $C^{r-\eps}$ along each  unstable leaf in supp$\nu$, and $h^{-1}$ is $C^{r-\eps}$ along each  unstable leaf of $A$ in $\mathrm{supp}h_*\nu=\T^2$, for all $\eps$.

Next, by Corollary 11.14 of \cite{BDV}, we know that $\nu$ as an SRB measure is a Gibbs $u$-state and its support contains entire leaves. Since $\Phi(B)$ is Anosov on $\T^2$, an entire leaf of $\Phi(B)$ is dense on $\T^2$, so we conclude that supp$\nu$ is the whole $\T^2$. This shows that the conjugacy $h$ and $h^{-1}$ are $C^{r-\eps}$ along each unstable leaf.

Applying the same argument to $\Phi(B)^{-1}$, by taking an SRB measure $\mu'$ of  $\Phi(B)^{-1}$, the averaging procedure $\lim_{N\to \infty}\frac{1}{N}\sum_{n=1}^N\Phi(B^{-n} e_1)_*\mu'$ yields the same measure $\nu=h^{-1}_*\mathrm{vol}$ (see the proof of Proposition \ref{PropSRBMME}). Repeating the above argument, we conclude that the conjugacy $h$ and $h^{-1}$ are $C^{r-\eps}$ along each stable leaf   for all $\eps$. Then by Journ\'e theorem \cite{J}, we get that $h$ and $h^{-1}$ are both $C^{r-\eps}$ for all $\eps$.
\end{proof}
\begin{proposition}\label{PropSRBMME}
The limit in the definition \eqref{Eqnu} of $\nu$ exists, and $\nu=h^{-1}_*\mathrm{vol}$ is an SRB measure and the measure of maximal entropy for $\Phi(B)$.
\end{proposition}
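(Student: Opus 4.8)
The plan is to compute the push-forward $h_{*}\nu$ first and read off the dynamical properties of $\nu$ afterwards. Recall from Theorem~\ref{ABC3} and the reductions of Section~\ref{secrota} (Proposition~\ref{ABCeasy}) that, on the relevant finite-index subgroup, $h\Phi(B)h^{-1}=A$ and $h\Phi(v)h^{-1}$ is the translation $T_{\rho v}\colon x\mapsto x+\rho v$ for every $v$ in the $\Z^{2}$-part, and that each such $\Phi(v)$ is a $C^{r}$ diffeomorphism preserving the unstable foliation $W^{u}$ of $\Phi(B)$. Since the vector $B^{n}e_{1}$ equals the group element $B^{n}e_{1}B^{-n}$ in $\Gamma_{B}$, one has $\Phi(B^{n}(e_{1}))=\Phi(B)^{n}\Phi(e_{1})\Phi(B)^{-n}$ and $h\Phi(B^{n}(e_{1}))h^{-1}=T_{w_{n}}$ with $w_{n}\equiv \rho B^{n}e_{1}\equiv A^{n}\rho_{1}\pmod{\Z^{2}}$, where $\rho_{1}=\rho e_{1}$. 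Writing $\eta:=h_{*}\mu$, the conjugated average is a convolution on the group $\T^{2}$:
\[
h_{*}\!\left(\frac{1}{N}\sum_{n=1}^{N}\Phi(B^{n}(e_{1}))_{*}\mu\right)=\left(\frac{1}{N}\sum_{n=1}^{N}\delta_{A^{n}\rho_{1}}\right)*\eta .
\]
I would then \emph{define} $\mathcal{C}$ to be the set of $(c_{1},c_{2})\in\R^{2}$ for which the orbit $\{A^{n}\rho_{1}\bmod\Z^{2}\}_{n\in\Z}$ equidistributes, i.e.\ $\frac{1}{N}\sum_{n=1}^{N}\delta_{A^{\pm n}\rho_{1}}\to\mathrm{vol}$ weak-$*$, with $\rho_{1}$ computed from \eqref{EqRotMat}. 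The map $(c_{1},c_{2})\mapsto\rho_{1}=\rho_{C}e_{1}+c_{1}(u_{B^{t}})_{1}u_{A}+c_{2}(u_{B^{-t}})_{1}u_{A^{-1}}$ is an invertible affine map of $\R^{2}$: the scalars $(u_{B^{t}})_{1},(u_{B^{-t}})_{1}$ are non-zero because the eigenlines of a hyperbolic $B\in\SL_{2}(\Z)$ are irrational, and $u_{A},u_{A^{-1}}$ are independent. Since $A$ is an ergodic hyperbolic toral automorphism, Birkhoff's theorem applied to a countable dense family in $C(\T^{2})$ shows $\mathrm{vol}$-a.e.\ point of $\T^{2}$ equidistributes, so $\mathcal{C}$ has full measure; intersecting over the countably many admissible triples $(A,B,C)$ yields a single universal full-measure $\mathcal{C}$.

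For $(c_{1},c_{2})\in\mathcal{C}$, weak-$*$ continuity of convolution by the fixed probability measure $\eta$, together with $\mathrm{vol}*\eta=\mathrm{vol}$ (Haar measure is absorbing), gives $h_{*}\nu=\mathrm{vol}$. As $h$ is a homeomorphism, $h_{*}$ is injective on probability measures, so every weak-$*$ limit point of $\frac{1}{N}\sum_{n=1}^{N}\Phi(B^{n}(e_{1}))_{*}\mu$ equals $h^{-1}_{*}\mathrm{vol}$; hence the limit exists and $\nu=h^{-1}_{*}\mathrm{vol}$. The maximal-entropy assertion is then immediate: $\Phi(B)$ is topologically conjugate to $A$, so $h_{\mathrm{top}}(\Phi(B))=\log|\lambda_{A}|$, and transporting the unique measure of maximal entropy of $A$ (which is $\mathrm{vol}$) by $h^{-1}$ shows $\nu$ is the unique measure of maximal entropy of $\Phi(B)$, with $h_{\nu}(\Phi(B))=\log|\lambda_{A}|$.

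The remaining, and I expect hardest, point is that $\nu$ is an SRB measure; the obstruction is that $h^{-1}$ is so far only a homeomorphism, so the absolute continuity of $\mathrm{vol}$ along the unstable leaves of $A$ cannot be transported directly along leaves. The plan is to show $\nu$ is a Gibbs $u$-state of $\Phi(B)$. First, $\nu$ is $\Phi(B)$-invariant: from $B\cdot(B^{n}e_{1})=(B^{n+1}e_{1})\cdot B$ in $\Gamma_{B}$ and $\Phi(B)_{*}\mu=\mu$ one gets $\Phi(B)_{*}\Phi(B^{n}(e_{1}))_{*}\mu=\Phi(B^{n+1}(e_{1}))_{*}\mu$, so the two $N$-term averages differ by $O(1/N)$. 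Second, since $\Phi(B^{n}(e_{1}))=\Phi(B)^{n}\Phi(e_{1})\Phi(B)^{-n}$ is a $C^{r}$ ($r\ge2$) diffeomorphism preserving $W^{u}$, each $\Phi(B^{n}(e_{1}))_{*}\mu$ has absolutely continuous conditionals along unstable plaques, and the standard bounded-distortion estimate for the Anosov map $\Phi(B)$ — applied to the orbit segments appearing in the cocycle decomposition of the unstable Jacobian of $\Phi(B)^{n}\Phi(e_{1})\Phi(B)^{-n}$ — bounds these conditional densities above and below by constants \emph{independent of $n$}. The set of probability measures with uniformly bounded absolutely continuous conditionals along $W^{u}$ is convex and weak-$*$ closed, so $\nu$ lies in it; being also $\Phi(B)$-invariant, $\nu$ is a Gibbs $u$-state (cf.\ \cite{BDV}), and since $\Phi(B)$ is a transitive Anosov diffeomorphism of $\T^{2}$ this forces $\nu$ to be its unique SRB measure, so in fact $\nu=\mu=h^{-1}_{*}\mathrm{vol}$. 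Finally, running the same averaging with $\Phi(B)^{-1}$ and an SRB measure $\mu'$ of $\Phi(B)^{-1}$ yields the same $\nu$, since by the first two paragraphs (using the $A^{-1}$-half of the definition of $\mathcal{C}$) that average also equals $h^{-1}_{*}\mathrm{vol}$. The only delicate input is the uniform-in-$n$ distortion bound, which is where I would concentrate the work.
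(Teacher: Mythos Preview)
Your first two paragraphs are essentially the paper's own proof: push forward by $h$, recognize the average as a convolution with $\frac{1}{N}\sum\delta_{A^{n}\rho_{1}}$, define $\mathcal{C}$ via Birkhoff genericity for $A$, and read off $\nu=h^{-1}_{*}\mathrm{vol}$ and the MME property. The paper does exactly this (with $\rho B^{n}e_{1}\equiv A^{n}\rho_{c}\bmod\Z^{2}$ after absorbing the integer matrix $C_{n}$), so nothing to add there.

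For the SRB part your strategy also matches the paper's: show that each $\Phi(B^{n}(e_{1}))_{*}\mu$ has conditional densities along $W^{u}$ with ratios bounded uniformly in $n$, then pass to the convex weak-$*$ closure. The gap is in the sentence ``the standard bounded-distortion estimate for the Anosov map $\Phi(B)$ \ldots\ bounds these conditional densities''. In the cocycle decomposition $\Phi(B^{n}(e_{1}))=\Phi(B)^{n}\Phi(e_{1})\Phi(B)^{-n}$, the $\Phi(B)^{-n}$ part is indeed standard (backward iterates of points on the same unstable leaf contract), but the $\Phi(B)^{n}$ part is not: the intermediate points $(x'_{-n})_{i}:=\Phi(B)^{i}\Phi(e_{1})\Phi(B)^{-n}x$ and $(y'_{-n})_{i}$ lie on the same \emph{unstable} leaf and are being iterated \emph{forward}, so naive hyperbolic distortion gives no uniform control as $i$ runs up to $n$. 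The paper's device (Lemmas~\ref{LmDerivativeBound} and \ref{LmDistortion}) is to use that the Franks conjugacy $h$ is bi-H\"older and that in $h$-coordinates the whole map is a translation: one computes $h((x'_{-n})_{i})-h((y'_{-n})_{i})=A^{i-n}(h(x)-h(y))$, which decays like $|\lambda_{A}|^{-(n-i)}$, and then pulls this back via bi-H\"older to get $|(x'_{-n})_{i}-(y'_{-n})_{i}|\le C|x-y|^{\alpha^{2}}\lambda^{\alpha(n-i)}$. The same bi-H\"older argument, combined with the mean value theorem, also gives the absolute bound $1/K\le\|D^{u}\Phi(B^{n}(e_{1}))\|\le K$ uniformly in $n$. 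So the missing idea is precisely the use of the bi-H\"older regularity of $h$ and the translation structure on the $h$-side; without it the forward half of the cocycle cannot be summed.
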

\begin{proof}
Consider the following sequence $$h_* \frac{1}{N}\sum_{n=1}^N\Phi(B^n(e_1))_*\mu= \frac{1}{N}\sum_{n=1}^N h_*\Phi(B^n(e_1))_*\mu.$$

Using \eqref{EqTrans}, the following holds $$h_*\Phi(B^n(e_1))_*\mu=(h\Phi(B^n(e_1))h^{-1})_* (h_*\mu)=h_*\mu(\bullet+ \rho B^ne_1). $$

We know from \eqref{EqRotMat} that
$$\rho =\rho_C+ c_1 u_A\otimes u_{B^t}+c_2 u_{A^{-1}}\otimes u_{B^{-t}},$$
where $\rho_C$ solves $A\rho=\rho B+C,\ C\in \Z^{2\times 2}$. Then we have $A^n\rho=\rho B^n+C_n$ with $C_n\in \Z^{2\times 2}.$

Then  we have $$\rho B^{n}e_1=A^n\rho e_1-C_ne_1= c_1(u_{B^t}\cdot e_1) \lambda_B^{n} u_A+c_2(u_{B^{-t}}\cdot e_1)\lambda_B^{-n} u_{A^{-1}}\ \mathrm{mod}\ \Z^2.$$
Therefore, the map $\rho e_1\mapsto \rho Be_1$ on $\T^2$ can be equivalently considered as the map $\rho_c\mapsto A\rho_c$ with $$\rho_c=c_1(u_{B^t}\cdot e_1) u_A+c_2(u_{B^{-t}}\cdot e_1)  u_{A^{-1}}.$$ Note that linear Anosov is ergodic on $\T^2$ with respect to Lebesgue. We thus obtain the full measure set $\mathcal C$ of parameters as the set of $(c_1,c_2)$ such that the corresponding $\rho_c$ is a generic point for applying Birkhoff ergodic theorem to $A$.

Therefore by the Ergodic Theorem we get $\lim\frac{1}{N}\sum_{n=1}^N\delta_{x+A^n\rho_c}=\mathrm{vol}$,  where $\delta_x$ is the Dirac-$\delta$ supported at $x$, also the limit is independent of $x$ by the translation invariance of vol. Therefore we have
\begin{equation*}
\begin{aligned}
\lim\frac{1}{N}\sum_{n=1}^N h_*\Phi(B^n(e_1))_*\mu&=\lim \frac{1}{N}\sum_{n=1}^N h_*\mu(\bullet +A^n\rho_c)\\
&=\int\lim \frac{1}{N}\sum_{n=1}^N\delta_{x+A^n\rho_c}dh_*\mu(x)\\
&=\mathrm{vol}.
\end{aligned}
\end{equation*}

This shows that $\nu=h^{-1}_*\mathrm{vol}$ is well-defined and is invariant under $\Phi(B)$. Since vol is the measure of maximal entropy (MME) for $A$ and the MME is preserved under conjugacy, we conclude that $\nu$ is the MME for $\Phi(B)$.

We next show that $\nu$ is an SRB measure. First, by Corollary 11.14 of \cite{BDV}, we know that the conditional measure of $\mu$ along each unstable leaf in supp$\mu$ is absolutely continuous with respect to Legesgue with a density $\rho$ satisfying $$\frac{1}{K} \leq \frac{\rho(z_1)}{\rho(z_2)}\leq K$$ for some constant $K$ uniform for all leaves in supp$\mu$ and all $z_1,z_2$ in the same leaf with $d(z_1,z_2)<10$ (equation (11.4) of \cite{BDV}). Next, by the following Lemma \ref{LmDerivativeBound}, we know that $\|D^u_x\Phi(B^n(e_1))\|$ along each unstable leaf is bounded from above and below uniformly with respect to both $n\in \N$ and $x\in \T^2$. This shows that $\Phi(B^n(e_1))_*\mu$ has conditional measure absolutely continuous with respect to Lebesgue with a density $\rho_n$ satisfying $$\frac{1}{K'} \leq  \frac{\rho_n(z_1)}{\rho_n(z_2)}\leq K' $$ where the constant $K'$ is uniform for all leaves in supp$\Phi(B^n(e_1))_*\mu$, all $z_1,z_2$ in the same leaf with $d(z_1,z_2)<10K'$ and for all $n\in \N$. The averaged measure $\nu$ therefore has the same property hence is SRB.
\end{proof}
The proof is then reduced to establishing a uniform derivative bound of $\Phi(B^n(e_1))$ along the unstable leaves.

\begin{lemma}\label{LmDerivativeBound}
For each $v\in \Z^2$, there is a constant $K=K_v$ such that
$$1/K\leq \|D^u_x\Phi(B^n(v))\|\leq K,\quad 1/K\leq \|D^s_x\Phi(B^{-n}(v))\|\leq K,\quad \forall\ x\ \in \T^2,\ n\in \N.$$
\end{lemma}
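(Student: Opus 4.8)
The argument is a cocycle estimate along the one–dimensional unstable bundle of $F:=\Phi(B)$. Write $E^u:=E^u_F$. Since $F$ is Anosov and every $\Phi(w)$, $w\in\Z^2$, preserves the unstable foliation of $F$, both $F$ and each $\Phi(w)$ carry $E^u$ to $E^u$, so $D^u:=\|D(\cdot)|_{E^u}\|$ is a well–defined positive multiplicative cocycle over the $\Gamma_B$–action. Using $\Phi(B^nv)=F^n\circ\Phi(v)\circ F^{-n}$ and the chain rule, with $y:=F^{-n}(x)$ and $\psi:=\log D^uF$, one gets
\[
\log D^u_x\Phi(B^nv)=\log D^u_{y}\Phi(v)+\sum_{k=0}^{n-1}\Bigl[\psi\bigl(F^k\Phi(v)(y)\bigr)-\psi\bigl(F^k(y)\bigr)\Bigr].
\]
The term $\log D^u_y\Phi(v)$ is bounded uniformly ($\Phi(v)\in\Diff^1(\T^2)$, $E^u$ continuous), and $\psi$ is Hölder because $F$ is $C^{1+}$ Anosov; so the whole problem reduces to a uniform (in $n$ and $x$) bound on the Birkhoff–sum difference $\Sigma_n(x):=\sum_{k=0}^{n-1}[\psi(F^k\Phi(v)y)-\psi(F^ky)]$. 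The statement for $D^s\Phi(B^{-n}v)$ is the mirror image: run the same computation with $F^{-1}$ (again Anosov) and $E^s_F=E^u_{F^{-1}}$.

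The key point is that $\psi$ is cohomologous, over $F$, to the constant $\log|\lambda|$ — where $\lambda$ is the expanding eigenvalue of $A$, equal to that of $B$ since $\mathrm{tr}\,A=\mathrm{tr}\,B$ in the setting of \eqref{EqRotMat} — with a \emph{bounded} transfer function; equivalently, there is a continuous, nowhere–vanishing vector field $X$ tangent to the unstable leaves of $F$ with $DF\cdot X=\lambda\,(X\circ F)$, so that $\psi=\log|\lambda|+\log\|X\|\circ F-\log\|X\|$. To build $X$ I would use the $\Z^2$–action: conjugating by the Franks map $h$ so that $\Phi(v)=h^{-1}T_{\rho v}h$, decompose $\rho v=\rho^uv+\rho^sv+\rho^{\mathrm{rat}}v$ along $u_A$, $u_{A^{-1}}$ and the rational part, and consider the sub–family $G^u_v:=h^{-1}T_{\rho^uv}h$. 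Each $G^u_v$ preserves every unstable leaf of $F$ and, after the identification by $h$, acts on it by the translation by the scalar $c_1\langle u_{B^t},v\rangle$; since $u_{B^t}$ is an eigenvector of the hyperbolic integer matrix $B^t$ its slope is a quadratic irrational, hence Diophantine, and Herman–Yoccoz theory (as in the proof of Theorem~\ref{ThmHigher}, using $r\ge 2$) linearizes this leaf–wise circle action by a normalization depending continuously on the leaf. This yields $X$ with the stated properties, and $\|X\|$ is bounded away from $0$ and $\infty$ by compactness of $\T^2$. Substituting into $\Sigma_n(x)$, the two copies of $\log|\lambda|$ cancel (both orbit segments have length $n$) and the sum telescopes:
\[
\Sigma_n(x)=\bigl[\log\|X\|(\Phi(B^nv)x)-\log\|X\|(x)\bigr]-\bigl[\log\|X\|(\Phi(v)y)-\log\|X\|(y)\bigr],
\]
which is at most $4\sup_{\T^2}|\log\|X\||$; this proves the lemma.

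\textbf{Main obstacle.} The delicate step is exactly the production of the equivariant vector field $X$: one must make the leaf–wise actions $G^u_v$ regular enough to apply Herman–Yoccoz (the unstable leaves are $C^r$ curves, and one needs a $C^r$ chart on each leaf in which $G^u_{v_0}$, for a suitable $v_0$ with $\langle u_{B^t},v_0\rangle\neq 0$, is a $C^r$ circle diffeomorphism with Diophantine rotation number), and then verify that the linearizing coordinates — hence $X$ and $\|X\|$ — depend continuously on the leaf and never degenerate; the Diophantine input is that eigenvectors of hyperbolic matrices in $\SL_2(\Z)$ are badly approximable. (A naive direct hyperbolic estimate of $\Sigma_n(x)$ does \emph{not} suffice, since the two orbit segments do drift apart and $\psi$ need not be constant along unstable leaves a priori; the cohomological structure coming from the $\Z^2$–action is essential.) If $c_1=0$ one argues symmetrically along the stable leaves with $F^{-1}$ and the $u_{A^{-1}}$–component, using $c_2\neq 0$; and the set of common fixed points of $\Phi(\Z^2)$ is empty whenever the affine model is faithful, so no separate treatment of degenerate points is needed. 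Once $X$ is available the remaining estimates are routine.
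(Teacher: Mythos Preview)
Your reduction to the cocycle identity
\[
\log D^u_x\Phi(B^nv)=\log D^u_{y}\Phi(v)+\sum_{k=0}^{n-1}\bigl[\psi(F^k\Phi(v)y)-\psi(F^ky)\bigr]
\]
is correct, and you are right that a naive Hölder estimate on the Birkhoff difference does not close. The problem is the step you flag yourself as the ``main obstacle'': the construction of the continuous nowhere--vanishing vector field $X$ along $E^u$ with $DF\cdot X=\lambda\,(X\circ F)$. The existence of such an $X$ is \emph{equivalent}, via Liv\v{s}ic, to $\psi$ being cohomologous to the constant $\log|\lambda_A|$ with continuous transfer function, i.e.\ to the conjugacy $h$ being $C^1$ along every unstable leaf. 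But that is precisely the content of Theorem~\ref{ThmHigher3}, and the present lemma is a step in its proof (it is what guarantees that the averaged measure $\nu$ remains SRB). So invoking $X$ here is circular.

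Your proposed way to build $X$ does not escape this. The maps $G^u_v:=h^{-1}T_{\rho^u v}h$ do preserve each unstable leaf of $F$, but they are defined through the Franks conjugacy $h$, which at this stage is only a homeomorphism; hence $G^u_v$ restricted to a leaf is only a homeomorphism, and Herman--Yoccoz does not apply. The situation in which Herman--Yoccoz \emph{is} available is Theorem~\ref{ThmHigher}: there $\rho=c\,u_A\otimes u_{B^t}$, so the genuine $C^r$ diffeomorphisms $\Phi(v)$ themselves preserve every unstable leaf, and one linearizes $\Phi(e_1),\Phi(e_2)$ directly. In the setting of Theorem~\ref{ThmHigher3} the rotation matrix has both a $u_A$-- and a $u_{A^{-1}}$--component, so no smooth leaf--preserving family is at hand and your decomposition $G^u_v$ is artificial.

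The paper proceeds very differently and uses only what is available a priori: the bi--H\"older regularity of $h$ and the fact that each $h\Phi(B^{-n}v)h^{-1}$ is a \emph{translation}, hence an isometry. Fixing a scale $\delta$ and two nearby points $x,z$ on the same stable leaf with $d(\Phi(B^{-n}v)x,\Phi(B^{-n}v)z)=\delta$, the isometry property together with bi--H\"older gives $\delta^{1/\alpha}\lesssim d(x,z)\lesssim \delta^{\alpha}$ uniformly in $n$; the mean value theorem then bounds $\|D^s_y\Phi(B^{-n}v)\|$ at an intermediate point $y$, and the transfer from $y$ to $x$ is handled by a separate distortion estimate (Lemma~\ref{LmDistortion}), proved by expanding $\Phi(B^nv)=F^n\Phi(v)F^{-n}$ with the chain rule and summing H\"older increments along the contracting orbit (again using bi--H\"older of $h$ to control the $\Phi(v)$--shifted orbit). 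No cohomological input and no $X$ are needed.
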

\begin{proof}We prove the statement for $D^s\Phi(B^{-n}(v))$. The other one is analogous.

Note that $h$ is biH\"older. Denote by $\alpha$ the H\"older exponent.
Fix a small $r$, then we have
$$B_{r^{1/\alpha}}(h(x))\subset hB_r(x)\subset B_{r^{\alpha}}(h(x)).$$

Choose $z\in B_r(x)\cap W^s_x$ such that $d(\Phi(B^{-n}(v))x,\Phi(B^{-n}(v))z)=\delta$ for some $\delta>0$. Since $h\Phi(B^{-n}(v)) h^{-1}$ acts as translation, by the H\"olderness of $h$, we get $\delta^{1/\alpha}\leq d(x,z)\leq \delta^{\alpha}$. Next, by the mean value theorem, we have for some $y$ lying between $x$ and $z$ on the same leaf
$$\delta=d(\Phi(B^{-n}(v))x,\Phi(B^{-n}(v))z)=  d(x,z)\cdot \|D^s_y\Phi(B^{-n}(v))\|$$
Next, applying the following Lemma \ref{LmDistortion} we have
 $$1/C \delta^{1-\alpha}\leq \|D^s_x\Phi(B^{-n}(v))\|\leq C\delta^{1-1/\alpha}.$$
\end{proof}

\begin{lemma}\label{LmDistortion}
For each $v\in \Z^2$, there exists constant $C=C_v>1$ such that for any two nearby points $x$ and $y$ on the same unstable leaf, we have
$$\left|\log\frac{D^u_x \Phi(B^n(v))}{D^u_y \Phi(B^n(v))}\right|\leq C|x-y|^{\alpha^2}$$
for all $n\in \N$, where $\alpha$ is the H\"older constant of the conjugacy $h$ and $h^{-1}$.

Similarly, we have for all nearby $x,y$ on the same stable leaf $$\left|\log\frac{D^s_x \Phi(B^{-n}(v))}{D^s_y \Phi(B^{-n}(v))}\right|\leq C|x-y|^{\alpha^2}.$$
for all $n\in \N$.
\end{lemma}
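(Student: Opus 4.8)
\emph{Proof proposal for Lemma~\ref{LmDistortion}.} The plan is to write $\Phi(B^n(v))=f^n\phi f^{-n}$, where $f:=\Phi(B)$ is the $C^r$ Anosov diffeomorphism ($r\ge 2$) and $\phi:=\Phi(v)$, using the group law in $\Gamma_B$. After the reductions of Section~\ref{secrota}, both $f$ and $\phi$ preserve the one-dimensional, $C^r$-leaved unstable foliation $\mathcal F$ of $f$ (indeed $h$ conjugates $\phi$ to a translation, which preserves the linear unstable foliation of $A$), the conjugacy $h$ of Theorem~\ref{ABC3} is bi-Hölder with exponent $\alpha\in(0,1]$, it maps each leaf of $\mathcal F$ homeomorphically onto a straight unstable leaf of $A$, and $h\,\Phi(B^n(v))\,h^{-1}=T_n$ is the Euclidean translation of $\T^2$ by $\rho B^n v$. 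For $x,y$ on a common leaf of $\mathcal F$, the chain rule applied to $f^n\phi f^{-n}$ splits the quantity to be estimated as
\[
\log\frac{D^u_x\Phi(B^n(v))}{D^u_y\Phi(B^n(v))}=\log\frac{D^u_{\phi f^{-n}x}f^n}{D^u_{\phi f^{-n}y}f^n}+\log\frac{D^u_{f^{-n}x}\phi}{D^u_{f^{-n}y}\phi}+\log\frac{D^u_xf^{-n}}{D^u_yf^{-n}}.
\]

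First I would dispose of the last two terms by the classical bounded-distortion argument: since $r\ge2$, $\log D^u f$ and $\log D^u\phi$ are $C^{r-1}$, hence Lipschitz, along the leaves of $\mathcal F$, and $f^{-1}$ contracts those leaves uniformly, $d^u(f^{-1}a,f^{-1}b)\le\kappa\,d^u(a,b)$ with $\kappa<1$. Writing $\log D^u_xf^{-n}=-\sum_{k=1}^n\log D^u_{f^{-k}x}f$, using $d^u(f^{-k}x,f^{-k}y)\le\kappa^k d^u(x,y)$ and telescoping a geometric series bounds the third term by $C\,d^u(x,y)$ uniformly in $n$, and the factor $\kappa^n$ in $d^u(f^{-n}x,f^{-n}y)$ bounds the second term by $C\,d^u(x,y)$. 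For the first term, set $p:=\Phi(B^n(v))x$, $q:=\Phi(B^n(v))y$, so $\phi f^{-n}x=f^{-n}p$ and $\phi f^{-n}y=f^{-n}q$ lie on backward $f$-orbits of the points $p,q$ of a common leaf; the same telescoping bounds it by $C\,d^u\bigl(\Phi(B^n(v))x,\Phi(B^n(v))y\bigr)$, again uniformly in $n$. So the lemma reduces to the uniform estimate $d^u\bigl(\Phi(B^n(v))x,\Phi(B^n(v))y\bigr)\le C\,|x-y|^{\alpha^2}$.

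To obtain this, I would straighten the leaf with $h$: it maps the arc $[x,y]\subset\mathcal F$ onto a straight unstable segment of $A$ of Euclidean length $|h(x)-h(y)|\le C|x-y|^{\alpha}$, the translation $T_n$ moves it to another straight unstable segment of $A$ of the same length, and $h^{-1}$ sends that back to the leaf-arc of $f$ from $p$ to $q$. Since $T_n$ is an isometry and $h^{\pm1}$ are $\alpha$-Hölder, $d_{\T^2}(p,q)\le C|h(x)-h(y)|^{\alpha}\le C|x-y|^{\alpha^2}$. The remaining — and main — obstacle is to pass from the ambient distance $d_{\T^2}(p,q)$ to the intrinsic leaf distance $d^u(p,q)$: these are comparable only below a scale $\epsilon_0$ uniform over all unstable leaves of $f$ (by uniform continuity of the foliation and $C^1$-regularity of its leaves), and unstable leaves recur, so ``ambient-close on a leaf'' need not mean ``leaf-close''. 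This is handled by a uniform-scale/compactness argument: there is a uniform $\epsilon_1>0$ such that $h^{-1}$ carries every straight unstable segment of $A$ of Euclidean length $\le\epsilon_1$ onto a leaf-arc of $f$ of $d^u$-length $\le\epsilon_0$ (otherwise one gets leaf-arcs of $f$ of fixed $d^u$-length $\epsilon_0/2$, hence of $\T^2$-diameter bounded below, which are $h^{-1}$-images of segments of vanishing Euclidean length — contradicting Hölder continuity of $h^{-1}$). Then for $|x-y|$ small enough that $C|x-y|^\alpha\le\epsilon_1$ one gets $d^u(p,q)\le\epsilon_0$, so $d^u(p,q)\le2\,d_{\T^2}(p,q)\le C|x-y|^{\alpha^2}$; since $\alpha^2\le1$, the two ``backward'' contributions $C\,d^u(x,y)$ are also $\le C|x-y|^{\alpha^2}$ at small scales, giving the claim. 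The stable estimate for $D^s_x\Phi(B^{-n}(v))$ is identical with $f$ replaced by $f^{-1}$.
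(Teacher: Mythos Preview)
Your proof is correct and essentially identical to the paper's: both decompose $\Phi(B^n(v))=f^n\phi f^{-n}$ via the group law, split the log-derivative ratio into three terms, dispose of the two ``backward'' terms by classical bounded distortion along the contracting direction, and control the remaining ``forward'' term using the bi-H\"older conjugacy $h$. The only cosmetic difference is that you first reduce the forward term to $C\,d^u(p,q)$ by another bounded-distortion telescoping and then estimate $d^u(p,q)\le C|x-y|^{\alpha^2}$ in one step, whereas the paper estimates each intermediate distance $|f^{-j}p-f^{-j}q|$ separately through $h$, $A^{i-n}$, and $h^{-1}$; your explicit handling of the ambient-versus-leaf distance issue is also somewhat more careful than the paper's.
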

\begin{proof}

By the group relation $ \Phi(B)^n\Phi(v)\Phi(B)^{-n}=\Phi(B^{n}(v))$, we get
 $$ D_{\Phi(v)x_{-n}}\Phi(B)^nD_{x_{-n}}\Phi(v)D_x\Phi(B)^{-n}=D_x\Phi(B^{n}(v)),$$
 where we use the notation $x_i$ to denote the $i$-th orbit point under the iteration $\Phi(B)$. In the following we denote $x'_n:=\Phi(v)x_{-n}$ and $y'_n:=\Phi(v)y_{-n}$ and by $(x'_n)_i$ the orbit point of $x'_n$ under $\Phi(B)$, similarly for $y'_n$.

 By assumption, $x$ and $y$ lie on the same unstable leaf of $\Phi(B)$. So we get that $|x_{-i}-y_{-i}|<\lambda^i|x-y|$ for some $0<\lambda<1$.

 We have the following estimate
 \begin{equation}\label{calcu}
 \begin{aligned}
 \log\frac{D^u \Phi(B^n(v))(x)}{D^u \Phi(B^n(v))(y)}&=\log \frac{D^u_{x_{-n}}\Phi(v)}{D^u_{y_{-n}}\Phi(v)}-\sum_{i=0}^{n-1}\log \frac{D^u_{x_{-i}}\Phi(B^{-1})}{D^u_{y_{-i}}\Phi(B^{-1})}+\sum_{i=0}^{n-1}\log \frac{D^u_{(x'_{-n})_{i}}\Phi(B)}{D^u_{(y'_{-n})_{i}}\Phi(B)}\\
 &\leq \frac{(D^u_{y_{-n}})^2\Phi(v)}{D^u_{y_{-n}}\Phi(v)}|x_{-n}-y_{-n}|\\
 &+\sum_{i=0}^{n-1}\frac{(D^u_{y_{-i}})^2\Phi(B)}{D^u_{y_{-i}}\Phi(B)}|x_{-i}-y_{-i}|\\
 &+\sum_{i=0}^{n-1}\frac{(D^u_{(y'_{-n})_{i}})^2\Phi(B)}{D_{(y'_{-n})_{i}}\Phi(B)}|(x'_{-n})_{i}-(y'_{-n})_{i}|.
 \end{aligned}
 \end{equation}
 Now by the exponential expansion and contraction, the estimate of the second summand on the RHS is bounded  by the distance $C\|\frac{(D^u)^2\Phi(B)}{D^u\Phi(B)}\| |x-y|$.  Also, each term of the third summand on the RHS is bounded by $C\|\frac{(D^u)^2\Phi(B)}{D^u\Phi(B)}\| |(x'_{-n})_i-(y'_{-n})_i|$.\\
 As the conjugacy $h$, which is known to be bi-H\"older, we have that  $$\frac{1}{C}|x-y|^{1/{\alpha}} \leq |h(x)-h(y)| \leq C|x-y|^\alpha. $$
 As $$|h((x'_{-n})_i)- h((y'_{-n})_i)| = |A^{i-n} h(x) - A^{i-n} h(y)|,$$ one has that  $$|(x'_{-n})_i-(y'_{-n})_i| \leq C'|x-y|^{\alpha^2}\lambda^{\alpha (n-i)}$$ and so the third summand in (\ref{calcu}) is bounded by $C''|x-y|^{\alpha^2}$.
\end{proof}

\section*{Acknowledgment}
We would like to thank Professor A. Wilkinson for getting us interested in the problem and many stimulating discussions. We also would like to thank the referees for pointing out a mistake (corrected in Section 7) in the first version. J. X. is supported by NSFC (Significant project No.11790273) in China and Beijing Natural Science Foundation (Z180003).

\end{section}

\end{document}